\documentclass[11pt, reqno ]{amsart} 

\usepackage[utf8]{inputenc}
\usepackage[margin=1in]{geometry}
\usepackage[T1]{fontenc}
\usepackage[colorlinks=true, linkcolor=blue, citecolor=red, urlcolor=blue]{hyperref}
\usepackage{amsmath, amssymb, amsthm}
\usepackage{graphicx}
\usepackage{enumitem}
\usepackage{tikz}
\usepackage{mathtools}
\usepackage{xcolor}
\usepackage[normalem]{ulem}

\newcommand{\act}[1]{\,\sp{#1}\!}
\newcommand{\del}{\partial}
\newcommand{\red}[1]{\textcolor{red}{#1}}
\newcommand{\obj}{{\mathop{\mathrm{Obj}}}}
\newcommand{\mor}{{\mathop{\mathrm{Mor}}}}
\newcommand{\Hom}{{\mathop{\mathrm{Hom}}}}
\newcommand{\id}{{\mathop{\mathrm{id}}}}

\newcommand{\TXMod}{{\bf X_3Mod}}
\newcommand{\Gray}{{\bf Gray4Group}}

\newcommand{\bG}{{\mathbb{G}}}

\newcommand{\cC}{{\mathcal{C}}}
\newcommand{\cD}{{\mathcal{D}}}

\newcommand{\blue}[1]{\textcolor{blue}{#1}}
\newcommand{\green}[1]{\textcolor{green!50!black}{#1}}

\newcommand{\strrect}[3][]{
  \begin{scope}[every node/.append style={overlay},#1]
    \foreach \x [count=\i] in {#3}{
      \ifx\x\empty
      \else
        \pgfmathsetmacro{\modResult}{mod(\i-1, #2)}
        \pgfmathsetmacro{\quotResult}{(\i-\modResult-1)/#2}
        \node at (\modResult,-\quotResult) {\x};
      \fi
    }
    \foreach \x in {0,1,...,#2}{
      \draw ({-1/2+\x},{1/2})--({-1/2+\x},{-#2+1/2});
      \draw ({-1/2},{1/2-\x})--({#2-1/2},{1/2-\x});
    }
  \end{scope}
}

\newcommand{\strsquare}[4][]{
  \begin{scope}[every node/.append style={overlay},#1]
    \foreach \x [count=\i] in {#4}{
      \ifx\x\empty
      \else
        \pgfmathsetmacro{\modResult}{mod(\i-1, #2)}
        \pgfmathsetmacro{\quotResult}{(\i-\modResult-1)/#2}
        \node at (\modResult,-\quotResult) {\x};
      \fi
    }
    \foreach \x in {0,1,...,#2}{
      \draw ({-1/2+\x},{1/2})--({-1/2+\x},{-#3+1/2});
    }
    \foreach \y in {0,1,...,#3}{
      \draw ({-1/2},{1/2-\y})--({#2-1/2},{1/2-\y});
    }
  \end{scope}
}

\setlist[enumerate]{label=(\roman*)}

\newtheorem{theorem}{Theorem}[section]
\newtheorem{lemma}[theorem]{Lemma}

\theoremstyle{definition}
\newtheorem{definition}[theorem]{Definition}

\theoremstyle{remark}
\newtheorem{remark}[theorem]{Remark}

\numberwithin{equation}{section}

\title{An Equivalence of Categories between 3-Crossed Modules and Gray 4-Groups}
\author{Masaki Fukuda and Tommy Shu}
\address{Department of Physics, Tohoku University\\
 \\
 Department of Mathematics, Tohoku
University\\67 \\}
\email{masaki.fukuda.r8@dc.tohoku.ac.jp,seiyo.shu.r7@dc.tohoku.ac.jp}
\date{\today}
\subjclass[2020]{Primary 18N20; Secondary 18G45,18B40}
\keywords{Gray category, crossed module, higher category theory, braided monoidal category, Zamolodchikov equation}
\thanks{We would like to express our sincere gratitude to Professor Yuji Terashima for his invaluable guidance, insightful comments, and constant support throughout this research.}

\begin{document}

\begin{flushright}
TU-1322 \\
\vspace{2em}
\end{flushright}

\begin{abstract}
  In this paper, we investigate the relation between the category of 3-crossed modules and the category of Gray-type 4-groups. The notion of a 3-crossed module was first introduced by Arvasi \textit{et al.}, motivated by the question of what kind of algebraic structure completely encodes a homotopy 4-type. On the other hand, from the point of view that higher groups are equivalent to algebraic realizations of higher categories -- as exemplified by the relationship between 2-crossed modules and Gray 3-groups established by Sarikaya--Ulualan -- it had not been clear how the 3-crossed modules of Arvasi \textit{et al.} relate to any higher category. In our previous paper, we proposed a new definition of a 3-crossed module and observed that it admits a natural interpretation in terms of higher categories. In this paper, we make this interpretation precise: we introduce a 4-category, which reduces to a semistrict braided monoidal 2-category when restricted to a single object and a single 1-morphism, and prove that the category of our 3-crossed modules is equivalent to the category of Gray 4-groups, defined as single-object versions of this 4-category in which all morphisms are invertible. Furthermore, while Gray 4-groups provide a conceptual framework for 4-dimensional higher structures, they are often computationally intractable. Our equivalence establishes 3-crossed modules as a concrete, group-theoretic calculus for Gray 4-groups, providing a powerful computational tool for studying surface knots, state-sum invariants, and higher gauge theories.
\end{abstract}

\maketitle

\tableofcontents
\clearpage

\section{Introduction} \label{sec:Introduction}
The study of crossed modules has developed along two main lines of motivation:
\begin{itemize}
  \item [A.] To realize a homotopy $n$-type algebraically, in terms of an $(n-1)$-crossed module.
  \item [B.] To realize an $(n-1)$-crossed module as an algebraic model for an $n$-category, or more specifically, for a single-object $n$-groupoid.
\end{itemize}
In the case $n=2$, Whitehead introduced the notion of a crossed module, motivated by problem A \cite{Whitehead:1949}. Subsequently, Brown--Spencer and Noohi showed that crossed modules are equivalent to strict 2-groups, thereby introducing problem B \cite{Brown:1976}. Similarly, in the case $n=3$, Conduch\'e introduced the notion of a 2-crossed module as an algebraic model for a homotopy 3-type, motivated by problem A \cite{Conduche:1984}, and Sarikaya--Ulualan proved an equivalence between the category of Gray 3-groups and the category of 2-crossed modules, motivated by problem B \cite{Sarikaya:2024}. Thus, for $n=2$ and $n=3$, answers to both problems A and B have been obtained, establishing a complementary pair of interpretations.

Beyond classification problems, crossed modules and 2-crossed modules have also found applications in the construction of topological invariants \cite{MartinsPorter2007,MartinsPicken2013arXiv,Radenkovic:2022qkd}. Extending such applications to higher-dimensional invariants, such as generalizations of the Dijkgraaf--Witten invariant and invariants of surface knots, naturally requires an extension to higher $n$. The study of 3-crossed modules by Arvasi \textit{et al.} was an attempt to answer problem A in the case $n=4$ \cite{Arvasi:2009}. Their work is motivated by the goal of realizing an algebraic model of the homotopy 4-type (i.e., problem A), via the Moore complex of a simplicial group, an algebraic device that completely encodes the homotopy type of a space. 

On the other hand, the problem from the perspective of B had not, until recently, been seriously investigated. The motivation of this study lies in problem B: we generalize the work of Sarikaya--Ulualan to the case $n=4$. In our previous work \cite{Fukuda:2025}, we pointed out that the connection between the definition of a 3-crossed module given in \cite{Arvasi:2009} and problem B is unclear. We proposed a new definition of 3-crossed modules and discussed its relation to quasi-categories and to the Moore complex. To ensure that this new definition constitutes a valid and universal higher-dimensional generalization of the crossed module, we must, as in the cases $n=2,3$, establish an equivalence of categories between the category of 3-crossed modules and an appropriate higher categorical counterpart, which we call the category of Gray 4-groups. In this paper, we prove this equivalence of categories, which was left as future work in \cite{Fukuda:2025}.

However, if we try to follow approach B, we naturally run into an independent problem: what kind of 4-category is the appropriate generalization of a Gray 3-category?\footnote{In this paper, we refer to the usual notion of a Gray category as a Gray 3-category in order to distinguish it from the Gray 4-categories introduced later.} \footnote{For details on this problem, we refer the reader to \cite{Crans:1999,Bourke:2023,Miranda:2024}.} As mentioned above, the category of Gray 3-groups is categorically equivalent to the category of 2-crossed modules. This fact suggests that the category equivalent to the category of 3-crossed modules should be the category of some 4-categories generalizing Gray 3-categories. Thus, in order to answer problem B, we need a definition of such 4-categories. A solution to this problem can be obtained by observing the relation between Gray 3-categories and strict braided monoidal categories: according to the Baez--Dolan stabilization hypothesis \cite{Baez:1995xq}, a strict braided monoidal category can be considered as a Gray category with a single object and a single 1-morphism (see, e.g., \cite{Crans:1998}; for the analogous statement for tricategories, which requires more care, see \cite{ChengGurski2011}). 

From this observation, we deduce that the desired 4-category, appropriately generalizing a Gray 3-category, should correspond in the same way to a semistrict braided monoidal 2-category, a notion originating in the work of Kapranov--Voevodsky \cite{Kapranov:1994}.\footnote{The definition of semistrict braided monoidal 2-categories, however, has a convoluted history, so we should clarify which definition we use. We adopt the final version of the definition arrived at by Crans \cite{Crans:1998}, and generalize it to our 4-categories. See also \cite{Baez:1996}.} Kapranov--Voevodsky and subsequent authors \cite{Baez:1996,Crans:1998} do not give an explicit definition of the corresponding 4-category, but they appear to have been aware that such a 4-category should exist. Guided by this observation, we construct an algebraic definition of a 4-category directly, in such a way that our 4-categories reduce to semistrict braided monoidal 2-categories when restricted to a single object and a single 1-morphism.

The main result of this paper is a proof of the categorical equivalence between the category of Gray 4-groups introduced above and the category of 3-crossed modules defined in \cite{Fukuda:2025}.

The rest of this paper is organized as follows. In Section \ref{sec:Pre}, we introduce the definitions of 2- and 3-crossed modules together with their diagrammatic representations. These diagrammatic representations are helpful not only for an intuitive understanding of the definitions, but also for the comparison with 4-categories carried out later. We then recall the definition of Gray 3-categories and their diagrammatic representation, in preparation for introducing Gray 4-categories, which we define at the end of the section.

In Section \ref{sec:main}, we present our main result. The proof proceeds as follows. In Section \ref{sec:Gray323CM}, we construct a functor from the category of Gray 4-groups to the category of 3-crossed modules. In Section \ref{3CM_to_Gray4}, we construct a functor in the opposite direction. In Section \ref{sec:thm}, we show that the composite of these two functors is naturally isomorphic to the identity functor. At the end of Section \ref{sec:thm}, we also comment on the relation between our Gray 4-categories and semistrict braided monoidal 2-categories.

Section \ref{sec:con} is devoted to discussion. There, we comment on the relation to the Zamolodchikov tetrahedron equation \cite{Zamolodchikov1980JETP} and 2-knot (surface-knot) invariants. 

\vspace{1em}
\noindent
\underline{Notation}: \\
In this paper, we use the following notation:
\begin{itemize}
  \item Since we do not address the weakening of monoidal structures in this paper, we drop the adjective ``semistrict'' and simply refer to semistrict $n$-categories as $n$-categories.
  \item We denote sets by uppercase letters and their elements by the corresponding lowercase letters. For example, when we consider a set $X$, its elements are denoted by $x,x',x_i$ and so on.
  \item For any sets $A_1,\ldots,A_n$ and maps $f_1,g_1,\ldots,f_{n-1},g_{n-1}$ between them, an iterated fiber product, i.e., the direct product eliminated by some conditions, is defined by
  \begin{align}
    \nonumber
    &A_1\times_{f_1,g_1}\ldots \times_{f_{n-1},g_{n-1}}A_n \\
    \nonumber
    &= \{(a_1,\ldots,a_n)\in A_1\times \ldots A_n|f_1(a_1)=g_1(a_2),\ldots,f_{n-1}(a_{n-1})=g_{n-1}(a_n)\}.
  \end{align}
  For our purposes, however, it suffices to specify the degree of the morphisms involved; for example,
  \[
    X\times_iY := X\times_{s_i,t_i}Y,
  \]
  where the maps $s_i$ and $t_i$ are source and target maps into $i$-morphisms, respectively, and $X$ and $Y$ are higher morphisms whose degrees are greater than $i$.
\end{itemize}

\section{Preliminaries}\label{sec:Pre}

\subsection{Crossed modules} \label{sec:CM}
In this section, we give the definition of 3-crossed modules. As preparation, we begin with the 2-crossed module and then generalize it to the 3-crossed module. Our definition of a 2-crossed module is similar to the refined definition given in \cite{Sarikaya:2024}.

\begin{definition}\label{def:2CM}
A \textbf{2-crossed module} is given by a complex of groups:
\[
  L\overset{\partial_2}{\longrightarrow} H\overset{\partial_1}{\longrightarrow} G
\]
together with the following three structures:
\begin{itemize}
  \item a left action of $G$ by automorphisms on $H$~:~$\triangleright^1_2$,
  \item a left action of $G$ by automorphisms on $L$~:~$\triangleright^1_3$,
  \item a left action of $H$ by automorphisms on $L$~:~$\triangleright^2_3$,
  \item a function called \textbf{Peiffer lifting} : $\{-,-\}~:~H\times H\to L$,
\end{itemize}
which are required to satisfy the following seven axioms. In what follows, we use some shorthand notation. Subscripts attached to the maps are omitted unless confusion would arise; the context clarifies which maps are used. We denote the action of $x$ on $y$ by $\act{x}y$. Moreover, we may omit the parentheses around the map $\partial$. The axioms are
\begin{enumerate}
  \item \label{ax:2CM-exact}For any element of $L$, the boundary of the boundary must be the identity element of the lowest group $G$. That is
  \[
    \partial_2\circ\partial_1(l) = e_G,\qquad \forall l\in L.
  \]
  \item \label{ax:2CM-BAction}Actions and boundary maps commute with each other. For each $g\in G,h\in H$ and $l\in L$, we have
  \begin{align}
    \nonumber
    \partial(\act{g}h) &= \act{g}(\partial h), \\
    \nonumber
    \partial(\act{g}l) &= \act{l}(\partial h), \\
    \nonumber
    \partial(\act{h}l) &= \act{h}(\partial l),
  \end{align}
  where the action of a group on itself is the adjoint action.
  \item \label{ax:2CM-GonPeiffer}The Peiffer lifting is invariant under the $G$ action. For each $g\in G$ and $h,h'\in H$, we have
  \[
    \act{g}\{h,h'\} = \{\act{g}h,\act{g}h'\}
  \]
  \item \label{ax:2CM-BoundaryPeiffer}The boundary of the Peiffer lifting is
  \begin{equation}
    \label{eq:2CM-bPeiffer}
    \partial\{h,h'\} = hh'h^{-1}\act{\partial h}h'^{-1}
  \end{equation}
  \item For any elements $l,l'$ of the third group $L$, the Peiffer identity holds:
  \begin{equation}
    \label{eq:2CM-Peifferid}
    \act{\partial l}l' = ll'l^{-1}
  \end{equation}
  \item For any elements $h_z,h_y$ and $h_x$ of the second group $H$, the Peiffer lifting satisfies
  \begin{align}
    \label{eq:HH_H-2CM}
    \{h_zh_y,h_x\} &= \act{h_z}\{h_y,h_x\}\{h_z,\act{\partial h_y}h_x\} \\
    \label{eq:H_HH-2CM}
    \{h_z,h_yh_x\} &= \{h_z,h_y\}\act{\act{\partial h_z}h_y}\{h_z,h_x\}
  \end{align}
  \item \label{ax:2CM-Hactions}For any element $l$ of the third group $L$ and any element $h$ of the second group $H$, the Peiffer lifting satisfies
  \begin{align}
    \label{eq:LH-2CM}
    \act{h}l &= l\{\partial l^{-1},h\} \\
    \label{eq:HL-2CM}
    \act{\partial h}l &= \act{h}l\{h,\partial l^{-1}\}
  \end{align}
\end{enumerate}
\end{definition}

We add a few comments on this definition. As mentioned at the beginning of this section, our definition is similar to the one introduced in \cite{Sarikaya:2024}, except that we do not include the condition $\{\partial l,\partial l'\} = ll'l^{-1}l'^{-1}$. Instead, we regard the condition \eqref{eq:LH-2CM} as an axiom in order to emphasize the role of the action of $H$ on $L$. This is more consistent with the categorical perspective and also with our generalization to 3-crossed modules. One can derive the condition $\{\partial l,\partial l'\} = ll'l^{-1}l'^{-1}$ from the condition \eqref{eq:LH-2CM}. 

There is another remark. For instance, the condition \eqref{eq:H_HH-2CM} appears to differ from the one given in many other references, such as \cite{Martins:2009evc}, but they are in fact the same. This is a consequence of the modification above. In our definition, since we already have the action of the second group $H$ on the third group $L$, we can express the condition \eqref{eq:H_HH-2CM} using this action. In addition, this formulation is more compatible with the diagrammatic representation introduced in Section \ref{sec:3CM-Diagram}.

It is convenient to rewrite Equations \eqref{eq:LH-2CM} and \eqref{eq:HL-2CM} in the following forms:
\begin{align}
  \label{eq:2CM-2-funct1}
  l\{h_2,h_1\} &= \{\partial_2(l)h_2,h_1\}\act{\act{\partial_1(h_2)}h_1}l \\
  \label{eq:2CM-2-funct2}
  \act{h_2}l\{h_2,h_1\} &= \{h_2,\partial_2(l)h_1\}\act{\partial_1(h_2)}l
\end{align}
These correspond to the 2-functoriality in the diagrammatic representation of Gray 3-categories, which will be introduced in Section \ref{sec:diag-Gray3}.

\begin{definition}\label{def:3CM}
A \textbf{3-crossed module} is given by a complex of groups:
\[
  M\overset{\partial_3}{\longrightarrow} L\overset{\partial_2}{\longrightarrow} H\overset{\partial_1}{\longrightarrow} G
\]
together with the following three structures:\vspace{1em} \\
\begin{minipage}{.49\columnwidth}
\begin{itemize}
  \item a left action of $G$ on $H$~:~$\triangleright^1_2$,
  \item a left action of $G$ on $L$~:~$\triangleright^1_3$,
  \item a left action of $G$ on $M$~:~$\triangleright^1_4$,
\end{itemize}
\end{minipage}
\begin{minipage}{.49\columnwidth}
  \begin{itemize}
    \item a left action of $H$ on $L$~:~$\triangleright^2_3$,
    \item a left action of $H$ on $M$~:~$\triangleright^2_4$,
    \item a left action of $L$ on $M$~:~$\triangleright^3_4$,
  \end{itemize}
\end{minipage}\vspace{1em} \\
All actions are given by automorphisms. There are six types of lifting.
\begin{itemize}
  \item a function called \textbf{22-Peiffer lifting}; $\{-,-\}_{22}~:~H\times H\to L$,
  \item a function called \textbf{32-Peiffer lifting}; $\{-,-\}_{32}~:~L\times H\to M$,
  \item a function called \textbf{23-Peiffer lifting}; $\{-,-\}_{23}~:~H\times L\to M$,
  \item a function called \textbf{33-Peiffer lifting}; $\{-,-\}_{33}~:~L\times L\to M$,
  \item a function called \textbf{left-Homanian}; $\{-,-|-\}~:~H\times H\times H\to M$,
  \item a function called \textbf{right-Homanian}; $\{-|-,-\}~:~H\times H\times H\to M$,
\end{itemize}
which are required to satisfy the following eight axioms. In what follows, we use the same shorthand notation as for the 2-crossed modules.
\begin{enumerate}
  \item The following subsequent together with suitable maps is a 2-crossed module;
  \begin{equation}
    \label{eq:sub-2CM}
    M\overset{\partial_3}{\longrightarrow} L\overset{\partial_2}{\longrightarrow} H,\quad \triangleright^2_3,\triangleright^2_4,\triangleright^3_4,\quad \{-,-\}_{33}.
  \end{equation}
  In addition to relations of this 2-crossed module, we also have the following.
  \item \label{ax:3CM-exact}For any element of $L$, the boundary of the boundary must be the identity element of the lowest group $G$.\footnote{The exactness for $\partial_2$ and $\partial_3$ holds from the axiom (i).} That is
  \[
    \partial_1\circ\partial_2(l) = e_G,\qquad \forall l\in L
  \]
  \item \label{ax:3CM-BAction}Actions and boundary maps commute with each other. For any $g\in G,h\in H,l\in L$ and $m\in M$, we have
  \begin{align}
    \nonumber
    \partial(\act{g}h) &= \act{g}(\partial h), &
    \partial(\act{g}l) &= \act{g}(\partial l), &
    \partial(\act{g}m) &= \act{g}(\partial m). 
  \end{align}
  where the action of a group on itself is given by the adjoint action.
  \item \label{ax:3CM-ActionLifting}All liftings are invariant under the $G$ action. For each $g\in G,h,h',h_x,h_y,h_z\in H$ and $l,l'\in L$, we have
  \begin{align}
    \nonumber
    \act{g}\{h,h'\} &= \{\act{g}h,\act{g}h'\}, &
    \act{g}\{l,h\} &= \{\act{g}l,\act{g}h\}, &
    \act{g}\{h_z,h_y|h_x\} &= \{\act{g}h_z,\act{g}h_y|\act{g}h_x\}, \\
    \nonumber
    \act{g}\{l,l'\} &= \{\act{g}l,\act{g}l'\}, &
    \act{g}\{h,l\} &= \{\act{g}h,\act{g}l\}, &
    \act{g}\{h_z|h_y,h_x\} &= \{\act{g}h_z|\act{g}h_y,\act{g}h_x\}.
  \end{align}
  \item \label{ax:3CM-BoundaryLiftings}The boundaries of the liftings are
  \begin{align}
    \nonumber
    \partial\{h,h'\} &= hh'h^{-1}\act{\partial h}h'^{-1}, \\
    \label{eq:3CM-bLH}
    \partial\{l,h\} &= l\{\partial l^{-1},h\}\act{h}l^{-1}, \\
    \label{eq:3CM-bHL}
    \partial\{h,l\} &= \act{h}l\{h,\partial l^{-1}\}\act{\partial h}l^{-1}, \\
    \nonumber
    \partial\{h_z,h_y|h_x\} &= \{h_zh_y,h_x\}\{h_z,\act{\partial h_y}h_x\}^{-1}\act{h_z}\{h_y,h_x\}^{-1}, \\
    \nonumber
    \partial\{h_z|h_y,h_x\} &= \{h_z,h_yh_x\}\act{\act{\partial h_z}h_y}\{h_z,h_x\}^{-1}\{h_z,h_y\}^{-1}.
  \end{align}
    \item \label{ax:3CM-Tetrahedrons}Tetrahedrons:
      \begin{align}
        \label{eq:3-CM-Fund-left-Hom}
        \{h_wh_z,h_y|h_x\}\act{\act{h_wh_z}\{h_y,h_x\}}\{h_w,h_z|\act{\partial h_y}h_x\} &= \{h_w,h_zh_y|h_x\}\act{h_w}\{h_z,h_y|h_x\} \\
        \label{eq:3-CM-Fund-right-Hom}
        \{h_w|h_z,h_yh_x\}\act{\{h_w,h_z\}}(\act{\act{\partial h_w}h_z}\{h_w|h_y,h_x\}) &= \{h_w|h_zh_y,h_x\}\{h_w|h_z,h_y\}
      \end{align}
    \item \label{ax:3CM-KV}Kapranov-Voevodsky's polytope (KV-polytope):
      \begin{align}
        \nonumber
        &\{h_w,h_z|h_yh_x\}\act{\act{h_w}\{h_z,h_yh_x\}}\{h_w|\act{\partial h_z}h_y,\act{\partial h_z}h_x\}\act{h_w}\{h_z|h_y,h_x\} \\
        \nonumber
        &= \{h_wh_z|h_y,h_x\}\act{\{h_wh_z,h_y\}}(\act{\act{\partial (h_wh_z)}h_y}\{h_w,h_z|h_x\})\{h_w,h_z|h_y\} \\
        \label{eq:3CM-KV}
        &\quad 
        \act{\act{h_w}\{h_z,h_y\}}\{\{h_w,\act{\partial h_z}h_y\},\act{\act{\partial(h_wh_z)}h_yh_w}\{h_z,h_x\}\}
      \end{align}
  \item Prisms: \label{ax:3CM-Prisms}
      \begin{align}
        \label{eq:3CM-L-HH}
        \{l,hh'\} &= \act{l}\{\partial l^{-1}|h,h'\}\{l,h\}\act{h}\{l,h'\} \\
        \label{eq:3CM-HH-L}
        \{hh',l\} &= \act{\act{hh'}l}\{h,h',\partial l^{-1}\}\act{h}\{h',l\}\{h,\act{\partial h'}l\},
      \end{align}
      \begin{align}
         \label{eq:3cm_prism23-2}
        &\act{h_2}\left<l,h_1\right>\act{\act{h_2}\{\partial l,h_1\}}\{\{h_2,h_1\},\act{\act{\partial h_2}h_1h_2}l\}^{-1}\{h_2,\partial l|h_1\}^{-1} \\
        \nonumber
        &= \{\act{h_2}l,\{h_2,h_1\}\}\act{\act{\partial\act{h_2}l}\{h_2,h_1\}}\left<\act{h_2}l,\act{\partial h_2}h_1\right>\{\partial\act{h_2}l,h_2|h_1\}^{-1},
      \end{align}
      where we have defined $\left<l,h\right> := \act{l}\{l^{-1},h\}^{-1}$.
      \begin{align}
                \label{eq:3cm_prism2-23}
        &\{\{h_2,h_1\},\act{\act{\partial h_2}h_1h_2}l\}^{-1}\act{\{h_2,h_1\}}(\act{\act{\partial h_2}h_1}\left<h_2,l\right>)\{h_2|h_1,\partial l\}^{-1} \\
        \nonumber
        &= \left<h_2,\act{h_1}l\right>\act{\{h_2,\partial\act{h_1}l\}}\{\act{\partial h_2}(\act{h_1}l),\{h_2,h_1\}\}\{h_2|\partial\act{h_1}l,h_1\}^{-1},
      \end{align}
      where we have defined $\left<h,l\right> := \act{\act{h}l}\{h,l^{-1}\}^{-1}$.
    \item \label{ax:3CM-Pasting}Pasting:
      \begin{align}
        \label{eq:3CM-Pasting}
        \{ll',h\} &= \act{ll'}\{\partial l'^{-1},\partial l^{-1}|h\}\act{ll'}\{l'^{-1},\{\partial l^{-1},h\}\}^{-1}\{l,h\}\act{\act{h}l}\{l',h\} \\
        \label{eq:3CM-Pasting2}
        \{h,ll'\} &= \act{\act{h}(ll')}\{h|\partial l'^{-1},\partial l^{-1}\}\act{\act{h}l}\{h,l'\}\act{\act{h}l\act{\partial h}l'}\{\act{\partial h}l'^{-1},\{h,\partial l^{-1}\}\}^{-1}\{h,l\}
      \end{align}
    \item \label{ax:3CM-Cube}Cube:
      \begin{equation}
        \label{eq:3CM-Cube}
        \act{l'^{-1}}(\{\partial l',l^{-1}\}\{l,'l^{-1}\}^{-1}) = \act{l^{-1}}(\{l,l'^{-1}\}^{-1}\{\partial l,l'^{-1}\}')
      \end{equation}
  \item \label{ax:3CM-2on4}For each $h\in H$ and $m\in M$, we have
  \begin{align}
    \label{eq:3CM-HonM}
    \act{h}m &= m\{\partial m^{-1},h\}, \\
    \label{eq:3CM-pHonM}
    \act{\partial h}m &= \act{h}m\{h,\partial m^{-1}\}
  \end{align}
  \item \label{ax:3CM-SpSm}$S^+=S^-$ condition:
  \begin{align*}
    &\{h_z,h_y|h_x\}^{-1}L(\{h_z,h_y\},\act{\partial h_z}h_yh_z;h_x)^{-1}\act{\{h_z,h_y\}}\{\act{\partial h_z}h_y,h_z|h_x\} \\
    &= \act{\act{h_z}\{h_y,h_x\}}\{h_z|\act{\partial h_y}h_x,h_y\}^{-1}R(h_z;\{h_y,h_x\},\act{\partial h_y}h_xh_y)\{h_z|h_y,h_x\}
  \end{align*}
  where $L$ and $R$ are
  \begin{align}
    \label{eq:defL}
    L(l,h_2;h_1) &= \{l,\{h_2,h_1\}\}\act{\act{\partial l}\{h_2,h_1\}}\left<l,\act{\partial h_2}h_1\right>\{\partial l,h_2|h_1\}^{-1}, \\
    \label{eq:defR}
    R(h_2;l,h_1) &= \left<h_2,l\right>\act{\{h_2,\partial l\}}\{\act{\partial h_2}l,\{h_2,h_1\}\}\{h_2|\partial l,h_1\}^{-1}.
  \end{align}
\end{enumerate}
\end{definition}
The expressions $L(l,h_2;h_1)$ and $R(h_2;l,h_1)$, which we introduced as auxiliaries, correspond precisely to the twists of relations \eqref{eq:2CM-2-funct1} and \eqref{eq:2CM-2-funct2}, respectively:
\begin{align}
  \label{eq:3CM-twisted-2funct1}
  l\{h_2,h_1\} &= \partial_3L(l,h_2;h_1)\{\partial_2(l)h_2,h_1\}\act{\act{\partial_1(h_2)}h_1}l, \\
  \label{eq:3CM-twisted-2funct2}
  \act{h_2}l\{h_2,h_1\} &= \partial_3R(h_2;l,h_1)\{h_2,\partial_2(l)h_1\}\act{\partial_1(h_2)}l.
\end{align}
We will see later that these correspond to the diagrams for Gray 4-categories.

\begin{lemma}\label{thm:unit}
Liftings are normalized as follows:
\begin{align*}
  \{h_z|h_y,h_x\} &= \{h_z,h_y|h_x\} = 1_M, & &\text{where} & h_z,h_y \text{ or } h_x &= 1_H \\
  \{l,h\}_{32} &= \{h,l\}_{23} = 1_M, & &\text{where} & h=1_H\text{ or } l&=1_L, \\
  \{h_2,h_1\}_{22} &= 1_L, & &\text{where} & h_2\text{ or } h_1&=1_H.
\end{align*}
\end{lemma}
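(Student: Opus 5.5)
The normalizations are obtained by specializing the multiplicative axioms of Definition \ref{def:3CM} at identity elements and cancelling in a group. The order matters, because each step uses the identities from the previous ones.

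\emph{The 22-Peiffer lifting.} By axiom (i), $\{-,-\}_{22}$ is the Peiffer lifting of the sub-2-crossed module \eqref{eq:sub-2CM}. It satisfies \eqref{eq:HH_H-2CM} and \eqref{eq:H_HH-2CM}, now read as identities in $L$ for the 2-crossed module $L\to H\to G$. The intended reading is that the 22-lifting is the Peiffer lifting of the lower 2-crossed module $L\to H\to G$, whose relations are encoded by the boundary formulas in axiom \ref{ax:3CM-BoundaryLiftings} and lifted by the Homanians. I will argue directly instead.
\begin{itemize}
  \item Set $h_x=1$ in the Homanian boundary formula and in the KV relation. This is less clean, so I prefer the following route.
  \item Put $h_z=h_y=1$ in \eqref{eq:3-CM-Fund-left-Hom}. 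After cancelling, this gives $\{1,1|h_x\}\,{}^{\cdots}\{1,1|h_x\}=\{1,1|h_x\}\{1,1|h_x\}$, which is not immediately trivial.
\end{itemize}
So the cleanest route is to start from the 2-crossed module identities of the lower part. Setting $h_z=h_y=1$ in \eqref{eq:HH_H-2CM} gives $\{1,h_x\}=\{1,h_x\}\{1,h_x\}$, hence $\{1,h_x\}=1_L$. Similarly, $h_y=h_x=1$ in \eqref{eq:H_HH-2CM} gives $\{h_z,1\}=1_L$. Here I will use that the boundary formulas in axiom \ref{ax:3CM-BoundaryLiftings} are stated only up to $\partial_3$, and that the 22-lifting axioms hold as in Definition \ref{def:2CM}. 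I expect this step to be the main obstacle: I must check which version of \eqref{eq:HH_H-2CM} and \eqref{eq:H_HH-2CM} actually holds for $\{-,-\}_{22}$ in $L$. If it holds only up to $\partial_3\{-,-|-\}$, the Homanian normalization has to be proved first and then fed back in.

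\emph{The Homanians.} Set $h_w=h_z=1$ in \eqref{eq:3-CM-Fund-left-Hom}. Using $\{h_y,h_x\}$ and the triviality of the actions of $1$, this gives
\[
\{1,h_y|h_x\}\,{}^{\{h_y,h_x\}}\{1,1|{}^{\partial h_y}h_x\}=\{1,h_y|h_x\}\{1,h_y|h_x\}.
\]
Specializing further to $h_y=1$ yields $\{1,1|h_x\}=1_M$. Feeding this back gives $\{1,h_y|h_x\}=1_M$. Next, $h_z=h_y=1$ gives $\{h_w,1|h_x\}=1_M$ after cancellation. Finally, $h_y=h_x=1$, together with $\{h,1\}=1_L$ and the previous cases, gives $\{h_w,h_z|1\}=1_M$. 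The right Homanian is handled symmetrically using \eqref{eq:3-CM-Fund-right-Hom}.

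\emph{The 32- and 23-liftings.} Put $h=h'=1$ in \eqref{eq:3CM-L-HH}. This gives $\{l,1\}=\act{l}\{\partial l^{-1}|1,1\}\{l,1\}\{l,1\}$, hence $\{l,1\}_{32}=1_M$. Next, $l=l'=1$ in \eqref{eq:3CM-Pasting} gives $\{1,h\}=\{1,1|h\}\{1,\{1,h\}\}_{33}^{-1}\{1,h\}\{1,h\}$. Here $\{1,1\}_{33}=1_M$ by the normalization of the Peiffer lifting in the 2-crossed module \eqref{eq:sub-2CM}, which is the first step above applied to that module. Hence $\{1,h\}_{32}=1_M$. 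The 23-lifting follows in the same way from \eqref{eq:3CM-HH-L} and \eqref{eq:3CM-Pasting2}.
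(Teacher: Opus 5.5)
\textbf{There is a genuine gap.} None of your specializations ever forces a lifting to equal $1$, and the axiom that does, \ref{ax:3CM-2on4}, never appears in your argument. Three points.

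First, axiom~(i) of Definition~\ref{def:3CM} makes $\{-,-\}_{33}$ the Peiffer lifting of the sub-2-crossed module \eqref{eq:sub-2CM}, not $\{-,-\}_{22}$. For the 22-lifting, \eqref{eq:HH_H-2CM} and \eqref{eq:H_HH-2CM} hold only up to the Homanian boundaries in \ref{ax:3CM-BoundaryLiftings}. So your specialization yields only $\{1,h\}_{22}=\partial_3\{1,1|h\}^{-1}$ and $\{h,1\}_{22}=\partial_3\{h|1,1\}^{-1}$.

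Second, the tetrahedron relations are 2-cocycle identities and cannot fix the constant. At $h_w=h_z=h_y=1$, \eqref{eq:3-CM-Fund-left-Hom} reads $\act{\{1,h_x\}}\{1,1|h_x\}=\{1,1|h_x\}$, not $\{1,1|h_x\}=1_M$. At $h_y=h_x=1$ it becomes a tautology once the other slots are known, so $\{h_w,h_z|1\}$ is never determined.

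Third, your pasting and prism computations correctly give only $\{1,h\}_{32}=\{1,1|h\}^{-1}$ and $\{l,1\}_{32}=\act{l}\{\partial l^{-1}|1,1\}^{-1}$.

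No reordering of these steps can close the gap. Take $G=H=L=1$, $M$ a nontrivial abelian group, all actions trivial, $\{1_L,1_L\}_{33}=1_M$, $\{1,1|1\}=\{1|1,1\}=c$ and $\{1_L,1_H\}_{32}=\{1_H,1_L\}_{23}=c^{-1}$ with $c\neq 1_M$. The sub-2-crossed module axioms, tetrahedrons, KV, all four prisms, both pastings, the cube and $S^+=S^-$ all reduce to valid identities among powers of $c$. So this satisfies every axiom except \ref{ax:3CM-2on4}, yet it is not normalized.

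\textbf{The missing step} is to evaluate \eqref{eq:3CM-HonM} and \eqref{eq:3CM-pHonM} at $m=1_M$. Since $H$ acts by automorphisms, $\act{h}1_M=1_M$, so $\{1_L,h\}_{32}=\{h,1_L\}_{23}=1_M$ for all $h$; this also kills $c$ in the model above. The rest then follows, roughly in the reverse of your order:
\begin{enumerate}
\item \eqref{eq:3CM-bLH} and \eqref{eq:3CM-bHL} at $l=1_L$ give $\{1,h\}_{22}=\partial_3\{1_L,h\}_{32}=1_L$ and $\{h,1\}_{22}=\partial_3\{h,1_L\}_{23}=1_L$.
\item \eqref{eq:3CM-Pasting} and \eqref{eq:3CM-Pasting2} at $l=l'=1_L$, using $\{1_L,-\}_{33}=1_M$ from the sub-2-crossed module (your argument there is fine), give $\{1,1|h\}=\{h|1,1\}=1_M$.
\item \eqref{eq:3CM-L-HH} and \eqref{eq:3CM-HH-L} at $l=1_L$ give $\{1|h,h'\}=\{h,h'|1\}=1_M$.
\item The left tetrahedron at $h_w=h_z=1$ and at $h_z=h_y=1$, and the right one at $h_y=h_x=1$ and at $h_z=h_y=1$, then give the remaining Homanian slots, since an action by automorphisms fixes only $1_M$.
\item Finally, your own specialization of the prisms at $h=h'=1$ correctly yields $\{l,1\}_{32}=\{1,l\}_{23}=1_M$.
\end{enumerate}
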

The proof of Lemma \ref{thm:unit} was given in \cite{Fukuda:2025}.

\begin{definition}\label{def:3CM-hom}
Let $\bG = M\to L\to H\to G$ and $\bG'=M'\to L'\to H'\to G'$ be 3-crossed modules. A {\bf{4-homomorphisms}} $F:~\bG\to\bG'$ is a collection of homomorphisms $F_i:~G_i\to G_i', i=1,\ldots,4$, preserving all the structures of 3-crossed modules, i.e., the boundary maps, group actions, and liftings. For example, the following diagram commutes:
\[
  \begin{tikzpicture}
    \node (A1) at (0,0) {$M$};
    \node (B1) at (1.5,0) {$L$};
    \node (C1) at (3,0) {$H$};
    \node (D1) at (4.5,0) {$G$};
    \node (A2) at (0,-1.5) {$M'$};
    \node (B2) at (1.5,-1.5) {$L'$};
    \node (C2) at (3,-1.5) {$H'$};
    \node (D2) at (4.5,-1.5) {$G'$};
    \draw [->] (A1) to node [anchor=south] {$\partial_3$} (B1);
    \draw [->] (B1)  to node [anchor=south] {$\partial_2$} (C1);
    \draw [->] (C1)  to node [anchor=south] {$\partial_1$} (D1);
    \draw [->] (A2)  to node [anchor=north] {$\partial_3'$} (B2);
    \draw [->] (B2)  to node [anchor=north] {$\partial_2'$} (C2);
    \draw [->] (C2)  to node [anchor=north] {$\partial_1'$} (D2);
    \draw [->] (A1)  to node [anchor=west] {$F_4$} (A2);
    \draw [->] (B1) to node [anchor=west] {$F_3$} (B2);
    \draw [->] (C1) to node [anchor=west] {$F_2$} (C2);
    \draw [->] (D1) to node [anchor=west] {$F_1$} (D2);
  \end{tikzpicture}
\]
\end{definition}

\subsection{Diagrammatic Representation}\label{sec:3CM-Diagram}

In this section, we give a diagrammatic representation of 3-crossed modules in order to compare them with Gray 4-categories. The details are given in our previous work \cite{Fukuda:2025}. The following diagrams help us to understand the structure of 3-crossed modules.

\begin{enumerate}
  \item Elements of groups:
  \item [$\bullet$] For any element $g\in G$ of the lowest group, we draw it as a segment:
  \begin{equation}
    \label{eq:3CM-Diagram-G-elem}
    \begin{tikzpicture}[baseline=(A.base)]
      \node (A) at (-0.75,0) {$\bullet$};
      \node (B) at (0.75,0) {$\bullet$};
      \draw (A.center) to node [fill=white] {$g$} (B.center);
    \end{tikzpicture},
  \end{equation}
  or a diagram given by compositions introduced later. 
  \item [$\bullet$] For any element $h\in H$ of the second group, we draw it as a square with two elements of $G$ on the upper and lower boundaries:
  \begin{equation}
    \label{eq:3CM-Diagram-H-elem}
    \begin{tikzpicture}[baseline=(T.base)]
      \node (A) at (-0.75,0.75) {$\bullet$};
      \node (B) at (0.75,0.75) {$\bullet$};
      \node (C) at (0.75,-0.75) {$\bullet$};
      \node (D) at (-0.75,-0.75) {$\bullet$};
      \draw [thick] (A.center) to node [fill=white] {$g$} (B.center) to (C.center) to node [fill=white] {$g'$} (D.center) to cycle;
      \node (T) at (0,0) {$h$};
    \end{tikzpicture}
  \end{equation}
  where $g,g':=\partial(h)g\in G$, or a diagram given by compositions introduced later. The blobs $\bullet$ and the boundary elements are often omitted. One should think of the identity element of $G$ as being placed at the top of the boundary. The lower boundary elements are then determined by the contents of the box, starting from the top. 
  The empty box denotes the identity element of $H$.
  \item [$\bullet$] For any element $l\in L$ of the third group, we draw it as an arrow connecting two elements of $H$:
  \begin{equation}
    \label{eq:3CM-Diagram-L-elem}
    \begin{tikzpicture}[baseline=(T.base)]
      \node (A) at (1,0) {\tikz{\strrect[thick,scale=.75]{1}{$h$}}};
      \node (B) at (-1,0) {\tikz{\strrect[thick,scale=.75]{1}{$h'$}}};
      \draw [thick,->] (A) to node [anchor=south] {$l$} (B);
    \end{tikzpicture}
  \end{equation}
  where $h,h':=\partial(l)h\in H$, or a diagram given by compositions introduced later. We often omit the elements attached to the arrow, since they can be recovered from the boundaries. In most cases, 22-Peiffer liftings are assigned to the arrow.  
  \item [$\bullet$] For any element $m\in M$ of the fourth group, we draw it as a double arrow connecting two elements of $L$:
  \begin{equation}
    \label{eq:3CM-Diagram-M-elem}
    \begin{tikzpicture}[baseline=(T.base)]
      \node (A) at (1.5,0) {\tikz{\strrect[thick,scale=.75]{1}{$h$}}};
      \node (B) at (-1.5,0) {\tikz{\strrect[thick,scale=.75]{1}{$h'$}}};
      \draw [thick,->,bend right=1cm] (A) to node (C) [fill=white] {$l$} (B);
      \draw [thick,->,bend left=1cm] (A) to node (D) [fill=white] {$l'$} (B);
      \path (C) to node {$\Downarrow m$} (D);
    \end{tikzpicture}
  \end{equation}
  where $l,l':=\partial(m)l\in L$, or a diagram given by compositions introduced later. In the diagrams \eqref{eq:3CM-Diagram-H-elem}, \eqref{eq:3CM-Diagram-L-elem} and \eqref{eq:3CM-Diagram-M-elem} above, we regard an element $x\in X$ of a higher group as a map from the lower group $Y$ to itself, given by $x~:~Y\ni y\mapsto \partial(x)y = y'\in Y$.
  \item Vertical compositions:
  \item [$\bullet$] We draw the multiplication of the group $G$ as the composition of two segments along the horizontal direction on the paper:
  \[
    \begin{tikzpicture}[baseline=(A.base)]
      \draw[thick] (-1,0) node (A) {$\bullet$} to node [fill=white] {$g_2$} (0,0) node {$\bullet$} to node [fill=white] {$g_1$} (1,0) node {$\bullet$};
    \end{tikzpicture}
    =
    \begin{tikzpicture}[baseline=(A.base)]
      \draw[thick] (-1,0) node (A) {$\bullet$} to node [fill=white] {$g_2g_1$} (1,0) node {$\bullet$};
    \end{tikzpicture},
  \]
  where $g_2,g_1\in G$. 
  \item [$\bullet$] We draw the multiplication of the group $H$ as the composition of two boxes along the vertical direction on the paper:
  \[
    \begin{tikzpicture}[baseline=(T.base)]
      \node (A) at (-0.5,1) {$\bullet$};
      \node (B) at (-0.5,0) {$\bullet$};
      \node (C) at (-0.5,-1) {$\bullet$};
      \node (D) at (0.5,1) {$\bullet$};
      \node (E) at (0.5,0) {$\bullet$};
      \node (F) at (0.5,-1) {$\bullet$};
      \draw [thick] (A.center) to (C.center);
      \draw [thick] (D.center) to (F.center);
      \draw [thick] (A.center) to node [fill=white,transform shape,scale=.75] {$g$} (D.center);
      \draw [thick] (B.center) to node [fill=white,transform shape,scale=.75] {$g'$} (E.center);
      \draw [thick] (C.center) to node [fill=white,transform shape,scale=.75] {$g''$} (F.center);
      \node at (0,1/2) {$h$};
      \node at (0,-1/2) {$h'$};
    \end{tikzpicture}
    =
    \begin{tikzpicture}[baseline=(T.base)]
      \node (A) at (-0.75,0.75) {$\bullet$};
      \node (B) at (0.75,0.75) {$\bullet$};
      \node (C) at (0.75,-0.75) {$\bullet$};
      \node (D) at (-0.75,-0.75) {$\bullet$};
      \draw [thick] (A.center) to node [fill=white] {$g$} (B.center) to (C.center) to node [fill=white] {$g''$} (D.center) to cycle;
      \node (T) at (0,0) {$h'h$};
    \end{tikzpicture},
  \]
  where $g,g'=\partial(h)g,g''=\partial(h')g'\in G$ and $h,h'\in H$. 
  \item [$\bullet$] We draw the multiplication of the group $L$ as a composition of two arrows:
  \[
    \begin{tikzpicture}[baseline=(T.base)]
      \node (A) at (2,0) {\tikz{\strrect[thick,scale=.5]{1}{$h$}}};
      \node (B) at (0,0) {\tikz{\strrect[thick,scale=.5]{1}{$h'$}}};
      \node (C) at (-2,0) {\tikz{\strrect[thick,scale=.5]{1}{$h''$}}};
      \draw [thick,->] (A) to node [anchor=south] {$l$} (B);
      \draw [thick,->] (B) to node [anchor=south] {$l'$} (C);
    \end{tikzpicture}
    =
    \begin{tikzpicture}[baseline=(T.base)]
      \node (A) at (1,0) {\tikz{\strrect[thick,scale=.5]{1}{$h$}}};
      \node (B) at (-1,0) {\tikz{\strrect[thick,scale=.5]{1}{$h''$}}};
      \draw [thick,->] (A) to node [anchor=south] {$l'l$} (B);
    \end{tikzpicture}
  \]
  where $h,h':=\partial(l)h,h''=\partial(l')h'\in H$ and $l,l'\in L$. 
  \item [$\bullet$] We draw the multiplication of the group $M$ as a composition of two double arrows:
  \[
    \begin{tikzpicture}[baseline=(T.base)]
      \node (A) at (1.5,0) {\tikz{\strrect[thick,scale=.5]{1}{$h$}}};
      \node (B) at (-1.5,0) {\tikz{\strrect[thick,scale=.5]{1}{$h'$}}};
      \draw [thick,->,bend right=1.5cm] (A) to node [fill=white] (C1) {$l$} (B);
      \draw [thick,->] (A) to node [fill=white] (C2) {$l'$} (B);
      \draw [thick,->,bend left=1.5cm] (A) to node [fill=white] (C3) {$l''$} (B);
      \path (C1) to node {$\Downarrow m$} (C2);
      \path (C2) to node {$\Downarrow m'$} (C3);
    \end{tikzpicture}
    =
    \begin{tikzpicture}[baseline=(T.base)]
      \node (A) at (1.5,0) {\tikz{\strrect[thick,scale=.5]{1}{$h$}}};
      \node (B) at (-1.5,0) {\tikz{\strrect[thick,scale=.5]{1}{$h'$}}};
      \draw [thick,->,bend right=1.5cm] (A) to node [fill=white] (C1) {$l$} (B);
      \draw [thick,->,bend left=1.5cm] (A) to node [fill=white] (C3) {$l''$} (B);
      \path (C1) to node {$\Downarrow m'm$} (C3);
    \end{tikzpicture}
  \]
  where $m,m'\in M$, $l,l':=\partial(m)l,l'':=\partial(m')l'\in L$ and $h,h':=\partial(l)h\in H$. 
  \item Actions: 
  \item [$\bullet$] We draw the action of $G$ on the higher groups $H,L$ and $M$ as a composition of a segment with the boxes constituting the diagram, along the horizontal direction of the paper. In particular, when the segment is attached to the left side of a box, we interpret it as the action; when it is attached to the right side, no action takes place. For the second group $H$,
  \begin{align}
    \label{eq:3CM-Diagram-G-on-H}
    \begin{tikzpicture}[baseline=(T.base)]
      \node (A) at (-1,1/2) {$\bullet$};
      \node (B) at (0,1/2) {$\bullet$};
      \node (C) at (1,1/2) {$\bullet$};
      \node (D) at (-1,-1/2) {$\bullet$};
      \node (E) at (0,-1/2) {$\bullet$};
      \node (F) at (1,-1/2) {$\bullet$};
      \draw [thick] (A.center) to (D.center);
      \draw [thick] (B.center) to (E.center);
      \draw [thick] (C.center) to (F.center);
      \draw [thick] (A.center) to node [fill=white,transform shape,scale=.75] {$g_2$} (B.center) to node [fill=white,transform shape,scale=.75] {$g_1$} (C.center);
      \draw [thick] (D.center) to node [fill=white,transform shape,scale=.75] {$g_2$} (E.center) to node [fill=white,transform shape,scale=.75] {$g_1'$} (F.center);
      \node (T) at (1/2,0) {$h_1$};
    \end{tikzpicture}
    &=
    \begin{tikzpicture}[baseline=(T.base)]
      \node (A) at (-0.75,0.75) {$\bullet$};
      \node (B) at (0.75,0.75) {$\bullet$};
      \node (C) at (0.75,-0.75) {$\bullet$};
      \node (D) at (-0.75,-0.75) {$\bullet$};
      \draw [thick] (A.center) to node [fill=white] {$g_2g_1$} (B.center) to (C.center) to node [fill=white] {$g_2g_1'$} (D.center) to cycle;
      \node (T) at (0,0) {$\act{g_2}h_1$};
    \end{tikzpicture},
    &
    \begin{tikzpicture}[baseline=(T.base)]
      \node (A) at (-1,1/2) {$\bullet$};
      \node (B) at (0,1/2) {$\bullet$};
      \node (C) at (1,1/2) {$\bullet$};
      \node (D) at (-1,-1/2) {$\bullet$};
      \node (E) at (0,-1/2) {$\bullet$};
      \node (F) at (1,-1/2) {$\bullet$};
      \draw [thick] (A.center) to (D.center);
      \draw [thick] (B.center) to (E.center);
      \draw [thick] (C.center) to (F.center);
      \draw [thick] (A.center) to node [fill=white,transform shape,scale=.75] {$g_2$} (B.center) to node [fill=white,transform shape,scale=.75] {$g_1$} (C.center);
      \draw [thick] (D.center) to node [fill=white,transform shape,scale=.75] {$g_2'$} (E.center) to node [fill=white,transform shape,scale=.75] {$g_1$} (F.center);
      \node (T) at (-1/2,0) {$h_2$};
    \end{tikzpicture}
    &=
    \begin{tikzpicture}[baseline=(T.base)]
      \node (A) at (-0.75,0.75) {$\bullet$};
      \node (B) at (0.75,0.75) {$\bullet$};
      \node (C) at (0.75,-0.75) {$\bullet$};
      \node (D) at (-0.75,-0.75) {$\bullet$};
      \draw [thick] (A.center) to node [fill=white] {$g_2g_1$} (B.center) to (C.center) to node [fill=white] {$g_2'g_1$} (D.center) to cycle;
      \node (T) at (0,0) {$h_2$};
    \end{tikzpicture},
  \end{align}
  where $g_1,g_1':=\partial(h_1)g_1,g_2,g_2':=\partial(h_2)g_2\in G$ and $h_1,h_2\in H$. We compose a segment with a box by using the identity element of $H$ and attaching the segment to the box. For the third group $L$,
  \begin{align}
    \nonumber
    &
    \begin{tikzpicture}[baseline=-2pt]
      \node (P) at (-2,0) {%
        \begin{tikzpicture}%
          \node (A) at (-1,1/2) {$\bullet$};%
          \node (B) at (0,1/2) {$\bullet$};%
          \node (C) at (1,1/2) {$\bullet$};%
          \node (D) at (-1,-1/2) {$\bullet$};%
          \node (E) at (0,-1/2) {$\bullet$};%
          \node (F) at (1,-1/2) {$\bullet$};%
          \draw [thick] (A.center) to (D.center);%
          \draw [thick] (B.center) to (E.center);%
          \draw [thick] (C.center) to (F.center);%
          \draw [thick] (A.center) to node [fill=white,transform shape,scale=.75] {$g_2$} (B.center) to node [fill=white,transform shape,scale=.75] {$g_1$} (C.center);%
          \draw [thick] (D.center) to node [fill=white,transform shape,scale=.75] {$g_2$} (E.center) to node [fill=white,transform shape,scale=.75] {$g_1'$} (F.center);%
          \node at (1/2,0) {$h_1'$};%
        \end{tikzpicture}%
        };
      \node (Q) at (2,0) {%
        \begin{tikzpicture}%
          \node (A) at (-1,1/2) {$\bullet$};%
          \node (B) at (0,1/2) {$\bullet$};%
          \node (C) at (1,1/2) {$\bullet$};%
          \node (D) at (-1,-1/2) {$\bullet$};%
          \node (E) at (0,-1/2) {$\bullet$};%
          \node (F) at (1,-1/2) {$\bullet$};%
          \draw [thick] (A.center) to (D.center);%
          \draw [thick] (B.center) to (E.center);%
          \draw [thick] (C.center) to (F.center);%
          \draw [thick] (A.center) to node [fill=white,transform shape,scale=.75] {$g_2$} (B.center) to node [fill=white,transform shape,scale=.75] {$g_1$} (C.center);%
          \draw [thick] (D.center) to node [fill=white,transform shape,scale=.75] {$g_2$} (E.center) to node [fill=white,transform shape,scale=.75] {$g_1'$} (F.center);%
          \node at (1/2,0) {$h_1$};%
        \end{tikzpicture}%
        };
      \draw [thick,->] (Q) to node [anchor=south] {$\act{g_2}l_1$} (P);
    \end{tikzpicture}, &
    &
    \begin{tikzpicture}[baseline=-2pt]
      \node (P) at (-2,0) {%
        \begin{tikzpicture}%
          \node (A) at (-1,1/2) {$\bullet$};%
          \node (B) at (0,1/2) {$\bullet$};%
          \node (C) at (1,1/2) {$\bullet$};%
          \node (D) at (-1,-1/2) {$\bullet$};%
          \node (E) at (0,-1/2) {$\bullet$};%
          \node (F) at (1,-1/2) {$\bullet$};%
          \draw [thick] (A.center) to (D.center);%
          \draw [thick] (B.center) to (E.center);%
          \draw [thick] (C.center) to (F.center);%
          \draw [thick] (A.center) to node [fill=white,transform shape,scale=.75] {$g_2$} (B.center) to node [fill=white,transform shape,scale=.75] {$g_1$} (C.center);%
          \draw [thick] (D.center) to node [fill=white,transform shape,scale=.75] {$g_2'$} (E.center) to node [fill=white,transform shape,scale=.75] {$g_1$} (F.center);%
          \node at (-1/2,0) {$h_2'$};%
        \end{tikzpicture}%
        };
      \node (Q) at (2,0) {%
        \begin{tikzpicture}%
          \node (A) at (-1,1/2) {$\bullet$};%
          \node (B) at (0,1/2) {$\bullet$};%
          \node (C) at (1,1/2) {$\bullet$};%
          \node (D) at (-1,-1/2) {$\bullet$};%
          \node (E) at (0,-1/2) {$\bullet$};%
          \node (F) at (1,-1/2) {$\bullet$};%
          \draw [thick] (A.center) to (D.center);%
          \draw [thick] (B.center) to (E.center);%
          \draw [thick] (C.center) to (F.center);%
          \draw [thick] (A.center) to node [fill=white,transform shape,scale=.75] {$g_2$} (B.center) to node [fill=white,transform shape,scale=.75] {$g_1$} (C.center);%
          \draw [thick] (D.center) to node [fill=white,transform shape,scale=.75] {$g_2'$} (E.center) to node [fill=white,transform shape,scale=.75] {$g_1$} (F.center);%
          \node at (-1/2,0) {$h_2$};%
        \end{tikzpicture}%
        };
      \draw [thick,->] (Q) to node [anchor=south] {$l_2$} (P);
    \end{tikzpicture}
  \end{align}
  where $l_1,l_2\in L$ and $h_1':=\partial(l_1)h_1,h_2':=\partial(l_2)h_2\in H$. For the fourth group,
  \begin{align}
    \nonumber
    &
    \begin{tikzpicture}[baseline=-2pt]
      \node (P) at (-2,0) {%
        \begin{tikzpicture}%
          \node (A) at (-1,1/2) {$\bullet$};%
          \node (B) at (0,1/2) {$\bullet$};%
          \node (C) at (1,1/2) {$\bullet$};%
          \node (D) at (-1,-1/2) {$\bullet$};%
          \node (E) at (0,-1/2) {$\bullet$};%
          \node (F) at (1,-1/2) {$\bullet$};%
          \draw [thick] (A.center) to (D.center);%
          \draw [thick] (B.center) to (E.center);%
          \draw [thick] (C.center) to (F.center);%
          \draw [thick] (A.center) to node [fill=white,transform shape,scale=.75] {$g_2$} (B.center) to node [fill=white,transform shape,scale=.75] {$g_1$} (C.center);%
          \draw [thick] (D.center) to node [fill=white,transform shape,scale=.75] {$g_2$} (E.center) to node [fill=white,transform shape,scale=.75] {$g_1'$} (F.center);%
          \node at (1/2,0) {$h_1'$};%
        \end{tikzpicture}%
        };
      \node (Q) at (2,0) {%
        \begin{tikzpicture}%
          \node (A) at (-1,1/2) {$\bullet$};%
          \node (B) at (0,1/2) {$\bullet$};%
          \node (C) at (1,1/2) {$\bullet$};%
          \node (D) at (-1,-1/2) {$\bullet$};%
          \node (E) at (0,-1/2) {$\bullet$};%
          \node (F) at (1,-1/2) {$\bullet$};%
          \draw [thick] (A.center) to (D.center);%
          \draw [thick] (B.center) to (E.center);%
          \draw [thick] (C.center) to (F.center);%
          \draw [thick] (A.center) to node [fill=white,transform shape,scale=.75] {$g_2$} (B.center) to node [fill=white,transform shape,scale=.75] {$g_1$} (C.center);%
          \draw [thick] (D.center) to node [fill=white,transform shape,scale=.75] {$g_2$} (E.center) to node [fill=white,transform shape,scale=.75] {$g_1'$} (F.center);%
          \node at (1/2,0) {$h_1$};%
        \end{tikzpicture}%
        };
      \draw [thick,->,bend right=1cm] (Q) to node [fill=white] (R) {$\act{g_2}l_1$} (P);
      \draw [thick,->,bend left=1cm] (Q) to node [fill=white] (S) {$\act{g_2}l_1'$} (P);
      \path (R) to node {$\Downarrow\act{g_2}m_1$} (S);
    \end{tikzpicture}, &
    &
    \begin{tikzpicture}[baseline=-2pt]
      \node (P) at (-2,0) {%
        \begin{tikzpicture}%
          \node (A) at (-1,1/2) {$\bullet$};%
          \node (B) at (0,1/2) {$\bullet$};%
          \node (C) at (1,1/2) {$\bullet$};%
          \node (D) at (-1,-1/2) {$\bullet$};%
          \node (E) at (0,-1/2) {$\bullet$};%
          \node (F) at (1,-1/2) {$\bullet$};%
          \draw [thick] (A.center) to (D.center);%
          \draw [thick] (B.center) to (E.center);%
          \draw [thick] (C.center) to (F.center);%
          \draw [thick] (A.center) to node [fill=white,transform shape,scale=.75] {$g_2$} (B.center) to node [fill=white,transform shape,scale=.75] {$g_1$} (C.center);%
          \draw [thick] (D.center) to node [fill=white,transform shape,scale=.75] {$g_2'$} (E.center) to node [fill=white,transform shape,scale=.75] {$g_1$} (F.center);%
          \node at (-1/2,0) {$h_2$};%
        \end{tikzpicture}%
        };
      \node (Q) at (2,0) {%
        \begin{tikzpicture}%
          \node (A) at (-1,1/2) {$\bullet$};%
          \node (B) at (0,1/2) {$\bullet$};%
          \node (C) at (1,1/2) {$\bullet$};%
          \node (D) at (-1,-1/2) {$\bullet$};%
          \node (E) at (0,-1/2) {$\bullet$};%
          \node (F) at (1,-1/2) {$\bullet$};%
          \draw [thick] (A.center) to (D.center);%
          \draw [thick] (B.center) to (E.center);%
          \draw [thick] (C.center) to (F.center);%
          \draw [thick] (A.center) to node [fill=white,transform shape,scale=.75] {$g_2$} (B.center) to node [fill=white,transform shape,scale=.75] {$g_1$} (C.center);%
          \draw [thick] (D.center) to node [fill=white,transform shape,scale=.75] {$g_2'$} (E.center) to node [fill=white,transform shape,scale=.75] {$g_1$} (F.center);%
          \node at (-1/2,0) {$h_2$};%
        \end{tikzpicture}%
        };
      \draw [thick,->,bend right=1cm] (Q) to node [fill=white] (R) {$l_2$} (P);
      \draw [thick,->,bend left=1cm] (Q) to node [fill=white] (S) {$l_2'$} (P);
      \path (R) to node {$\Downarrow m_2$} (S);
    \end{tikzpicture}
  \end{align}
  where $m_2,m_1\in M$ and $l_2':=\partial(m_2)l_2,l_1':=\partial(m_1)l_1\in L$. 
  \item [$\bullet$] We draw the action of the second group $H$ on the higher groups $L$ and $M$ by attaching the diagram \eqref{eq:3CM-Diagram-H-elem} of $H$ to the boxes constituting the diagram \eqref{eq:3CM-Diagram-L-elem} of $L$ From below, while it does not act from above. For the third group $L$,
  \begin{align}
    \nonumber
    &
    \begin{tikzpicture}[baseline=-2pt]
      \node (A) at (1,0) {\tikz{\strsquare[thick,scale=.5]{1}{2}{$h_1$,$h_2$}}};
      \node (B) at (-1,0) {\tikz{\strsquare[thick,scale=.5]{1}{2}{$h_1$,$h_2'$}}};
      \draw [thick,->] (A) to node [anchor=south] {$\act{h_2}l_1$} (B);
    \end{tikzpicture}, &
    &
    \begin{tikzpicture}[baseline=-2pt]
      \node (A) at (1,0) {\tikz{\strsquare[thick,scale=.5]{1}{2}{$h_1$,$h_2$}}};
      \node (B) at (-1,0) {\tikz{\strsquare[thick,scale=.5]{1}{2}{$h_1$,$h_2'$}}};
      \draw [thick,->] (A) to node [anchor=south] {$l_2$} (B);
    \end{tikzpicture},
  \end{align}
  where $l_2,l_1$ and $h_2,h_1,h_2'=\partial(l_2)h_2,h_1'=\partial(l_1)h_1\in H$. For the fourth group $M$,
  \begin{align}
    \nonumber
    &
    \begin{tikzpicture}[baseline=-2pt]
      \node (A) at (1.5,0) {\tikz{\strsquare[thick,scale=.5]{1}{2}{$h_1$,$h_2$}}};
      \node (B) at (-1.5,0) {\tikz{\strsquare[thick,scale=.5]{1}{2}{$h_1'$,$h_2$}}};
      \draw [thick,->,bend right=1.5cm] (A) to node [fill=white] (C) {$\act{h_2}l_1$} (B);
      \draw [thick,->,bend left=1.5cm] (A) to node [fill=white] (D) {$\act{h_2}l_1'$} (B);
      \path (C) to node {$\Downarrow \act{h_2}m_1$} (D);
    \end{tikzpicture}, &
    &
    \begin{tikzpicture}[baseline=-2pt]
      \node (A) at (1.5,0) {\tikz{\strsquare[thick,scale=.5]{1}{2}{$h_1$,$h_2$}}};
      \node (B) at (-1.5,0) {\tikz{\strsquare[thick,scale=.5]{1}{2}{$h_1$,$h_2'$}}};
      \draw [thick,->,bend right=1.5cm] (A) to node [fill=white] (C) {$l_2$} (B);
      \draw [thick,->,bend left=1.5cm] (A) to node [fill=white] (D) {$l_2'$} (B);
      \path (C) to node {$\Downarrow m_2$} (D);
    \end{tikzpicture},
  \end{align}
  where $m_2,m_1\in M$ and $l_2':=\partial(m_2)l_2, l_1':=\partial(m_1)l_1\in L$.
  \item [$\bullet$] We draw the action of the third group $L$ on the fourth group $M$ by attaching the diagram \eqref{eq:3CM-Diagram-L-elem} of $L$ to the tip of the arrow in the diagram \eqref{eq:3CM-Diagram-L-elem} of $M$, while it does not act if the diagram is attached to the base of the arrow:
  \begin{align}
    \nonumber
    \begin{tikzpicture}[baseline=(T.base)]
      \node (A) at (2,0) {\tikz{\strrect[thick,scale=.5]{1}{$h$}}};
      \node (B) at (0,0) {\tikz{\strrect[thick,scale=.5]{1}{$h'$}}};
      \node (C) at (-2,0) {\tikz{\strrect[thick,scale=.5]{1}{$h''$}}};
      \draw [thick,->] (B) to node [anchor=south] {$l_2$} (C);
      \draw [thick,->,bend right=1.5cm] (A) to node [fill=white] (C) {$l_1$} (B);
      \draw [thick,->,bend left=1.5cm] (A) to node [fill=white] (D) {$l_1'$} (B);
      \path (C) to node {$\Downarrow m_1$} (D);
    \end{tikzpicture}
    &=
    \begin{tikzpicture}[baseline=(T.base)]
      \node (A) at (1.5,0) {\tikz{\strrect[thick,scale=.5]{1}{$h$}}};
      \node (B) at (-1.5,0) {\tikz{\strrect[thick,scale=.5]{1}{$h''$}}};
      \draw [thick,->,bend right=1.5cm] (A) to node [fill=white] (C) {$l_2l_1$} (B);
      \draw [thick,->,bend left=1.5cm] (A) to node [fill=white] (D) {$l_2l_1'$} (B);
      \path (C) to node {$\Downarrow \act{l_2}m_1$} (D);
    \end{tikzpicture}, \\
    \begin{tikzpicture}[baseline=(T.base)]
      \node (A) at (2,0) {\tikz{\strrect[thick,scale=.5]{1}{$h$}}};
      \node (B) at (0,0) {\tikz{\strrect[thick,scale=.5]{1}{$h'$}}};
      \node (C) at (-2,0) {\tikz{\strrect[thick,scale=.5]{1}{$h''$}}};
      \draw [thick,->] (A) to node [anchor=south] {$l_1$} (B);
      \draw [thick,->,bend right=1.5cm] (B) to node [fill=white] (A) {$l_2$} (C);
      \draw [thick,->,bend left=1.5cm] (B) to node [fill=white] (D) {$l_2'$} (C);
      \path (A) to node {$\Downarrow m_2$} (D);
    \end{tikzpicture}
    &=
    \begin{tikzpicture}[baseline=(T.base)]
      \node (A) at (1.5,0) {\tikz{\strrect[thick,scale=.5]{1}{$h$}}};
      \node (B) at (-1.5,0) {\tikz{\strrect[thick,scale=.5]{1}{$h''$}}};
      \draw [thick,->,bend right=1.5cm] (A) to node [fill=white] (C) {$l_2l_1$} (B);
      \draw [thick,->,bend left=1.5cm] (A) to node [fill=white] (D) {$l_2'l_1$} (B);
      \path (C) to node {$\Downarrow m_2$} (D);
    \end{tikzpicture},
  \end{align}
  where $m_2,m_1\in M$, $l_2,l_1,l_2'=\partial(m_2)l_2,l_1'=\partial(m_1)l_1\in L$ and $h,h':=\partial(l_1)m,h'':=\partial(l_2)h'\in H$. 
  \item Liftings
  \item [$\bullet$] 22-Peiffer lifting:
  \[
    \begin{tikzpicture}[baseline=-2pt]
      \node (A) at (1.5,0) {\tikz[baseline=-2pt,scale=.5]{\strrect{2}{$h_2$,,,$h_1$}}};
      \node (B) at (-1.5,0) {\tikz[baseline=-2pt,scale=.5]{\strrect{2}{,$h_1$,$h_2$,}}};
      \draw [->] (A) to node [anchor=south] {$\{h_2,h_1\}$} (B);
    \end{tikzpicture}
  \]
  \item [$\bullet$] 32- and 23-Peiffer liftings:
  \begin{align}
    \nonumber
    &
    \begin{tikzpicture}[baseline=-2pt]
      \node (A) at (1.5,0) {\tikz[baseline=-2pt,scale=.4]{\strrect{2}{,,,$h$}}};
      \node (B) at (0,1.75) {\tikz[baseline=-2pt,scale=.4]{\strrect{2}{$\partial l$,,,$h$}}};
      \node (C) at (-1.5,0) {\tikz[baseline=-2pt,scale=.4]{\strrect{2}{,$h$,$\partial l$,}}};
      \draw [->] (A) to node [anchor=south,sloped] {$\act{h}l$} (B);
      \draw [->] (B) to node [anchor=south,sloped,transform shape,scale=.75] {$\{\partial l,h\}$} (C);
      \draw [->] (A) to node [fill=white] (D) {$l$} (C);
      \path (B) to node [transform shape,scale=.75] {$\Downarrow \left<l,h\right>$} (D);
    \end{tikzpicture}, &
    &
    \begin{tikzpicture}[baseline=-2pt]
      \node (A) at (1.5,0) {\tikz[baseline=-2pt,scale=.4]{\strrect{2}{$h$,,,}}};
      \node (B) at (0,1.75) {\tikz[baseline=-2pt,scale=.4]{\strrect{2}{$h$,,,$\partial l$}}};
      \node (C) at (-1.5,0) {\tikz[baseline=-2pt,scale=.4]{\strrect{2}{,$\partial l$,$h$,}}};
      \draw [->] (A) to node [anchor=south,sloped] {$\act{\partial h}l$} (B);
      \draw [->] (B) to node [anchor=south,sloped,transform shape,scale=.75] {$\{h,\partial l\}$} (C);
      \draw [->] (A) to node [fill=white] (D) {$\act{h}l$} (C);
      \path (B) to node [transform shape,scale=.75] {$\Downarrow \left<h,l\right>$} (D);
    \end{tikzpicture}
  \end{align}
  where $\left<h,l\right>:=\act{\act{h}l}\{h,l^{-1}\}^{-1}$ and $\left<l,h\right>:=\act{l}\{l^{-1},h\}^{-1}$.
  \item [$\bullet$] 33-Peiffer lifting:
  \[
    \begin{tikzpicture}[baseline=-2pt]
      \node (A) at (1.5,0) {\tikz[baseline=-2pt,scale=.4]{\strsquare{1}{2}{}}};
      \node (B1) at (0,1.5) {\tikz[baseline=-2pt,scale=.4]{\strsquare{1}{2}{,$\partial l_2$}}};
      \node (B2) at (0,-1.5) {\tikz[baseline=-2pt,scale=.4]{\strsquare{1}{2}{$\partial l_1$}}};
      \node (C) at (-1.5,0) {\tikz[baseline=-2pt,scale=.4]{\strsquare{1}{2}{$\partial l_1$,$\partial l_2$}}};
      \draw [->] (A) to node [anchor=south,sloped] {$l_2$} (B1);
      \draw [->] (B1) to node [anchor=south,sloped] {$\act{\partial l_2}l_1$} (C);
      \draw [->] (A) to node [anchor=north,sloped] {$l_1$} (B2);
      \draw [->] (B2) to node [anchor=north,sloped] {$l_2$} (C);
      \path (B1) to node {$\Downarrow\{l_2,l_1\}$} (B2);
    \end{tikzpicture}
  \]
  \item [$\bullet$] left- and right-Homanian:
  \begin{align}
    \nonumber
    &
    \begin{tikzpicture}[baseline=-2pt]
      \node (A) at (2,0) {\tikz[baseline=-2pt,scale=.4]{\strsquare{2}{3}{$h_y$,,$h_z$,,,$h_x$}}};
      \node (B) at (0,2) {\tikz[baseline=-2pt,scale=.4]{\strsquare{2}{3}{$h_y$,,,$h_x$,$h_z$,}}};
      \node (C) at (-2,0) {\tikz[baseline=-2pt,scale=.4]{\strsquare{2}{3}{,$h_x$,$h_y$,,$h_z$,}}};
      \draw [->] (A) to node [fill=white,transform shape,scale=.75] (D) {$\{h_zh_y,h_x\}$} (C);
      \draw [->] (A) to node [anchor=south,sloped,transform shape,scale=.75] {$\{h_z,\act{\partial h_y}h_x\}$} (B);
      \draw [->] (B) to node [anchor=south,sloped,transform shape,scale=.75] {$\act{h_z}\{h_y,h_x\}$} (C);
      \path (B) to node [transform shape,scale=.75] {$\Downarrow\{h_z,h_y|h_x\}$} (D);
    \end{tikzpicture}, &
    &
    \begin{tikzpicture}[baseline=-2pt]
      \node (A) at (2,0) {\tikz[baseline=-2pt,scale=.4]{\strsquare{2}{3}{$h_z$,,,$h_x$,,$h_y$}}};
      \node (B) at (0,2) {\tikz[baseline=-2pt,scale=.4]{\strsquare{2}{3}{,$h_x$,$h_z$,,,$h_y$}}};
      \node (C) at (-2,0) {\tikz[baseline=-2pt,scale=.4]{\strsquare{2}{3}{,$h_x$,,$h_y$,$h_z$,}}};
      \draw [->] (A) to node [fill=white,transform shape,scale=.75] (D) {$\{h_zh_y,h_x\}$} (C);
      \draw [->] (A) to node [anchor=south,sloped,transform shape,scale=.75] {$\act{\act{\partial h_z}h_y}\{h_z,h_x\}$} (B);
      \draw [->] (B) to node [anchor=south,sloped,transform shape,scale=.75] {$\{h_z,h_y\}$} (C);
      \path (B) to node [transform shape,scale=.75] {$\Downarrow\{h_z|h_y,h_x\}$} (D);
    \end{tikzpicture}.
  \end{align}
  \item The fundamental relations can be represented by diagrams. The complete set of diagrammatic representations of these relations is given in Appendix \ref{app:3CM-diagram-list}.
\end{enumerate}
Let us give some examples. The first example is the element $\act{\partial h_2}h_1h_2 \in H$. This can be represented as
\begin{equation}
  \label{eq:3-CM-Diagram-example1}
  \tikz[baseline=-8pt,scale=.4]{\strrect{2}{$h_2$,,,$h_1$}} = 
  \begin{tikzpicture}[baseline=(T.base)]
      \node (A1) at (-1.25,1.25) {$\bullet$};
      \node (A2) at (-1.25,0) {$\bullet$};
      \node (A3) at (-1.25,-1.25) {$\bullet$};
      \node (B1) at (0,1.25) {$\bullet$};
      \node (B2) at (0,0) {$\bullet$};
      \node (B3) at (0,-1.25) {$\bullet$};
      \node (C1) at (1.25,1.25) {$\bullet$};
      \node (C2) at (1.25,0) {$\bullet$};
      \node (C3) at (1.25,-1.25) {$\bullet$};
      \draw [thick] (A1.center) to (A3.center);
      \draw [thick] (B1.center) to (B3.center);
      \draw [thick] (C1.center) to (C3.center);
      \draw [thick] (A1.center) to node [fill=white,transform shape,scale=.75] {$e_G$} (B1.center) to node [fill=white,transform shape,scale=.75] {$e_G$} (C1.center);
      \draw [thick] (A2.center) to node [fill=white,transform shape,scale=.75] {$\partial(h_2)$} (B2.center) to node [fill=white,transform shape,scale=.75] {$e_G$} (C2.center);
      \draw [thick] (A3.center) to node [fill=white,transform shape,scale=.75] {$\partial(h_2)$} (B3.center) to node [fill=white,transform shape,scale=.75] {$\partial(h_1)$} (C3.center);
      \node at (-1.25/2,1.25/2) {$h_2$};
      \node at (1.25/2,1.25/2) {$e_H$};
      \node at (1.25/2,-1.25/2) {$h_1$};
      \node at (-1.25/2,-1.25/2) {$e_H$};
    \end{tikzpicture}.
  \end{equation}
Let us reconstruct the information of $H$ from the diagram \eqref{eq:3-CM-Diagram-example1}. First, we read the data of the lowest group $G$. Since the boundary elements are omitted, we should think of the identities $e_G$ as being assigned to the upper edge. Then the lower edges are determined by the contents of the boxes. The second and third line edges are $\partial(h_2)$ and $\partial(h_2h_1)$.

Next, we read the data of the second group $H$. We observe that the boxes in the first and second lines are precisely the right- and left-hand cases of Equation \eqref{eq:3CM-Diagram-G-on-H}, respectively. Hence the boxes in the first and second lines represent $h_2$ and $\act{\partial h_2}h_1$, respectively, and we can reconstruct the starting point $\act{\partial h_2}h_1h_2$. At first glance, this seems inconsistent with the boundary element. However, since $h_2h_1$ equals $\act{\partial h_2}h_1h_2$ up to the boundary element of $L$, i.e., up to a Peiffer lifting, there is no problem.

Another example is the relation for the left-Homanians \eqref{eq:3-CM-Fund-left-Hom}. This can be represented as
\[
  \begin{tikzpicture}[baseline=-2pt]
    \node (A) at (2.25,0) {\tikz[baseline=-2pt,scale=.4]{\strsquare{2}{4}{$h_y$,,$h_z$,,$h_w$,,,$h_x$}}};
    \node (B) at (.75,0) {\tikz[baseline=-2pt,scale=.4]{\strsquare{2}{4}{$h_y$,,$h_z$,,,$h_x$,$h_w$,}}};
    \node (C) at (-.75,0) {\tikz[baseline=-2pt,scale=.4]{\strsquare{2}{4}{$h_y$,,,$h_x$,$h_z$,,$h_w$,}}};
    \node (D) at (-2.25,0) {\tikz[baseline=-2pt,scale=.4]{\strsquare{2}{4}{,$h_x$,$h_y$,,$h_z$,,$h_w$,}}};
    \draw [->] (A) to (B);
    \draw [->] (B) to (C);
    \draw [->] (C) to (D);
    \draw [->,bend left=1.5cm] (A.south west) to (D.south east);
    \draw [->,bend left=1.5cm] (A.south west) to node (D) {} (C.south east);
    \path (B.north) to [pos=15/16] coordinate (A) (B.south);
    \path (B) to node [transform shape,scale=.75] {$\Downarrow m_2$} (D);
    \path (C.north) to [pos=5/4] coordinate (D) (C.south);
    \path (C) to node [transform shape,scale=.75] {$\Downarrow m_1$} (D);
  \end{tikzpicture}
  =
  \begin{tikzpicture}[baseline=-2pt]
    \node (A) at (2.25,0) {\tikz[baseline=-2pt,scale=.4]{\strsquare{2}{4}{$h_y$,,$h_z$,,$h_w$,,,$h_x$}}};
    \node (B) at (.75,0) {\tikz[baseline=-2pt,scale=.4]{\strsquare{2}{4}{$h_y$,,$h_z$,,,$h_x$,$h_w$,}}};
    \node (C) at (-.75,0) {\tikz[baseline=-2pt,scale=.4]{\strsquare{2}{4}{$h_y$,,,$h_x$,$h_z$,,$h_w$,}}};
    \node (D) at (-2.25,0) {\tikz[baseline=-2pt,scale=.4]{\strsquare{2}{4}{,$h_x$,$h_y$,,$h_z$,,$h_w$,}}};
    \draw [->] (A) to (B);
    \draw [->] (B) to (C);
    \draw [->] (C) to (D);
    \draw [->,bend left=1.5cm] (A.south west) to (D.south east);
    \draw [->,bend left=1.5cm] (B.south west) to node (A) {} (D.south east);
    \path (C) to node [transform shape,scale=.75] {$\Downarrow m_3$} (A);
    \path (B.north) to [pos=5/4] coordinate (D) (B.south);
    \path (B) to node [transform shape,scale=.75] {$\Downarrow m_4$} (D);
  \end{tikzpicture},
\]
where
\begin{align}
  \nonumber
  m_1 &= \{h_wh_z,h_y|h_x\}, &
  m_3 &= \act{h_w}\{h_z,h_y|h_x\}, \\
  \nonumber
  m_2 &= \{h_w,h_z|\act{\partial h_y}h_x\}, &
  m_4 &= \{h_w,h_zh_y|h_x\}.
\end{align}
For the right-Homanian, the analogous diagram holds. Since these are exactly the nets of tetrahedra, we call them tetrahedron relations. The names of the other relations are determined by the same reasoning.

\subsection{Gray 3-Categories} \label{sec:Gray}
In this section, we give the definition of Gray 3-categories. As in Section \ref{sec:CM}, we begin with Gray 3-categories and then generalize them to 4-categories in Section~\ref{sec:Gray4}. Our definition of a Gray 3-category differs slightly from the one introduced in \cite{Sarikaya:2024}. In particular, we use the term ``vertical'' for the operations corresponding to the ``ordinary'' composition of $n$-morphisms, such as the $\circ$ appearing in $f\circ g$, and we do not include ``horizontal composition'' in the following definitions, for reasons that will soon become clear.

To begin with, let us recall the definition of ordinary 2-categories.
\begin{definition}\label{def:strict-2-cat}
A strict 2-category $\cC_2$ consists of the following data:
\begin{enumerate}
  \item \label{ax:2-Cat-2} Collections of $i$-morphisms $\mor_i(\cC_2)$ for $i=0,1,2$. In particular, for $i=0$, $\mor_0(\cC_2)$ is also referred to as the collection of objects and denoted by $\obj(\cC_2)$.
  \begin{itemize}
    \item For $i=1,2$, each $i$-morphism $f$ has an $l$-source $s_l(f)\in \mor_l(\cC_2)$, $l=0,1,\ldots,i$, and an $l$-target $t_l(f)\in \mor_l(\cC_2)$. In particular, for an $i$-morphism $f$ with $(i-1)$-source $x$ and $(i-1)$-target $y$, we write $f:~x\to y$.
    \item The source and target maps $s_l,t_s:~\mor_i(\cC_2)\to\mor_l(\cC_2)$ satisfy the globular relations:
    \begin{align}
      \nonumber
      s_k\circ s_l &= s_k, & t_k\circ t_l &= t_k, \\
      \nonumber
      s_k\circ t_l &= s_k, & t_k\circ s_l &= t_k,
    \end{align}
    for all $k<l<i$. 
  \end{itemize}
  \item \label{ax:2-Cat-3} For each $i$-morphism $x\in\mor_i(\cC)$ ($i=0,1$), an identity $(i+1)$-morphism $\id_x:~x\to x$,
  \item \label{ax:2-Cat-4} Vertical compositions of $i$-morphisms $\#_i:~\mor_i(\cC)\times_{i-1} \mor_i(\cC) \to \mor_i(\cC)$ for $i=1,2$.  In other words, for $i$-morphisms $f:~x\to y,g:~y\to z$, we have an $i$-morphism $g\#_if:~x\to z$.
  \newcounter{temp}
  \setcounter{temp}{\arabic{enumi}}
\end{enumerate}
For any two objects $x$ and $y$, the data $(\{f|f\in\mor_1(\cC),f:~x\to y\},\#_2,s_1,t_1)$ is required to form a category, denoted by $\Hom(x,y)$. 
\begin{enumerate}
  \setcounter{enumi}{\thetemp}
  \item Whiskering by 1-morphisms: for any 1-morphism $f:~w\to x$ and $g:~y\to z$, there are functors\label{def:item:C-is-V}
  \begin{align}
    \nonumber
    -\#_1^{\text{r}}f&:~\Hom(x,y)\to \Hom(w,y) \\
    \nonumber
    g\#_1^{\text{l}}-&:~\Hom(x,y)\to \Hom(x,z)
  \end{align}
  satisfying the following identities (where sources and targets are assumed to be compatible):
  \begin{align}
    \nonumber
    (-\#_1^{\text{r}}f_2)\circ (-\#_1^{\text{r}}f_1) &= -\#_1^{\text{r}}(f_1\#_1f_2), &
    g\#_1^{\text{l}}f &= g\#_1^{\text{r}}f = g\#_1f, \\
    \nonumber
    (g_1\#_1^{\text{l}}-)\circ (g_2\#_1^{\text{l}}-) &= (g_1\#_1g_2)\#_1^{\text{l}}-, & 
    (-\#_1^{\text{r}}\id_x) &= (\id_y\#_1^{\text{l}}-) = \text{Id}_{\Hom(x,y)}. \\
    \nonumber
    (-\#_1^{\text{r}}f)\circ (g\#_1^{\text{l}}-) &= (g\#_1^{\text{l}}-)\circ (-\#_1^{\text{r}}f), 
  \end{align}
  When no confusion can arise, we simply denote these by $\#_1$. 
  \setcounter{temp}{\arabic{enumi}}
\end{enumerate}
In addition, the following axioms must hold:
\begin{enumerate}
  \setcounter{enumi}{\thetemp}
  \item Associativity: Vertical composition of 1-morphisms is associative.
  \item Unit laws: For any 1-morphism $f:x\to y$, the identities satisfy $f\#_1\id_x = \id_y\#_1f = f$. 
  \item \label{ax:2-Cat-8} Interchange law: for two 2-morphisms $(\beta,\alpha)\in\mor(\cC_2)\times_0\mor(\cC_2)$, the following holds:
  \[
    (\beta\#_1t_1(\alpha))\#_2(s_1(\beta)\#_1\alpha) = (t_1(\beta)\#_1\alpha)\#_2(\beta\#_1s_1(\alpha))
  \]
\end{enumerate}
\end{definition}

In preparation for the generalization to the Gray 3- and 4-categories, we restate \ref{ax:2-Cat-2}, \ref{ax:2-Cat-3} and \ref{ax:2-Cat-4} in a redundant manner.

In addition, the interchange law \ref{ax:2-Cat-8} looks different from the usual one because it is expressed using the ``horizontal composition'' of 2-morphisms, which we have not introduced. This condition says that the result is the same no matter which 2-morphism acts first in the direction of the vertical composition of the 1-morphisms. That is, it guarantees that the ``horizontal composition'' can be introduced without ambiguity.

However, in the next generalization, we consider a weakening of this relation by a 3-morphism, which makes the composition ill-defined. With this in mind, we do not include ``horizontal composition'' in the above and following definitions. That said, since it is convenient, we introduce a horizontal composition as follows. For two 2-morphisms $\alpha$ and $\beta$ as in \ref{ax:2-Cat-8}, the horizontal composition $\beta\#_1\alpha$ is defined as
\[
  \beta\#_1\alpha := (\beta\#_1t_1(\alpha))\#_2(s_1(\beta)\#_1\alpha).
\]
One can check that the ordinary interchange law holds with these definitions of the horizontal and vertical compositions. 

Now, let us turn to the definition of Gray 3-categories. 
\begin{definition}\label{def:Gray3}
A Gray 3-category $\cC_3$ consists of the following data:
\begin{enumerate}
  \item Collections of $i$-morphisms $\mor_i(\cC_3)$ for $i=0,1,2,3$. In particular, for $i=0$, $\mor_0(\cC_3)$ is also referred to as the collection of objects and denoted by $\obj(\cC_3)$.
  \begin{itemize}
    \item For $i=1,2,3$, each $i$-morphism $f$ has an $l$-source $s_l(f)\in \mor_l(\cC_3)$, $l=0,1,\ldots,i$, and an $l$-target $t_l(f)\in \mor_l(\cC_3)$. In particular, for an $i$-morphism $f$ with $(i-1)$-source $x$ and $(i-1)$-target $y$, we write $f:~x\to y$.
    \item The source and target maps $s_l,t_l:~\mor_i(\cC_3)\to\mor_l(\cC_3)$ satisfy the globular relations:
    \begin{align}
      \nonumber
      s_k\circ s_l &= s_k, & t_k\circ t_l &= t_k, \\
      \nonumber
      s_k\circ t_l &= s_k, & t_k\circ s_l &= t_k,
    \end{align}
    for all $k<l<i$. 
  \end{itemize}
  \item For each $i$-morphism $x$ ($i=0,1,2$), an identity $(i+1)$-morphism $\id_x:~x\to x$,
  \item Vertical compositions of $i$-morphisms $\#_i:~\mor_i(\cC)\times_{i-1} \mor_i(\cC) \to \mor_i(\cC)$ for $i=1,2,3$. For any two $i$-morphisms $f:~x\to y,g:~y\to z$, we have an $i$-morphism $g\#_if:~x\to z$. 
  \setcounter{temp}{\arabic{enumi}}
\end{enumerate}
For any two 1-morphisms $f$ and $g$, the data
\[
  \left(
    \begin{array}{l}
      \{\alpha|\alpha\in\mor_2(\cC_3),\alpha:~f\to g\}, \\
      \{\Phi|\Phi\in\mor_3(\cC_3),s_1(\Phi) = f,t_1(\Phi) = g\},
    \end{array}
    \#_3,s_3,t_3\right)
\]
is required to form a category, denoted by $\Hom_1(f,g)$.
\begin{enumerate}
  \setcounter{enumi}{\thetemp}
  \item Whiskering by 2-morphisms: for any 2-morphism $\alpha:~e\to f$ and $\beta:~g\to h$, there are 2-functors 
  \begin{align}
    \nonumber
    -\#_2^{\text{r}}\alpha&:~\Hom_1(f,g)\to \Hom_1(e,g), \\ 
    \nonumber
    \beta\#_2^{\text{l}}-&:~\Hom_1(f,g)\to \Hom_1(f,h),
  \end{align}
  satisfying identities analogous to those for whiskering by 1-morphisms in 2-categories \ref{def:item:C-is-V}.
  When no confusion can arise, we simply denote these by $\#_2$. 
  \setcounter{temp}{\arabic{enumi}}
\end{enumerate}
For any two objects $x$ and $y$, the data 
\[
  \left(
    \begin{array}{l}
      \{f|f\in\mor_1(\cC_3),f:~x\to y\}, \\
      \{\alpha|\alpha\in\mor_2(\cC_3),s_0(\alpha) = x,t_0(\alpha) = y\}, \\
      \{\Phi|\Phi\in\mor_3(\cC_3),s_0(\Phi) = y,t_0(\Phi) = y\},
    \end{array}
  \#_2,\#_3,s_1,t_1,s_2,t_2\right)
\]
is required to form a 2-category, denoted by $\Hom_2(x,y)$
\begin{enumerate}
  \setcounter{enumi}{\thetemp}
  \item Whiskering by 1-morphisms: for any 1-morphism $f:~w\to x$ and $g:~y\to z$, there are 2-functors 
  \begin{align}
    \nonumber
    -\#_1^{\text{r}}f&:~\Hom_2(x,y)\to \Hom_2(w,y) \\
    \nonumber
    g\#_1^{\text{l}}-&:~ \Hom_2(x,y)\to \Hom_2(x,z)
  \end{align}
  satisfying identities analogous to those for whiskering by 1-morphisms in 2-categories \ref{def:item:C-is-V}. When no confusion can arise, we simply denote these by $\#_1$. 
  \item Interchange 3-morphism (or Peiffer lifting): for any two 2-morphisms $(\beta,\alpha)\in\mor_2(\cC_3)\times_0\mor_2(\cC_3)$, there is a 3-isomorphism denoted by $\{\beta,\alpha\}$, whose 2-source and 2-target are
  \begin{align}
    \label{eq:Peiffer-Source}
    s_2(\{\beta,\alpha\}) &= (\beta\#_1t_1(\alpha))\#_2(s_1(\beta)\#_1\alpha), \\
    \label{eq:Peiffer-Target}
    t_2(\{\beta,\alpha\}) &= (t_1(\beta)\#_1\alpha)\#_2(\beta\#_1s_1(\alpha)).
  \end{align}
  \setcounter{temp}{\arabic{enumi}}
\end{enumerate}
In what follows, expressions such as the left-hand sides of equations \eqref{eq:Peiffer-Source} and \eqref{eq:Peiffer-Target} appear frequently and take up much space, so we use the following shorthand notation:
\begin{align}
  \nonumber
  \begin{bmatrix}& y \\ x &\end{bmatrix} &:= (x\#_1t_1(y))\#_2(s_1(x)\#_1y), &
  \begin{pmatrix}& w \\ z &\end{pmatrix} &:= (z\#_2t_2(w))\#_3(s_2(z)\#_2w), \\
  \nonumber
  \begin{bmatrix}x& \\ &y\end{bmatrix} &:= (t_1(x)\#_1y)\#_2(x\#_1s_1(y)), &
  \begin{pmatrix}z & \\ & w\end{pmatrix} &:= (t_2(z)\#_2w)\#_3(z\#_2s_2(w)),
\end{align}
where $(x,y)\in \mor_i(\cC_3)\times_0\mor_j(\cC_3),~(i,j)=(2,2),(2,3),(3,2)$, and $(z,w)\in \mor_3(\cC_3)\times_1\mor_3(\cC_3)$.

In addition, the following axioms must hold:
\begin{enumerate}
  \setcounter{enumi}{\thetemp}
  \item Associativity: Vertical composition of 1-morphisms is associative. 
  \item Unit laws: For any 1-morphism $f:~x\to y$, the identities satisfy $f\#_1\id_x = \id_y\#_1f = f$. 
  \item \label{ax:Gray3-1morPeiffer} 
  The compatibility conditions between the Peiffer lifting and 1-morphisms are
  \begin{itemize}
    \item for any $(h,\beta,\alpha) \in \mor_1(\cC_3)\times_0\mor_2(\cC_3)\times_0\mor_2(\cC_3)$,
    \[
      h\#_1\{\beta,\alpha\} = \{h\#_1\beta,\alpha\},
    \]
    \item for any $(\beta,g,\alpha) \in \mor_2(\cC_3)\times_0\mor_1(\cC_3)\times_0\mor_2(\cC_3)$,
    \[
      \{\beta\#_1g,\alpha\} = \{\beta,g\#_1\alpha\},
    \]
    \item for any $(\beta,\alpha,f) \in \mor_2(\cC_3)\times_0\mor_2(\cC_3)\times_0\mor_1(\cC_3)$,
    \[
      \{\beta,\alpha\}\#_1f = \{\beta,\alpha\#_1f\},
    \]
  \end{itemize}
  \item 
  The compatibility conditions between the Peiffer lifting and 2-morphisms are
  \begin{align}
    \label{eq:P-decom1}
    \{\beta_2\#_2\beta_1,\alpha\} &= [(\beta_2\#_1t_1(\alpha))\#_2\{\beta_1,\alpha\}]\#_3[\{\beta_2,\alpha\}\#_2(\beta_1\#_1s_1(\alpha))], \\
    \label{eq:P-decom2}
    \{\beta,\alpha_2\#_2\alpha_1\}
    &= [\{\beta,\alpha_2\}\#_2(s_1(\beta)\#_1\alpha_1)]\#_3[(t_1(\beta)\#_1\alpha_2)\#_2\{\beta,\alpha_1\}],
  \end{align}
  where
  \begin{align}
    \nonumber
    (\beta_2,\beta_1,\alpha) &\in (\mor_2(\cC_3)\times_1\mor_2(\cC_3))\times_0\mor_2(\cC_3), \\
    \nonumber
    (\beta,\alpha_2,\alpha_1) &\in \mor_2(\cC_3)\times_0(\mor_2(\cC_3)\times_1\mor_2(\cC_3)).
  \end{align}
  \item 
  The compatibility conditions between the Peiffer lifting and 3-morphisms are
  \begin{align}
    \label{eq:2-funct1}
    \{t_2(\Psi),\alpha\}\#_3\begin{bmatrix}\Psi & \\ & \alpha\end{bmatrix} &= \begin{bmatrix}&\alpha \\ \Psi &\end{bmatrix}\#_3\{s_2(\Psi),\alpha\} \\
    \label{eq:2-funct2}
    \{\beta,t_2(\Phi)\}\#_3\begin{bmatrix}\beta & \\ & \Phi\end{bmatrix} &= \begin{bmatrix}& \Phi \\ \beta &\end{bmatrix}\#_3\{\beta,s_2(\Phi)\}
  \end{align}
  where
  \begin{align}
    \nonumber
    (\Psi,\alpha) &\in \mor_3(\cC_3)\times_0\mor_2(\cC_3), \\
    \nonumber
    (\beta,\Phi) &\in \mor_2(\cC_3)\times_0\mor_3(\cC_3).
  \end{align}
\end{enumerate}
\end{definition}

Note that the associativity, unit and interchange laws for $\#_2$ and $\#_3$ follow directly from the requirement that $\Hom_2(x,y)$ is a 2-category. In contrast, the associativity and unit laws for the vertical composition of 1-morphisms must be specified explicitly.

The above definition differs slightly from the pioneering works, such as \cite{Martins:2009evc} and \cite{Sarikaya:2024}. In our definition, we have not introduced the composition of 3-morphisms in the 2-morphism direction, called ``vertical composition of 3-morphisms'' in \cite{Martins:2009evc}, which corresponds to the ``horizontal composition'' in the 2-category case. The interchange law would make this composition well-defined, but we do not include it, for the same reason as in the case of 2-categories. Correspondingly, the ``2-functoriality'' relation in \cite{Martins:2009evc} splits into two types of relations, \eqref{eq:2-funct1} and \eqref{eq:2-funct2}.

\begin{lemma}
The following normalization conditions hold.
\begin{align}
  \nonumber
  \{\id_g,\alpha\} &= \id_{g\#_1\alpha}, & \{\beta,\id_f\} &= \id_{\beta\#_1f}.
\end{align}
\end{lemma}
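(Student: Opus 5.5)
The plan is to derive both identities from the decomposition axioms \eqref{eq:P-decom1} and \eqref{eq:P-decom2}, applied to the trivial factorization of an identity 2-morphism. In each case this gives an idempotence relation in a hom-category, and since every 3-morphism appearing there is invertible, idempotence forces the lifting to be an identity.

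For the first identity, fix $g$ and $\alpha$ with $(\id_g,\alpha)\in\mor_2(\cC_3)\times_0\mor_2(\cC_3)$, and write $\alpha:f\to f'$. I will take $\beta_2=\beta_1=\id_g$ in \eqref{eq:P-decom1}. Because $\Hom_1(g,g)$ is a category, $\id_g\#_2\id_g=\id_g$, and \eqref{eq:P-decom1} becomes
\[
  \{\id_g,\alpha\} = [(\id_g\#_1 f')\#_2\{\id_g,\alpha\}]\#_3[\{\id_g,\alpha\}\#_2(\id_g\#_1 f)].
\]
Next I simplify the whiskered terms. Since $g\#_1-$ is a 2-functor, $\id_g\#_1 f' = \id_{g\#_1 f'}$ and $\id_g\#_1 f=\id_{g\#_1 f}$. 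Whiskering by an identity 2-morphism is the identity functor, by the analogue of $(-\#_1^{\text{r}}\id_x)=\mathrm{Id}$ for 2-morphism whiskering. Hence both brackets reduce to $\{\id_g,\alpha\}$, and we obtain $\Phi=\Phi\#_3\Phi$ for $\Phi=\{\id_g,\alpha\}$.

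The source and target computations \eqref{eq:Peiffer-Source} and \eqref{eq:Peiffer-Target} show that $\Phi$ has equal 2-source and 2-target, namely $g\#_1\alpha$. The key point is that for $\beta=\id_g$ we have $s_1(\beta)=t_1(\beta)=g$ and $\id_g\#_1 t_1(\alpha)$ is an identity, so both expressions collapse to $g\#_1\alpha$. Thus $\Phi$ is an endomorphism in $\Hom_1$. Since the Peiffer lifting is by definition a 3-isomorphism, composing $\Phi=\Phi\#_3\Phi$ with $\Phi^{-1}$ yields $\Phi=\id_{g\#_1\alpha}$.

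The second identity is handled symmetrically, using \eqref{eq:P-decom2} with $\alpha_2=\alpha_1=\id_f$.

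The main obstacle is bookkeeping rather than ideas. It consists of checking that the whiskering identities, namely that whiskering by an identity 2-morphism is the identity and that $\id_g\#_1 f=\id_{g\#_1 f}$ by functoriality of $g\#_1-$, are genuinely among the axioms ``analogous to'' those of Definition~\ref{def:strict-2-cat}. It also requires verifying that the composites in \eqref{eq:P-decom1} are composable as written. If invertibility were unavailable, I would instead try the 2-functoriality axiom \eqref{eq:2-funct2} with $\Phi=\id_{\alpha}$, but the idempotent argument should suffice.
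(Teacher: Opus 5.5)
Your proof is correct. The paper gives no argument of its own for this lemma and only cites Sarikaya--Ulualan, so there is no internal proof to compare against. Your route is the standard one: specialize \eqref{eq:P-decom1} (resp.\ \eqref{eq:P-decom2}) to $\beta_2=\beta_1=\id_g$ (resp.\ $\alpha_2=\alpha_1=\id_f$), obtain $\Phi=\Phi\#_3\Phi$ for an endomorphism $\Phi$ of $g\#_1\alpha$, and cancel using that the lifting is a 3-isomorphism. It is exactly the pattern the paper uses later for $\{\id_\beta,\alpha\}_{32}=\{\beta,\id_\alpha\}_{23}=\id$ via the Pasting relations. It is also the categorical version of the group computation $\{1,h\}=\{1,h\}\{1,h\}$ coming from \eqref{eq:HH_H-2CM}.

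A few bookkeeping corrections, none of which affect validity:
\begin{itemize}
\item The unit law $\id_g\#_2\id_g=\id_g$ comes from $\Hom_2(x,y)$ being a 2-category, not from $\Hom_1(g,g)$, whose composition is $\#_3$.
\item It is the 2-functor $-\#_1 f'$, rather than $g\#_1-$, whose preservation of identities gives $\id_g\#_1 f'=\id_{g\#_1 f'}$.
\item Both factors of the specialized decomposition equal $\Phi$, so the order in which the $\#_3$-factors are printed in \eqref{eq:P-decom1} and \eqref{eq:P-decom2} is irrelevant. This settles your composability worry.
\end{itemize}

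Your fallback would not have worked. \eqref{eq:2-funct2} with $\Phi=\id_\alpha$ collapses to the tautology $\{\beta,\alpha\}=\{\beta,\alpha\}$. Invertibility of the lifting is therefore genuinely what closes the argument, and the definition supplies it.
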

The proof is given, for example, in \cite{Sarikaya:2024}.

\begin{definition}\label{def:Gray3Functor}
Let $\cC$ and $\cD$ be Gray 3-categories. A Gray 3-functor $F:~\cC\to\cD$ is a collection of maps $F_i:~\mor_i(\cC)\to\mor_i(\cD), i=0,\ldots,3$, preserving all the structures of Gray 3-categories, i.e., the identity, source and target maps, vertical compositions, whiskerings, and the Peiffer lifting.
\end{definition}

\subsection{Diagrammatic Representation of Gray 3-categories}\label{sec:diag-Gray3}
Before proceeding to the definition of Gray 4-categories, we introduce the diagrammatic representation of Gray 3-categories. As in the case of crossed modules, these diagrams help us to understand not only the structure of Gray 3-categories but also that of Gray 4-categories, since the same diagrams can be used to represent the relations of Gray 4-categories.

\begin{enumerate}
  \item Morphisms:
  \item [$\bullet$] For any object $x\in \obj(\cC_4)$, we draw it as a blob $\bullet$,
  \item [$\bullet$] For any 1-morphism $f\in \mor_1(\cC_4)$, we draw it as a horizontal segment:
  \begin{equation}
    \label{eq:Gray4-Diagram-1-Mor}
    \begin{tikzpicture}[baseline=(A.base)]
      \node (A) at (-0.75,0) {$\bullet$};
      \node (B) at (0.75,0) {$\bullet$};
      \draw [thick] (A.center) to node [anchor=south] {$f$} (B.center);
      \node at (B.center) [anchor=west] {$s_0(f)$};
      \node at (A.center) [anchor=east] {$t_0(f)$};
    \end{tikzpicture},
  \end{equation}
  or a diagram given by compositions introduced later. The source and target objects are attached to the right and left sides, respectively; as introduced above, they are represented as blobs.
  \item [$\bullet$] For any 2-morphism $\alpha\in\mor_2(\cC_4)$, we draw it as a square:
  \begin{equation}
    \label{eq:Gray4-Diagram-2-Mor}
    \begin{tikzpicture}[baseline=(T.base)]
      \node (A) at (-0.75,0.75) {$\bullet$};
      \node (B) at (0.75,0.75) {$\bullet$};
      \node (C) at (0.75,-0.75) {$\bullet$};
      \node (D) at (-0.75,-0.75) {$\bullet$};
      \draw [thick] (A.center) node [anchor=east] {$t_0(\alpha)$} to node [anchor=south] {$s_1(\alpha)$} (B.center) node [anchor=west] {$s_0(\alpha)$} to (C.center) node [anchor=west] {$s_0(\alpha)$} to node [anchor=north] {$t_1(\alpha)$} (D.center) node [anchor=east] {$t_0(\alpha)$} to cycle;
      \node (T) at (0,0) {$\alpha$};
    \end{tikzpicture}
  \end{equation}
  or a diagram given by compositions introduced later.
  The source and target 1-morphisms are attached to the upper and lower boundaries. The boundary elements and blobs $\bullet$ are often omitted. The empty box denotes the identity 2-morphism $\id_f$.
  \item [$\bullet$] For any 3-morphism $\Phi\in\mor_3(\cC_4)$, we draw it as an arrow connecting two 2-morphisms:
  \begin{equation}
    \label{eq:Gray4-Diagram-3-Mor}
    \begin{tikzpicture}[baseline=(T.base)]
      \node (A) at (1,0) {\tikz{\strrect[thick,transform shape,scale=.75]{1}{$s_2(\Phi)$}}};
      \node (B) at (-1,0) {\tikz{\strrect[thick,transform shape,scale=.75]{1}{$t_2(\Phi)$}}};
      \draw [thick,->] (A) to node [anchor=south] {$\Phi$} (B);
    \end{tikzpicture}
  \end{equation}
  or a diagram given by compositions introduced later.  
  \item Vertical compositions:
  \item [$\bullet$] Vertical composition of 1-morphisms: We draw it as the composition of segments along the horizontal direction on the paper:
  \[
    \begin{tikzpicture}[baseline=(A.base)]
      \draw[thick] (-1,0) node (A) {$\bullet$} to node [anchor=south] {$g$} (0,0) node {$\bullet$} to node [anchor=south] {$f$} (1,0) node {$\bullet$};
    \end{tikzpicture}
    =
    \begin{tikzpicture}[baseline=(A.base)]
      \draw[thick] (-1,0) node (A) {$\bullet$} to node [anchor=south] {$g\#_1f$} (1,0) node {$\bullet$};
    \end{tikzpicture},
  \]
  where $(f,g)\in \mor_1(\cC_4)\times_0\mor_1(\cC_4)$. 
  \item [$\bullet$] Vertical composition of 2-morphisms: We draw it as the composition of boxes along the vertical direction on the paper:
  \[
    \begin{tikzpicture}[baseline=(T.base)]
      \node (A) at (-0.5,1) {$\bullet$};
      \node (B) at (-0.5,0) {$\bullet$};
      \node (C) at (-0.5,-1) {$\bullet$};
      \node (D) at (0.5,1) {$\bullet$};
      \node (E) at (0.5,0) {$\bullet$};
      \node (F) at (0.5,-1) {$\bullet$};
      \draw [thick] (A.center) to (C.center);
      \draw [thick] (D.center) to (F.center);
      \draw [thick] (A.center) to (D.center);
      \draw [thick] (B.center) to (E.center);
      \draw [thick] (C.center) to (F.center);
      \node at (0,1/2) {$\beta$};
      \node at (0,-1/2) {$\alpha$};
    \end{tikzpicture}
    =
    \begin{tikzpicture}[baseline=(T.base)]
      \node (A) at (-0.75,0.75) {$\bullet$};
      \node (B) at (0.75,0.75) {$\bullet$};
      \node (C) at (0.75,-0.75) {$\bullet$};
      \node (D) at (-0.75,-0.75) {$\bullet$};
      \draw [thick] (A.center) to (B.center) to (C.center) to (D.center) to cycle;
      \node (T) at (0,0) {$\beta\#_2\alpha$};
    \end{tikzpicture},
  \]
  where $(\beta,\alpha)\in\mor_2(\cC_4)\times_1\mor_2(\cC_4)$. 
  \item [$\bullet$] Vertical composition of 3-morphisms: We draw it as a composition of two arrows:
  \[
    \begin{tikzpicture}[baseline=(T.base)]
      \node (A) at (2,0) {\tikz{\strrect[thick,scale=.5]{1}{$\alpha$}}};
      \node (B) at (0,0) {\tikz{\strrect[thick,scale=.5]{1}{$\beta$}}};
      \node (C) at (-2,0) {\tikz{\strrect[thick,scale=.5]{1}{$\gamma$}}};
      \draw [thick,->] (A) to node [anchor=south] {$\Phi$} (B);
      \draw [thick,->] (B) to node [anchor=south] {$\Psi$} (C);
    \end{tikzpicture}
    =
    \begin{tikzpicture}[baseline=(T.base)]
      \node (A) at (1,0) {\tikz{\strrect[thick,scale=.5]{1}{$\alpha$}}};
      \node (B) at (-1,0) {\tikz{\strrect[thick,scale=.5]{1}{$\gamma$}}};
      \draw [thick,->] (A) to node [anchor=south] {$\Psi\#_3\Phi$} (B);
    \end{tikzpicture}
  \]
  where $\Psi:~\beta\to\gamma$ and $\Phi:~\alpha\to\beta$. 
  \item Whiskerings
  \item [$\bullet$] Whiskering by 1-morphisms: We draw it by attaching a segment representing the 1-morphism to the boxes of the diagrams, along the horizontal direction on the paper. For 2-morphisms,
  \begin{align}
    \nonumber
    \begin{tikzpicture}[baseline=(T.base)]
      \node (A) at (-1,1/2) {$\bullet$};
      \node (B) at (0,1/2) {$\bullet$};
      \node (C) at (1,1/2) {$\bullet$};
      \node (D) at (-1,-1/2) {$\bullet$};
      \node (E) at (0,-1/2) {$\bullet$};
      \node (F) at (1,-1/2) {$\bullet$};
      \draw [thick] (A.center) to (D.center);
      \draw [thick] (B.center) to (E.center);
      \draw [thick] (C.center) to (F.center);
      \draw [thick] (A.center) to node [fill=white,transform shape,scale=.75] {$g$} (B.center) to (C.center);
      \draw [thick] (D.center) to node [fill=white,transform shape,scale=.75] {$g$} (E.center) to (F.center);
      \node (T) at (1/2,0) {$\alpha$};
    \end{tikzpicture}
    &=
    \begin{tikzpicture}[baseline=(T.base)]
      \node (A) at (-0.75,0.75) {$\bullet$};
      \node (B) at (0.75,0.75) {$\bullet$};
      \node (C) at (0.75,-0.75) {$\bullet$};
      \node (D) at (-0.75,-0.75) {$\bullet$};
      \draw [thick] (A.center) to (B.center) to (C.center) to (D.center) to cycle;
      \node (T) at (0,0) {$g\#_1\alpha$};
    \end{tikzpicture},
    &
    \begin{tikzpicture}[baseline=(T.base)]
      \node (A) at (-1,1/2) {$\bullet$};
      \node (B) at (0,1/2) {$\bullet$};
      \node (C) at (1,1/2) {$\bullet$};
      \node (D) at (-1,-1/2) {$\bullet$};
      \node (E) at (0,-1/2) {$\bullet$};
      \node (F) at (1,-1/2) {$\bullet$};
      \draw [thick] (A.center) to (D.center);
      \draw [thick] (B.center) to (E.center);
      \draw [thick] (C.center) to (F.center);
      \draw [thick] (A.center) to (B.center) to node [fill=white,transform shape,scale=.75] {$f$} (C.center);
      \draw [thick] (D.center) to (E.center) to node [fill=white,transform shape,scale=.75] {$f$} (F.center);
      \node (T) at (-1/2,0) {$\beta$};
    \end{tikzpicture}
    &=
    \begin{tikzpicture}[baseline=(T.base)]
      \node (A) at (-0.75,0.75) {$\bullet$};
      \node (B) at (0.75,0.75) {$\bullet$};
      \node (C) at (0.75,-0.75) {$\bullet$};
      \node (D) at (-0.75,-0.75) {$\bullet$};
      \draw [thick] (A.center) to (B.center) to (C.center) to (D.center) to cycle;
      \node (T) at (0,0) {$\beta\#_1f$};
    \end{tikzpicture},
  \end{align}
  where $(g,\alpha)\in\mor_1(\cC_4)\times_0\mor_2(\cC_4)$ and $(\beta,f)\in\mor_2(\cC_4)\times_0\mor_1(\cC_4)$. We compose a segment with boxes by using the identity elements $\id_f$ and $\id_g$. For 3-morphisms,
  \begin{align}
    \nonumber
    &
    \begin{tikzpicture}[baseline=-2pt]
      \node (P) at (-2,0) {%
        \begin{tikzpicture}[baseline=(T.base)]%
          \node (A) at (-1,1/2) {$\bullet$};%
          \node (B) at (0,1/2) {$\bullet$};%
          \node (C) at (1,1/2) {$\bullet$};%
          \node (D) at (-1,-1/2) {$\bullet$};%
          \node (E) at (0,-1/2) {$\bullet$};%
          \node (F) at (1,-1/2) {$\bullet$};%
          \draw [thick] (A.center) to (D.center);%
          \draw [thick] (B.center) to (E.center);%
          \draw [thick] (C.center) to (F.center);%
          \draw [thick] (A.center) to node [fill=white,transform shape,scale=.75] {$g$} (B.center) to (C.center);%
          \draw [thick] (D.center) to node [fill=white,transform shape,scale=.75] {$g$} (E.center) to (F.center);%
          \node (T) at (1/2,0) {$t_2(\Phi)$};%
        \end{tikzpicture}%
        };
      \node (Q) at (2,0) {%
        \begin{tikzpicture}[baseline=(T.base)]%
          \node (A) at (-1,1/2) {$\bullet$};%
          \node (B) at (0,1/2) {$\bullet$};%
          \node (C) at (1,1/2) {$\bullet$};%
          \node (D) at (-1,-1/2) {$\bullet$};%
          \node (E) at (0,-1/2) {$\bullet$};%
          \node (F) at (1,-1/2) {$\bullet$};%
          \draw [thick] (A.center) to (D.center);%
          \draw [thick] (B.center) to (E.center);%
          \draw [thick] (C.center) to (F.center);%
          \draw [thick] (A.center) to node [fill=white,transform shape,scale=.75] {$g$} (B.center) to (C.center);%
          \draw [thick] (D.center) to node [fill=white,transform shape,scale=.75] {$g$} (E.center) to (F.center);%
          \node (T) at (1/2,0) {$s_2(\Phi)$};%
        \end{tikzpicture}%
        };
      \draw [thick,->] (Q) to node [anchor=south] {$g\#_1\Phi$} (P);
    \end{tikzpicture},
    &
    &
    \begin{tikzpicture}[baseline=-2pt]
      \node (P) at (-2,0) {%
        \begin{tikzpicture}[baseline=(T.base)]
          \node (A) at (-1,1/2) {$\bullet$};
          \node (B) at (0,1/2) {$\bullet$};
          \node (C) at (1,1/2) {$\bullet$};
          \node (D) at (-1,-1/2) {$\bullet$};
          \node (E) at (0,-1/2) {$\bullet$};
          \node (F) at (1,-1/2) {$\bullet$};
          \draw [thick] (A.center) to (D.center);
          \draw [thick] (B.center) to (E.center);
          \draw [thick] (C.center) to (F.center);
          \draw [thick] (A.center) to (B.center) to node [fill=white,transform shape,scale=.75] {$f$} (C.center);
          \draw [thick] (D.center) to (E.center) to node [fill=white,transform shape,scale=.75] {$f$} (F.center);
          \node (T) at (-1/2,0) {$t_2(\Psi)$};
        \end{tikzpicture}
        };
      \node (Q) at (2,0) {%
        \begin{tikzpicture}[baseline=(T.base)]%
          \node (A) at (-1,1/2) {$\bullet$};%
          \node (B) at (0,1/2) {$\bullet$};%
          \node (C) at (1,1/2) {$\bullet$};%
          \node (D) at (-1,-1/2) {$\bullet$};%
          \node (E) at (0,-1/2) {$\bullet$};%
          \node (F) at (1,-1/2) {$\bullet$};%
          \draw [thick] (A.center) to (D.center);%
          \draw [thick] (B.center) to (E.center);%
          \draw [thick] (C.center) to (F.center);%
          \draw [thick] (A.center) to (B.center) to node [fill=white,transform shape,scale=.75] {$f$} (C.center);%
          \draw [thick] (D.center) to (E.center) to node [fill=white,transform shape,scale=.75] {$f$} (F.center);%
          \node (T) at (-1/2,0) {$s_2(\Psi)$};%
        \end{tikzpicture}%
        };
      \draw [thick,->] (Q) to node [anchor=south] {$\Psi\#_1f$} (P);
    \end{tikzpicture}
  \end{align}
  where $(g,\Phi)\in\mor_1(\cC_4)\times_0\mor_3(\cC_4)$ and $(\Psi,f)\in\mor_3(\cC_4)\times_0\mor_1(\cC_4)$. 
  \item [$\bullet$] Whiskering by 2-morphisms: We draw it by attaching a box representing the 2-morphism to the boxes constituting the diagrams. For 3-morphisms, 
  \begin{align}
    \nonumber
    &
    \begin{tikzpicture}[baseline=-2pt]
      \node (A) at (1,0) {\tikz{\strsquare[thick,scale=.5]{1}{2}{$\alpha$,$\beta$}}};
      \node (B) at (-1,0) {\tikz{\strsquare[thick,scale=.5]{1}{2}{$\alpha'$,$\beta$}}};
      \draw [thick,->] (A) to node [anchor=south] {$\beta\#_2\Phi$} (B);
    \end{tikzpicture}, &
    &
    \begin{tikzpicture}[baseline=-2pt]
      \node (A) at (1,0) {\tikz{\strsquare[thick,scale=.5]{1}{2}{$\alpha$,$\beta'$}}};
      \node (B) at (-1,0) {\tikz{\strsquare[thick,scale=.5]{1}{2}{$\alpha$,$\beta$}}};
      \draw [thick,->] (A) to node [anchor=south] {$\Psi\#_2\alpha$} (B);
    \end{tikzpicture},
  \end{align}
  where $(\beta,\Phi)\in\mor_2(\cC_4)\times_1\mor_3(\cC_4),s_2(\Phi)=\alpha,t_2(\Phi)=\alpha'$ and $(\Psi,\alpha)\in\mor_3(\cC_4),s_2(\Psi)=\beta,t_2(\Psi)=\beta'$.
  \item  Peiffer lifting:\label{diag:Peiffer22}
  \[
    \begin{tikzpicture}[baseline=-2pt]
      \node (A) at (1.5,0) {\tikz[baseline=-2pt,scale=.5]{\strrect{2}{$\beta$,,,$\alpha$}}};
      \node (B) at (-1.5,0) {\tikz[baseline=-2pt,scale=.5]{\strrect{2}{,$\alpha$,$\beta$,}}};
      \draw [thick,->] (A) to node [anchor=south] {$\{\beta,\alpha\}$} (B);
    \end{tikzpicture}
  \]
  In the previously introduced simplified notation, the boxes above can be depicted as
  \begin{align*}
    \begin{bmatrix}\beta & \\ & \alpha \end{bmatrix} &= \tikz[baseline=-2pt]{\node (B) at (0,0) {\tikz[baseline=-2pt,scale=.5]{\strrect{2}{$\beta$,,,$\alpha$}}};}, & \begin{bmatrix} & \alpha \\ \beta & \end{bmatrix} &= \tikz[baseline=-2pt]{\node (B) at (0,0) {\tikz[baseline=-2pt,scale=.5]{\strrect{2}{,$\alpha$,$\beta$,}}};}.
  \end{align*}
  They represent the same thing, and we use whichever of the two is more convenient.
\end{enumerate}

These diagrams clarify the role of the relations \eqref{eq:P-decom1}--\eqref{eq:2-funct2}. For example, the relations \eqref{eq:P-decom1} and \eqref{eq:2-funct1} can be represented as the commutativity of the following diagrams, respectively:
\begin{align*}
  &
  \begin{tikzpicture}[baseline=(I.base)]
    \node (A) at (2,0) {\tikz[baseline=-2pt,scale=.33]{\strsquare{2}{3}{$\beta_1$,,$\beta_2$,,,$\alpha$}}};
    \node (B) at (0,-2) {\tikz[baseline=-2pt,scale=.33]{\strsquare{2}{3}{$\beta_1$,,,$\alpha$,$\beta_2$,}}};
    \node (C) at (-2,0) {\tikz[baseline=-2pt,scale=.33]{\strsquare{2}{3}{,$\alpha$,$\beta_1$,,$\beta_2$,}}};
    \draw [->] (A) to node [fill=white,transform shape,scale=.75] (D) {$\{\beta_2\#_2\beta_1,\alpha\}$} (C);
    \draw [->] (A) to node [anchor=north west,transform shape,scale=.66] {$\{\beta_1,\alpha\}\#_2(\beta_2\#_1s_1(\alpha))$} (B);
    \draw [->] (B) to node [anchor=north east,transform shape,scale=.66] {$(\beta_2\#_1t_1(\alpha))\#_2\{\beta_1,\alpha\}$} (C);
    \path (D) to node (I) {$=$} (B);
  \end{tikzpicture},
  &
  &
  \begin{tikzpicture}[baseline=-2pt]
    \node (A) at (1.5,0) {\tikz[baseline=-2pt,scale=.33]{\strrect{2}{$\beta$,,,$\alpha$}}};
    \node (B1) at (0,1.5) {\tikz[baseline=-2pt,scale=.33]{\strrect{2}{$\beta'$,,,$\alpha$}}};
    \node (B2) at (0,-1.5) {\tikz[baseline=-2pt,scale=.33]{\strrect{2}{,$\alpha$,$\beta$,}}};
    \node (C) at (-1.5,0) {\tikz[baseline=-2pt,scale=.33]{\strrect{2}{,$\alpha$,$\beta'$,}}};
    \draw [thick,->] (A) to node [anchor=south west,transform shape,scale=.75] {$\begin{bmatrix}\Psi & \\ & \alpha\end{bmatrix}$} (B1);
    \draw [thick,->] (A) to node [sloped,anchor=north] {$\{\beta,\alpha\}$} (B2);
    \draw [thick,->] (B1) to node [sloped,anchor=south] {$\{\beta',\alpha\}$} (C);
    \draw [thick,->] (B2) to node [anchor=north east,transform shape,scale=.75] {$\begin{bmatrix}& \alpha \\ \Psi &\end{bmatrix}$} (C);
    \path (B1) to node {$=$} (B2);
  \end{tikzpicture}
\end{align*}
The left diagram says that exchanging \tikz[baseline=-4pt,scale=.33]{\strsquare{2}{1}{,$\alpha$}} with \tikz[baseline=-8pt,scale=.33]{\strrect{2}{$\beta_1$,,$\beta_2$,}} all at once is the same as first exchanging \tikz[baseline=-4pt,scale=.33]{\strsquare{2}{1}{,$\alpha$}} with \tikz[baseline=-4pt,scale=.33]{\strsquare{2}{1}{$\beta_2$,}} and then with \tikz[baseline=-4pt,scale=.33]{\strsquare{2}{1}{$\beta_1$,}}. The right diagram says that the 3-morphisms $\Psi$ and $\{\beta,\alpha\}$ commute. These diagrams are ``twisted'' by the introduction of 4-morphisms in the Gray 4-categories below.

\subsection{Gray 4-Categories} \label{sec:Gray4}
In this section, we generalize the ordinary Gray 3-categories to Gray 4-categories. Along the way, we introduce their diagrammatic representation, as we did for Gray 3-categories. As mentioned in the Introduction, this generalization is suggested by the theory of semistrict braided monoidal 2-categories \cite{Kapranov:1994,Baez:1996,Crans:1998}. A Gray 4-category with a single object and a single 1-morphism can be thought of as a semistrict braided monoidal 2-category.

\begin{definition}
\label{def:Gray4}
A Gray 4-category $\cC_4$ consists of the following data:
\begin{enumerate}
  \item Collections of $i$-morphisms $\mor_i(\cC_4)$ for $i=0,1,2,3,4$. In particular, for $i=0$, $\mor_0(\cC_4)$ is also referred to as the collection of objects and denoted by $\obj(\cC_4)$.
  \begin{itemize}
    \item For $i=1,2,3,4$, each $i$-morphism $f$ has an $l$-source $s_l(f)\in \mor_l(\cC_4)$, $l=0,1,\ldots,i$, and an $l$-target $t_l(f)\in \mor_l(\cC_4)$. In particular, for an $i$-morphism $f$ with $(i-1)$-source $x$ and $(i-1)$-target $y$, we write $f:~x\to y$.
    \item The source and target maps $s_l,t_l:~\mor_i(\cC_4)\to\mor_l(\cC_4)$ satisfy the globular relations:
    \begin{align}
      \nonumber
      s_k\circ s_l &= s_k, & t_k\circ t_l &= t_k, \\
      \nonumber
      s_k\circ t_l &= s_k, & t_k\circ s_l &= t_k,
    \end{align}
    for all $k<l<i$. 
  \end{itemize}
  \item For each $i$-morphism $x$ ($i=0,1,2,3$), an identity $(i+1)$-morphism $\id_x:~x\to x$,
  \item Vertical compositions of $i$-morphisms $\#_i:~\mor_i(\cC)\times_{i-1} \mor_i(\cC) \to \mor_i(\cC)$ for $i=1,2,3,4$. For any two $i$-morphisms $f:~x\to y$ and $g:~y\to z$, we have an $i$-morphism $g\#_i f:~x\to z$. 
  \setcounter{temp}{\arabic{enumi}}
\end{enumerate}
For any two 2-morphisms $\alpha$ and $\beta$, the data 
\[
  \left(
    \begin{array}{l}
      \{\Phi|\Phi\in\mor_3(\cC_4),f:~\alpha\to \beta\}, \\
      \{A|A\in\mor_4(\cC_4),s_0(A) = \alpha,t_0(A) = \beta\},
    \end{array}
  \#_4,s_3,t_3\right)
\]
is required to form a category, denoted by $\Hom_1(\alpha,\beta)$. 
\begin{enumerate}
  \setcounter{enumi}{\thetemp}
  \item Whiskering by 3-morphisms: for any 3-morphism $\Phi:~\omega\to\alpha$ and $\Psi:~\beta\to\gamma$, there are functors 
  \begin{align}
    \nonumber
    -\#_3^{\text{r}}\Phi&:~\Hom_1(\alpha,\beta)\to \Hom_1(\omega,\beta) \\ 
    \nonumber
    \Psi\#_3^{\text{l}}-&:~\Hom_1(\alpha,\beta)\to \Hom_1(\alpha,\gamma)
  \end{align}
  satisfying identities analogous to those for whiskering by 1-morphisms in 2-categories \ref{def:item:C-is-V}.
  When no confusion can arise, we simply denote these by $\#_3$. 
  \setcounter{temp}{\arabic{enumi}}
\end{enumerate}
For any two 1-morphisms $f$ and $g$, the data 
\[
  \left(
    \begin{array}{l}
      \{\alpha|\alpha\in\mor_2(\cC_4),\alpha:~f\to g\}, \\
      \{\Phi|\Phi\in\mor_3(\cC_4),s_1(\Phi)=f,t_1(\Phi)=g\}, \\
      \{A|A\in\mor_4(\cC_4),s_1(A)=f,t_1(A)=g\},
    \end{array}
    \#_3,\#_4,s_2,t_2,s_3,t_3
  \right)
\]
is required to form a 2-category, denoted by $\Hom_2(f,g)$. 
\begin{enumerate}
  \setcounter{enumi}{\thetemp}
  \item Whiskering by 2-morphisms: for any 2-morphism $\alpha:~e\to f$ and $\beta:~g\to h$, there are 2-functors 
  \begin{align}
    \nonumber
    -\#_2^{\text{r}}\alpha&:~\Hom_2(f,g)\to \Hom_2(e,g) \\ 
    \beta\#_2^{\text{l}}-&:~\Hom_2(f,g)\to \Hom_2(f,h)
  \end{align}
  satisfying identities analogous to those for whiskering by 1-morphisms in 2-categories \ref{def:item:C-is-V}. When no confusion can arise, we simply denote these by $\#_2$. 
  \item Interchange 4-morphism (33-Peiffer lifting): for any two 3-morphisms $(\Psi,\Phi)\in\mor_3(\cC_4)\times_1\mor_3(\cC_4)$, there is a 4-isomorphism denoted by $\{\Psi,\Phi\}_{33}$, whose 2-source and 2-target are
  \begin{align}
    \nonumber
    s_3(\{\Psi,\Phi\}_{33}) &= (\Psi\#_2t_2(\Phi))\#_3(s_2(\Psi)\#_2\Phi), \\
    \nonumber
    t_3(\{\Psi,\Phi\}_{33}) &= (t_2(\Psi)\#_2\Phi)\#_3(\Psi\#_2s_2(\Phi)).
  \end{align}
  \setcounter{temp}{\arabic{enumi}}
\end{enumerate}
For any two objects $x$ and $y$, the data 
\[
  \left(
    \begin{array}{l}
      \{f|f\in\mor_1(\cC_4),f:~x\to y\}, \\
      \{\alpha|\alpha\in\mor_2(\cC_4),s_0(\alpha)=x,t_0(\alpha)=y\}, \\
      \{\Phi|\Phi\in\mor_3(\cC_4),s_0(\Phi)=x,t_0(\Phi)=y\}, \\
      \{A|A\in\mor_4(\cC_4),s_0(A)=x,t_0(A)=y\},
    \end{array}
    \#_2,\#_3,\#_4,s_i,t_i,\{-,-\}_{33}
  \right)
\]
is required to form a Gray 3-category, denoted by $\Hom_3(x,y)$. 
\begin{enumerate}
  \setcounter{enumi}{\thetemp}
  \item Whiskering by 1-morphisms: for any 1-morphism $f:~w\to x$ and $g:~y\to z$, there are Gray 3-functors 
  \begin{align}
    \nonumber
    -\#_1^{\text{r}}f&:~\Hom_3(x,y)\to \Hom_3(w,y) \\
    \nonumber
    g\#_1^{\text{l}}-&:~\Hom_3(x,y)\to \Hom_3(x,z)
  \end{align}
  satisfying identities analogous to those for whiskering by 1-morphisms in 2-categories \ref{def:item:C-is-V}. When no confusion can arise, we simply denote these by $\#_1$. 
  \item Interchange 3-morphism (or 22-Peiffer lifting): there is a 3-isomorphism $\{\beta,\alpha\}_{22}$ for any two 2-morphisms $(\beta,\alpha)\in\mor_2(\cC_4)\times_0\mor_2(\cC_4)$, whose 2-source and 2-target are
  \begin{align}
    \nonumber
    s_2(\{\beta,\alpha\}_{22}) &= (\beta\#_1t_1(\alpha))\#_2(s_1(\beta)\#_1\alpha), \\
    \nonumber
    t_2(\{\beta,\alpha\}_{22}) &= (t_1(\beta)\#_1\alpha)\#_2(\beta\#_1s_1(\alpha)).
  \end{align}
  \item Twist of $\{-\#_2-,-\}_{22}$ (left-Homanian): there is a 4-isomorphism $\{\beta_2,\beta_1|\alpha\}$ for morphisms $(\beta_2,\beta_1,\alpha) \in (\mor_2(\cC_4)\times_1\mor_2(\cC_4))\times_0\mor_2(\cC_4)$, whose 3-source and 3-target are
  \begin{align}
    \nonumber
    s_3(\{\beta_2,\beta_1|\alpha\}) &= \{\beta_2\#_2\beta_1,\alpha\}_{22} \\
    \nonumber
    t_3(\{\beta_2,\beta_1|\alpha\}) &= [(\beta_2\#_1t_1(\alpha))\#_2\{\beta_1,\alpha\}_{22}]\#_3[\{\beta_2,\alpha\}_{22}\#_2(\beta_1\#_1s_1(\alpha))]
  \end{align}
  \item Twist of $\{-,-\#_2-\}_{22}$ (right-Homanian): there is a 4-isomorphism $\{\beta|\alpha_2,\alpha_1\}$ for morphisms $(\beta,\alpha_2,\alpha_1) \in \mor_2(\cC_4)\times_0(\mor_2(\cC_4)\times_1\mor_2(\cC_4))$, whose 3-source and 3-target are
  \begin{align}
    \nonumber
    s_3(\{\beta|\alpha_2,\alpha_1\}) &= \{\beta,\alpha_2\#_2\alpha_1\}_{22} \\
    \nonumber
    t_3(\{\beta|\alpha_2,\alpha_1\}) &= [\{\beta,\alpha_2\}_{22}\#_2(s_1(\beta)\#_1\alpha_1)]\#_3[(t_1(\beta)\#_1\alpha_2)\#_2\{\beta,\alpha_1\}_{22}]
  \end{align}
  \item 32-lifting: there is a 4-isomorphism $\{\Psi,\alpha\}_{32}$ for morphisms $(\Psi,\alpha) \in \mor_3(\cC_4)\times_0\mor_2(\cC_4)$, whose 3-source and 3-target are
  \begin{align}
    \nonumber
    s_3(\{\Psi,\alpha\}_{32}) &= \{t_2(\Psi),\alpha\}_{22}\#_3[(t_1(\Psi)\#_1\alpha)\#_2(\Psi\#_1s_1(\alpha))] \\
    \nonumber
    t_3(\{\Psi,\alpha\}_{32}) &= [(\Psi\#_1t_1(\alpha))\#_2(s_1(\Psi)\#_1\alpha)]\#_3\{s_2(\Psi),\alpha\}_{22}
  \end{align}
  \item 23-lifting: there is a 4-isomorphism $\{\beta,\Phi\}_{23}$ for morphisms $(\beta,\Phi) \in \mor_2(\cC_4)\times_0\mor_3(\cC_4)$, whose 3-source and 3-target are
  \begin{align}
    \nonumber
    s_3(\{\beta,\Phi\}_{23}) &= \{\beta,t_2(\Phi)\}_{22}\#_3[(t_1(\beta)\#_1\Phi)\#_2(\beta\#_1s_1(\Phi))] \\
    \nonumber
    t_3(\{\beta,\Phi\}_{23}) &= [(\beta\#_1t_1(\Phi))\#_2(s_1(\beta)\#_1\Phi)]\#_3\{\beta,s_2(\Phi)\}_{22}
  \end{align}
  \setcounter{temp}{\arabic{enumi}}
\end{enumerate}
When it is clear which lifting is used from the context, subscripts are omitted. 

Before proceeding to the coherence conditions, it is helpful to introduce the diagrammatic representation of the morphisms. These are essentially the same as for Gray 3-categories, except that Gray 4-categories now have 4-morphisms. In addition to the diagrams introduced in Section \ref{sec:diag-Gray3}, we have
\begin{itemize}
  \item For any 4-morphism $A\in \mor_4$, we draw it as a double arrow connecting two 3-morphisms:
  \begin{equation}
    \label{eq:Gray4-Diagram-4-Mor}
    \begin{tikzpicture}[baseline=(T.base)]
      \node (A) at (1.5,0) {\tikz{\strrect[thick,transform shape,scale=.75]{1}{$s_2(A)$}}};
      \node (B) at (-1.5,0) {\tikz{\strrect[thick,transform shape,scale=.75]{1}{$t_2(A)$}}};
      \draw [thick,->,bend right=1cm] (A) to node (C) [fill=white,transform shape,scale=.75] {$s_3(A)$} (B);
      \draw [thick,->,bend left=1cm] (A) to node (D) [fill=white,transform shape,scale=.75] {$t_3(A)$} (B);
      \path (C) to node {$\Downarrow A$} (D);
    \end{tikzpicture}
  \end{equation}
  or a diagram given by compositions introduced later. 
  \item Vertical composition of 4-morphisms: We draw it as a composition of two double arrows:
  \[
    \begin{tikzpicture}[baseline=(T.base)]
      \node (A) at (1.5,0) {\tikz{\strrect[thick,scale=.5]{1}{$\alpha$}}};
      \node (B) at (-1.5,0) {\tikz{\strrect[thick,scale=.5]{1}{$\beta$}}};
      \draw [thick,->,bend right=1.5cm] (A) to node (C1) {} (B);
      \draw [thick,->] (A) to node (C2) {} (B);
      \draw [thick,->,bend left=1.5cm] (A) to node (C3) {} (B);
      \path (C1) to node {$\Downarrow A$} (C2);
      \path (C2) to node {$\Downarrow B$} (C3);
    \end{tikzpicture}
    =
    \begin{tikzpicture}[baseline=(T.base)]
      \node (A) at (1.5,0) {\tikz{\strrect[thick,scale=.5]{1}{$\alpha$}}};
      \node (B) at (-1.5,0) {\tikz{\strrect[thick,scale=.5]{1}{$\beta$}}};
      \draw [thick,->,bend right=1.5cm] (A) to node (C1) {} (B);
      \draw [thick,->,bend left=1.5cm] (A) to node (C3) {} (B);
      \path (C1) to node {$\Downarrow B\#_4A$} (C3);
    \end{tikzpicture}
  \]
  where $(B,A)\in\mor_4(\cC_4)\times_3\mor_3(\cC_4)$, $s_2(A) = s_2(B) = \alpha$ and $t_2(A) = t_2(B) = \beta$. 
  \item Whiskering by 1-morphisms: For 4-morphisms,
  \begin{align}
    \nonumber
    &
    \begin{tikzpicture}[baseline=-2pt]
      \node (P) at (-2,0) {%
        \begin{tikzpicture}%
          \node (A) at (-1,1/2) {$\bullet$};%
          \node (B) at (0,1/2) {$\bullet$};%
          \node (C) at (1,1/2) {$\bullet$};%
          \node (D) at (-1,-1/2) {$\bullet$};%
          \node (E) at (0,-1/2) {$\bullet$};%
          \node (F) at (1,-1/2) {$\bullet$};%
          \draw [thick] (A.center) to (D.center);%
          \draw [thick] (B.center) to (E.center);%
          \draw [thick] (C.center) to (F.center);%
          \draw [thick] (A.center) to node [fill=white,transform shape,scale=.75] {$g$} (B.center) to (C.center);%
          \draw [thick] (D.center) to node [fill=white,transform shape,scale=.75] {$g$} (E.center) to (F.center);%
          \node at (1/2,0) {$t_2(A)$};%
        \end{tikzpicture}%
        };
      \node (Q) at (2,0) {%
        \begin{tikzpicture}%
          \node (A) at (-1,1/2) {$\bullet$};%
          \node (B) at (0,1/2) {$\bullet$};%
          \node (C) at (1,1/2) {$\bullet$};%
          \node (D) at (-1,-1/2) {$\bullet$};%
          \node (E) at (0,-1/2) {$\bullet$};%
          \node (F) at (1,-1/2) {$\bullet$};%
          \draw [thick] (A.center) to (D.center);%
          \draw [thick] (B.center) to (E.center);%
          \draw [thick] (C.center) to (F.center);%
          \draw [thick] (A.center) to node [fill=white,transform shape,scale=.75] {$g$} (B.center) to (C.center);%
          \draw [thick] (D.center) to node [fill=white,transform shape,scale=.75] {$g$} (E.center) to (F.center);%
          \node at (1/2,0) {$s_2(A)$};%
        \end{tikzpicture}%
        };
      \draw [thick,->,bend right=1cm] (Q) to node (R) {} (P);
      \draw [thick,->,bend left=1cm] (Q) to node (S) {} (P);
      \path (R) to node {$\Downarrow g\#_1A$} (S);
    \end{tikzpicture}, &
    &
    \begin{tikzpicture}[baseline=-2pt]
      \node (P) at (-2,0) {%
        \begin{tikzpicture}%
          \node (A) at (-1,1/2) {$\bullet$};%
          \node (B) at (0,1/2) {$\bullet$};%
          \node (C) at (1,1/2) {$\bullet$};%
          \node (D) at (-1,-1/2) {$\bullet$};%
          \node (E) at (0,-1/2) {$\bullet$};%
          \node (F) at (1,-1/2) {$\bullet$};%
          \draw [thick] (A.center) to (D.center);%
          \draw [thick] (B.center) to (E.center);%
          \draw [thick] (C.center) to (F.center);%
          \draw [thick] (A.center) to (B.center) to node [fill=white,transform shape,scale=.75] {$f$} (C.center);%
          \draw [thick] (D.center) to (E.center) to node [fill=white,transform shape,scale=.75] {$f$} (F.center);%
          \node at (-1/2,0) {$t_2(B)$};%
        \end{tikzpicture}%
        };
      \node (Q) at (2,0) {%
        \begin{tikzpicture}%
          \node (A) at (-1,1/2) {$\bullet$};%
          \node (B) at (0,1/2) {$\bullet$};%
          \node (C) at (1,1/2) {$\bullet$};%
          \node (D) at (-1,-1/2) {$\bullet$};%
          \node (E) at (0,-1/2) {$\bullet$};%
          \node (F) at (1,-1/2) {$\bullet$};%
          \draw [thick] (A.center) to (D.center);%
          \draw [thick] (B.center) to (E.center);%
          \draw [thick] (C.center) to (F.center);%
          \draw [thick] (A.center) to (B.center) to node [fill=white,transform shape,scale=.75] {$f$} (C.center);%
          \draw [thick] (D.center) to (E.center) to node [fill=white,transform shape,scale=.75] {$f$} (F.center);%
          \node at (-1/2,0) {$s_2(B)$};%
        \end{tikzpicture}%
        };
      \draw [thick,->,bend right=1cm] (Q) to node (R) {} (P);
      \draw [thick,->,bend left=1cm] (Q) to node (S) {} (P);
      \path (R) to node {$\Downarrow B\#_1f$} (S);
    \end{tikzpicture}
  \end{align}
  where $(g,A)\in \mor_1(\cC_4)\times_0\mor_4(\cC_4)$ and $(B,f)\in\mor_4(\cC_4)\times_0\mor_1(\cC_4)$. 
  \item [$\bullet$] Whiskering by 2-morphisms: For 4-morphisms, 
  \begin{align}
    \nonumber
    &
    \begin{tikzpicture}[baseline=-2pt]
      \node (A) at (1.5,0) {\tikz{\strsquare[thick,scale=.5]{1}{2}{$\alpha$,$\beta$}}};
      \node (B) at (-1.5,0) {\tikz{\strsquare[thick,scale=.5]{1}{2}{$\alpha'$,$\beta$}}};
      \draw [thick,->,bend right=1.5cm] (A) to node (C) {} (B);
      \draw [thick,->,bend left=1.5cm] (A) to node (D) {} (B);
      \path (C) to node {$\Downarrow \beta\#_2A$} (D);
    \end{tikzpicture}, &
    &
    \begin{tikzpicture}[baseline=-2pt]
      \node (A) at (1.5,0) {\tikz{\strsquare[thick,scale=.5]{1}{2}{$\alpha$,$\beta$}}};
      \node (B) at (-1.5,0) {\tikz{\strsquare[thick,scale=.5]{1}{2}{$\alpha$,$\beta'$}}};
      \draw [thick,->,bend right=1.5cm] (A) to node (C) {} (B);
      \draw [thick,->,bend left=1.5cm] (A) to node (D) {} (B);
      \path (C) to node {$\Downarrow B\#_2\alpha$} (D);
    \end{tikzpicture},
  \end{align}
  where $(\beta,A)\in \mor_2(\cC_4)\times_1\mor_4(\cC_4),s_2(A)=\alpha,t_2(A)=\alpha'$ and $(B,\alpha)\in \mor_4(\cC_4)\times_1\mor_2(\cC_4),s_2(B)=\beta,t_2(B)=\beta'$. 
  \item [$\bullet$] Whiskering by 3-morphisms: We draw it by attaching an arrow representing the 3-morphism to the ends of arrows in the diagram:
  \begin{align}
    \nonumber
    \begin{tikzpicture}[baseline=(T.base)]
      \node (A) at (2,0) {\tikz{\strrect[thick,scale=.5]{1}{$\alpha$}}};
      \node (B) at (0,0) {\tikz{\strrect[thick,scale=.5]{1}{$\beta$}}};
      \node (C) at (-2,0) {\tikz{\strrect[thick,scale=.5]{1}{$\gamma$}}};
      \draw [thick,->] (B) to node [anchor=south] {$\Psi$} (C);
      \draw [thick,->,bend right=1.5cm] (A) to node (C) {} (B);
      \draw [thick,->,bend left=1.5cm] (A) to node (D) {} (B);
      \path (C) to node {$\Downarrow A$} (D);
    \end{tikzpicture}
    &=
    \begin{tikzpicture}[baseline=(T.base)]
      \node (A) at (1.5,0) {\tikz{\strrect[thick,scale=.5]{1}{$\alpha$}}};
      \node (B) at (-1.5,0) {\tikz{\strrect[thick,scale=.5]{1}{$\gamma$}}};
      \draw [thick,->,bend right=1.5cm] (A) to node (C) {} (B);
      \draw [thick,->,bend left=1.5cm] (A) to node (D) {} (B);
      \path (C) to node {$\Downarrow\Psi\#_3A$} (D);
    \end{tikzpicture}, \\
    \nonumber
    \begin{tikzpicture}[baseline=(T.base)]
      \node (A) at (2,0) {\tikz{\strrect[thick,scale=.5]{1}{$\alpha$}}};
      \node (B) at (0,0) {\tikz{\strrect[thick,scale=.5]{1}{$\beta$}}};
      \node (C) at (-2,0) {\tikz{\strrect[thick,scale=.5]{1}{$\gamma$}}};
      \draw [thick,->] (A) to node [anchor=south] {$\Phi$} (B);
      \draw [thick,->,bend right=1.5cm] (B) to node (A) {} (C);
      \draw [thick,->,bend left=1.5cm] (B) to node (D) {} (C);
      \path (A) to node {$\Downarrow B$} (D);
    \end{tikzpicture}
    &=
    \begin{tikzpicture}[baseline=(T.base)]
      \node (A) at (1.5,0) {\tikz{\strrect[thick,scale=.5]{1}{$\alpha$}}};
      \node (B) at (-1.5,0) {\tikz{\strrect[thick,scale=.5]{1}{$\gamma$}}};
      \draw [thick,->,bend right=1.5cm] (A) to node (C) {} (B);
      \draw [thick,->,bend left=1.5cm] (A) to node (D) {} (B);
      \path (C) to node {$\Downarrow B\#_3\Phi$} (D);
    \end{tikzpicture},
  \end{align}
  where $(\Psi,A)\in\mor_3(\cC_4)\times_2\mor_4(\cC_4),\Psi:~\beta\to\gamma,s_2(A)=\alpha,t_2(A)=\beta$ and $(B,\Phi)\in\mor_4(\cC_4)\times_2\mor_3(\cC_4),s_2(B)=\beta,t_2(B)=\gamma,\Phi:~\alpha\to\beta$. 
\end{itemize}
Using these diagrams, the above liftings can be represented as follows:
\begin{itemize}
  \item [$\bullet$] 32-Peiffer lifting and 23-Peiffer liftings:
  \begin{align}
    \nonumber
    &
    \begin{tikzpicture}[baseline=-2pt]
      \node (A) at (1.5,0) {\tikz[baseline=-2pt,scale=.33]{\strrect{2}{$\beta$,,,$\alpha$}}};
      \node (B1) at (0,1.5) {\tikz[baseline=-2pt,scale=.33]{\strrect{2}{$\beta'$,,,$\alpha$}}};
      \node (B2) at (0,-1.5) {\tikz[baseline=-2pt,scale=.33]{\strrect{2}{,$\alpha$,$\beta$,}}};
      \node (C) at (-1.5,0) {\tikz[baseline=-2pt,scale=.33]{\strrect{2}{,$\alpha$,$\beta'$,}}};
      \draw [thick,->] (A) to node [anchor=south west,transform shape,scale=.75] {$\begin{bmatrix}\Psi & \\ & \alpha\end{bmatrix}$} (B1);
      \draw [thick,->] (A) to node [sloped,anchor=north] {$\{\beta,\alpha\}$} (B2);
      \draw [thick,->] (B1) to node [sloped,anchor=south] {$\{\beta',\alpha\}$} (C);
      \draw [thick,->] (B2) to node [anchor=north east,transform shape,scale=.75] {$\begin{bmatrix} & \alpha \\ \Psi &\end{bmatrix}$} (C);
      \path (B1) to node {$\Downarrow\{\Psi,\alpha\}$} (B2);
    \end{tikzpicture}, &
    &
    \begin{tikzpicture}[baseline=-2pt]
      \node (A) at (1.5,0) {\tikz[baseline=-2pt,scale=.33]{\strrect{2}{$\beta$,,,$\alpha$}}};
      \node (B1) at (0,1.5) {\tikz[baseline=-2pt,scale=.33]{\strrect{2}{$\beta$,,,$\alpha'$}}};
      \node (B2) at (0,-1.5) {\tikz[baseline=-2pt,scale=.33]{\strrect{2}{,$\alpha$,$\beta$,}}};
      \node (C) at (-1.5,0) {\tikz[baseline=-2pt,scale=.33]{\strrect{2}{,$\alpha'$,$\beta$,}}};
      \draw [thick,->] (A) to node [anchor=south west,transform shape,scale=.75] {$\begin{bmatrix}\beta & \\ & \Phi\end{bmatrix}$} (B1);
      \draw [thick,->] (A) to node [sloped,anchor=north] {$\{\beta,\alpha\}$} (B2);
      \draw [thick,->] (B1) to node [sloped,anchor=south] {$\{\beta',\alpha\}$} (C);
      \draw [thick,->] (B2) to node [anchor=north east,transform shape,scale=.75] {$\begin{bmatrix} & \Phi\\\beta & \end{bmatrix}$} (C);
      \path (B1) to node {$\Downarrow \{\beta,\Phi\}$} (B2);
    \end{tikzpicture}
  \end{align}
  \item [$\bullet$] 33-Peiffer lifting:
  \[
    \begin{tikzpicture}[baseline=-2pt]
      \node (A) at (1.5,0) {\tikz[baseline=-2pt,scale=.4]{\strsquare{1}{2}{$\alpha$,$\beta$}}};
      \node (B1) at (0,1.5) {\tikz[baseline=-2pt,scale=.4]{\strsquare{1}{2}{$\alpha$,$\beta'$}}};
      \node (B2) at (0,-1.5) {\tikz[baseline=-2pt,scale=.4]{\strsquare{1}{2}{$\alpha'$,$\beta$}}};
      \node (C) at (-1.5,0) {\tikz[baseline=-2pt,scale=.4]{\strsquare{1}{2}{$\alpha'$,$\beta'$}}};
      \draw [->] (A) to node [anchor=south,sloped,transform shape,scale=.75] {$\Psi\#_2s_2(\Phi)$} (B1);
      \draw [->] (B1) to node [anchor=south,sloped,transform shape,scale=.75] {$t_2(\Psi)\#_2\Phi$} (C);
      \draw [->] (A) to node [anchor=north,sloped,transform shape,scale=.75] {$s_2(\Psi)\#_2\Phi$} (B2);
      \draw [->] (B2) to node [anchor=north,sloped,transform shape,scale=.75] {$\Psi\#_2t_2(\Phi)$} (C);
      \path (B1) to node {$\Downarrow \{\Psi,\Phi\}$} (B2);
    \end{tikzpicture}
  \]
  \item [$\bullet$] left- and right-Homanian:
  \begin{align}
    \nonumber
    &
    \begin{tikzpicture}[baseline=-2pt]
      \node (A) at (2,0) {\tikz[baseline=-2pt,scale=.33]{\strsquare{2}{3}{$\beta_1$,,$\beta_2$,,,$\alpha$}}};
      \node (B) at (0,-2) {\tikz[baseline=-2pt,scale=.33]{\strsquare{2}{3}{$\beta_1$,,,$\alpha$,$\beta_2$,}}};
      \node (C) at (-2,0) {\tikz[baseline=-2pt,scale=.33]{\strsquare{2}{3}{,$\alpha$,$\beta_1$,,$\beta_2$,}}};
      \draw [->] (A) to node [fill=white,transform shape,scale=.75] (D) {$\{\beta_2\#_2\beta_1,\alpha\}$} (C);
      \draw [->] (A) to node [anchor=north,sloped,transform shape,scale=.75] {$\Phi_1$} (B);
      \draw [->] (B) to node [anchor=north,sloped,transform shape,scale=.75] {$\Phi_2$} (C);
      \path (D) to node [transform shape,scale=.75] {$\Downarrow\{\beta_2,\beta_1|\alpha\}$} (B);
    \end{tikzpicture}, &
    &
    \begin{tikzpicture}[baseline=-2pt]
      \node (A) at (2,0) {\tikz[baseline=-2pt,scale=.33]{\strsquare{2}{3}{$\beta$,,,$\alpha_1$,,$\alpha_2$}}};
      \node (B) at (0,-2) {\tikz[baseline=-2pt,scale=.33]{\strsquare{2}{3}{,$\alpha_1$,$\beta$,,,$\alpha_2$}}};
      \node (C) at (-2,0) {\tikz[baseline=-2pt,scale=.33]{\strsquare{2}{3}{,$\alpha_1$,,$\alpha_2$,$\beta$,}}};
      \draw [->] (A) to node [fill=white,transform shape,scale=.75] (D) {$\{\beta,\alpha_2\#_2\alpha_1\}$} (C);
      \draw [->] (A) to node [anchor=north,sloped,transform shape,scale=.75] {$\Psi_1$} (B);
      \draw [->] (B) to node [anchor=north,sloped,transform shape,scale=.75] {$\Psi_2$} (C);
      \path (D) to node [transform shape,scale=.75] {$\Downarrow\{\beta|\alpha_2,\alpha_1\}$} (B);
    \end{tikzpicture}
  \end{align}
  where
  \begin{align}
    \nonumber
    \Phi_1 &= \{\beta_1,\alpha\}\#_2(\beta_2\#_1s_1(\alpha)), & \Psi_1 &= (t_1(\beta)\#_1\alpha_2)\#_2\{\beta,\alpha_1\}, \\
    \nonumber
    \Phi_2 &= (\beta_2\#_1t_1(\alpha))\#_2\{\beta_1,\alpha\}, & \Psi_2 &= \{\beta,\alpha_2\}\#_2(s_1(\beta)\#_1\alpha_2).
  \end{align}
\end{itemize}

Returning to the definition, we require the following axioms:
\begin{enumerate}
  \setcounter{enumi}{\thetemp}
  \item Associativity: Vertical composition of 1-morphisms is associative, 
  \item Unit laws: For any 1-morphism $f:~x\to y$, the identities satisfy $f\#_1\id_x = \id_y\#_1f = f$, 
  \item \label{ax:Gray4-unit}Compatibility with unit:
    \begin{align}
      \{y,\id_f\} &= y\#_1f, &
      \{\id_g,x\} &= g\#_1x, \\
      \{\beta|\alpha,\id_{s_1(\alpha)}\} &= \id_{\{\beta,\alpha\}}, &
      \{\id_{t_1(\beta)},\beta|\alpha\} &= \id_{\{\beta,\alpha\}}, \\
      \{\beta|\id_{t_1(\alpha)},\alpha\} &= \id_{\{\beta,\alpha\}}, &
      \{\beta,\id_{s_1(\beta)}|\alpha\} &= \id_{\{\beta,\alpha\}}, \\
      \{\id_g|\alpha_2,\alpha_1\} &= \id_{\id_{g\#_1(\alpha_2\#_2\alpha_1)}}, &
      \{\beta_2,\beta_1|\id_f\} &= \id_{\id_{(\beta_2\#_2\beta_1)\#_1f}},
    \end{align}
  where $x$ and $y$ are 2-morphisms or 3-morphisms.
  \item \label{ax:Gray4-LiftingWhiskering}Compatibility with $\#_1$: The compatibility conditions between the liftings and 1-morphisms are\footnote{Since whiskering by 1-morphisms is a Gray functor, the compatibility with 33-Peiffer lifting and $\#_1$ is determined, i.e., $g\#_1\{\Psi,\Phi\}_{33} = \{g\#_1\Psi,g\#_1\Phi\}_{33}$ and $\{\Psi,\Phi\}_{33}\#_1f = \{\Psi\#_1f,\Phi\#_1f\}_{33}$.}
  \begin{align}
    \nonumber
    \{h\#_1\beta,\alpha\}_{22} &= h\#_1\{\beta,\alpha\}_{22}, \\
    \nonumber
    \{\beta\#_1g,\alpha\}_{22} &= \{\beta,g\#_1\alpha\}_{22}, \\
    \nonumber
    \{\beta,\alpha\#_1f\}_{22} &= \{\beta,\alpha\}_{22}\#_1f,
  \end{align}
  \begin{align}
    \nonumber
    \{f\#_1\beta',f\#_1\beta|\alpha\} &= f\#_1\{\beta',\beta|\alpha\}, &
    \{f\#_1\beta'|\alpha'\alpha\} &= f\#_1\{\beta|\alpha',\alpha\}, \\
    \nonumber
    \{\beta'\#_1f,\beta\#_1f|\alpha\} &= \{\beta',\beta|f\#_1\alpha\}, &
    \{\beta'\#_1f|\alpha',\alpha\} &= \{\beta'|f\#_1\alpha',f\#_1\alpha\}, \\
    \nonumber
    \{\beta',\beta|\alpha\#_1f\} &= \{\beta',\beta|\alpha\}\#_1f, & 
    \{\beta'|\alpha'\#_1f,\alpha\#_1f\} &= \{\beta'|\alpha',\alpha\}\#_1f,
  \end{align}
  \begin{align}
    \nonumber
    \{f\#_1\Psi,\alpha\}_{32} &= f\#_1\{\Psi,\alpha\}_{32}, &
    \{f\#_1\beta,\Phi\}_{23} &= f\#_1\{\beta,\Phi\}_{23}, \\
    \nonumber
    \{\Psi\#_1f,\alpha\}_{32} &= \{\Psi,f\#_1\alpha\}_{32}, &
    \{\beta\#_1f,\Phi\}_{23} &= \{\beta,f\#_1\Phi\}_{23} \\
    \nonumber
    \{\Psi,\alpha\#_1f\}_{32} &= \{\Psi,\alpha\}_{32}\#_1f, &
    \{\beta,\Phi\#_1f\}_{23} &= \{\beta,\Phi\}_{23}\#_1f,
  \end{align}
  where sources and targets are assumed to be compatible.
  \item \label{ax:Gray4-HomanRels}left-Homanian relation: 
  \[
    \begin{tikzpicture}[baseline=-2pt]
      \node (P) at (9/4,0) {\tikz[scale=0.45]{\strsquare{2}{4}{$\alpha_y$,,$\alpha_z$,,$\alpha_w$,,,$\alpha_x$}}};
      \node (Q) at (3/4,0) {\tikz[scale=0.45]{\strsquare{2}{4}{$\alpha_y$,,$\alpha_z$,,,$\alpha_x$,$\alpha_w$,}}};
      \node (R) at (-3/4,0) {\tikz[scale=0.45]{\strsquare{2}{4}{$\alpha_y$,,,$\alpha_x$,$\alpha_z$,,$\alpha_w$,}}};
      \node (S) at (-9/4,0) {\tikz[scale=0.45]{\strsquare{2}{4}{,$\alpha_x$,$\alpha_y$,,$\alpha_z$,,$\alpha_w$,}}};
      \draw [->] (P) to (Q);
      \draw [->] (Q) to (R);
      \draw [->] (R) to (S);
      \draw [->,bend right=1.5cm] (P.north west) to (R.north east);
      \draw [->,bend right=1.5cm] (P.north west) to (S.north east);
      \node at (R.north) [anchor=south,transform shape,scale=.66] {$\Downarrow A_1$};
      \node at (Q.north) [transform shape,scale=.66] {$\Downarrow A_2$};
    \end{tikzpicture}
    =
    \begin{tikzpicture}[baseline=-2pt]
      \node (P) at (9/4,0) {\tikz[scale=0.45]{\strsquare{2}{4}{$\alpha_y$,,$\alpha_z$,,$\alpha_w$,,,$\alpha_x$}}};
      \node (Q) at (3/4,0) {\tikz[scale=0.45]{\strsquare{2}{4}{$\alpha_y$,,$\alpha_z$,,,$\alpha_x$,$\alpha_w$,}}};
      \node (R) at (-3/4,0) {\tikz[scale=0.45]{\strsquare{2}{4}{$\alpha_y$,,,$\alpha_x$,$\alpha_z$,,$\alpha_w$,}}};
      \node (S) at (-9/4,0) {\tikz[scale=0.45]{\strsquare{2}{4}{,$\alpha_x$,$\alpha_y$,,$\alpha_z$,,$\alpha_w$,}}};
      \draw [->] (P) to (Q);
      \draw [->] (Q) to (R);
      \draw [->] (R) to (S);
      \draw [->,bend right=1.5cm] (Q.north west) to (S.north east);
      \draw [->,bend right=1.5cm] (P.north west) to (S.north east);
      \node at (R.north) [transform shape,scale=.66] {$\Downarrow A_4$};
      \node at (Q.north) [anchor=south,transform shape,scale=.66] {$\Downarrow A_3$};
    \end{tikzpicture}
  \]
  where
  \begin{align*}
    A_1 &= \{\alpha_w\#_2\alpha_z,\alpha_y|\alpha_x\}, &
    A_3 &= \{\alpha_w,\alpha_z\#_2\alpha_y|\alpha_x\}, \\
    A_2 &= \{\alpha_w,\alpha_z|\alpha_x\}\#_2(\alpha_y\#_1s_1(\alpha_x)), &
    A_4 &= (\alpha_w\#_1t_1(\alpha))\#_2\{\alpha_z,\alpha_y|\alpha_x\}.
  \end{align*}
  In terms of the formula, this can be written as \eqref{eq:left-Hom}.
  \item right-Homanian relation: The analogous relation to the diagram above holds; see \eqref{eq:right-Hom}.
  \item \label{ax:Gray4-KV}KV-polytope: 
  \[
    \begin{tikzpicture}[baseline=-2pt]
      \node (A) at (2.9,0) {\tikz[baseline=-2pt,scale=.4]{\strsquare{2}{4}{$\alpha_z$,,$\alpha_w$,,,$\alpha_x$,,$\alpha_y$}}};
      \node (B) at (1.45,0) {\tikz[baseline=-2pt,scale=.4]{\strsquare{2}{4}{$\alpha_z$,,,$\alpha_x$,$\alpha_w$,,,$\alpha_y$}}};
      \node (C1) at (0,1.125) {\tikz[baseline=-2pt,scale=.4]{\strsquare{2}{4}{$\alpha_z$,,,$\alpha_x$,,$\alpha_y$,$\alpha_w$,}}};
      \node (C2) at (0,-1.125) {\tikz[baseline=-2pt,scale=.4]{\strsquare{2}{4}{,$\alpha_x$,$\alpha_z$,,$\alpha_w$,,,$\alpha_y$}}};
      \node (D) at (-1.45,0) {\tikz[baseline=-2pt,scale=.4]{\strsquare{2}{4}{,$\alpha_x$,$\alpha_z$,,,$\alpha_y$,$\alpha_w$,}}};
      \node (E) at (-2.9,0) {\tikz[baseline=-2pt,scale=.4]{\strsquare{2}{4}{,$\alpha_x$,,$\alpha_y$,$\alpha_z$,,$\alpha_w$,}}};
      \draw [->] (A) to (B);
      \draw [->] (B) to (C1);
      \draw [->] (C1) to (D);
      \draw [->] (B) to (C2);
      \draw [->] (C2) to (D);
      \draw [->] (D) to (E);
      \draw [->,bend right=1.25cm] (A) to node (P) {} (C1);
      \draw [->,bend right=1.25cm] (C1) to node (Q) {} (E);
      \draw [->,bend right=2cm] (A.north) to node (R) {} (E.north);
      \path (C1.south) to node [transform shape,scale=.75] {$\Downarrow A_4$} (C2.north);
      \path (Q) to node [transform shape,scale=.75] {$\Downarrow A_3$} (D);
      \path (P) to node [transform shape,scale=.75] {$\Downarrow A_2$} (B);
      \path (R) to node [transform shape,scale=.75] {$\Downarrow A_1$} (C1);
    \end{tikzpicture}
    =
    \begin{tikzpicture}[baseline=-2pt]
      \node (A) at (2.9,0) {\tikz[baseline=-2pt,scale=.4]{\strsquare{2}{4}{$\alpha_z$,,$\alpha_w$,,,$\alpha_x$,,$\alpha_y$}}};
      \node (B) at (1.45,0) {\tikz[baseline=-2pt,scale=.4]{\strsquare{2}{4}{$\alpha_z$,,,$\alpha_x$,$\alpha_w$,,,$\alpha_y$}}};
      \node (C) at (0,0) {\tikz[baseline=-2pt,scale=.4]{\strsquare{2}{4}{,$\alpha_x$,$\alpha_z$,,$\alpha_w$,,,$\alpha_y$}}};
      \node (D) at (-1.45,0) {\tikz[baseline=-2pt,scale=.4]{\strsquare{2}{4}{,$\alpha_x$,$\alpha_z$,,,$\alpha_y$,$\alpha_w$,}}};
      \node (E) at (-2.9,0) {\tikz[baseline=-2pt,scale=.4]{\strsquare{2}{4}{,$\alpha_x$,,$\alpha_y$,$\alpha_z$,,$\alpha_w$,}}};
      \draw [->] (A) to (B);
      \draw [->] (B) to (C);
      \draw [->] (C) to (D);
      \draw [->] (D) to (E);
      \draw [->,bend right=1.6cm] (A.north) to node (F) {} (E.north);
      \draw [->,bend right=1.5cm] (A.north west) to node (A) {} (C.north east);
      \draw [->,bend right=1.5cm] (C.north west) to node (E) {} (E.north east);
      \path (A) to node [transform shape,scale=.75] {$\Downarrow A_6$} (B);
      \path (E) to node [transform shape,scale=.75] {$\Downarrow A_7$} (D);
      \path (F) to node [transform shape,scale=.75] {$\Downarrow A_5$} (C);
    \end{tikzpicture},
  \]
  where 
  \begin{align}
    \nonumber
    A_1 &= \{\alpha_w,\alpha_z|\alpha_y\#_2\alpha_x\}, &
    A_5 &= \{\alpha_w\#_2\alpha_z|\alpha_y,\alpha_x\}, \\
    \nonumber
    A_2 &= \{\alpha_w|\alpha_y,\alpha_x\}\#_2(\alpha_z\#_1s_1(\alpha_x)), &
    A_6 &= (t_1(\alpha_w)\#_1\alpha_y)\#_2\{\alpha_w\#_2\alpha_z|\alpha_x\}, \\
    \nonumber
    A_3 &= (\alpha_w\#_1t_1(\alpha_y))\#_2\{\alpha_z|\alpha_y,\alpha_x\}, &
    A_7 &= \{\alpha_w,\alpha_z|\alpha_y\}\#_2(s_1(\alpha_z)\#_1\alpha_x), \\
    \nonumber
    A_4 &= \{\{\alpha_w,\alpha_y\},\{\alpha_z,\alpha_x\}\}.
  \end{align}
  See \eqref{eq:KV-poly}.
  \item \label{ax:Gray4-Prism}Prism 3-22: 
  \begin{equation}
    \begin{tikzpicture}[baseline=-2pt]
      \node (T) at (-3,0) {\tikz{\strsquare[scale=0.33]{2}{3}{,$\alpha_1$,,$\alpha_2$,$\beta'$,}}};
      \node (S) at (-1.5,-1.5) {\tikz{\strsquare[scale=0.33]{2}{3}{,$\alpha_1$,,$\alpha_2$,$\beta$,}}};
      \node (Q1) at (1.5,1.5) {\tikz{\strsquare[scale=0.33]{2}{3}{$\beta'$,,,$\alpha_1$,,$\alpha_2$}}};
      \node (Q2) at (1.5,-1.5) {\tikz{\strsquare[scale=0.33]{2}{3}{,$\alpha_1$,$\beta$,,,$\alpha_2$}}};
      \node (P) at (3,0) {\tikz{\strsquare[scale=0.33]{2}{3}{$\beta$,,,$\alpha_1$,,$\alpha_2$}}};
      \draw [->] (P) to (Q1);
      \draw [->] (P) to (Q2);
      \draw [->] (P) to (S);
      \draw [->] (Q1) to (T);
      \draw [->] (Q2) to (S);
      \draw [->] (S) to (T);
      \path (Q1) to [pos=45/160] node (Q1T) {} (T);
      \path (P) to [pos=115/160] node (PS1) {} (S);
      \path (P) to [pos=45/160] node (PS2) {} (S);
      \path (Q1T) to node {$\Downarrow A_1$} (PS1);
      \path (PS2) to node [transform shape,scale=.66] {$\Downarrow A_2$} (Q2);
    \end{tikzpicture}
    =
    \begin{tikzpicture}[baseline=-2pt]
      \node (T) at (-3,0) {\tikz{\strsquare[scale=0.33]{2}{3}{,$\alpha_1$,,$\alpha_2$,$\beta'$,}}};
      \node (S) at (-1.5,-1.5) {\tikz{\strsquare[scale=0.33]{2}{3}{,$\alpha_1$,,$\alpha_2$,$\beta$,}}};
      \node (R) at (0,0) {\tikz{\strsquare[scale=0.33]{2}{3}{,$\alpha_1$,$\beta'$,,,$\alpha_2$}}};
      \node (Q1) at (1.5,1.5) {\tikz{\strsquare[scale=0.33]{2}{3}{$\beta'$,,,$\alpha_1$,,$\alpha_2$}}};
      \node (Q2) at (1.5,-1.5) {\tikz{\strsquare[scale=0.33]{2}{3}{,$\alpha_1$,$\beta$,,,$\alpha_2$}}};
      \node (P) at (3,0) {\tikz{\strsquare[scale=0.33]{2}{3}{$\beta$,,,$\alpha_1$,,$\alpha_2$}}};
      \draw [->] (P) to (Q1);
      \draw [->] (P) to (Q2);
      \draw [->] (Q1) to (R);
      \draw [->] (Q1) to (T);
      \draw [->] (Q2) to (R);
      \draw [->] (Q2) to (S);
      \draw [->] (R) to (T);
      \draw [->] (S) to (T);
      \path (Q1) to [pos=45/160] node (Q1T) {} (T);
      \path (R) to [pos=18/160] node (RT) {} (T);
      \path (Q2) to [pos=142/160] node (Q2S) {} (S);
      \path (Q1T) to node [transform shape,scale=.66] {$\Downarrow A_3$} (R);
      \path (Q1) to node {$\Downarrow A_4$} (Q2);
      \path (RT) to node {$\Downarrow A_5$} (Q2S);
    \end{tikzpicture}
  \end{equation}
  where
  \begin{align*}
    A_1 &= \{\Psi,\alpha_2\#_2\alpha_1\}, &
    A_3 &= \{t_2(\Psi)|\alpha_2,\alpha_1\}, \\
    A_2 &= \{s_2(\Psi)|\alpha_2,\alpha_1\}, &
    A_4 &= (t_1(\Psi)\#_1\alpha_2)\#_2\{\Psi,\alpha_1\}, \\
    &&
    A_5 &= \{\Psi,\alpha_2\}\#_2(s_1(\Psi)\#_1\alpha_1).
  \end{align*}
  See \eqref{eq:Prism3|22}.
  The classification number 3-22 encodes the structure of the Peiffer lifting relation $\{\Psi,\alpha_2\#_2\alpha_1\}$: the first digit 3 indicates that the first slot is occupied by a 3-morphism, while 22 indicates that the second slot is occupied by the vertical composition of two 2-morphisms.
  For 3-22, 23-2, 22-3, 2-32 and 2-23, similar relations hold; see \eqref{eq:Prism32|2}--\eqref{eq:Prism2|23}.
  \item \label{ax:Gray4-Cube}Cube: 
  \[
    \begin{tikzpicture}[baseline=-2pt]
      \node (A) at (3,0) {\tikz{\strrect[scale=0.45]{2}{$\beta$,,,$\alpha$}}};
      \node (B1) at (1,1) {\tikz{\strrect[scale=0.45]{2}{$\beta$,,,$\alpha'$}}};
      \node (B2) at (1,-1) {\tikz{\strrect[scale=0.45]{2}{,$\alpha$,$\beta$,}}};
      \node (C1) at (-1,1) {\tikz{\strrect[scale=0.45]{2}{$\beta'$,,,$\alpha'$}}};
      \node (C2) at (-1,-1) {\tikz{\strrect[scale=0.45]{2}{,$\alpha'$,$\beta$,}}};
      \node (D) at (-3,0) {\tikz{\strrect[scale=0.45]{2}{,$\alpha'$,$\beta'$,}}};
      \draw [->] (A) to (B1);
      \draw [->] (A) to (B2);
      \draw [->] (B1) to (C1);
      \draw [->] (B2) to (C2);
      \draw [->] (C1) to (D);
      \draw [->] (C2) to (D);
      \draw [->] (B1) to (C2);
      \path (B1) to node {$\Downarrow A_2$} (B2);
      \path (C1) to node {$\Downarrow A_1$} (C2);
    \end{tikzpicture}
    = 
    \begin{tikzpicture}[baseline=-2pt]
      \node (A) at (3,0) {\tikz{\strrect[scale=0.45]{2}{$\beta$,,,$\alpha$}}};
      \node (B1) at (1,1.5) {\tikz{\strrect[scale=0.45]{2}{$\beta$,,,$\alpha'$}}};
      \node (B2) at (1,0) {\tikz{\strrect[scale=0.45]{2}{$\beta'$,,,$\alpha$}}};
      \node (B3) at (1,-1.5) {\tikz{\strrect[scale=0.45]{2}{,$\alpha$,$\beta$,}}};
      \node (C1) at (-1,1.5) {\tikz{\strrect[scale=0.45]{2}{$\beta'$,,,$\alpha'$}}};
      \node (C2) at (-1,0) {\tikz{\strrect[scale=0.45]{2}{,$\alpha$,$\beta'$,}}};
      \node (C3) at (-1,-1.5) {\tikz{\strrect[scale=0.45]{2}{,$\alpha'$,$\beta$,}}};
      \node (D) at (-3,0) {\tikz{\strrect[scale=0.45]{2}{,$\alpha'$,$\beta'$,}}};
      \draw [->] (A) to (B1);
      \draw [->] (A) to (B2);
      \draw [->] (A) to (B3);
      \draw [->] (B1) to (C1);
      \draw [->] (B2) to (C2);
      \draw [->] (B3) to (C3);
      \draw [->] (C1) to (D);
      \draw [->] (C2) to (D);
      \draw [->] (C3) to (D);
      \draw [->] (B2) to (C1);
      \draw [->] (B3) to (C2);
      \path (B1) to node {$\Downarrow A_3$} (B2);
      \path (C1) to node {$\Downarrow A_4$} (C2);
      \path (B2) to node {$\Downarrow A_5$} (B3);
      \path (C2) to node {$\Downarrow A_6$} (C3);
    \end{tikzpicture}
  \]
  where
  \begin{align*}
    A_1 &= \{\Psi,t_2(\Phi)\}, &
    A_3 &= \{t_1(\Psi)\#_1\Phi,\Psi\#_1s_1(\Phi)\},\\
    A_2 &= \{s_2(\Psi),\Phi\}, &
    A_4 &= \{t_2(\Psi),\Phi\}, \\
    &&
    A_5 &= \{\Psi,s_2(\Phi)\}, \\
    &&
    A_6 &= \{\Psi\#_1t_1(\Phi),s_1(\Psi)\#_1\Phi\}.
  \end{align*}
  See \eqref{eq:Cube}.
  \item \label{ax:Gray4-Pasting}Pasting 33-2:
  \begin{equation}
    \label{fig:pasting-33-2}
    \begin{tikzpicture}[baseline=-2pt]
      \node (A) at (2,0) {\tikz{\strrect[scale=0.45]{2}{$\beta$,,,$\alpha$}}};
      \node (B1) at (0,1) {\tikz{\strrect[scale=0.45]{2}{$\beta''$,,,$\alpha$}}};
      \node (B2) at (0,-1) {\tikz{\strrect[scale=0.45]{2}{,$\alpha$,$\beta$,}}};
      \node (C) at (-2,0) {\tikz{\strrect[scale=0.45]{2}{,$\alpha$,$\beta''$,}}};
      \draw [->] (A) to (B1);
      \draw [->] (A) to (B2);
      \draw [->] (B1) to (C);
      \draw [->] (B2) to (C);
      \path (B1) to node {$\Downarrow\{\Psi'\#_3\Psi,\alpha\}$} (B2);
    \end{tikzpicture}
    =
    \begin{tikzpicture}[baseline=-2pt]
      \node (A) at (3,0) {\tikz{\strrect[scale=0.45]{2}{$\beta$,,,$\alpha$}}};
      \node (B1) at (1,1) {\tikz{\strrect[scale=0.45]{2}{$\beta'$,,,$\alpha$}}};
      \node (B2) at (1,-1) {\tikz{\strrect[scale=0.45]{2}{,$\alpha$,$\beta$,}}};
      \node (C1) at (-1,1) {\tikz{\strrect[scale=0.45]{2}{$\beta''$,,,$\alpha$}}};
      \node (C2) at (-1,-1) {\tikz{\strrect[scale=0.45]{2}{,$\alpha$,$\beta'$,}}};
      \node (D) at (-3,0) {\tikz{\strrect[scale=0.45]{2}{,$\alpha$,$\beta''$,}}};
      \draw [->] (A) to (B1);
      \draw [->] (A) to (B2);
      \draw [->] (B1) to (C1);
      \draw [->] (B2) to (C2);
      \draw [->] (C1) to (D);
      \draw [->] (C2) to (D);
      \draw [->] (B1) to (C2);
      \path (B1) to node {$\Downarrow\{\Psi,\alpha\}$} (B2);
      \path (C1) to node {$\Downarrow\{\Psi',\alpha\}$} (C2);
    \end{tikzpicture}
  \end{equation}
  See \eqref{eq:Pasting332}. For 2-33, a similar relation holds; see \eqref{eq:Pasting233}.
  \item \label{ax:Gray4-Tube}Tube 1: 
  \[
    \begin{tikzpicture}[baseline=-2pt]
      \node (A) at (2,0) {\tikz{\strrect[scale=0.45]{2}{$\beta$,,,$\alpha$}}};
      \node (B1) at (0,1) {\tikz{\strrect[scale=0.45]{2}{$\beta'$,,,$\alpha$}}};
      \node (B2) at (0,-1) {\tikz{\strrect[scale=0.45]{2}{,$\alpha$,$\beta$,}}};
      \node (C) at (-2,0) {\tikz{\strrect[scale=0.45]{2}{,$\alpha$,$\beta'$,}}};
      \draw [->,bend left=0.5cm] (A) to node (P) {} (B1);
      \draw [->,bend right=0.5cm] (A) to node (Q) {} (B1);
      \draw [->] (A) to (B2);
      \draw [->] (B1) to (C);
      \draw [->] (B2) to (C);
      \path (B1) to node {$\Downarrow\{t_3(B),\alpha\}$} (B2);
      \path (Q) to node [transform shape,scale=.66] (A) {$\Downarrow$} (P);
      \node [transform shape,scale=.66] (D) at (5/4,5/4) {$\begin{bmatrix}B & \\ &\alpha\end{bmatrix}$};
    \end{tikzpicture}
    =
    \begin{tikzpicture}[baseline=-2pt]
      \node (A) at (2,0) {\tikz{\strrect[scale=0.45]{2}{$\beta$,,,$\alpha$}}};
      \node (B1) at (0,1) {\tikz{\strrect[scale=0.45]{2}{$\beta'$,,,$\alpha$}}};
      \node (B2) at (0,-1) {\tikz{\strrect[scale=0.45]{2}{,$\alpha$,$\beta$,}}};
      \node (C) at (-2,0) {\tikz{\strrect[scale=0.45]{2}{,$\alpha$,$\beta'$,}}};
      \draw [->] (A) to (B1);
      \draw [->] (A) to (B2);
      \draw [->] (B1) to (C);
      \draw [->,bend right=0.5cm] (B2) to node (P) {} (C);
      \draw [->,bend left=0.5cm] (B2) to node (Q) {} (C);
      \path (B1) to node {$\Downarrow \{s_3(B),\alpha\}$} (B2);
      \path (P) to node [transform shape,scale=.66] (A) {$\Downarrow$} (Q);
      \node [transform shape,scale=.66] (D) at (-5/4,-5/4) {$\begin{bmatrix}& \alpha \\ B &\end{bmatrix}$};
    \end{tikzpicture}
  \]
  See \eqref{eq:Tube1}. A similar relation holds for $\begin{pmatrix}\beta & \\ & A\end{pmatrix}$; see \eqref{eq:Tube2}.
  \item \label{ax:Gray4-SpSm}$S^+=S^-$ condition:
  \[
    \begin{tikzpicture}[baseline=-2pt]
      \node (P) at (3,0) {\tikz[scale=0.45]{\strrect{3}{$\alpha_z$,,,,$\alpha_y$,,,,$\alpha_x$}}};
      \node (Q1) at (1,1.5) {\tikz[scale=0.45]{\strrect{3}{$\alpha_z$,,,,,$\alpha_x$,,$\alpha_y$,}}};
      \node (Q2) at (1,-1.5) {\tikz[scale=0.45]{\strrect{3}{,$\alpha_y$,,$\alpha_z$,,,,,$\alpha_x$}}};
      \node (R1) at (-1,1.5) {\tikz[scale=0.45]{\strrect{3}{,,$\alpha_x$,$\alpha_z$,,,,$\alpha_y$,}}};
      \node (R2) at (-1,-1.5) {\tikz[scale=0.45]{\strrect{3}{,$\alpha_y$,,,,$\alpha_x$,$\alpha_z$,,}}};
      \node (S) at (-3,0) {\tikz[scale=0.45]{\strrect{3}{,,$\alpha_x$,,$\alpha_y$,,$\alpha_z$,,}}};
      \draw [->] (P) to (Q1);
      \draw [->] (P) to (Q2);
      \draw [->] (Q1) to coordinate (S1) (R1);
      \draw [->] (Q2) to coordinate (S2) (R2);
      \draw [->] (R1) to (S);
      \draw [->] (R2) to (S);
      \draw [->,bend left=1cm] (P) to (R1);
      \draw [->,bend right=1cm] (Q2) to (S);
      \path (Q1.north) to [pos=1/2] coordinate (A) (Q2.south);
      \path (Q1) to node {$\Downarrow A_1$} (A);
      \path (R1.north) to [pos=1/2] coordinate (B) (R2.south);
      \path (B) to node {$\Downarrow A_3$} (R2);
      \path (S1) to [pos=2/5] coordinate (Sn) (S2);
      \path (S1) to [pos=3/5] coordinate (Ss) (S2);
      \path (Sn) to node {$\Downarrow A_2$} (Ss);
    \end{tikzpicture}
    =
    \begin{tikzpicture}[baseline=-2pt]
      \node (P) at (3,0) {\tikz[scale=0.45]{\strrect{3}{$\alpha_z$,,,,$\alpha_y$,,,,$\alpha_x$}}};
      \node (Q1) at (1,1.5) {\tikz[scale=0.45]{\strrect{3}{$\alpha_z$,,,,,$\alpha_x$,,$\alpha_y$,}}};
      \node (Q2) at (1,-1.5) {\tikz[scale=0.45]{\strrect{3}{,$\alpha_y$,,$\alpha_z$,,,,,$\alpha_x$}}};
      \node (R1) at (-1,1.5) {\tikz[scale=0.45]{\strrect{3}{,,$\alpha_x$,$\alpha_z$,,,,$\alpha_y$,}}};
      \node (R2) at (-1,-1.5) {\tikz[scale=0.45]{\strrect{3}{,$\alpha_y$,,,,$\alpha_x$,$\alpha_z$,,}}};
      \node (S) at (-3,0) {\tikz[scale=0.45]{\strrect{3}{,,$\alpha_x$,,$\alpha_y$,,$\alpha_z$,,}}};
      \draw [->] (P) to (Q1);
      \draw [->] (P) to (Q2);
      \draw [->] (Q1) to coordinate (S1) (R1);
      \draw [->] (Q2) to coordinate (S2) (R2);
      \draw [->] (R1) to (S);
      \draw [->] (R2) to (S);
      \draw [->,bend left=1cm] (Q1) to (S);
      \draw [->,bend right=1cm] (P) to (R2);
      \path (Q1.north) to [pos=1/2] coordinate (A) (Q2.south);
      \path (A) to node {$\Downarrow A_6$} (Q2);
      \path (R1.north) to [pos=1/2] coordinate (B) (R2.south);
      \path (R1) to node {$\Downarrow A_4$} (B);
      \path (S1) to [pos=2/5] coordinate (Sn) (S2);
      \path (S1) to [pos=3/5] coordinate (Ss) (S2);
      \path (Sn) to node {$\Downarrow A_5$} (Ss);
    \end{tikzpicture}
  \]
  where
  \begin{align*}
    A_1 &= \{t_1(\alpha_z)\#_1\alpha_y,\alpha_z\#_1s_1(\alpha_y)|\alpha_x\}^{-1_4}, &
    A_4 &= \{\alpha_z|\alpha_y\#_1t_1(\alpha_x),s_1(\alpha_y)\#_1\alpha_x\}^{-1_4},\\
    A_2 &= \{\{\alpha_z,\alpha_y\},\alpha_x\}^{-1_4}, &
    A_5 &= \{\alpha_z,\{\alpha_y,\alpha_x\}\}, \\
    A_3 &= \{\alpha_z\#_1t_1(\alpha_y),s_1(\alpha_z)\#_1\alpha_y|\alpha_x\}, &
    A_6 &= \{\alpha_z|t_1(\alpha_y)\#_1\alpha_x,\alpha_y\#_1s_1(\alpha_x)\}.
  \end{align*}
  See \eqref{eq:SpSm}. 
\end{enumerate}
\end{definition}
In particular, when all 1-, 2-, 3- and 4-morphisms of a Gray 4-category with a single object are invertible, we call the resulting 4-category a \textbf{Gray 4-group}.

\begin{lemma}
The following normalization conditions hold.
\[
  \{\id_\beta,\alpha\}_{32} = \{\beta,\id_\alpha\}_{23} = \id_{\{\beta,\alpha\}_{22}},
\]
\end{lemma}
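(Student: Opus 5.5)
We want $\{\id_\beta,\alpha\}_{32}=\id_{\{\beta,\alpha\}_{22}}$; the 23 case is symmetric. The plan is to mimic the standard Gray 3-category argument that $\{\id_g,\alpha\}=\id_{g\#_1\alpha}$: find an axiom in which the lifting of a composite splits into liftings of its factors, and specialize it to identities.

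The natural candidate is the pasting relation 33-2 \eqref{fig:pasting-33-2}. It expresses $\{\Psi'\#_3\Psi,\alpha\}_{32}$ as a pasting of $\{\Psi,\alpha\}_{32}$ and $\{\Psi',\alpha\}_{32}$, suitably whiskered by 3-morphisms. Put $\Psi=\Psi'=\id_\beta$.

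First check the whiskering 3-morphisms. The 3-source and 3-target of $\{\id_\beta,\alpha\}_{32}$ contain
\[
(t_1(\id_\beta)\#_1\alpha)\#_2(\id_\beta\#_1 s_1(\alpha)) \quad\text{and}\quad (\id_\beta\#_1t_1(\alpha))\#_2(s_1(\id_\beta)\#_1\alpha).
\]
Since whiskering by 1- and 2-morphisms consists of (2-)functors, each of these is an identity 3-morphism on the appropriate 2-morphism. Hence
\[
\{\id_\beta,\alpha\}_{32}:\{\beta,\alpha\}_{22}\Rightarrow\{\beta,\alpha\}_{22},
\]
an endo-4-morphism of the 3-morphism $\{\beta,\alpha\}_{22}$ in the category $\Hom_1$.

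With $\Psi=\Psi'=\id_\beta$, the composite is $\id_\beta\#_3\id_\beta=\id_\beta$. The 3-morphism edges in the diagram are identities, so all whiskerings by 3-morphisms act trivially, since $-\#_3\id$ is the identity functor. The pasting relation should then read
\[
X = X\#_4X, \qquad X:=\{\id_\beta,\alpha\}_{32},
\]
an equation between endomorphisms of $\{\beta,\alpha\}_{22}$. Every 4-morphism appearing here is invertible: the 32-lifting is declared a 4-isomorphism. Composing with $X^{-1}$ gives $X=\id_{\{\beta,\alpha\}_{22}}$.

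The main obstacle is carefully unpacking the pasting diagram \eqref{fig:pasting-33-2} as an explicit formula, \eqref{eq:Pasting332}. The right-hand side is a composite of two 2-cells along a diagonal 3-morphism, and it uses 2-dimensional composition in $\Hom_2$, i.e.\ $\#_4$ together with whiskering by $\#_3$. When $\Psi=\Psi'=\id_\beta$, every non-lifting cell and edge in that composite must degenerate to an identity, including the implicit $\#_3$-whiskerings of $\{\Psi,\alpha\}$ by $\begin{bmatrix}\Psi'&\\&\alpha\end{bmatrix}$ and similar terms. I expect this to follow from the functoriality of the whiskerings and the unit laws in the 2-category $\Hom_2(f,g)$.

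The 23 case runs the same way using the 2-33 pasting relation \eqref{eq:Pasting233} with $\Phi=\Phi'=\id_\alpha$.

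If the pasting route proves awkward, a fallback is the tube relations \eqref{eq:Tube1}--\eqref{eq:Tube2} with $B=\id_{\id_\beta}$. This gives naturality-type identities, and these combined with invertibility would again force idempotence.
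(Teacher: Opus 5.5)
Your argument is correct and is essentially the paper's own proof. The paper just says to set the 3-morphisms in the Pasting relations to identities. Your version spells this out: with $\Psi=\Psi'=\id_\beta$ in Pasting 33-2 (resp.\ $\Phi=\Phi'=\id_\alpha$ in Pasting 2-33), the whiskering cells are trivial, so the endo-4-isomorphism $X$ of $\{\beta,\alpha\}_{22}$ satisfies $X=X\#_4X$, which forces $X=\id_{\{\beta,\alpha\}_{22}}$.

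Your tube fallback is unnecessary and would not have worked. With $B=\id_{\id_\beta}$ the tube relation collapses to the tautology $X=X$, so it does not force idempotence.
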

This can be shown by setting the appropriate 3-morphisms in the Pasting relations \ref{ax:Gray4-Pasting} to identity 3-morphisms.

\begin{definition}\label{def:Gray4Functor}
Let $\cC$ and $\cD$ be Gray 4-categories. A Gray 4-functor $F:~\cC\to\cD$ is a collection of maps $F_i:~\mor_i(\cC)\to\mor_i(\cD), i=0,\ldots,4$, preserving all the structures of Gray 4-categories, i.e., the identity, source and target maps, vertical compositions, whiskerings, and liftings.
\end{definition}

\section{Equivalence between 3-crossed modules and Gray 4-groups}\label{sec:main}
Let $\Gray$ be the category of Gray 4-groups with Gray 4-functors as morphisms, and let $\TXMod$ be the category of 3-crossed modules with 4-homomorphisms between them as morphisms. In this section, we show the equivalence of categories $\Gray \cong \TXMod$. First, we define a functor $\Delta:~\Gray\to \TXMod$.

\subsection{From Gray 4-groups to 3-crossed modules}\label{sec:Gray323CM}
Given a Gray 4-group ${\bf A_*}\in \Gray$, let us define the sets $G,H,L$ and $M$ as follows:
\begin{align*}
  G &:= \mor_1({\bf A_*}), &
  L &:= \{l\in\mor_3({\bf A_*})|s_2(l) = \id_{\id_*}\}, \\
  H &:= \{h\in\mor_2({\bf A_*})|s_1(h) = \id_*\}, &
  M &:= \{m\in\mor_4({\bf A_*})|s_3(m) = \id_{\id_{\id_*}}\}.
\end{align*}
We also denote them by $G_i,\ (i=1,2,3,4)$, where the subscript indicates the degree of the corresponding morphisms. These sets are groups with respect to the following operations. For the set $G$, 
\begin{align*}
  g_2g_1 &= g_2\#_1g_1,  & \forall g_1,g_2\in G, \\
  g^{-1} &= g^{-1_1}, & \forall g\in G.
\end{align*}
For the sets $H,L$ and $M$, 
\begin{align*}
  yx &= (y\#_{i-1}t_{i-1}(x))\#_ix & \forall x,y\in G_i, \\
  x^{-1} &= x^{-1_i}\#_{i-1}t_{i-1}(x)^{-1_{i-1}}, & \forall x\in G_i,
\end{align*}
where the subscript attached to $-1$ indicates the degree of the morphism. Indeed, one can prove associativity, the existence of identities and the inverse law. For the set $G$, the proof is well known. For the sets $G_i,\ i=2,3,4$, the proof is straightforward. For any $x,y,z\in G_i,\ i=2,3,4$, 
\begin{align*}
  (zy)x
  &= ([(z\#_{i-1}t_{i-1}(y))\#_iy]\#_{i-1}t_{i-1}(x))\#_ix, & 
  &\text{by definition}, \\
  &= ([z\#_{i-1}(t_{i-1}(y)\#_{i-1}t_{i-1}(x))]\#_i[y\#_{i-1}t_{i-1}(x)])\#_ix, &
  &\text{whiskering}, \\
  &= [z\#_{i-1}(t_{i-1}(y\#_{i-1}t_{i-1}(x)))]\#_i([y\#_{i-1}t_{i-1}(x)]\#_ix), &
  &\text{associativity of $\#_1$}, \\
  &= z(yx), &
  &\text{by definition}.
\end{align*}
The identity elements are the identity morphisms $e_H = \id_{\id_*}, e_L = \id_{\id_{\id_*}}$ and $e_M = \id_{\id_{\id_{\id_*}}}$. The inverse law is
\begin{align*}
  x^{-1}x 
  &= ([x^{-1_i}\#_{i-1}t_{i-1}(x)^{-1_{i-1}}]\#_{i-1}t_{i-1}(x))\#_ix \\
  &= (x^{-1_i}\#_{i-1}[t_{i-1}(x)^{-1_{i-1}}\#_{i-1}t_{i-1}(x)])\#_ix \\
  &= (x^{-1_i}\#_{i-1}\id_{s_{i-2}(x)})\#_ix \\
  &= x^{-1_i}\#_ix \\
  &= \id_{s_{i-1}(x)} = e_{G_i}
\end{align*}
The proof for $xx^{-1}$ is the same.

For $x\in G_{i+1},\ i=1,2,3$, let us define the boundary maps as $\partial_i:~G_{i+1}\ni x \mapsto t_i(x)\in G_i$. Applying the map to $x$ twice, it maps to the identity:
\begin{align}
  \label{eq:p1p2=e}
  \partial_1\circ\partial_2(l) &= t_1(l) = \id_* = e_G, &\forall l&\in L, \\
  \label{eq:p2p3=e}
  \partial_2\circ\partial_3(m) &= t_2(m) = \id_{\id_*} = e_H, &\forall m&\in M.
\end{align}
In addition, one can show that the maps are group isomorphisms. For any $x,y\in G_{i+1}$, 
\begin{align*}
  \partial_i(yx) 
  &= t_i((y\#_it_i(x))\#_{i+1}x), \\
  &= t_i(y\#_it_i(x)), \\
  &= t_i(y)\#_it_i(x), \\
  &= (t_i(y)\#_{i-1}t_{i-1}(x))\#_it_i(x), \\
  &= (\partial_iy)(\partial_ix).
\end{align*}

Let us define the operations $x\triangleright y$ as follows:
\begin{align*}
  \act{x}y &= x\#_1y\#_1x^{-1}, & \forall x\in G_1 &\text{ and }\forall y\in G_j,~j=2,3,4, \\
  \act{x}y &= (x\#_iy\#_ix^{-1_i})\#_{i-1}t_{i-1}(x)^{-1_{i-1}}, & \forall x\in G_i &\text{ and }\forall y\in G_j,~i<j,i=2,3.
\end{align*}
These are elements of the group $G_j$. For example, by a straightforward calculation, one can show that $s_2(\act{h}l)=\id_{\id_*}$ and $t_1(\act{h}l)=\id_*$ for $\forall h\in H$ and $\forall l\in L$:
\begin{align*}
  s_2(\act{h}l) 
  &= s_2((h\#_2l\#_2h^{-1_2})\#_1t_1(h)^{-1_1}), &
  t_1(\act{h}l)
  &= t_1(h\#_2t_2(l)\#_2(h^{-1_{\#_2}}))\#_1t_1(h)^{-1_1}, \\
  &= (h\#_2s_2(l)\#_2h^{-1_2})\#_1t_1(h)^{-1_1}, &
  &= t_1(h)\#_1t_1(h)^{-1_1}, \\
  &= (h\#_2\id_{\id_*}\#_2h^{-1_2})\#_1t_1(h)^{-1_1}, &
  &= \id_*. \\
  &= \id_{t_1(h)}\#_1t_1(h)^{-1_1}, \\
  &= \id_{\id_*},
\end{align*}
The calculation is the same for the other operations. In addition, one can show that
\begin{align}
  \label{eq:GtoT-action}
  \act{e_i}y &= y, & \act{x}(y_2y_1) &= \act{x}(y_2)\act{x}(y_1), & \act{x_2x_1}y &= \act{x_2}(\act{x_1}y).
\end{align}
Indeed, for $\forall y\in G_j$,
\begin{align*}
  \act{e_G}y &= \id_*\#_1y\#_1\id_* = y, &
  \act{e_i}y &= (\id_i\#_1y\#_1\id_i)\#_{i-1}\id_{i-1} = y, & i&=2,3,4,
\end{align*}
where we denote the identity $\id_{\id_{\ldots_{\id_*}}}$ by $\id_i$. The proof of the second equation in \eqref{eq:GtoT-action} differs slightly depending on the degrees $i$ and $j$, but the various cases are essentially the same. The most complicated cases are $(i,j)=(2,3)$ and $(3,4)$: 
\begin{align*}
  \act{x}(y_2y_1) 
  &= (x\#_i[(y_2\#_it_i(y_1))\#_{i+1}y_1]\#_ix^{-1_i})\#_{i-1}t_{i-1}(x)^{-1_{i-1}} \\
  &= [(x\#_iy_2\#_it_i(y_1\#_ix^{-1_i}))\#_{i+1}(x\#_iy_1\#_ix^{-1_i})]\#_{i-1}t_{i-1}(x)^{-1_{i-1}} \\
  &= [(x\#_iy_2\#_ix^{-1_i}\#_ix\#_it_i(y_1\#_ix^{-1_i}))\#_{i+1}(x\#_iy_1\#_ix^{-1_i})]\#_{i-1}t_{i-1}(x)^{-1_{i-1}} \\
  &= 
  [
    ((x\#_iy_2\#_ix^{-1_i})\#_{i-1}t_{i-1}(x)^{-1_{i-1}})
    \#_i
    t_i((x\#_iy_1\#_ix^{-1_i})\#_{i-1}t_{i-1}(x)^{-1_{i-1}})
  ] \\
  &\hspace{1em} 
  \#_{i+1}
  [
    (x\#_iy_1\#_ix^{-1_i})\#_{i-1}t_{i-1}(x)^{-1_{i-1}}
  ] \\
  &= \act{x}y_2\act{x}y_1
\end{align*}
For $i=1$ case, the third equation in \eqref{eq:GtoT-action} is
\begin{align*}
  \act{x_2x_1}y &= (x_2x_1)\#_1y\#_1(x_2x_1)^{-1}, \\
  &= x_2\#_1(x_1\#_1y\#_1x_1^{-1})\#_1x_2^{-1}, \\
  &= \act{x_2}(\act{x_1}y).
\end{align*}
For the cases $i=2,3$ and $4$,
\begin{align*} 
  \act{x_2x_1}y &= 
  [
    ((x_2\#_{i-1}t_{i-1}(x_1))\#_ix_1)
    \#_i
    y
    \#_i
    ((x_2\#_{i-1}t_{i-1}(x_1))\#_ix_1)^{-1_i}
  ] \\
  &\qquad 
  \#_it_{i-1}((x_2\#_{i-1}t_{i-1}(x_1))\#_ix_1)^{-1_{i-1}}, \\
  &= 
  [
    ((x_2\#_{i-1}t_{i-1}(x_1))
    \#_i
    (x_1\#_iy\#_ix_1^{-1_i})
    \#_i
    (x_2^{-1_i}\#_{i-1}t_{i-1}(x_1)))
  ] \\
  &\qquad 
  \#_i[t_{i-1}(x_1)^{-1_{i-1}}\#_{i-1}t_{i-1}(x_2)^{-1_{i-1}}], \\
  &= 
  [
    x_2
    \#_i
    [(x_1\#_iy\#_ix_1^{-1_i})\#_{i-1}t_{i-1}(x_1)^{-1_{i-1}}]
    \#_i
    x_2^{-1_i}
  ]\#_{i-1}t_{i-1}(x_2)^{-1_{i-1}}, \\
  &= \act{x_2}(\act{x_1}y).
\end{align*}
Thus, the operations $x\triangleright y$ are left actions. 

As can be seen from these calculations, we obtain a useful formula that will be used throughout. Let $F:~\mor_k({\bf A}_*)\times \mor_l({\bf A}_*)\times \ldots \to \mor_i({\bf A}_*),~k,l,\cdots \geq i$ be an $i$-morphism $G_i,i=2,3,4$ built from the operations $s_k,t_k$ and $\#_k,(-)^{-1_k},k=i,i+1,\ldots$. Then, for any $i$-morphism $f\in G_i$, the following formula holds:
\[
  f\#_iF(-,-,\ldots)\#_if^{-1_i} = F(f\#_i(-)\#_if^{-1_i},f\#_i(-)\#_if^{-1_i},\ldots).
\]
This follows from functoriality when $k>i$. Even when $F$ involves $k=i$, the formula still holds, since we now have inverses:
\begin{align*}
  x\#_i(y\#_iy')\#x^{-1_i} &= (x\#_iy\#_ix^{-1_i})\#_i(x\#_iy'\#x^{-1_i}), \\
  x\#_iy^{-1_i}\#_ix^{-1_i} &= (x\#_iy\#_ix^{-1_i})^{-1_i},
\end{align*}
where $y$ and $y'$ are $i$-morphisms in $G_i$, and we have used the associativity of the vertical composition $\#_i$. In what follows, this formula will be used frequently without further mention.

For our purposes, we define the group liftings in terms of the categorical liftings. To avoid confusion, we attach the subscript $c$ to the categorical liftings.
\begin{center}
\begin{minipage}{.495\columnwidth}
\begin{itemize}
  \item 22-Peiffer lifting:
  \[
    \{h_2,h_1\} := \begin{bmatrix}& (\act{\partial h_2}h_1h_2)^{-1} \\ \{h_2,h_1\}_c & \end{bmatrix}
  \]
  \item 32-Peiffer lifting:
  \[
    \{l,h\} := \begin{pmatrix} & \begin{bmatrix} & h^{-1} \\ \{l^{-1},h\}_c^{-1_4} & \end{bmatrix}\\ l &\end{pmatrix}
  \]
\end{itemize}
\end{minipage}
\begin{minipage}{.495\columnwidth}
\begin{itemize}
  \item 33-Peiffer lifting:
  \[
    \{l_2,l_1\} := \begin{pmatrix} & (\act{\partial l_2}l_1l_2)^{-1} \\ \{l_2,l_1\}_c & \end{pmatrix}
  \]
  \item 23-Peiffer lifting:
  \[
    \{h,l\} := \begin{pmatrix} & \begin{bmatrix} & h^{-1} \\ \{h,l^{-1}\}_c^{-1_4} & \end{bmatrix}\\\act{h}l &\end{pmatrix}
  \]
\end{itemize}
\end{minipage}
\begin{itemize}
  \item left-Homanian:
  \[
    \{h_z,h_y|h_x\} := \begin{pmatrix} & (\act{h_z}\{h_y,h_x\}\{h_z,\act{\partial h_y}h_x\})^{-1} \\ \begin{bmatrix} & (\act{\partial(h_zh_y)}h_xh_zh_y)^{-1} \\ \{h_z\#_1t_1(h_y),h_y|h_x\}_c^{-1_4} & \end{bmatrix} & \end{pmatrix}
  \]
  \item right-Homanian:
  \[
    \{h_z|h_y,h_x\} := \begin{pmatrix} & (\{h_z,h_y\}\act{\act{\partial h_z}h_y}\{h_z,h_x\})^{-1} \\ \begin{bmatrix} & (\act{\partial h_z}(h_xh_y)h_z)^{-1} \\ \{h_z|h_y\#_1t_1(h_x),h_x\}_c^{-1_4} & \end{bmatrix} & \end{pmatrix}
  \]
\end{itemize}
\end{center}
The 22-Peiffer lifting is an element of $L$, while the 32-, 23- and 33-Peiffer liftings and the Homanians are elements of $M$. The proof is straightforward. For example, the 3-source of the left-Homanian is
\begin{align*}
  &s_3(\{h_z,h_y|h_x\}) \\
  &= \begin{pmatrix} & (\act{h_z}\{h_y,h_x\}\{h_z,\act{\partial h_y}h_x\})^{-1} \\ \begin{bmatrix} & (\act{\partial(h_zh_y)}h_xh_zh_y)^{-1} \\ t_3(\{h_z\#_1t_1(h_y),h_y|h_x\}_c) & \end{bmatrix} & \end{pmatrix}, \\
  &= \begin{pmatrix} & (\act{h_z}\{h_y,h_x\}\{h_z,\act{\partial h_y}h_x\})^{-1} \\ \begin{bmatrix} & (\act{\partial(h_zh_y)}h_xh_zh_y)^{-1} \\ \begin{bmatrix}&\act{\partial(h_zh_y)}h_xh_zh_y \\ \act{h_z}\{h_y,h_x\}\{h_z,\act{\partial h_y}h_x\} & \end{bmatrix} & \end{bmatrix} & \end{pmatrix}, \\
  &= e_M.
\end{align*}
where we use
\begin{align*}
  &t_3(\{h_z\#_1t_1(h_y),h_y|h_x\}_c) \\
  &= [((h_z\#_1t_1(h_y))\#_1t_1(h_x))\#_2\{h_y,h_x\}_c]\#_3[\{h_z\#_1t_1(h_y),h_x\}_c\#_2h_y], \displaybreak[1] \\
  &= \begin{bmatrix}& \begin{bmatrix}& \act{\partial h_y}h_xh_y \\ \{h_y,h_x\} & \end{bmatrix}\\ h_z\end{bmatrix}\#_3\begin{bmatrix}& h_y \\ \begin{bmatrix}&\act{\partial (h_zh_y)}h_xh_z \\ \{h_z,\act{\partial h_y}h_x\} &\end{bmatrix} &\end{bmatrix}, \\
  &= \begin{bmatrix}& h_z\act{\partial h_y}h_xh_y \\ \act{h_z}\{h_y,h_x\} & \end{bmatrix}\#_3\begin{bmatrix}&\act{\partial(h_zh_y)}h_xh_zh_y \\ \{h_z,\act{\partial h_y}h_x\} &\end{bmatrix}, \displaybreak[1] \\
  &= \begin{bmatrix}&\act{\partial(h_zh_y)}h_xh_zh_y \\ \act{h_z}\{h_y,h_x\}\{h_z,\act{\partial h_y}h_x\} & \end{bmatrix}.
\end{align*}
The 2-target is 
\begin{align*}
  &t_2(\{h_z,h_y|h_x\}) \\
  &= t_2\left(\begin{pmatrix} & (\act{h_z}\{h_y,h_x\}\{h_z,\act{\partial h_y}h_x\})^{-1} \\ \begin{bmatrix} & (\act{\partial(h_zh_y)}h_xh_zh_y)^{-1} \\ s_3(\{h_z\#_1t_1(h_y),h_y|h_x\}_c) & \end{bmatrix} & \end{pmatrix}\right), \\
  &= \begin{bmatrix} & (\act{\partial(h_zh_y)}h_xh_zh_y)^{-1} \\ t_2(\{h_zh_y,h_x\}_c) & \end{bmatrix}\#_2t_2((\act{h_z}\{h_y,h_x\}\{h_z,\act{\partial h_y}h_x\})^{-1}), \\
  &= \begin{bmatrix} & (\act{\partial(h_zh_y)}h_xh_zh_y)^{-1} \\ \begin{bmatrix}& h_x \\ h_zh_y &\end{bmatrix} & \end{bmatrix}\#_2t_2((\act{h_z}\{h_y,h_x\}\{h_z,\act{\partial h_y}h_x\})^{-1}), \\
  &= e_H.
\end{align*}
Thus, the left-Homanian is an element of the group $M$. By the same procedure, we can show that the other liftings are elements of the groups $L$ or $M$. 

\begin{theorem}
The above sequence of groups $\bG := M\xrightarrow{\partial_3}L \xrightarrow{\partial_2}H \xrightarrow{\partial_1} G$, together with the left actions and the liftings defined above, constitutes a 3-crossed module.
\end{theorem}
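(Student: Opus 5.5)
The plan is to verify the axioms of Definition \ref{def:3CM} one group at a time, always translating a group-theoretic identity in $\bG$ into an equality of composites of morphisms in ${\bf A_*}$. Every group product, action and lifting was defined by whiskering with (inverses of) boundary morphisms, so that the element has trivial source. The basic tool is therefore a dictionary lemma. An element $x\in G_{i}$ is equivalently an $i$-morphism $\tilde x$ with prescribed source, via $x=\tilde x\#_{i-1}s_{i-1}(\tilde x)^{-1}$. Group multiplication corresponds to $\#_i$ of such shifted morphisms, and the actions correspond to whiskering, using the conjugation formula stated just before the definition of the liftings.

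The easy axioms go first.
\begin{itemize}
\item Exactness (ii) is \eqref{eq:p1p2=e}.
\item Compatibility of $\partial$ with actions (iii) follows because $t_i$ commutes with whiskering.
\item $G$-invariance of all liftings (iv) follows from the $\#_1$-compatibility axiom \ref{ax:Gray4-LiftingWhiskering}, together with the fact that whiskering by $g$ and $g^{-1}$ is a Gray 3-functor.
\item The boundary formulas (v) are source/target computations of the kind already done for the left-Homanian above: $t_3$ of the categorical lifting is read off from the definition and then shifted.
\item Axiom (i), that $M\to L\to H$ with $\{-,-\}_{33}$ is a 2-crossed module, should follow from Sarikaya--Ulualan's Gray 3-group argument \cite{Sarikaya:2024}, applied to the Gray 3-category $\Hom_3(*,*)$ restricted to the 2-morphisms from $\id_*$. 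The one modification is deriving \eqref{eq:LH-2CM} in place of their extra axiom; this follows from the 33-lifting with one entry an identity and \ref{ax:Gray4-unit}.
\item The relations \eqref{eq:3CM-HonM}--\eqref{eq:3CM-pHonM} are then the analogous statements for the 32/23 liftings applied to $t_3$-trivial 4-morphisms. They follow from the Tube relations \ref{ax:Gray4-Tube}, taking $B=m$ with $s_3(m)$ an identity and using the normalization lemma $\{\id,\alpha\}_{32}=\id$.
\end{itemize}

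The structural relations are next, and each comes from the correspondingly named coherence axiom of Definition \ref{def:Gray4}.
\begin{itemize}
\item Tetrahedrons \eqref{eq:3-CM-Fund-left-Hom}--\eqref{eq:3-CM-Fund-right-Hom} come from the Homanian relations.
\item KV \eqref{eq:3CM-KV} comes from \ref{ax:Gray4-KV}.
\item The Prisms come from \ref{ax:Gray4-Prism} and its variants.
\item Pasting comes from \ref{ax:Gray4-Pasting}.
\item Cube comes from \ref{ax:Gray4-Cube}.
\item $S^+=S^-$ comes from \ref{ax:Gray4-SpSm}.
\end{itemize}
The uniform procedure has three steps. First, specialize each axiom to 2-morphisms $\alpha$ with $s_1(\alpha)=\id_*$ and 3-morphisms with trivial 2-source. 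Second, read the pasting diagram as a composite of 4-morphisms along $\#_4$ and whiskerings. Third, shift every face by the appropriate inverse of its 3-source, converting each $\#_4$-composite of faces into a product in $M$. In this conversion, whiskering a face by an $L$-element on the tip side becomes the action $\triangleright^3_4$, and whiskering by boxes below becomes $\triangleright^2_4$, exactly as in the diagrammatic dictionary of Section \ref{sec:3CM-Diagram}. The elements $\left<l,h\right>$ and $\left<h,l\right>$ appear precisely because the group 32/23-liftings were defined via $\{l^{-1},h\}_c^{-1_4}$. The categorical faces $\{\Psi,\alpha\}_c$ are therefore recovered as $\left<l,h\right>$, and the twisted 2-functoriality \eqref{eq:3CM-twisted-2funct1}--\eqref{eq:3CM-twisted-2funct2} identifies the composite faces $L(\cdot)$ and $R(\cdot)$ in the $S^+=S^-$ and prism diagrams.

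The main obstacle is this bookkeeping for the KV-polytope and $S^+=S^-$ relations. There the faces carry nested whiskerings and conjugations, for example the 33-lifting $\{\{h_w,\partial h_z h_y\},\ldots\}$ and the factor $\act{\{h_wh_z,h_y\}}(\cdots)$, and each must be matched to its group-theoretic form. My first move would be to prove a small set of translation lemmas once:
\begin{itemize}
\item the group element attached to $(\beta\#_1 t_1\alpha)\#_2\Phi$;
\item the group element attached to $\Phi\#_2\alpha$;
\item the group element attached to $\{\Psi,\alpha\}_c$ for general source $s_2(\Psi)$;
\item the group element attached to a Homanian with arbitrary 1-sources, using \ref{ax:Gray4-LiftingWhiskering} to move $t_1$-whiskers inside.
\end{itemize}
Each axiom then becomes a mechanical substitution, checked face by face.
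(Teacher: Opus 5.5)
Your overall strategy is the paper's: specialize each coherence axiom of Definition~\ref{def:Gray4} to trivially sourced morphisms, shift faces by inverses of their sources, and follow Table~\ref{tab:diagram-rel}. There is, however, a genuine gap in how you handle 32- and 23-liftings whose 3-morphism entry has a nontrivial 2-source.

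The group liftings $\{l,h\}$ and $\{h,l\}$ only record the categorical faces $\{\Psi,\alpha\}_c$ and $\{\beta,\Phi\}_c$ in which the 3-morphism has identity 2-source. Nontrivially sourced faces cannot be avoided. Feeding a product $l'l=(l'\#_2\partial l)\#_3 l$ into \eqref{eq:Pasting332} produces $\{l'\#_2\partial l,h\}_c$. The $S^+=S^-$ diagram contains $\{\{\alpha_z,\alpha_y\},\alpha_x\}$ and $\{\alpha_z,\{\alpha_y,\alpha_x\}\}$.

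You propose to match these faces with $L(\cdot)$ and $R(\cdot)$ via the twisted 2-functoriality \eqref{eq:3CM-twisted-2funct1}--\eqref{eq:3CM-twisted-2funct2}. Those are identities in $L$: they only fix $\partial_3L(l,h_2;h_1)$ and $\partial_3R(h_2;l,h_1)$. Since $\partial_3$ need not be injective, they cannot identify an $M$-valued face.

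Your translation lemma for $\{\Psi,\alpha\}_c$ with general source is true, but it is not bookkeeping. Its proof is the categorical Prism 32-2 \eqref{eq:Prism32|2}, and Prism 2-32 \eqref{eq:Prism2|3|2} for 23-liftings. These axioms decompose $\{\Psi\#_2\gamma,\alpha\}$ into four pieces:
\begin{itemize}
\item $\{\Psi,\alpha\}$,
\item $\{\gamma,\alpha\}$,
\item the left-Homanian $\{t_2(\Psi),\gamma|\alpha\}$,
\item the 33-lifting $\{\Psi\#_1t_1(\alpha),\{\gamma,\alpha\}\}$.
\end{itemize}
This is exactly where the terms $\{\partial l'^{-1},\partial l^{-1}|h\}$ and $\{l'^{-1},\{\partial l^{-1},h\}\}$ in \eqref{eq:3CM-Pasting} come from. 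The Pasting axiom \ref{ax:Gray4-Pasting} contains only 32-liftings, so your assignment ``Pasting comes from \ref{ax:Gray4-Pasting}'' cannot produce them. The paper explicitly combines \eqref{eq:Pasting332} with \eqref{eq:Prism32|2}, which is why Table~\ref{tab:diagram-rel} lists ``Prism and Pasting''.

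The same use of Prism 32-2 and 2-32 produces three further things:
\begin{itemize}
\item the $L(\act{h_2}l,h_2;h_1)$ right-hand side of \eqref{eq:3cm_prism23-2},
\item the $R(h_2;\act{h_1}l,h_1)$ right-hand side of \eqref{eq:3cm_prism2-23},
\item the $L$ and $R$ faces of $S^+=S^-$.
\end{itemize}

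A smaller point concerns axiom (i). Sarikaya--Ulualan's argument needs a Gray 3-group. The one sitting inside $\Hom_3(*,*)$ is the endomorphism part at $\id_*$, whose 1-morphisms form $\ker\partial_1$, not $H$, so quoting it yields only $M\to L\to\ker\partial_1$. Statements involving arbitrary $h\in H$, notably $\act{h}\{l,l'\}=\{\act{h}l,\act{h}l'\}$, must be checked separately. The paper does this using the compatibility \ref{ax:Gray3-1morPeiffer} of Definition~\ref{def:Gray3} in $\Hom_3(*,*)$, together with functoriality of $-\#_1t_1(h)^{-1_1}$.

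Likewise, \eqref{eq:LH-2CM} does not follow from normalization alone. It is the 2-functoriality \eqref{eq:2-funct1} of $\Hom_3(*,*)$ applied to the pair $(m,l)$, and normalization only removes the face $\{s_3(m),l\}$. This is the exact analogue of your correct Tube argument for \eqref{eq:3CM-HonM}--\eqref{eq:3CM-pHonM}.
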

\begin{proof}
The proof essentially follows the approach outlined below. First, we compare the diagrams of the fundamental relations of Gray 4-categories with those of 3-crossed modules. This tells us which relation in a category ${\bf A_*}$ corresponds to which relation in a 3-crossed module. Now choose one of the relations that we wish to prove on the 3-crossed module side, and consider the corresponding relation on the 4-category side. Using the invertibility of the morphisms and making all sources equal to the identity in the diagram, we obtain the relation on the 3-crossed module side.

First, we need to show that the data $(M\xrightarrow{\partial_3}L\xrightarrow{\partial_2}H,\triangleright,\{-,-\}_{33})$ for the higher levels constitutes a 2-crossed module. In other words, we must check that this data satisfies the seven axioms of Definition \ref{def:2CM}. By construction, these components already satisfy axioms \ref{ax:2CM-exact}, \ref{ax:2CM-BAction} and \ref{ax:2CM-BoundaryPeiffer} of Definition \ref{def:2CM}, so it remains to verify the remaining axioms. The proof is straightforward:
\begin{enumerate}
  \setcounter{enumi}{2}
  \item $\act{h}\{l,l'\} = \{\act{h}l,\act{h}l'\}$
  \begin{align*}
    (\text{L.H.S}) &= \left[h\#_2\begin{pmatrix}&(\act{\partial l}l'l)^{-1}\\\{l,l'\}_c&\end{pmatrix}\#_2h^{-1_2}\right] \displaybreak[1] \\
    &= \begin{pmatrix}&[h\#_2(\act{\partial l}l'l)^{-1}\#_2h^{-1_2}]\#_1t_1(h)^{-1_1}\\ [h\#_2\{l,l'\}_c\#_2h^{-1_2}]\#_1t_1(h)^{-1_1}&\end{pmatrix} \displaybreak[1] \\
    &= \begin{pmatrix}&(\act{\partial (\act{h}l)}(\act{h}l')(\act{h}l))^{-1}\\ \{h\#_2l\#_2h^{-1_2},h\#_2l'\#_2h^{-1_2}\}_c\#_1t_1(h)^{-1_1}&\end{pmatrix} \displaybreak[1] \\
    &= \begin{pmatrix}&(\act{\partial (\act{h}l)}(\act{h}l')(\act{h}l))^{-1}\\ \{\act{h}l,\act{h}l'\}_c&\end{pmatrix} = (\text{R.H.S}),
  \end{align*}
  where, in the third equality, we have used the relation \ref{ax:Gray3-1morPeiffer} of Definition \ref{def:Gray3} and, in the fourth equality, the functoriality of whiskering by 1-morphisms.
  \stepcounter{enumi}
  \item Peiffer identity $\act{\partial m}m' = mm'm^{-1}$: This follows from the interchange law of 4-morphisms. 
  \begin{align*}
    (\text{L.H.S}) 
    &= t_3(m)\#_3m'\#_3t_3(m)^{-1_3} \\
    &= [(t_3(m)\#_3m')\#_4(m\#_3s_3(m))\#_4m^{-1_4}]\#_3t_3(m)^{-1_3} \displaybreak[1] \\
    &= [(m\#_3t_3(m'))\#_4(s_3(m)\#_3m')\#_4m^{-1_4}]\#_3t_3(m)^{-1_3} \\
    &= [(mm')\#_3t_3(m)^{-1_3}]\#_4m^{-1} \displaybreak[1] \\
    &= mm'm^{-1} = (\text{R.H.S}).
  \end{align*}
  \item Equations \eqref{eq:HH_H-2CM} and \eqref{eq:H_HH-2CM}: These equations can be shown by the approach outlined above. For example, let us focus on \eqref{eq:HH_H-2CM}. The corresponding relation on the category side is \eqref{eq:P-decom1} (with the degrees of the morphisms shifted). From this, we obtain
  \begin{align*}
    \{l_zl_y,l_x\}_c &= [([l_z\#_2t_2(l_y)]\#_2t_2(l_x))\#_3\{l_y,l_x\}_c]\#_4[\{l_z\#_2t_2(l_y),l_x\}_c\#_3(l_y\#_2s_2(l_x))] \\
    &= [(l_z\#_2t_2(l_yl_x))\#_3\{l_y,l_x\}_c]\#_4[(\{l_z,\act{\partial l_y}l_x\}_c\#_2t_2(l_y))\#_3l_y].
  \end{align*}
  In terms of the Peiffer lifting on the group side,
  \[
    \begin{pmatrix}& \act{\partial (l_zl_y)}l_xl_zl_y \\ \{l_zl_y,l_x\} &\end{pmatrix} 
    = 
    \begin{pmatrix}&\begin{pmatrix} & \act{\partial l_y}l_xl_y \\ \{l_y,l_x\} &\end{pmatrix} \\ l_z &\end{pmatrix}
    \#_4
    \begin{pmatrix}& \act{\partial (l_zl_y)}l_xl_zl_y \\ \{l_z,\act{\partial l_y}l_x\}\end{pmatrix}.
  \]
  Multiplying both sides of this equation by $(-\#_2t_2((\act{\partial(l_zl_y)}l_xl_zl_y)^{-1}))\#_3(\act{\partial(l_zl_y)}l_xl_zl_y)^{-1}$ gives
  \begin{align*}
    \{l_zl_y,l_x\}
    &= 
    \begin{pmatrix} & l_z\act{\partial l_y}l_xl_y(\act{\partial (l_zl_y)}l_xl_zl_y)^{-1}\\ \act{l_z}\{l_y,l_x\}&\end{pmatrix}
    \#_4 \{l_z,\act{\partial l_y}l_x\} \\
    &= \act{l_z}\{l_y,l_x\}\{l_z,\act{\partial l_y}l_x\}.
  \end{align*}
  The proof of Equation \eqref{eq:H_HH-2CM} is the same.
  \item Equations \eqref{eq:LH-2CM} and \eqref{eq:HL-2CM}: 
  These equations can also be shown by the same approach. Let us focus on \eqref{eq:LH-2CM}. The corresponding relation is \eqref{eq:2-funct1}. Using this equation, we have
  \[
    \{t_3(m),l\}_c\#_4\begin{pmatrix}m & \\ & l\end{pmatrix} = \begin{pmatrix}&l \\ m&\end{pmatrix}
  \]
  Here we have used $\{\id_3,Phi\}_{33,c} = \id_{\Phi}$. 
  In terms of the group-theoretic liftings,
  \begin{align*}
    \begin{pmatrix} &l\partial m \\ \{\partial m,l\} & \end{pmatrix}\#_4\begin{pmatrix} & m\\l & \end{pmatrix} &= \begin{pmatrix}&l \\ m&\end{pmatrix} \\
    \{\partial m,l\}\act{l}m &= m.
  \end{align*}
  Same as for Equation \eqref{eq:HL-2CM}.
\end{enumerate}
Therefore, the data $(M\to L\to H\to G,\triangleright,\{-,-\}_{33})$ form a 2-crossed module. 

Next, we have to show that the remaining axioms of Definition \ref{def:3CM} are satisfied. By construction, the conditions \ref{ax:3CM-exact}, \ref{ax:3CM-BAction} and \ref{ax:3CM-BoundaryLiftings} already hold. The proof is again straightforward:
\begin{enumerate}
  \setcounter{enumi}{3}
  \item The action of $g\in G$ on the liftings: these can be verified in exactly the same way as in the case of the 2-crossed module, using the compatibility conditions between the liftings and $\#_1$ from \ref{ax:Gray4-LiftingWhiskering}. For example, let us consider the left-Homanian. By almost the same calculation as in the 2-crossed module case, one can show that $\act{g}\{h,h'\} = \{\act{g}h,\act{g}h'\}$. Since $g\#_1$ and $\#_1g^{-1}$ are whiskerings by 1-morphisms, we have
  \begin{align*}
    &\act{g}\{h_z,h_y|h_x\} \\
    &= \begin{pmatrix}&(\act{h'_z}\{h'_y,h'_x\}\{h'_z,\act{\partial(h'_y)}h_x'\})^{-1} \\ \begin{bmatrix}&(\act{\partial(h'_zh'_y)}h'_xh'_zh'_y)^{-1} \\ (g\#_1\{h_z\#_1t_1(h_y),h_y|h_x\}_c\#_1g^{-1})^{-1_4} & \end{bmatrix}&\end{pmatrix},
  \end{align*}
  where $h'$ stands for $\act{g}h$. Using \ref{ax:Gray4-LiftingWhiskering} for the categorical lifting part, we get
  \begin{align*}
    &g\#_1\{h_z\#_1t_1(h_y),h_y|h_x\}_c\#_1g^{-1} \\
    &= \{g\#_1h_z\#_1t_1(h_y),g\#_1h_y|h_x\#_1g^{-1}\}_c \\
    &= \{g\#_1h_z\#_1(t_1(h_y)g^{-1}),g\#_1h_y\#_1g^{-1}|g\#_1h_x\#_1g^{-1}\}_c \\
    &= \{\act{g}h_z\#_1(t_1(\act{g}h_y)),\act{g}h_y|\act{g}h_x\}_c.
  \end{align*}
  Thus, we obtain $\act{g}\{h_z,h_y|h_x\} = \{\act{g}h_z,\act{g}h_y|\act{g}h_x\}$. 
  \setcounter{enumi}{5}
  \item, (vii), (viii), (x), (xi) and (xii): As mentioned above, these equations can be verified by eliminating sources from the corresponding relations on the category side. The equation to be used is found by comparing the diagrammatic representations of the 3-crossed module with the categorical ones; the correspondence is given in Table \ref{tab:diagram-rel}. Along the way, we frequently use the normalization of the liftings \ref{ax:Gray4-unit} and the compatibility of the liftings with whiskering by $\#_1$ \ref{ax:Gray4-LiftingWhiskering}.
  \begin{center}
    \begin{table}[htb]
      \caption{Corresponding diagrams}
      \label{tab:diagram-rel}
      \begin{tabular}{ccc}
        \hline
        Axioms of Def.~\ref{def:3CM} & name & corresponding diagrams in Def.~\ref{def:Gray4} \\ \hline
        \ref{ax:3CM-Tetrahedrons} & Tetrahedrons & \ref{ax:Gray4-HomanRels} \\
        \ref{ax:3CM-KV} & KV-Polytope & \ref{ax:Gray4-KV} \\
        \ref{ax:3CM-Prisms} & Prisms & \ref{ax:Gray4-Prism} \\
        \ref{ax:3CM-Pasting} & Pastings & \ref{ax:Gray4-Prism} and \ref{ax:Gray4-Pasting} \\
        \ref{ax:3CM-Cube} & Cube & \ref{ax:Gray4-Cube} \\
        \ref{ax:3CM-2on4} & Action of $G_2$ on $G_4$ & \ref{ax:Gray4-Tube} \\
        \ref{ax:3CM-SpSm} & $S^+=S^-$ & \ref{ax:Gray4-SpSm}\\\hline
      \end{tabular}
    \end{table}
  \end{center}
  
  Here, we focus on the left-Homanian relation \eqref{eq:3-CM-Fund-left-Hom} and verify it. As its name indicates, the corresponding diagram is \ref{ax:Gray4-HomanRels} in Definition \ref{def:Gray4}. In terms of formulas, this equation is given by Equation \eqref{eq:left-Hom}. Let us set the $\alpha$s to
  \begin{align*}
    \alpha_x &= h_x, &
    \alpha_y &= h_y, \\
    \alpha_z &= h_z\#_1t_1(h_y), &
    \alpha_w &= h_w\#_1t_1(h_zh_y).
  \end{align*}
  Substituting them into \eqref{eq:left-Hom}, we get
  \begin{align*}
    &
    [
      (((h_wh_z)\#_1t_1(h_yh_x))\#_2\{h_y,h_x\}_c)
      \#_3
      (\{h_w\#_1t_1(h_zh_y),h_z\#_1t_1(h_y)|h_x\}_c\#_2h_y)
    ] \\
    &\#_4
    \{(h_wh_z)\#_1t_1(h_y),h_y|h_x\}_c \\
    &= \\
    &
    [
      ((h_w\#_1t_1(h_zh_yh_x))\#_2\{h_z\#_1t_1(h_y),h_y|h_x\}_c)
      \#_3
      (\{h_w\#_1t_1(h_zh_y),h_x\}_c\#_2((h_zh_y)))
    ] \\
    &\#_4
    \{h_w\#_1t_1(h_zh_y),h_zh_y|h_x\}_c.
  \end{align*}
  Taking $\#_1t_1$ to the right side of the expressions, and using the simplified notation, the above equation can be rewritten as
  \begin{align*}
    &\quad 
    \{(h_wh_z)\#_1t_1(h_y),h_y|h_x\}_c^{-1_4} \\
    & \#_4
    [
      \begin{bmatrix} & \{h_y,h_x\}_c\\ h_wh_z&\end{bmatrix}
      \#_3
      \begin{bmatrix}& h_y\\ \{h_w\#_1t_1(h_z),h_z|\act{\partial h_y}h_x\}_c^{-1_4} & \end{bmatrix}
    ] \\
    &= \\
    &
    \{h_w\#_1t_1(h_zh_y),h_zh_y|h_x\}_c^{-1_4} \\
    &\quad \#_4
    [
      \begin{bmatrix} & \{h_z\#_1t_1(h_y),h_y|h_x\}_c^{-1_4}\\ h_w&\end{bmatrix}
      \#_3
      \begin{bmatrix}& h_zh_y\\ \{h_w,\act{\partial(h_zh_y)}h_x\}_c&\end{bmatrix}
    ],
  \end{align*}
  where we have taken inverses of both sides. In terms of the group liftings,
  \begin{align*}
    &\quad 
    \begin{bmatrix}& \act{\partial(h_wh_zh_y)}h_xh_wh_zh_y\\ \begin{pmatrix}& \act{h_wh_z}\{h_y,h_x\}\{h_wh_z,\act{\partial h_y}h_x\}\\ \{h_wh_z,h_y|h_x\}& \end{pmatrix}& \end{bmatrix} \\
    & \#_4
    [
      \begin{bmatrix} &\begin{bmatrix}& \act{\partial h_y}h_xh_y\\ \{h_y,h_x\}&\end{bmatrix}\\ h_wh_z&\end{bmatrix} \\
      & 
      \#_3
      \begin{bmatrix}& h_y\\ \begin{bmatrix}& \act{\partial(h_wh_zh_y)}h_xh_wh_z\\\begin{pmatrix} & \act{h_w}\{h_z,\act{\partial h_y}h_x\}\{h_w,\act{\partial (h_zh_y)}h_x\}\\ \{h_w,h_z|\act{\partial h_y}h_x\}&\end{pmatrix} &\end{bmatrix} & \end{bmatrix}
    ] \\
    &= \\
    &
    \begin{bmatrix}& \act{\partial (h_wh_zh_y)}h_xh_wh_zh_y\\ \begin{pmatrix} & \act{h_w}\{h_zh_y,h_x\}\{h_w,\act{\partial(h_zh_y)}h_x\}\\\{h_w,h_zh_y|h_x\} &\end{pmatrix} &\end{bmatrix} \\
    &\quad \#_4
    [
      \begin{bmatrix} & \begin{bmatrix}& \act{\partial (h_zh_y)}h_xh_zh_y\\ \begin{pmatrix}& \act{h_z}\{h_y,h_x\}\{h_z,\act{\partial h_y}h_x\}\\ \{h_z,h_y|h_x\} &\end{pmatrix}& \end{bmatrix}\\ h_w&\end{bmatrix} \\
      & \#_3
      \begin{bmatrix}& h_zh_y\\ \begin{bmatrix} & \act{\partial (h_wh_zh_y)}h_xh_w\\ \{h_w,\act{\partial (h_zh_y)}h_x\} &\end{bmatrix}&\end{bmatrix}
    ].
  \end{align*}
  Rearranging the $h$ factors, we obtain
  \begin{align*}
    &\quad 
    \begin{bmatrix}& \act{\partial(h_wh_zh_y)}h_xh_wh_zh_y\\ \begin{pmatrix}& \act{h_wh_z}\{h_y,h_x\}\{h_wh_z,\act{\partial h_y}h_x\}\\ \{h_wh_z,h_y|h_x\}& \end{pmatrix}& \end{bmatrix}
    \\
    & \#_4
    [
      \begin{bmatrix}& h_wh_z\act{\partial h_y}h_xh_y\\ \act{h_wh_z}\{h_y,h_x\}&\end{bmatrix} \\
      & 
      \#_3
      \begin{bmatrix}& \act{\partial(h_wh_zh_y)}h_xh_wh_zh_y\\\begin{pmatrix} & \act{h_w}\{h_z,\act{\partial h_y}h_x\}\{h_w,\act{\partial (h_zh_y)}h_x\}\\ \{h_w,h_z|\act{\partial h_y}h_x\}&\end{pmatrix} &\end{bmatrix}
    ] \\
    &= \\
    &
    \begin{bmatrix}& \act{\partial (h_wh_zh_y)}h_xh_wh_zh_y\\ \begin{pmatrix} & \act{h_w}\{h_zh_y,h_x\}\{h_w,\act{\partial(h_zh_y)}h_x\}\\\{h_w,h_zh_y|h_x\} &\end{pmatrix} &\end{bmatrix} \\
    &\quad \#_4
    [
      \begin{bmatrix}& h_w\act{\partial (h_zh_y)}h_xh_zh_y\\ \begin{pmatrix}& \act{h_wh_z}\{h_y,h_x\}\act{h_w}\{h_z,\act{\partial h_y}h_x\}\\ \act{h_w}\{h_z,h_y|h_x\} &\end{pmatrix}& \end{bmatrix} \\
      &
      \#_3
      \begin{bmatrix} & \act{\partial (h_wh_zh_y)}h_xh_wh_zh_y\\ \{h_w,\act{\partial (h_zh_y)}h_x\} &\end{bmatrix}
    ].
  \end{align*}
  Then, we can cancel the rightmost factors $\act{\partial (h_wh_zh_y)}h_xh_wh_zh_y$ from both sides
  \begin{align*}
    &
    \begin{pmatrix}& \act{h_wh_z}\{h_y,h_x\}\{h_wh_z,\act{\partial h_y}h_x\}\\ \{h_wh_z,h_y|h_x\}& \end{pmatrix}
    \\
    & \#_4
    [
      \begin{bmatrix}& t_2(\act{h_w}\{h_z,\act{\partial h_y}h_x\}\{h_w,\act{\partial (h_zh_y)}h_x\})\\ \act{h_wh_z}\{h_y,h_x\}&\end{bmatrix} \\
      & 
      \#_3
      \begin{pmatrix} & \act{h_w}\{h_z,\act{\partial h_y}h_x\}\{h_w,\act{\partial (h_zh_y)}h_x\}\\ \{h_w,h_z|\act{\partial h_y}h_x\}&\end{pmatrix}
    ] \\
    &= \\
    &
    \begin{pmatrix} & \act{h_w}\{h_zh_y,h_x\}\{h_w,\act{\partial(h_zh_y)}h_x\}\\\{h_w,h_zh_y|h_x\} &\end{pmatrix} \\
    & \#_4
    [
      \begin{bmatrix}& t_2(\{h_w,\act{\partial (h_zh_y)}h_x\})\\ \begin{pmatrix}& \act{h_wh_z}\{h_y,h_x\}\act{h_w}\{h_z,\act{\partial h_y}h_x\}\\ \act{h_w}\{h_z,h_y|h_x\} &\end{pmatrix}& \end{bmatrix} \\
      &
      \#_3
      \{h_w,\act{\partial (h_zh_y)}h_x\}
    ],
  \end{align*}
  where we used $t_2(\{h_2,h_1\}) = h_2h_1h_2^{-1}\act{\partial h_2}h_1^{-1}$ in the second and sixth lines. By a simple calculation, we can show
  \begin{align*}
    \begin{bmatrix}&t_2(l')\\ l&\end{bmatrix}\#_3\begin{pmatrix}& l'\\ m&\end{pmatrix} &= \begin{pmatrix}& ll'\\ \act{l}m&\end{pmatrix}, \displaybreak[1] \\
    \begin{bmatrix}& t_2(l')\\\begin{pmatrix}& l\\ m &\end{pmatrix} &\end{bmatrix}\#_3l &= \begin{pmatrix}& ll'\\ m &\end{pmatrix}.
  \end{align*}
  Then, the second factors on the left- and right-hand sides simplify to the following:
  \begin{align*}
    &\quad 
    \begin{pmatrix}& \act{h_wh_z}\{h_y,h_x\}\{h_wh_z,\act{\partial h_y}h_x\}\\ \{h_wh_z,h_y|h_x\}& \end{pmatrix}
    \\
    & \#_4
    \begin{pmatrix} & \act{h_wh_z}\{h_y,h_x\}\act{h_w}\{h_z,\act{\partial h_y}h_x\}\{h_w,\act{\partial (h_zh_y)}h_x\}\\ \act{\act{h_wh_z}\{h_y,h_x\}}\{h_w,h_z|\act{\partial h_y}h_x\}&\end{pmatrix} \\
    &= \\
    &
    \begin{pmatrix} & \act{h_w}\{h_zh_y,h_x\}\{h_w,\act{\partial(h_zh_y)}h_x\}\\\{h_w,h_zh_y|h_x\} &\end{pmatrix} \\
    &\quad \#_4
    \begin{pmatrix}& \act{h_wh_z}\{h_y,h_x\}\act{h_w}\{h_z,\act{\partial h_y}h_x\}\{h_w,\act{\partial (h_zh_y)}h_x\}\\ \act{h_w}\{h_z,h_y|h_x\} &\end{pmatrix}.
  \end{align*}
  We can cancel the rightmost factors $\act{h_wh_z}\{h_y,h_x\}\act{h_w}\{h_z,\act{\partial h_y}h_x\}\{h_w,\act{\partial (h_zh_y)}h_x\}$ from both sides
  \begin{align*}
    &\quad 
    \begin{pmatrix}& t_3(\act{\act{h_wh_z}\{h_y,h_x\}}\{h_w,h_z|\act{\partial h_y}h_x\})\\ \{h_wh_z,h_y|h_x\}& \end{pmatrix}
    \#_4
    \act{\act{h_wh_z}\{h_y,h_x\}}\{h_w,h_z|\act{\partial h_y}h_x\} \\
    &= 
    \begin{pmatrix} & t_3(\act{h_w}\{h_z,h_y|h_x\})\\\{h_w,h_zh_y|h_x\} &\end{pmatrix}
    \#_4
    \act{h_w}\{h_z,h_y|h_x\},
  \end{align*}
  where we have used $t_3(\{h_3,h_2|h_1\}) = \{h_3h_2,h_1\}\{h_3,\act{\partial h_2}h_1\}^{-1}\act{h_3}\{h_2,h_1\}^{-1}$ in the parentheses. Since both sides are of the form $(m\#_3t_3(m'))\#_4m' = mm'$, we therefore obtain
  \[
    \{h_wh_z,h_y|h_x\}\act{\act{h_wh_z}\{h_y,h_x\}}\{h_w,h_z|\act{\partial h_y}h_x\} = \{h_w,h_zh_y|h_x\}\act{h_w}\{h_z,h_y|h_x\}.
  \]
  This is exactly Equation \eqref{eq:3-CM-Fund-left-Hom}.
  \setcounter{enumi}{8}
  \item Pastings: The proof is essentially the same as above. In Equation \eqref{eq:Pasting332}, set the variables as follows:
  \begin{align*}
    \Psi' &= l_1^{-1}\#_1t_2(l_2^{-1}), &
    \Psi &= l_2^{-1}, &
    \alpha &= h,
  \end{align*}
  then, the Equation \eqref{eq:Pasting332} becomes
  \begin{align}
    \nonumber
    \{(l_2l_1)^{-1},h\}_c^{-1_4}
    &= 
    \bigl(
      \{l_1^{-1}\#_2t_2(l_2^{-1}),h\}_c^{-1_4}
      \#_3
      \begin{bmatrix}l_2^{-1} & \\ & h\end{bmatrix}
    \bigr) \\
    \label{eq:piece0}
    & \#_4
    \bigl(
      \begin{bmatrix}& h \\ l_1^{-1}\#_2t_2(l_2^{-1}) &\end{bmatrix}
      \#_3
      \{l_2^{-1},h\}_c^{-1_4}
    \bigr).
  \end{align}
  The left-hand side can be rewritten in terms of the group liftings:
  \begin{equation}
    \label{eq:piece1}
    \{(l_2l_1)^{-1},h\}_c^{-1_4} = \begin{bmatrix}& h \\ \begin{pmatrix} & \{l_2l_1,h\} \\ l_1^{-1}l_2^{-1} & \end{pmatrix} & \end{bmatrix}.
  \end{equation}
  The second factor of the right-hand side can also be written
  \begin{align}
    \nonumber
    \begin{bmatrix}& h \\ l_1^{-1}\#_2t_2(l_2^{-1}) &\end{bmatrix}
    \#_3
    \{l_2^{-1},h\}_c^{-1_4}
    &= 
    \begin{bmatrix}& h \\ l_1^{-1}\#_2t_2(l_2^{-1}) &\end{bmatrix}
    \#_3
    \begin{bmatrix} & h\\ \begin{pmatrix} & \{l_2,h\} \\ l_2^{-1} & \end{pmatrix} & \end{bmatrix}, \\
    \label{eq:piece2}
    &= \begin{bmatrix} & h\\ \begin{pmatrix} & \{l_2,h\} \\ l_1^{-1}l_2^{-1} & \end{pmatrix} & \end{bmatrix}.
  \end{align}
  To proceed further, unlike the other relations proved above, we need to use another categorical relation. In Equation \eqref{eq:Prism32|2}, setting $\Psi = l_1^{-1},\gamma = t_2(l_2^{-1}),\alpha=h$, we obtain
  \begin{align*}
    &
    \{l_1^{-1}\#_2t_2(l_2^{-1}),h\}_c^{-1_4} \\
    &= 
    \Bigl(
      \{t_2(l_1^{-1}),t_2(l_2^{-1})|h\}_c^{-1_4}
      \#_3
      \begin{bmatrix}l_1^{-1}\#_2t_2(l_2^{-1}) & \\ & h\end{bmatrix}
    \Bigr) \\
    &\quad \#_4
    \Bigl(
      \begin{bmatrix} & \{t_2(l_2^{-1}),h\}_c \\ t_2(l_1^{-1}) & \end{bmatrix}
      \#_3
      [\{l_1^{-1},h\}_c^{-1_4}\#_2(t_2(l_2^{-1}))]
    \Bigr) \\
    &\quad \#_4
    \{l_1^{-1}\#_1t_1(h),\{t_2(l_2^{-1}),h\}_c\}_c^{-1_4}.
  \end{align*}
  By careful calculations analogous to those for Equation \eqref{eq:piece2}, we obtain
  \begin{align}
    &
    \nonumber
    \{l_1^{-1}\#_1t_1(h),\{t_2(l_2^{-1}),h\}_c\}_c^{-1_4}
    \#_3
    \begin{bmatrix}l_2^{-1} & \\ & h\end{bmatrix} \\
    \label{eq:piece3}
    &= 
    \begin{bmatrix} & h \\ \begin{pmatrix} & \begin{pmatrix} & t_3(\{l_2,h\}) \\ \act{l_2l_1}\{l_1^{-1},\{\partial l_2^{-1},h\}\}^{-1} &\end{pmatrix} \\ l_1^{-1}l_2^{-1} &\end{pmatrix} & \end{bmatrix}.
  \end{align}
  \begin{align}
    \nonumber
    &
    \begin{bmatrix} & \{t_2(l_2^{-1}),h\}_c \\ t_2(l_1^{-1}) & \end{bmatrix}
    \#_3
    [\{l_1^{-1},h\}_c^{-1_4}\#_2t_2(l_2^{-1})]
    \#_3
    \begin{bmatrix}l_2^{-1} & \\ & h\end{bmatrix} \\
    \label{eq:piece4}
    &= 
    \begin{bmatrix} & h \\ \begin{pmatrix} & \partial(\act{l_2l_1}\{l_1^{-1},\{\partial l_2^{-1},h\}\}^{-1}\{l_2,h\}) \\ \begin{pmatrix} & \act{\partial\act{l_2l_1}\{l_1^{-1},\{\partial l_2^{-1},h\}\}^{-1}l_2\{\partial l_2^{-1},h\}}\{l_1,h\}\\ l_1^{-1}l_2^{-1} & \end{pmatrix} &\end{pmatrix} & \end{bmatrix}
  \end{align}
  \begin{align}
    &\nonumber
    \{t_2(l_1^{-1}),t_2(l_2^{-1})|h\}_c^{-1_4}
    \#_3
    \begin{bmatrix}l_1^{-1}\#_2t_2(l_2^{-1}) & \\ & h\end{bmatrix} 
    \#_3
    \begin{bmatrix}l_2^{-1} & \\ & h\end{bmatrix} \\
    \label{eq:piece5}
    &= 
    \begin{bmatrix} & h \\ \begin{pmatrix} & \begin{pmatrix} & \partial(\act{l_2l_1}\{l_1^{-1},\{\partial l_2^{-1},h\}\}^{-1}\{l_2,h\}\act{\act{h}l_2}\{l_1,h\}) \\ \act{l_2l_1}\{\partial l_1^{-1},\partial l_2^{-1}|h\} & \end{pmatrix} \\ l_1^{-1}l_2^{-1} & \end{pmatrix} &\end{bmatrix}
  \end{align}
  Substituting all the pieces \eqref{eq:piece1}--\eqref{eq:piece5} into \eqref{eq:piece0} and eliminating $h$ and $l_1^{-1}l_2^{-1}$, we get
  \begin{align*}
    &
    \{l_2l_1,h\} \\
    &= 
    \begin{pmatrix} & \partial(\act{l_2l_1}\{l_1^{-1},\{\partial l_2^{-1},h\}\}^{-1}\{l_2,h\}\act{\act{h}l_2}\{l_1,h\}) \\ \act{l_2l_1}\{\partial l_1^{-1},\partial l_2^{-1}|h\} & \end{pmatrix} \\
    &\#_4
    \begin{pmatrix} & \partial(\act{l_2l_1}\{l_1^{-1},\{\partial l_2^{-1},h\}\}^{-1}\{l_2,h\}) \\ \act{\partial\act{l_2l_1}\{l_1^{-1},\{\partial l_2^{-1},h\}\}^{-1}l_2\{\partial l_2^{-1},h\}}\{l_1,h\} &\end{pmatrix} \\
    &\#_4
    \begin{pmatrix} & t_3(\{l_2,h\}) \\ \act{l_2l_1}\{l_1^{-1},\{\partial l_2^{-1},h\}\}^{-1} &\end{pmatrix}
    \#_4
    \{l_2,h\}.
  \end{align*}
  From the definition of multiplication and Peiffer identities, 
  \begin{align*}
    &
    \{l_2l_1,h\} \\
    &= 
    \begin{pmatrix} & \partial(\act{l_2l_1}\{l_1^{-1},\{\partial l_2^{-1},h\}\}^{-1}\{l_2,h\}\act{\act{h}l_2}\{l_1,h\}) \\ \act{l_2l_1}\{\partial l_1^{-1},\partial l_2^{-1}|h\} & \end{pmatrix} \\
    &\#_4
    \act{\partial\act{l_2l_1}\{l_1^{-1},\{\partial l_2^{-1},h\}\}^{-1}l_2\{\partial l_2^{-1},h\}}\{l_1,h\}\act{l_2l_1}\{l_1^{-1},\{\partial l_2^{-1},h\}\}^{-1}\{l_2,h\}, \\
    &= \begin{pmatrix} & \partial(\act{l_2l_1}\{l_1^{-1},\{\partial l_2^{-1},h\}\}^{-1}\{l_2,h\}\act{\act{h}l_2}\{l_1,h\}) \\ \act{l_2l_1}\{\partial l_1^{-1},\partial l_2^{-1}|h\} & \end{pmatrix} \\
    &\#_4
    \act{l_2l_1}\{l_1^{-1},\{\partial l_2^{-1},h\}\}^{-1}\{l_2,h\}\act{\partial \{l_2,h\}l_2\{\partial l_2^{-1},h\}}\{l_1,h\}, \\
    &= \begin{pmatrix} & \partial(\act{l_2l_1}\{l_1^{-1},\{\partial l_2^{-1},h\}\}^{-1}\{l_2,h\}\act{\act{h}l_2}\{l_1,h\}) \\ \act{l_2l_1}\{\partial l_1^{-1},\partial l_2^{-1}|h\} & \end{pmatrix} \\
    &\#_4
    \act{l_2l_1}\{l_1^{-1},\{\partial l_2^{-1},h\}\}^{-1}\{l_2,h\}\act{\partial \{l_2,h\}l_2\{\partial l_2^{-1},h\}}\{l_1,h\}, \\
    &= \act{l_2l_1}\{\partial l_1^{-1},\partial l_2^{-1}|h\}\act{l_2l_1}\{l_1^{-1},\{\partial l_2^{-1},h\}\}^{-1}\{l_2,h\}\act{\act{h}l_2}\{l_1,h\}.
  \end{align*}
  Therefore, Equation \eqref{eq:3CM-Pasting} holds.
\end{enumerate}
Let us define a functor $\Delta:~\Gray\to \TXMod$ by $\Delta({\bf A_*}) = \bG$. 

\end{proof}

\subsection{From 3-crossed modules to Gray 4-groups}
\label{3CM_to_Gray4}

We construct a functor from the category of 3-crossed modules $\TXMod$ to the category of Gray 4-groups ${\bf Gray_4Group}$.

Let $\bG \coloneqq ( M\overset{\partial_3}{\longrightarrow}L\overset{\partial_2}{\longrightarrow}H\overset{\partial_1}{\longrightarrow} G)$ be a 3-crossed module. We construct the Gray 4-group ${\bf A_*}$ from $\bG$, and this construction defines the functor $\Theta : \TXMod \rightarrow {\bf Gray_4Group}$.

The structure of ${\bf A_*}$ is given by the following data:

\vspace{.5em}
\noindent
{\bf Objects and $i$-morphisms $\mor_i({\bf A_*})$ for $i=1,2,3$.}
The object $A_0$ of ${\bf A_*}$ is trivial, and the morphisms $\mor_i({\bf A_*})$ for $i=1,2,3$ are given as follows:
\begin{equation*}
\begin{aligned}
\mor_1({\bf A_*}) &\coloneqq G \\
\mor_2({\bf A_*}) &\coloneqq H \times G \\
\mor_3({\bf A_*}) &\coloneqq L \times H \times G \\
\mor_4({\bf A_*}) &\coloneqq M \times L \times H \times G.
\end{aligned}
\end{equation*}

\vspace{.5em}
\noindent
{\bf Source and target maps.}
 For $i = 1,2,3$, each $i$-morphism has $l$-source maps for $l = 0, \dots ,i-1$ and, similarly, $l$-target maps. For $\mor_4({\bf A_*})$, there are four types of source and target maps; for an element $(m,l,h,g) \in \mor_4({\bf A_*})$, these are constructed from the following data:
\begin{equation*}
\begin{aligned}
t_3(m, l, h, g) &= (\partial_3(m) \cdot l, h, g ), &\qquad s_3(m, l, h, g) &= (l, h, g ), \\
t_2(m, l, h, g) &= (\partial_2(l) \cdot h, g), &\qquad s_2(m, l, h, g) &= (h,g), \\
t_1(m, l, h, g) &= \partial_1(h) \cdot g, &\qquad s_1(m, l, h, g) &= g, \\
t_0(m, l, h, g) &= *, &\qquad s_0(m, l, h, g) &= *.
\end{aligned}
\end{equation*}

For $(l, h, g) \in \mor_3(\mathbf{A}_*)$, we define:
\begin{equation*}
\begin{aligned}
t_2(l, h, g) &= (\partial_2(l) \cdot h,g), &\qquad s_2(l, h, g) &= (h,g), \\
t_1(l, h, g) &= \partial_1(h) \cdot g, &\qquad s_1(l, h, g) &= g, \\
t_0(l, h, g) &= *, &\qquad s_0(l, h, g) &= *.
\end{aligned}
\end{equation*}

For $(h, g) \in \mor_2(\mathbf{A}_*)$, we define:
\begin{equation*}
\begin{aligned}
t_1(h, g) &= \partial_1(h) \cdot g, &\qquad s_1(h, g) &= g, \\
t_0(h, g) &= *, &\qquad s_0(h, g) &= *.
\end{aligned}
\end{equation*}

For $g \in \mor_1(\mathbf{A}_*)$, we define:
\begin{equation*}
t_1(g) = g, \quad s_1(g) = g, \quad t_0(g) = *, \quad s_0(g) = *.
\end{equation*}

\vspace{.5em}
\noindent
{\bf Vertical composition of $i$-morphisms $\#_i$.}
 A vertical composition of $i$-morphisms $\#_i : \mor_i(\mathbf{A}_*)\allowbreak \times_{i-1} \mor_i(\mathbf{A}_*)  \rightarrow \mor_i(\mathbf{A}_*) $ is based on the group product.

For $ \left( (m', \partial_3(m) l,h,g) , (m,l,h,g) \right) \in \mor_4({\bf A_*}) \times_3 \mor_4({\bf A_*})$, the vertical composition of 4-morphisms is constructed from the following data:
\[
(m', \partial_3(m) l,h,g) \#_4 (m,l,h,g) = (m' \cdot m,l,h,g),
\]
where ``$\cdot$'' denotes the group multiplication in $M$.

For $ \left( ( l',\partial_2(l)h,g) , (l,h,g) \right) \in \mor_3({\bf A_*}) \times_2 \mor_3({\bf A_*})$, we define the vertical composition of 3-morphisms:
\[
( l',\partial_2(l)h,g) \#_3 ( l,h,g) = (l'\cdot l,h,g),
\]
where ``$\cdot$'' denotes the group multiplication in $L$.

For $\big((h', \partial_1(h)g), (h, g)\big) \in \mor_2(\mathbf{A_*}) \times_1 \mor_2(\mathbf{A_*})$, we define
\[
(h', \partial_1(h)g) \#_2 (h, g) = (h' \cdot h, g),
\]
where ``$\cdot$'' denotes the group multiplication in $H$.

For $(g', g) \in \mor_1(\mathbf{A_*}) \times_0 \mor_1(\mathbf{A_*})$, we define
\[
g' \#_1 g = g' \cdot g,
\]
where ``$\cdot$'' denotes the group multiplication in $G$.

\vspace{.5em}
\noindent
{\bf Whiskering by 1-morphisms.}
Whiskering by 1-morphisms consists of the following data:
For $i = 2,3,4$,
\begin{equation*}
\begin{aligned}
\#^l_1: \mor_1 \times_0 \mor_i \rightarrow \mor_i,\\
\#^r_1: \mor_i \times_0 \mor_1 \rightarrow \mor_i.
\end{aligned}
\end{equation*}

For $g' \in \mor_1(\mathbf{A_*}) = G$ and an $i$-morphism, the whiskering $g' \#^l_1 (-)$ is defined as follows:

For 2-morphisms $(h, g) \in \mor_2(\mathbf{A_*})$:
$$
g' \#^l_1 (h, g) = (\act{g'}{}{h},\ g'g).
$$
For 3-morphisms $(l, h, g) \in \mor_3(\mathbf{A_*})$:
$$
g' \#^l_1 (l, h, g) = (\act{g'}{}{l},\ \act{g'}{}{h},\ g'g).
$$
For 4-morphisms $(m, l, h, g) \in \mor_4(\mathbf{A_*})$:
$$
g' \#^l_1 (m, l, h, g) = (\act{g'}{}{m},\ \act{g'}{}{l},\ \act{g'}{}{h},\ g'g).
$$
For an $i$-morphism and $g' \in \mor_1(\mathbf{A_*}) = G$, the whiskering $(-) \#^r_1 g'$ is defined as follows:
For 2-morphisms $(h, g) \in \mor_2(\mathbf{A_*})$:
$$
(h, g) \#^r_1 g' = (h,\ gg').
$$
For 3-morphisms $(l, h, g) \in \mor_3(\mathbf{A_*})$:
$$
(l, h, g) \#^r_1 g' = (l,\ h,\ gg').
$$
For 4-morphisms $(m, l, h, g) \in \mor_4(\mathbf{A_*})$:
$$
(m, l, h, g) \#^r_1 g' = (m,\ l,\ h,\ gg').
$$

\vspace{.5em}
\noindent
{\bf Whiskering by 2-morphisms.} Whiskering by 2-morphisms is analogous to whiskering by 1-morphisms, except that it acts only on 3- and 4-morphisms. Thus, for $i = 3,4$, it is given by the following data:
\begin{equation*}
\begin{aligned}
\#^l_2: \mor_2 \times_1 \mor_i \rightarrow \mor_i,\\
\#^r_2: \mor_i \times_1 \mor_2 \rightarrow \mor_i.
\end{aligned}
\end{equation*}

First, we construct the case of the left whiskering.
For $\left( (h', \del_1(h)g) , (l,h,g) \right)\in \mor_2 \times_1 \mor_3$, the whiskering $(h', \del_1(h)g)  \#^l_2  (l,h,g)$ is defined as follows:
\[
(h', \del_1(h)g)  \#^l_2  (l,h,g) = (\act{h'}l,h' h, g).
\]

For $\left( (h', \del_1(h)g) , (m,l,h,g) \right)\in \mor_2 \times_1 \mor_4$, the whiskering $(h', \del_1(h)g)  \#^l_2  (m,l,h,g)$ is defined as follows:
\[
(h', \del_1(h)g)  \#^l_2  (m,l,h,g) = (\act{h'}m, \act{h'}l,h' h, g).
\]

The case of the right whiskering is simpler. 
For $\left( (l',h', \del_1(h)g) , (h,g) \right)\in \mor_3 \times_1 \mor_2$, the whiskering $(l',h', \del_1(h)g) \#^r_2  (h,g)$ is defined as follows:
\[
(l',h', \del_1(h)g)   \#^r_2  (h,g) = (l', h' h, g).
\]

For $\left( (m', l',h', \del_1(h)g) , (h,g) \right)\in \mor_4 \times_1 \mor_2$, the whiskering $(m', l',h', \del_1(h)g) \#^r_2  (h,g)$ is defined as follows:
\[
(m', l',h', \del_1(h)g)   \#^r_2  (h,g) = (m', l', h' h, g).
\]

\vspace{.5em}
\noindent
{\bf Whiskering by 3-morphisms.}
 Whiskering by 3-morphisms acts only on 4-morphisms, so we only need to give the following data:
\begin{align*}
  \#^l_3&:~\mor_3 \times_2 \mor_4 \rightarrow \mor_4,\\
  \#^r_3&:~\mor_4 \times_2 \mor_3 \rightarrow \mor_4.
\end{align*}

The construction is almost the same as in the 2-morphism case, the only difference being that we now use a left $L$-action $\triangleright^3_4$ instead of a left $H$-action $\triangleright^2_3$. For $\left( (l',\del_2(l)h, g) , (m,l,h,g) \right)\in \mor_3 \times_2 \mor_4$, the whiskering $(l',\del_2(l)h, g)  \#^l_3  (m,l,h,g)$ is defined as follows:
\[
(l',\del_2(l)h, g)  \#^l_3  (m,l,h,g) = (\act{l'}m,l'l, h, g).
\]

The construction on the other side is done in a similar manner to the 2-morphism case. 
For $\left( (m',l',\del_2(l)h,g),  (l,h, g)  \right)\in \mor_4 \times_2 \mor_3$, the whiskering $(m',l',\del_2(l)h,g) \#^r_3  (l,h, g)$ is defined as follows:
\[
(m',l',\del_2(l)h,g) \#^r_3  (l,h, g) = (m', l'l,h,g).
\]

We now construct the six types of liftings. Most of them can be defined directly in a Gray 4-group by using the corresponding liftings of the 3-crossed module; the 32- and 23-Peiffer liftings differ slightly, while the remaining ones are essentially unchanged. We therefore first construct the 22- and 33-Peiffer liftings together with the left- and right-Homanian, and then handle the 32- and 23-Peiffer liftings.

\vspace{.5em}
\noindent
{\bf 22-Peiffer lifting.} A 22-Peiffer lifting in a Gray 4-group is constructed directly from the 22-Peiffer lifting in a 3-crossed module. For $((h_2, g_2),(h_1, g_1))\in \mor_2({\bf A_*}) \times_0 \mor_2({\bf A_*}) $, the 22-Peiffer lifting $\{(h_2, g_2), (h_1, g_1)\}_{22} \in \mor_3({\bf A_*})$ is defined as follows:
\[
\{(h_2, g_2), (h_1, g_1)\}_{22} = ( \{h_2,\act{g_2}h_1\}_{22},\act{\del_2 h_2 g_2}h_1h_2,g_2g_1).
\]

\vspace{.5em}
\noindent
{\bf 33-Peiffer lifting.} A 33-Peiffer lifting in a Gray 4-group is constructed in a manner analogous to the 22-Peiffer lifting. For $((l_2,h_2, g_2),(l_1,h_1, g_1))\in \mor_3({\bf A_*}) \times_1 \mor_3({\bf A_*}) $, the 33-Peiffer lifting $\{(l_2, h_2, \del_1h_1 g_1), (l_1, h_1, g_1)\}_{33} \in \mor_4({\bf A_*})$ is defined as follows:
\[
\{(l_2, h_2, g_2), (l_1, h_1, g_1)\}_{33} = (\{l_2, \act{h_2}l_1 \}_{33} ,\act{\del_2 l_2h_2}l_1l_2,h_2h_1, g_1).
\]

\vspace{.5em}
\noindent
{\bf left-Homanian.} A left-Homanian is constructed in the same way. For $(\beta_2,\beta_1,\alpha)\in \mor_2({\bf A_*}) \times_1 \mor_2({\bf A_*}) \times_0 \mor_2({\bf A_*}) $, a left-Homanian $\{\beta_2,\beta_1|\alpha\} \in \mor_4({\bf A_*})$ is defined as follows:
\begin{align*}
\{\beta_2,\beta_1|\alpha\}
&= (\{h_3,h_2 | h_1\}^{-1},\{h_3h_2, h_1\}_{22},\act{\del_1(h_3h_2)g_2}h_1h_3h_2,g_2g_1),
\end{align*}
where $\beta_2,\beta_1$ and $\alpha$ are $(h_3, \del_1h_2 g_2),(h_2, g_2) $ and $ (h_1, g_1)$, respectively.

\vspace{.5em}
\noindent
{\bf right-Homanian.} A right-Homanian is constructed in the same way. For $(\beta,\alpha_2,\alpha_2)\in \mor_2({\bf A_*}) \times_0 \mor_2({\bf A_*}) \times_{s_1,t_1} \mor_2({\bf A_*}) $, a right-Homanian $\{(h_3, g_2)|(h_2, \del_1 h_1 g_1), (h_1, g_1)\} \in \mor_4({\bf A_*})$ is defined as follows:
\[
\{\beta|\alpha_2, \alpha_1\} = (\{h_3| h_2, h_1\}^{-1},\{h_3, h_2h_1\}_{22},\act{\del_1(h_3)g_2}(h_2h_1)h_3,g_2g_1),
\]
where $\beta,\alpha_2$ and $\beta_1$ are $(h_3, g_2),(h_2, \del_1 h_1 g_1)$ and $(h_1, g_1)$, respectively.

\vspace{.5em}
\noindent
{\bf 32-Peiﬀer lifting.} Unlike the previous cases, the 32-Peiﬀer lifting in a Gray 4-group differs from that in a 3-crossed module. To construct it, we must use not only the 32-Peiﬀer lifting but also a 33-Peiﬀer lifting and a left-Homanian in the 3-crossed module. For $((l_2, h_2, g_2),(h_1,g_1))\in \mor_3({\bf A_*}) \times_0\mor_2({\bf A_*})$, the 32-Peiffer lifting $\{(l_2, h_2, g_2),(h_1,g_1)\}_{32} \in \mor_4({\bf A_*})$ is defined as follows:
\[
\{(l_2, h_2, g_2),(h_1,g_1)\}_{32} = \left(L(l_2, h_2; h'_1), \{\del_2(l_2)h_2,h'_1\}_{22}\act{(\act{\del_1 h_2}h'_1)}l_2, \act{\del_1 h_2}h'_1 h_2, g_2g_1 \right).
\]
Here, we write $\act{g_2}h_1$ as $h'_1$ and $L(l,h_2;h_1) := \{l,\{h_2,h_1\}\}\act{\act{\partial l}\{h_2,h_1\}}\left<l,\act{\partial h_2}h_1\right>\{\partial l,h_2|h_1\}^{-1}$ for convenience.

\vspace{.5em}
\noindent
{\bf 23-Peiﬀer lifting.} The 23-Peiﬀer lifting is constructed in a manner analogous to the 32-Peiffer lifting. To construct it in a Gray 4-group, we use a 32-Peiﬀer lifting, a 33-Peiﬀer lifting, and a right-Homanian in the 3-crossed module. For $((h_2, g_2),(l_1,h_1,g_1))\in \mor_2({\bf A_*}) \times_0\mor_3({\bf A_*})$, the 23-Peiffer lifting $\{( h_2, g_2),(l_1,h_1,g_1)\}_{32} \in \mor_4({\bf A_*})$ is defined as follows:
\[
\{(h_2, g_2),(l_1, h_1,g_1)\}_{23} = \left(R(h_2; l'_1, h'_1), \{h_2,\del_2(l'_1)h'_1\}_{22}\act{\del_1 h_2}l'_1, \act{\del_1 h_2}h'_1 h_2, g_2g_1  \right).
\]
Here, we write $\act{g_2}h_1$ as $h'_1$ and $\act{g_2}l_1$ as $l'_1$ and $R(h_2;l,h_1) := \left<h_2,l\right>\act{\{h_2,\partial l\}}\{\act{\partial h_2}l,\{h_2,h_1\}\}\{h_2|\partial l,h_1\}^{-1}$ for convenience. 

We have now constructed all the structure of a Gray 4-group. To confirm that $A_{*}$ is indeed a Gray 4-group, it remains to check that this structure satisfies the axioms of a Gray 4-category. 
\begin{theorem}
\label{3-cm_to_gray}
Let $\bG \coloneqq ( M\overset{\partial_3}{\longrightarrow}L\overset{\partial_2}{\longrightarrow}H\overset{\partial_1}{\longrightarrow} G)$ be a 3-crossed module. Then the 4-category ${A_{*}}$ constructed from the 3-crossed module $\bG$ as above is a Gray 4-group.
\end{theorem}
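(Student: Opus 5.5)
The plan is to verify the axioms of Definition \ref{def:Gray4} for ${\bf A_*}$ in increasing order of dimension. At each level the structure reduces to group-theoretic identities in $\bG$, and each identity is either a 3-crossed module axiom or follows from one. Throughout, I will use the idea in the proof of the previous theorem in reverse. Every morphism of ${\bf A_*}$ is a tuple whose first entry is a group element with trivial source, followed by its source data. Whiskering by lower-degree morphisms acts on that first entry by the corresponding action, and composition multiplies it. Hence every categorical identity, once its sources are stripped off, becomes an identity among elements of $G,H,L,M$. The table of correspondences, Table \ref{tab:diagram-rel}, tells us which axiom of Definition \ref{def:3CM} to invoke for each relation.

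\textbf{Step 1: the underlying strict structure.} First I check the globular relations for the source and target maps. These reduce to $\partial_1\partial_2=e$ and $\partial_2\partial_3=e$ (axioms (i) and \ref{ax:3CM-exact}). Next, each $\Hom_1(\alpha,\beta)$ is a category and each $\Hom_2(f,g)$ is a 2-category. Associativity and units are immediate from the group laws. The interchange law for $\#_3,\#_4$ is the Peiffer identity $\act{\partial m}m'=mm'm^{-1}$ of the sub-2-crossed module \eqref{eq:sub-2CM}, which is exactly the computation of the previous theorem read backwards.

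Whiskerings by 3-, 2- and 1-morphisms must be (2-, Gray 3-)functors. This follows from three facts:
\begin{itemize}
  \item the actions are by automorphisms;
  \item they are compatible with $\partial$ (axiom \ref{ax:3CM-BAction});
  \item their composites satisfy $\act{x_2x_1}y=\act{x_2}\act{x_1}y$.
\end{itemize}
To show that $\Hom_3(*,*)$ is a Gray 3-category with Peiffer lifting $\{-,-\}_{33}$, I repeat the Sarikaya--Ulualan argument for the 2-crossed module \eqref{eq:sub-2CM}. Its 2-functoriality relations \eqref{eq:2-funct1}--\eqref{eq:2-funct2} are \eqref{eq:LH-2CM}--\eqref{eq:HL-2CM}, rewritten as \eqref{eq:2CM-2-funct1}--\eqref{eq:2CM-2-funct2}. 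The $\#_1$-whiskerings are Gray 3-functors because all liftings are $G$-equivariant (axiom \ref{ax:3CM-ActionLifting}).

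\textbf{Step 2: sources and targets of the liftings.} For each lifting I check that the given tuple has the prescribed 3-source and 3-target.
\begin{itemize}
  \item For the 22-lifting, this is the boundary formula $\partial\{h,h'\}=hh'h^{-1}\act{\partial h}h'^{-1}$.
  \item For the Homanians, it is the boundary formulas in axiom \ref{ax:3CM-BoundaryLiftings}. The inverse in the first slot is what makes source and target come out in the orientation of Definition \ref{def:Gray4}.
  \item For the 32- and 23-liftings, it is exactly the twisted 2-functoriality \eqref{eq:3CM-twisted-2funct1}--\eqref{eq:3CM-twisted-2funct2}, which I will derive from \eqref{eq:3CM-bLH}, \eqref{eq:3CM-bHL} and the Homanian boundaries together with the definitions \eqref{eq:defL}--\eqref{eq:defR}.
\end{itemize}
The unit axioms \ref{ax:Gray4-unit} follow from Lemma \ref{thm:unit}. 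The compatibility with $\#_1$ in \ref{ax:Gray4-LiftingWhiskering} follows from $G$-equivariance together with the fact that right whiskering only changes the $G$-component.

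\textbf{Step 3: the 4-dimensional coherence axioms.} For each remaining axiom I expand both pastings into tuples. The lower components agree automatically by Step 2, so only an equation in $M$ remains. I then normalize it using $(m\#_3t_3(m'))\#_4m'=mm'$ and the two whiskering identities displayed in the previous proof.

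After normalization, each axiom should match its 3-crossed module counterpart:
\begin{itemize}
  \item the Homanian relations match \eqref{eq:3-CM-Fund-left-Hom}--\eqref{eq:3-CM-Fund-right-Hom};
  \item the KV-polytope matches \eqref{eq:3CM-KV};
  \item the Cube matches \eqref{eq:3CM-Cube};
  \item the Tubes match \eqref{eq:3CM-HonM}--\eqref{eq:3CM-pHonM};
  \item $S^+=S^-$ matches axiom \ref{ax:3CM-SpSm}.
\end{itemize}
To handle arbitrary $G$-components, I first reduce to $s_1=\id_*$ via right whiskering and $G$-equivariance.

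\textbf{Main obstacle.} I expect the hardest part to be the Prism and Pasting relations for the 32- and 23-liftings. These liftings are composites $L(l,h_2;h_1)$ and $R(h_2;l,h_1)$ rather than primitive liftings, so verifying, for example, Prism 3-22 for general $(l,h_2)$ requires expanding $L$ and using several axioms at once:
\begin{itemize}
  \item the prism axioms \eqref{eq:3CM-L-HH}, \eqref{eq:3cm_prism23-2};
  \item the tetrahedron relations;
  \item the 2-crossed module Peiffer identities, to move $\langle l,h\rangle$ past 33-liftings.
\end{itemize}
My first approach is to show that the categorical 32-lifting for general $\Psi=(l,h_2,g)$ factors as $L$ built from the 32-lifting with identity 2-source, the 33-lifting and the left-Homanian. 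This factorization is precisely what the Pasting axioms \eqref{eq:3CM-Pasting}--\eqref{eq:3CM-Pasting2} and prism \eqref{eq:3cm_prism23-2} encode. With it, each Prism/Pasting relation reduces to the identity-source case, which is a direct translation of the corresponding axiom in Definition \ref{def:3CM}.
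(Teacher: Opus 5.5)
There is a genuine gap in Step 3, at exactly the point where the 32- and 23-liftings enter. Your overall frame matches the paper's: strip the sources and compare $M$-components. Steps 1--2, the Homanian and KV relations, and $S^+=S^-$ are fine. The last works only because axiom \ref{ax:3CM-SpSm} is already phrased with $L$ and $R$ at general arguments.

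The Cube and the Tubes are different. Their faces $\{\Psi,t_2(\Phi)\}$, $\{s_2(\Psi),\Phi\}$, $\{t_2(\Psi),\Phi\}$, $\{\Psi,s_2(\Phi)\}$ in \ref{ax:Gray4-Cube}, and $\{s_3(B),\alpha\}$, $\{t_3(B),\alpha\}$ in \ref{ax:Gray4-Tube}, have $M$-components $L$ and $R$ at arbitrary arguments. For $\Psi=(l_2,h_2,g_2)$ and $\Phi=(l_1,h_1,g_1)$ the Cube becomes
\begin{align*}
&\act{l_2}R(h_2;l_1,h_1)\,L(l_2,h_2;\partial l_1h_1)\\
&=\{l_2,\act{h_2}l_1\}\,\act{\act{\partial l_2h_2}l_1}L(l_2,h_2;h_1)\,R(\partial l_2h_2;l_1,h_1)\,\act{\{\partial l_2h_2,\partial l_1h_1\}}\{\act{\partial h_2}l_1,\act{\act{\partial h_2}h_1}l_2\}.
\end{align*}
This collapses to \eqref{eq:3-CM-aux-Cube} only at $h_1=h_2=e_H$. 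Similarly, Tube 1 for $B=(m,l,h_2,g)$ becomes $L(\partial m\,l,h_2;h_1)\,\act{\{\partial l\,h_2,h_1\}}(\act{\act{\partial h_2}h_1}m)=m\,L(l,h_2;h_1)$, which is \eqref{eq:3CM-aux-HonM} only at $l=e_L$, $h_2=e_H$. Your device for general sources, right whiskering plus $G$-equivariance, removes only the $G$-component. So the claimed direct matches with \eqref{eq:3CM-Cube} and \eqref{eq:3CM-HonM}--\eqref{eq:3CM-pHonM} fail. In the paper these identities need real computations:
\begin{itemize}
\item the Cube uses \eqref{eq:3CM-twisted-2funct2}, \eqref{eq:braket322}, \eqref{eq:3-CM-aux-Cube} and the previously established Prism 22-3 identity;
\item Tube 1 uses the pasting axiom in the form \eqref{eq:3CM-aux-Pasting1} together with Lemma \ref{thm:unit};
\item Tube 2 uses \eqref{eq:3CM-aux-Pasting2} with \eqref{eq:3CM-aux-pHonM}.
\end{itemize}

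Your cure for the Prism and Pasting relations, reduction to the identity-source case, is asserted rather than carried out, and that reduction is where the work lies. The splitting of $\{\Psi\#_2\gamma,\alpha\}_{32}$ into a 33-lifting, an identity-source 32-lifting and a left-Homanian is Prism 32-2. No axiom of Definition \ref{def:3CM} encodes it: \eqref{eq:3CM-Pasting}--\eqref{eq:3CM-Pasting2} are the Pastings at identity source, and \eqref{eq:3cm_prism23-2} is Prism 23-2. It is built into the definition of $L$, and its general form
\[
\act{l}\{h_z,h_y|h_x\}^{-1}L(l,h_zh_y;h_x)=\{l,\act{h_z}\{h_y,h_x\}\}\,\act{\act{\partial l\,h_z}\{h_y,h_x\}}L(l,h_z;\act{\partial h_y}h_x)\,\{\partial l\,h_z,h_y|h_x\}^{-1}
\]
must be verified from \eqref{eq:2CM-2-funct2}, \eqref{eq:HH_H-2CM} and \eqref{eq:3-CM-Fund-left-Hom}.

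More seriously, consider passing from the identity-source case of Prism 3-22, which is \eqref{eq:braket322}, to an arbitrary 2-source $h_z$. What remains is a product of suitably conjugated factors $\{\partial l,h_z|h_y\}$, $\act{\partial l}\{h_z|h_y,h_x\}^{-1}$, $\{\partial l|\act{\partial h_z}h_y,\act{\partial h_z}h_x\}^{-1}$, $\{\partial l,h_z|h_yh_x\}^{-1}$, together with the 33-lifting $\{\{\partial l,\act{\partial h_z}h_y\},\act{\act{\partial h_z}h_y\,\partial l}\{h_z,h_x\}\}$. Rewriting this as $\{\partial l\,h_z|h_y,h_x\}^{-1}$ times a conjugate of $\{\partial l,h_z|h_x\}^{-1}$ is precisely the KV-polytope \eqref{eq:3CM-KV} with $h_w=\partial l$. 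The same is true for Prism 22-3. That axiom is missing from your list of ingredients.

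To close the gap you need one of two things. Either prove the group identities in $M$ for each of the six Prisms, the two Pastings, the Cube and the two Tubes, which is what the paper does. Or give an explicit, non-circular pasting argument deriving each general-source axiom from its identity-source instance, using Prism 32-2/2-32, the Homanian and KV relations, and the Gray 3-category structure of $\Hom_3(*,*)$.
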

\begin{proof}
It is trivial to verify that the 4-category ${A_{*}}$ satisfies the axioms \ref{ax:Gray4-unit}--\ref{ax:Gray4-KV} of Gray 4-categories. Indeed, between a 3-crossed module and a Gray 4-category only the 32- and 23-Peiffer liftings differ; the remaining structure is described by the same diagrams. Consequently, the verification that ${A_{*}}$ satisfies these axioms is entirely straightforward.

In contrast, the axioms \ref{ax:Gray4-Prism}--\ref{ax:Gray4-SpSm} require computation: the verification that the 4-category ${A_{*}}$ satisfies them is not immediate, because the passage from the 3-crossed module $\bG$ to the 4-category ${\bf A_*}$ is not simple for the 32- and 23-Peiffer liftings, which use not only the 32- and 23-Peiffer liftings but also other liftings. We prove that ${\bf A_*}$ satisfies these axioms by direct calculation; the details are collected in Appendix \ref{proof:3-cm_to_Gray4-group}, and here we only carry out the calculation for prism 3-22.

For $(l,h_z, g_2) \in \mor_3({\bf A_*})$ and $(h_y, \del_1 h_x g_1),  (h_x, g_1)\in \mor_2({\bf A_*})$, we can write the LHS of prism 3-22 as follows:
\begin{align}
  \nonumber
  &\left( (l,h_z, g_2)  \#_3\{(h_z, g_2)|(h_y, \del h_x g_1), (h_x, g_1)\} \right) \#_4 \{(l,h_z, g_2), (h_yh_x, g_1)\}_{32} \\
  \nonumber
  &= 
  \left(
    \act{l}\{h_3| h_2, h_1\}^{-1},l\{h_z, h_yh_z\}_{22},\act{\del_1(h_z)}(h'_yh'_x)h_z,g_2g_1
  \right) \\
  \nonumber
  &\quad \#_4
  \left(
    L(l,h_z;h'_yh'_x), \{\del(l)h_z,(h'_yh'_x)\}_{22}\act{(\act{\del_1 h_z}(h'_yh'_x))}l, \act{\del_1 h_z}(h'_yh'_x) h_z, g_2g_1 
  \right) \\
  \label{eq:LHSprism3-22}
  &= 
  \left(
    \act{l}\{h_z| h'_y, h'_x\}^{-1}L(l,h_z;h'_yh'_x), 
    \{\del_2(l)h_z,(h'_yh'_x)\}_{22}\act{(\act{\del_1 h_z}(h'_yh'_x))}l, 
    \act{\del_1 h_z}(h'_yh'_x) h_z, 
    g_2g_1 
  \right)
\end{align}
Here, we write $\act{g_2}h_y$ as $h'_y$ and $\act{g_2}h_x$ as $h'_x$ for convenience.

The RHS of prism 3-22 can be written as follows:
\begin{align}
  \nonumber
  & 
  \Bigl[
    \big(\{ (l, h_z,g_2), \left(h_y, \del_1(h_x) g_1\right)\}_{32} \#_2 (\act{g_2}h_1, g_1 ) \big) \\
    \nonumber
    &\quad \#_3 
    \left( (\act{\del_1(h_z)g_2}h_y, \del_1(h_z)g_2\del_1(h_x)g_1) \#_2\{ (h_z,g_2), (h_x, g_1)\} \right)
  \Bigr] \\
  \nonumber
  &\quad \#_4
  \Bigl[
    \left(\{(\del_2(l)h_z,g_2), (h_y,\del_1(h_x)g_1) \} \#_2 (\act{g_2}h_x, g_2g_1) \right) \\
    \nonumber
    &\quad \#_3
    \left( (\act{\del_1(h_z)g_2}h_y, \del_1(h_z)g_2\del_1(h_x)g_1)\#_2\{(l,h_z,g_2), (h_x,g_1)\}_{32} \right)
  \Bigr] \\
  \nonumber
  &\quad \#_4 
  \left[ 
    \{(\del_2 h_z, g_2)|(h_y, \del h_x g_1), (h_x, g_1) \} 
    \#_3 
    (\act{\del_2(h_z)g_2}(h_yh_x), \del_2(h_z)g_2g_1) 
  \right] \\
  \nonumber
  &= 
  \Bigl(
    L(l,h_z;h'_y)\act{\{\partial_2 lh_z,h'_y\}}\left(\act{\act{\partial_1 h_z}h'_y}L(l,h_z;h'_x)\right)\{\partial_2 lh_z|h'_y,h'_x\}^{-1}, \\
    &\qquad\{\del_2(l)h_z,(h'_yh'_x)\}_{22}\act{(\act{\del_1 h_z}(h'_yh'_x))}l, \quad
    \act{\del_1 h_z}(h'_yh'_x) h_z, \quad
    g_2g_1 
  \Bigr)
\end{align}
Here, we write $\act{g_2}h_y$ as $h'_y$ and $\act{g_2}h_x$ as $h'_x$ for convenience. 

As the two sides of prism 3-22 differ only in their first term, which lies in $M$, it remains to establish the following identity.
\[
  \act{l}\{h_z|h_y,h_x\}^{-1}L(l,h_z;h_yh_x) = L(l,h_z;h_y)\act{\{\partial lh_z,h_y\}}\left(\act{\act{\partial h_z}h_y}L(l,h_z;h_x)\right)\{\partial lh_z|h_y,h_x\}^{-1}.
\]
For simplicity, we omit the apostrophe from here on and write them as $h_x$ and $h_y$.

We begin with the left-hand side. First, by the definition of $L$,
\[
  \text{(L.H.S)} = \red{\act{l}\{h_z|h_y,h_x\}^{-1}\{l,\{h_z,h_yh_x\}\}}\blue{\act{\act{\partial l}\{h_z,h_yh_x\}}\left<l,\act{\partial h_z}(h_yh_x)\right>}\{\partial l,h_z|h_yh_x\}^{-1}
\]
Focusing on the two red factors and applying the decomposition formula \ref{ax:3CM-BoundaryLiftings} for the 22-Peiffer lifting from Definition~\ref{def:3CM} to the interior of the 33-Peiffer lifting, we obtain
\[
  \red{\act{l}\{h_z|h_y,h_x\}^{-1}\{l,\{h_z,h_yh_x\}\}} = \red{\act{l}\{h_z|h_y,h_x\}^{-1}\{l,\partial\{h_z|h_y,h_x\}^{-1}\{h_z,h_y\}\act{\act{\partial h_z}h_x}\{h_z,h_x\}\}}
\]
Furthermore, from the 2-functoriality relation \eqref{eq:2CM-2-funct2} of the 2-crossed module,
\[
  \red{\act{l}\{h_z|h_y,h_x\}^{-1}\{l,\{h_z,h_yh_x\}\}} = \red{\{l,\{h_z,h_y\}\act{\act{\partial h_z}h_x}\{h_z,h_x\}\}\act{\partial l}\{h_z|h_y,h_x\}^{-1}}
\]
Also, using the decomposition formula for the 33-Peiffer lifting \eqref{eq:H_HH-2CM},
\[
  \red{\act{l}\{h_z|h_y,h_x\}^{-1}\{l,\{h_z,h_yh_x\}\}} = \red{\{l,\{h_z,h_y\}\}\act{\act{\partial l}\{h_z,h_y\}}\{l,\act{\act{\partial h_z}h_x}\{h_z,h_x\}\}\act{\partial l}\{h_z|h_y,h_x\}^{-1}}
\]
Next, we turn to the blue-colored part. Rewriting the Prism relation \ref{ax:3CM-Prisms} of Definition~\ref{def:3CM} in terms of the bracket $\left<,\right>$ yields
\begin{equation}
  \label{eq:braket322}
  \left<l,hh'\right> = \left<l,h\right>\act{\{\partial l,h\}}(\act{h}\left<l,h'\right>)\{\partial l|h,h'\}^{-1}
\end{equation}
Using this, we obtain
\[
  \blue{\act{\act{\partial l}\{h_z,h_yh_x\}}\left(\left<l,\act{\partial h_z}h_y\right>\act{\{\partial l,\act{\partial h_z}h_y\}}(\act{\act{\partial h_z}h_y}\left<l,\act{\partial h_z}h_x\right>)\{\partial l|\act{\partial h_z}h_y,\act{\partial h_z}h_x\}^{-1}\right)}
\]
From the above, the left-hand side becomes
\begin{align*}
  \text{(L.H.S)} &= 
  \{l,\{h_z,h_y\}\}\red{\act{\act{\partial l}\{h_z,h_y\}}\{l,\act{\act{\partial h_z}h_x}\{h_z,h_x\}\}}\blue{\act{\partial l}\{h_z|h_y,h_x\}^{-1}} \\
  &\qquad \act{\act{\partial l}\{h_z,h_yh_x\}}\left(\left<l,\act{\partial h_z}h_y\right>\act{\{\partial l,\act{\partial h_z}h_y\}}(\act{\act{\partial h_z}h_y}\left<l,\act{\partial h_z}h_x\right>)\{\partial l|\act{\partial h_z}h_y,\act{\partial h_z}h_x\}^{-1}\right) \\
  &\qquad \{\partial l,h_z|h_yh_x\}^{-1} \\
  &= 
  \{l,\{h_z,h_y\}\}\green{\act{\act{\partial l}\{h_z,h_y\}l\act{\act{\partial h_z}h_y}\{h_z,h_x\}l^{-1}}\left<l,\act{\partial h_z}h_y\right>}\red{\act{\act{\partial l}\{h_z,h_y\}}\{l,\act{\act{\partial h_z}h_y}\{h_z,h_x\}\}} \\
  &\qquad \act{\act{\partial l}\{h_z,h_y\}\act{\partial l\act{\partial h_z}h_y}\{h_z,h_x\}\{\partial l,\act{\partial h_z}h_y\}}(\act{\act{\partial h_z}h_y}\left<l,\act{\partial h_z}h_x\right>) \\
  &\qquad \blue{\act{\partial l}\{h_z|h_y,h_x\}^{-1}}\act{\act{\partial l}\{h_z,h_yh_x\}}\{\partial l|\act{\partial h_z}h_y,\act{\partial h_z}h_x\}^{-1}\{\partial l,h_z|h_yh_x\}^{-1}
\end{align*}
We reorder the colored factors using the Peiffer identity (recoloring as we go, so that the correspondence between colors and expressions differs from the previous step). Focusing on the green part, we apply equation \eqref{eq:LH-2CM} for the action of $H$ on $L$ in a 2-crossed module from Definition~\ref{def:2CM} and obtain
\begin{align*}
  &\green{\act{\act{\partial l}\{h_z,h_y\}l\act{\act{\partial h_z}h_y}\{h_z,h_x\}l^{-1}}\left<l,\act{\partial h_z}h_y\right>}\red{\act{\act{\partial l}\{h_z,h_y\}}\{l,\act{\act{\partial h_z}h_y}\{h_z,h_x\}\}} \\
  &= \green{\act{\act{\partial l}\{h_z,h_y\}}\left<l,\act{\partial h_z}h_y\right>\act{\act{\partial l}\{h_z,h_y\}}\{\partial \left<l,\act{\partial h_z}h_y\right>^{-1},l\act{\act{\partial h_z}h_y}\{h_z,h_x\}l^{-1}\}}\red{\act{\act{\partial l}\{h_z,h_y\}}\{l,\act{\act{\partial h_z}h_y}\{h_z,h_x\}\}}
\end{align*}
Focusing on the second green factor together with the red factor, and combining the 2-functoriality with the other relations of the 2-crossed module, one can show
\[
  \{\partial m,ll'l^{-1}\}\{l,l'\} = \{\partial m,\partial\{l,l'\}\act{\partial l}l'\}\act{\partial\partial m}\{l,l'\} = \act{\partial m}\{l,l'\}\{\partial m,\act{\partial l}l'\} = \{\partial ml,l'\}
\]
Using this, we obtain
\begin{align*}
  &\green{\act{\act{\partial l}\{h_z,h_y\}l\act{\act{\partial h_z}h_y}\{h_z,h_x\}l^{-1}}\left<l,\act{\partial h_z}h_y\right>}\red{\act{\act{\partial l}\{h_z,h_y\}}\{l,\act{\act{\partial h_z}h_y}\{h_z,h_x\}\}} \\
  &= \green{\act{\act{\partial l}\{h_z,h_y\}}\left<l,\act{\partial h_z}h_y\right>\act{\act{\partial l}\{h_z,h_y\}}\{\partial \left<l,\act{\partial h_z}h_y\right>^{-1}l,\act{\act{\partial h_z}h_y}\{h_z,h_x\}\}} \\
  &= \green{\act{\act{\partial l}\{h_z,h_y\}}\left<l,\act{\partial h_z}h_y\right>\act{\act{\partial l}\{h_z,h_y\}}\{\{\partial l,\act{\partial h_z}h_y\}\act{\act{\partial h_z}h_y}l,\act{\act{\partial h_z}h_y}\{h_z,h_x\}\}}
\end{align*}
Here, in the last equality we used the boundary formula \ref{ax:3CM-BoundaryLiftings} of Definition~\ref{def:3CM}. Furthermore, using the decomposition relation \eqref{eq:HH_H-2CM} of the Peiffer lifting of the 2-crossed module from Definition~\ref{def:2CM},
\begin{align*}
  &\green{\act{\act{\partial l}\{h_z,h_y\}l\act{\act{\partial h_z}h_y}\{h_z,h_x\}l^{-1}}\left<l,\act{\partial h_z}h_y\right>}\red{\act{\act{\partial l}\{h_z,h_y\}}\{l,\act{\act{\partial h_z}h_y}\{h_z,h_x\}\}} \\
  &= \green{\act{\act{\partial l}\{h_z,h_y\}}\left<l,\act{\partial h_z}h_y\right>\act{\act{\partial l}\{h_z,h_y\}\{\partial l,\act{\partial h_z}h_y\}}\{\act{\act{\partial h_z}h_y}l,\act{\act{\partial h_z}h_y}\{h_z,h_x\}\}\act{\act{\partial l}\{h_z,h_y\}}\{\{\partial l,\act{\partial h_z}h_y\},\act{\act{\partial h_z}h_y\partial l}\{h_z,h_x\}\}}
\end{align*}
Writing out the entire left-hand side once more,
\begin{align*}
  (\text{L.H.S}) &= \{l,\{h_z,h_y\}\}\act{\act{\partial l}\{h_z,h_y\}}\left<l,\act{\partial h_z}h_y\right> \\
  &\qquad \act{\act{\partial l}\{h_z,h_y\}\{\partial l,\act{\partial h_z}h_y\}}\{\act{\act{\partial h_z}h_y}l,\act{\act{\partial h_z}h_y}\{h_z,h_x\}\}\red{\act{\act{\partial l}\{h_z,h_y\}}\{\{\partial l,\act{\partial h_z}h_y\},\act{\act{\partial h_z}h_y\partial l}\{h_z,h_x\}\}} \\
  &\qquad \act{\act{\partial l}\{h_z,h_y\}\act{\partial l\act{\partial h_z}h_y}\{h_z,h_x\}\{\partial l,\act{\partial h_z}h_y\}}(\act{\act{\partial h_z}h_y}\left<l,\act{\partial h_z}h_x\right>) \\
  &\qquad \act{\partial l}\{h_z|h_y,h_x\}^{-1}\act{\act{\partial l}\{h_z,h_yh_x\}}\{\partial l|\act{\partial h_z}h_y,\act{\partial h_z}h_x\}^{-1}\{\partial l,h_z|h_yh_x\}^{-1}
\end{align*}
(We have recolored.) Focusing on the red factor and reordering using the Peiffer identity,
\begin{align*}
  (\text{L.H.S}) &= \{l,\{h_z,h_y\}\}\act{\act{\partial l}\{h_z,h_y\}}\left<l,\act{\partial h_z}h_y\right> \\
  &\qquad \act{\blue{\act{\partial l}\{h_z,h_y\}\{\partial l,\act{\partial h_z}h_y\}}}\left(\{\act{\act{\partial h_z}h_y}l,\act{\act{\partial h_z}h_y}\{h_z,h_x\}\}\act{\act{\act{\partial h_z}h_y\partial l}\{h_z,h_x\}}(\act{\act{\partial h_z}h_y}\left<l,\act{\partial h_z}h_x\right>)\right) \\
  &\qquad \red{\act{\act{\partial l}\{h_z,h_y\}}\{\{\partial l,\act{\partial h_z}h_y\},\act{\act{\partial h_z}h_y\partial l}\{h_z,h_x\}\}} \\
  &\qquad \act{\partial l}\{h_z|h_y,h_x\}^{-1}\act{\act{\partial l}\{h_z,h_yh_x\}}\{\partial l|\act{\partial h_z}h_y,\act{\partial h_z}h_x\}^{-1}\{\partial l,h_z|h_yh_x\}^{-1}
\end{align*}
Focus on the blue part. Using $\act{\partial l}\{h_z,h_y\}\{\partial l,\act{\partial h_z}h_y\} = \partial\{\partial l|h_z,h_y\}^{-1}\{\partial lh_z,h_y\}$ together with the Peiffer identity,
\begin{align*}
  (\text{L.H.S}) &= \{l,\{h_z,h_y\}\}\act{\act{\partial l}\{h_z,h_y\}}\left<l,\act{\partial h_z}h_y\right>\{\partial l,h_z|h_y\}^{-1} \\
  &\qquad \act{\{\partial lh_z,h_y\}}\left(\act{\act{\partial h_z}h_y}\left[\{l,\{h_z,h_x\}\}\act{\act{\partial l}\{h_z,h_x\}}\left<l,\act{\partial h_z}h_x\right>\right]\right) \\
  &\qquad \red{\{\partial l,h_z|h_y\}\act{\act{\partial l}\{h_z,h_y\}}\left(\{\{\partial l,\act{\partial h_z}h_y\},\act{\act{\partial h_z}h_y\partial l}\{h_z,h_x\}\}\right)} \\
  &\qquad \red{\act{\partial l}\{h_z|h_y,h_x\}^{-1}\act{\act{\partial l}\{h_z,h_yh_x\}}\{\partial l|\act{\partial h_z}h_y,\act{\partial h_z}h_x\}^{-1}\{\partial l,h_z|h_yh_x\}^{-1}}
\end{align*}
(We have recolored.) Focus on the red part. Using the KV-polytope relation \ref{ax:3CM-KV} of Definition~\ref{def:3CM} with $h_w = \partial l$, 
\begin{align*}
  (\text{L.H.S}) &= \{l,\{h_z,h_y\}\}\act{\act{\partial l}\{h_z,h_y\}}\left<l,\act{\partial h_z}h_y\right>\{\partial l,h_z|h_y\}^{-1} \\
  &\qquad \act{\{\partial lh_z,h_y\}}\left(\act{\act{\partial h_z}h_y}\left[\{l,\{h_z,h_x\}\}\act{\act{\partial l}\{h_z,h_x\}}\left<l,\act{\partial h_z}h_x\right>\right]\right) \\
  &\qquad \act{\{\partial lh_z,h_y\}}(\act{\act{\partial (\partial lh_z)}h_y}\{\partial l,h_z|h_x\})^{-1}\{\partial lh_z|h_y,h_x\}^{-1} \\
  &= L(l,h_z;h_y)\act{\{\partial lh_z,h_y\}}(\act{\act{\partial h_z}h_y}L(l,h_z;h_x))\{\partial lh_z|h_y,h_x\}^{-1} \\
  &= \text{(R.H.S)}
\end{align*}

The other relations can be proved in a similar manner. We give the calculation in Appendix \ref{proof:3-cm_to_Gray4-group}.

\end{proof}

\subsection{Equivalence of 3-crossed modules and Gray 4-groups}\label{sec:thm}

In Section \ref{sec:Gray323CM} we constructed the functor $\Delta: \Gray \rightarrow \TXMod $, and above we constructed the functor $\Theta : \TXMod \rightarrow \Gray$. We now show that these two functors establish a categorical equivalence between $\TXMod$ and $ \Gray$.

\begin{theorem}
  \label{thm:equivalence_3cm_gray4group}
Let $\TXMod$ be the category of 3-crossed modules and let $ \Gray$ be the category of Gray 4-groups. Then $\TXMod$ and $ \Gray$ are categorically equivalent.
\end{theorem}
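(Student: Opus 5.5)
We will prove the equivalence by showing that the functors $\Delta:\Gray\to\TXMod$ of Section \ref{sec:Gray323CM} and $\Theta:\TXMod\to\Gray$ of Section \ref{3CM_to_Gray4} are quasi-inverse. First we make sure both are functors on morphisms. For a Gray 4-functor $F:{\bf A_*}\to{\bf B_*}$, the maps $F_i$ preserve identities, sources and targets, so $F_i$ sends $G_i({\bf A_*})$ into $G_i({\bf B_*})$. The group operations, boundaries $\partial_i=t_i$, actions and group liftings of $\Delta({\bf A_*})$ are all expressions in $\#_k$, $s_k$, $t_k$, inverses and categorical liftings, and $F$ preserves each of these, so $(F_1,\dots,F_4)$ is a 4-homomorphism. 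Conversely, for a 4-homomorphism $f$, we set $\Theta(f)(m,l,h,g)=(f_4m,f_3l,f_2h,f_1g)$. Since every structure map of $\Theta(\bG)$ was defined componentwise by group-theoretic formulas, $\Theta(f)$ preserves all of it. In particular the 32-/23-liftings are built from $L$ and $R$, which only involve liftings, actions and products, all of which $f$ preserves. Functoriality ($\id$ and composition) is immediate in both directions.

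Next we construct a natural isomorphism $\Delta\Theta\cong\mathrm{Id}_{\TXMod}$. In $\Theta(\bG)$ the elements with trivial sources are exactly $(h,e_G)$, $(l,e_H,e_G)$ and $(m,e_L,e_H,e_G)$. This gives bijections $\eta_{\bG}:G_i\to G_i(\Theta\bG)$, and we check they are group isomorphisms. For example, in $H$ the product $(h',e)\#_1 t_1(h,e)$ followed by $\#_2$ gives $(h'h,e)$. Boundaries correspond because $t_1(h,e)=\partial_1 h$. The actions $x\#_iy\#_ix^{-1_i}\#_{i-1}t_{i-1}(x)^{-1}$ reduce, using the whiskering formulas of Section \ref{3CM_to_Gray4}, to $\act{x}y$. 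The substantive checks are the liftings. The 22-lifting is $\bigl[\ ,(\act{\partial h_2}h_1h_2)^{-1};\{(h_2,e),(h_1,e)\}_c\bigr]$, and its $L$-component is $\{h_2,h_1\}$. The 33-lifting and the Homanians unwind similarly: the categorical left-Homanian carries $\{h_z,h_y|h_x\}^{-1}$, and the inverse $^{-1_4}$ in the definition of $\Delta$ cancels it. For the 32-lifting, $\Delta$ uses $\{l^{-1},h\}_c^{-1_4}$ whiskered. Its $M$-component is $L(l^{-1},e;h)^{-1}$ transported by whiskering, and we must show this equals $\{l,h\}$. By Lemma \ref{thm:unit}, $\{e_H,h\}=1$ and $\{\partial l^{-1},e|h\}=1$, so $L(l^{-1},e;h)=\{l^{-1},1\}\left<l^{-1},h\right>=\left<l^{-1},h\right>$. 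The definition $\left<l,h\right>=\act{l}\{l^{-1},h\}^{-1}$, together with the $L$-action coming from whiskering by $l$, should then give exactly $\{l,h\}$. The 23 case is analogous with $R$. Naturality of $\eta$ in $\bG$ is evident.

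Finally we construct $\epsilon:\Theta\Delta\cong\mathrm{Id}_{\Gray}$. Given ${\bf A_*}$, define $\epsilon$ on 2-morphisms by $\epsilon_2(\alpha)=(\alpha\#_1 s_1(\alpha)^{-1},s_1(\alpha))$. On 3-morphisms set $\epsilon_3(\Phi)=(\Phi\#_2 s_2(\Phi)^{-1},\,\epsilon_2(s_2\Phi))$, with the convention that $s_2(\Phi)^{-1}$ is taken as $s_2(\Phi)^{-1_2}$ whiskered appropriately. Define $\epsilon_4$ similarly. This is the standard ``split off the source'' bijection, with inverse $(x,\dots,g)\mapsto x\#\cdots\# g$ built by whiskering. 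We then verify that $\epsilon$ preserves sources and targets, vertical compositions and whiskerings, which is routine, as in the group-structure computations of Section \ref{sec:Gray323CM}.

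The main obstacle is showing that $\epsilon$ preserves the categorical liftings, especially $\{-,-\}_{32}$, $\{-,-\}_{23}$ and the Homanians at arbitrary (not normalized) sources. Our plan is to reduce to normalized arguments using the $\#_1$-compatibilities \ref{ax:Gray4-LiftingWhiskering}, and to write a general lifting as a whiskered normalized one. For the 32-lifting with nontrivial $s_2(\Psi)$, we decompose $\Psi=\Psi'\#_2\beta$ with $\Psi'$ normalized. We then apply Prism 3-22/32-2 \ref{ax:Gray4-Prism} and Pasting \ref{ax:Gray4-Pasting} to express $\{\Psi,\alpha\}_{32}$ through the normalized $\{\Psi',\alpha\}_{32}$, a 22-lifting, a 33-lifting and a left-Homanian. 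The target is precisely the formula $L(l,h_2;h_1)$ used in defining $\Theta$, and matching these is the core computation. It mirrors the Pasting computation already carried out in the proof that $\Delta({\bf A_*})$ is a 3-crossed module. Naturality of $\epsilon$ is then immediate, since Gray functors commute with $\#_i$ and inverses. Together, $\eta$ and $\epsilon$ establish the equivalence.
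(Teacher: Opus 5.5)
Your proposal is correct and follows essentially the same route as the paper: the same quasi-inverse pair $\Delta,\Theta$, with your $\eta$ being the paper's embedding $\beta_{\bG}$ ($h\mapsto(h,1)$, etc.) and your $\epsilon$ being the paper's source-splitting map $\alpha_{\mathcal{C}}$ (up to direction). The paper simply declares these natural isomorphisms ``clear''; you correctly isolate the one nontrivial check, that the splitting map respects the 32-/23-liftings at non-normalized sources, and your reduction does go through: the $\#_1$-compatibilities together with Prism 32-2, where $s_2(\Psi')=\id$ kills the first Homanian and the 22-factor, reproduce exactly the formula $L(l,h_2;h_1)$ used to define $\Theta$.
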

\begin{proof}
To prove that $\TXMod$ and $\Gray$ are categorically equivalent, we will show that the composite $\Theta \circ \Delta:~\Gray\to\Gray$ is naturally isomorphic to ${\rm Id}_{\text{Gray}_4} : \Gray \to \Gray$ and that $\Delta \circ \Theta:~\TXMod \to \TXMod$ is naturally isomorphic to ${\rm Id}_{\text{3CM}} : \TXMod \to \TXMod$, where ${\rm Id}_{\text{Gray}_4}$ and ${\rm Id}_{\text{3CM}}$ denote the relevant identity functors.

\vspace{.5em}
\noindent
$\rm(\hspace{.18em}i\hspace{.18em})$ Natural isomorphism $\alpha : {\rm Id}_{\text{Gray}_4} \to \Theta \circ \Delta$.
For each object $\cC \in \obj(\Gray)$, the component $\alpha_{\cC}$ of this natural isomorphism is constructed as follows.
\begin{itemize}
  \item For $* \in \obj(\cC)$, 
  \[
  \alpha_\cC (*) = *.
  \]
  \item For $ g \in \mor_1(\cC)$, 
  \[
  \alpha_\cC (g) = g.
  \]
  \item For $ h \in \mor_2(\cC)$, 
  \[
  \alpha_\cC (h) = \left(h \#_1 s_1(h)^{-1_1}, s_1(h)\right).
  \]
  \item For $ l \in \mor_3(\cC)$, 
  \[
  \alpha_\cC (l) = \left(\left(l \#_2 s_2(l)^{-1_2}\right)\#_1 t_1(l)^{-1_1},s_2(l) \#_1 s_1(l)^{-1_1},s_1(l)\right)
  \]
  \item For $ m \in \mor_4(\cC)$, 
  \[
  \alpha_\cC (m) = (x,y,z,w),
  \]
  where
  \begin{align*}
    x &= \left(\left(m \#_3 s_3(m)^{-1_3} \right) \#_2 t_2(m)^{-1_2}\right) \#_1 t_1(m)^{-1_1}, & z &= s_2(m) \#_1 s_1(m)^{-1_1}, \\ y &= \left(s_3(m) \#_2 s_2(m)^{-1_2}\right)\#_1 t_1(m)^{-1_1}, & w &= s_1(m).
  \end{align*}
\end{itemize}

It is clear that $\alpha$ is a natural transformation and that the morphism $\alpha_\cC$ is an isomorphism. The morphism $\alpha'_\cC : \Theta\circ \Delta(\cC) \rightarrow \cC$, which is an inverse of the morphism $\alpha_\cC$, is constructed as follows.
\begin{itemize}
  \item For $* \in \obj\left(\Theta\circ \Delta(\cC)\right)$, 
  \[
  \alpha'_\cC (*) = *.
  \]
  \item For $g \in \mor_1\left(\Theta\circ \Delta(\cC)\right)$, 
  \[
  \alpha'_\cC (g) = g.
  \]
  \item For $(h,g) \in \mor_2\left(\Theta\circ \Delta(\cC)\right)$, 
  \[
  \alpha'_\cC ((h,g)) = h \#_1 g .
  \]
  \item For $(l,h,g) \in \mor_3\left(\Theta\circ \Delta(\cC)\right)$, 
  \[
  \alpha'_\cC ((l,h,g)) = \left(l \#_1 \left(h({\rm id_1}) \#_1 g\right)  \right) \#_2 (h \#_1 g) .
  \]
  \item For $(m,l,h,g) \in \mor_4\left(\Theta\circ \Delta(\cC)\right)$, 
  \[
  \alpha'_\cC \left( (m,l,h,g) \right) = \left(\left(\left(\left( m \#_2 l({\rm id_2}) \right) \#_1 \left(h({\rm id_1}) \#_1 g\right) \right) \right)  \#_3 l \right)  \#_2 (h \#_1 g).
  \]
\end{itemize}

\vspace{.5em}
\noindent
$\rm(\hspace{.08em}ii\hspace{.08em})$ Natural isomorphism $\beta : {\rm Id}_{\text{3CM}} \rightarrow \Delta \circ \Theta$. The natural transformation $\beta$ is defined trivially. For any 3-crossed module $\bG=(M\rightarrow L\rightarrow H\rightarrow G) \in \obj(\TXMod)$, the component $\beta_\bG$ is given as follows.
\begin{itemize}
  \item For $g \in G$, 
  \[
  \beta_\bG (g) = g.
  \]
  \item For $h \in H$, 
  \[
  \beta_\bG (h) = (h,1).
  \]
  \item For $l \in L$, 
  \[
  \beta_\bG (l) = (l,1,1).
  \]
  \item For $m \in M$, 
  \[
  \beta_\bG (m) = (m,1,1,1).
  \]
\end{itemize}

Each component $\beta_\bG$ is clearly an isomorphism, which completes the proof.
\end{proof}

We have just related 3-crossed modules to Gray 4-groups. On the other hand, a Gray 4-category is also related to a braided monoidal 2-category (see \cite{Baez:1996} or \cite{Crans:1998} for the definition): in fact, the category of Gray 4-categories with trivial object and trivial 1-morphisms is isomorphic to the category of braided monoidal 2-categories. In this context, a 3-crossed module may be regarded as the group version of a braided monoidal 2-category.

\begin{lemma}
Let ${\bf Gray_{4*}}$ be the category of Gray 4-categories whose object is trivial and whose 1-morphisms are trivial, and let ${\bf BrdMonCat}$ be the category of braided monoidal 2-categories. ${\bf Gray_{4*}}$ and ${\bf BrdMonCat}$ are isomorphic as categories.
\end{lemma}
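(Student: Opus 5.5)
The plan is to build explicit functors in both directions, $\Xi:{\bf Gray_{4*}}\to{\bf BrdMonCat}$ and $\Xi':{\bf BrdMonCat}\to{\bf Gray_{4*}}$, and check that they are strictly inverse. Both are just reindexing, since no data is created or destroyed. This follows the Baez--Dolan shift used in Crans' treatment of Gray 3-categories with one object and one 1-morphism versus strict braided monoidal categories.

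Let $\cC$ be a Gray 4-category with one object $*$ and only the 1-morphism $\id_*$. We take:
\begin{itemize}
  \item the objects of $\Xi(\cC)$ to be $\mor_2(\cC)$;
  \item the 1-morphisms to be $\mor_3(\cC)$;
  \item the 2-morphisms to be $\mor_4(\cC)$;
  \item the hom-2-categories to be the $\Hom_2$ of the definition;
  \item the tensor product on objects to be $\#_2$, and the tensoring of a 1- or 2-morphism with an object to be whiskering by 2-morphisms;
  \item the interchange (tensorator) $\otimes$-data to be the 33-Peiffer lifting $\{-,-\}_{33}$;
  \item the braiding 1-morphism to be the 22-Peiffer lifting $\{-,-\}_{22}$.
\end{itemize}
Whiskering by 1-morphisms is trivial, because $\#_1$ with $\id_*$ is the identity by the unit laws. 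Hence $s_1(\beta)\#_1\alpha=\alpha$ and the source/target of $\{\beta,\alpha\}_{22}$ become $\beta\otimes\alpha\to\alpha\otimes\beta$ in the appropriate convention. The left and right Homanians become the two braiding 2-cells $R_{(-\otimes-)|-}$ and $R_{-|(-\otimes-)}$ of Kapranov--Voevodsky/Crans. The 32- and 23-liftings become the naturality 2-cells of the braiding in each variable.

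The main work is a dictionary between axioms. Semistrictness of the monoidal 2-category (strict associativity, units, and the Gray-type tensor axioms) comes from $\Hom_3(*,*)$ being a Gray 3-category, whose single object reduces it to a semistrict monoidal 2-category. The remaining Crans axioms are then matched one by one with the Gray 4-category axioms:
\begin{itemize}
  \item compatibility with units \ref{ax:Gray4-unit} gives the braiding unit axioms;
  \item the Homanian relations \ref{ax:Gray4-HomanRels} give the two tetrahedron (hexagon-coherence) axioms;
  \item the KV-polytope \ref{ax:Gray4-KV} gives the Kapranov--Voevodsky 4-cell;
  \item the prisms \ref{ax:Gray4-Prism} give compatibility of the braiding 2-cells with naturality;
  \item the cube \ref{ax:Gray4-Cube}, pastings \ref{ax:Gray4-Pasting} and tubes \ref{ax:Gray4-Tube} give pseudonaturality of the braiding (functoriality in 1- and 2-morphisms and compatibility with $\otimes$ on 1-morphisms);
  \item $S^+=S^-$ \ref{ax:Gray4-SpSm} gives Crans' final axiom relating $\{\{\alpha_z,\alpha_y\},\alpha_x\}$ and $\{\alpha_z,\{\alpha_y,\alpha_x\}\}$.
\end{itemize}
The condition \ref{ax:Gray4-LiftingWhiskering} becomes vacuous. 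The functor $\Xi'$ reverses this dictionary by setting $\mor_0=\mor_1=\{*\}$ and shifting degrees up by two. On morphisms, Gray 4-functors preserving all liftings correspond exactly to strict braided monoidal 2-functors, so $\Xi$ and $\Xi'$ are mutually inverse on the nose.

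The main obstacle will be the axiom dictionary, specifically checking that Crans' list of axioms for semistrict braided monoidal 2-categories is in bijection with ours, with matching orientations and inverses. For example, the $^{-1_4}$ appearing in $S^+=S^-$ and the direction of the Homanians must agree with Crans' chosen 2-cells, possibly after replacing a structure cell by its inverse. I will first write each Crans axiom as a pasting diagram in the box notation of Section~\ref{sec:diag-Gray3} and compare the diagrams directly. Wherever the conventions differ, I will compose the identification with the inverse-taking bijection on that lifting, which preserves isomorphism of categories.
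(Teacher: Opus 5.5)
Your proposal follows the same route as the paper: a pure reindexing in which the objects, 1-cells and 2-cells of the braided monoidal 2-category are $\mor_2$, $\mor_3$, $\mor_4$. Under it, the tensor is $\#_2$ together with whiskering by 2-morphisms, the tensorator is $\{-,-\}_{33}$, the braiding is $\{-,-\}_{22}$ with naturality cells $\{-,-\}_{32}$ and $\{-,-\}_{23}$, and $\tilde{R}_{(-|-,-)}$, $\tilde{R}_{(-,-|-)}$ are the two Homanians. You go further than the paper, whose proof only lists this data in the direction ${\bf BrdMonCat}\to{\bf Gray_{4*}}$, with no inverse functor, no axiom-by-axiom matching and no check on morphisms. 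One point your dictionary should pin down: ${\bf BrdMonCat}$ must use braidings whose components $R_{A,B}$ are invertible rather than merely equivalences, because $\{-,-\}_{22}$ is required to be a 3-isomorphism.
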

\begin{proof}
We now spell out the correspondence explicitly. For a braided monoidal 2-category:
\[
  (\cC, \bigotimes, I, R, \tilde{R}_{(-|-,-)}, \tilde{R}_{(-,-|-)} ),
\]
we construct an object of ${\bf Gray_{4*}}$ as follows:

\vspace{.5em}
\noindent
\textbf{Object}: The object is trivial $\{*\}$. 

\vspace{.5em}
\noindent
\textbf{1-morphism}: The 1-morphism is trivial $\{*\}$.

\vspace{.5em}
\noindent
\textbf{2-morphism}: Each object $A \in \cC$ becomes a 2-morphism $A : * \rightarrow * $ in the Gray 4-category, and for two such 2-morphisms $A,B$ the vertical composition $A\#_2 B$ is defined via the tensor $A\bigotimes B$ of the braided monoidal 2-category. 

\vspace{.5em}
\noindent
\textbf{3-morphism}: Each 1-morphism $f$ becomes a 3-morphism $f$ in the Gray 4-category, and for two such 3-morphisms $f,g$ the vertical composition $f\#_3 g$ is defined via their composition $f \circ g$ in the braided monoidal 2-category. 

\vspace{.5em}
\noindent
\textbf{4-morphism}: Each 2-morphism $\alpha$ becomes a 4-morphism $\alpha$ in the Gray 4-category, and for two such morphisms $\alpha, \beta$ the vertical composition $\alpha \#_4 \beta$ is defined via their composition $\alpha \circ \beta$ in the braided monoidal 2-category. 

\vspace{.5em}
\noindent
\textbf{Whiskering}: Whiskering is likewise defined using the structure of the braided monoidal 2-category: for any 2-morphism $A$ and any i-morphism $x$ in the Gray 4-category with $i = 3,4$, the whiskering $A \#_2 x$ is given by $A \bigotimes x$, using that $\bigotimes$ is a 2-functor of the braided monoidal 2-category. Whiskering by 3-morphisms, in turn, is defined through horizontal composition of 1-morphisms in the braided monoidal 2-category.

\vspace{.5em}
\noindent
\textbf{Liftings}: The structure of the 22-, 32-, and 23-liftings in the Gray 4-category is expressed using the pseudonatural equivalence $R$ of the braided monoidal 2-category. Thus, for two 2-morphisms $(A, B) \in\mor_2({\bf Gray_{4*}})\times_1\mor_2({\bf Gray_{4*}})$, the 33-Peiffer lifting $\{A,B\}_{22}$ is given by $R_{A,B}$; for morphisms $(f,A) \in \mor_3({\bf Gray_{4*}})\times_0\mor_2({\bf Gray_{4*}})$, the 32-lifting $\{f,A\}_{32}$ is given by $R_{f,A}$. The 23-lifting is obtained in a similar manner.

\vspace{.5em}
\noindent
\textbf{33-Lifting}: The 33-lifting structure in the Gray 4-category is constructed using the 2-functoriality of $\bigotimes$. Indeed, for two 3-morphisms $(f,g)\in\mor_3({\bf Gray_{4*}})\times_1\mor_3({\bf Gray_{4*}})$, the 33-lifting $\{f,g\}_{33}$ is defined via $\bigotimes_{f,g}$.

\vspace{.5em}
\noindent
\textbf{Homanians}: The twists given by the left-Homanian $\{-,-|-\}$ and the right-Homanian $\{-|-,-\}$ are described directly via $\tilde{R}_{(-|-,-)}$ and $\tilde{R}_{(-,-|-)}$. For 2-morphisms $(B_2,B_1,A) \in {\lparen}\mor_2({\bf Gray_{4*}})\times_1\mor_2({\bf Gray_{4*}}){\rparen}\times_0\mor_2({\bf Gray_{4*}})$, the left-Homanian $\{B_2,B_1|A\}$ is defined via $\tilde{R}_{(B_2,B_1|A)}$; the right-Homanian is handled in a similar manner.

\end{proof}

\begin{remark}
We have seen that the 3-crossed modules introduced in this paper are equivalent to Gray 4-groups (Theorem \ref{thm:equivalence_3cm_gray4group}). The definition of a 3-crossed module given in our previous work \cite{Fukuda:2025} is more economical: it omits the two prism relations (\ref{eq:3cm_prism23-2}) and (\ref{eq:3cm_prism2-23}). By the same argument as in Theorem \ref{thm:equivalence_3cm_gray4group}, the 3-crossed modules of \cite{Fukuda:2025} are equivalent to a variant of Gray 4-groups in which these two prism relations, Prism 23-2 and Prism 2-23, are not imposed. The addition of these two relations in the present paper is precisely what is needed to obtain an equivalence with the full notion of Gray 4-groups.
\end{remark}

\section{Conclusion and Discussion}\label{sec:con}
In this paper, we proved a categorical equivalence between the category of 3-crossed modules $\TXMod$ and the category of Gray 4-groups $\Gray$, the groupoid version of Gray 4-categories corresponding to semistrict braided monoidal 2-categories. This result determines the place of the definition of 3-crossed modules given in our previous work \cite{Fukuda:2025} within the literature of higher category theory.

Just as crossed modules and 2-crossed modules are closely related to the construction of ordinary knot invariants and topological invariants of three- and four-dimensional manifolds, the 3-crossed modules defined in this work are strongly expected to provide a framework for constructing algebraic invariants of 2-knots (surface knots) and topological invariants of five dimensional manifolds. In particular, the fact that a semistrict braided monoidal 2-category gives a solution of the Zamolodchikov tetrahedron equation is immediately translated, via the equivalence established in this paper, into a corresponding statement for 3-crossed modules.

However, the essential point is that the algebraic structure underlying the tetrahedron equation can be described purely in terms of a 3-crossed module, independently of categorical language. In actual computations, we can directly associate some of the elementary moves in a motion-picture (movie) description of a 2-knot with Peiffer liftings of a 3-crossed module (and their compositions).\footnote{We can establish this correspondence directly for most moves, with the exception of the generalization of Reidemeister move 1; with a little ingenuity, the latter can also be accommodated.} Under this correspondence, each Roseman move -- a relation among movies built from the collection of fundamental moves \cite{Roseman1998}, and one of the key ingredients in the construction of 2-knot invariants -- can be interpreted in terms of an algebraic relation among elements of the 3-crossed module. Moreover, we can show that these relations hold as a consequence of the axioms of a 3-crossed module. The resolution of the Zamolodchikov tetrahedron equation can therefore be recast in a self-contained form, as a concrete algebraic computation within a 3-crossed module, independently of the Kapranov--Voevodsky approach.

At first glance, this correspondence might seem to complete the construction of 2-knot invariants. However, a movie description of closed knots alone is insufficient for applying 3-crossed modules; instead, it is essential to treat them as movie descriptions of tangle diagrams. In this tangle setting, the precise set of Roseman-type moves required to guarantee topological invariance remains to be fully determined.

\addtocontents{toc}{\protect\setcounter{tocdepth}{1}}
\appendix
\section{Diagrams of the fundamental relations of 3-crossed modules}\label{app:3CM-diagram-list}

In the diagrammatic representation of 3-crossed modules, since the auxiliary brackets $\left<-,-\right>$ are more convenient than $\{-,-\}$, we use them. However, these auxiliary brackets are obtained by dropping part of the information of the 2-sources from \eqref{eq:defL} and \eqref{eq:defR}, which directly correspond to the categorical language. Consequently, the diagrams representing the fundamental relations in terms of these brackets may look different from their categorical counterparts expressed in terms of \eqref{eq:defL} and \eqref{eq:defR}. Nevertheless, in order to make the correspondence between the algebraic and categorical descriptions transparent, we use the same names for the diagrams in the following as for their categorical counterparts.
\subsection{right-Homanian relation}
\[
  \begin{tikzpicture}[baseline=-2pt]
    \node (A) at (2.25,0) {\tikz[baseline=-2pt,scale=.4]{\strsquare{2}{4}{$h_w$,,,$h_x$,,$h_y$,,$h_z$}}};
    \node (B) at (.75,0) {\tikz[baseline=-2pt,scale=.4]{\strsquare{2}{4}{,$h_x$,$h_w$,,,$h_y$,,$h_z$}}};
    \node (C) at (-.75,0) {\tikz[baseline=-2pt,scale=.4]{\strsquare{2}{4}{,$h_x$,,$h_y$,$h_w$,,,$h_z$}}};
    \node (D) at (-2.25,0) {\tikz[baseline=-2pt,scale=.4]{\strsquare{2}{4}{,$h_x$,,$h_y$,,$h_z$,$h_w$,}}};
    \draw [->] (A) to (B);
    \draw [->] (B) to (C);
    \draw [->] (C) to (D);
    \draw [->,bend left=1.5cm] (A.south west) to (D.south east);
    \draw [->,bend left=1.5cm] (A.south west) to node (D) {} (C.south east);
    \path (B.north) to [pos=15/16] coordinate (A) (B.south);
    \path (B) to node [transform shape,scale=.75] {$\Downarrow m_2$} (D);
    \path (C.north) to [pos=5/4] coordinate (D) (C.south);
    \path (C) to node [transform shape,scale=.75] {$\Downarrow m_1$} (D);
  \end{tikzpicture}
  =
  \begin{tikzpicture}[baseline=-2pt]
    \node (A) at (2.25,0) {\tikz[baseline=-2pt,scale=.4]{\strsquare{2}{4}{$h_w$,,,$h_x$,,$h_y$,,$h_z$}}};
    \node (B) at (.75,0) {\tikz[baseline=-2pt,scale=.4]{\strsquare{2}{4}{,$h_x$,$h_w$,,,$h_y$,,$h_z$}}};
    \node (C) at (-.75,0) {\tikz[baseline=-2pt,scale=.4]{\strsquare{2}{4}{,$h_x$,,$h_y$,$h_w$,,,$h_z$}}};
    \node (D) at (-2.25,0) {\tikz[baseline=-2pt,scale=.4]{\strsquare{2}{4}{,$h_x$,,$h_y$,,$h_z$,$h_w$,}}};
    \draw [->] (A) to (B);
    \draw [->] (B) to (C);
    \draw [->] (C) to (D);
    \draw [->,bend left=1.5cm] (A.south west) to (D.south east);
    \draw [->,bend left=1.5cm] (B.south west) to node (A) {} (D.south east);
    \path (C) to node [transform shape,scale=.75] {$\Downarrow m_3$} (A);
    \path (B.north) to [pos=5/4] coordinate (D) (B.south);
    \path (B) to node [transform shape,scale=.75] {$\Downarrow m_4$} (D);
  \end{tikzpicture},
\]
where
\begin{align}
  \nonumber
  m_1 &= \{h_w|h_z,h_yh_x\}, &
  m_3 &= \{h_w|h_z,h_y\}, \\
  \nonumber
  m_2 &= \act{\act{\partial h_w}h_z}\{h_w|h_y,h_x\}, &
  m_4 &= \{h_w|h_zh_y,h_x\}.
\end{align}

\subsection{left-Homanian relation}
\[
  \begin{tikzpicture}[baseline=-2pt]
    \node (A) at (2.25,0) {\tikz[baseline=-2pt,scale=.4]{\strsquare{2}{4}{$h_y$,,$h_z$,,$h_w$,,,$h_x$}}};
    \node (B) at (.75,0) {\tikz[baseline=-2pt,scale=.4]{\strsquare{2}{4}{$h_y$,,$h_z$,,,$h_x$,$h_w$,}}};
    \node (C) at (-.75,0) {\tikz[baseline=-2pt,scale=.4]{\strsquare{2}{4}{$h_y$,,,$h_x$,$h_z$,,$h_w$,}}};
    \node (D) at (-2.25,0) {\tikz[baseline=-2pt,scale=.4]{\strsquare{2}{4}{,$h_x$,$h_y$,,$h_z$,,$h_w$,}}};
    \draw [->] (A) to (B);
    \draw [->] (B) to (C);
    \draw [->] (C) to (D);
    \draw [->,bend left=1.5cm] (A.south west) to (D.south east);
    \draw [->,bend left=1.5cm] (A.south west) to node (D) {} (C.south east);
    \path (B.north) to [pos=15/16] coordinate (A) (B.south);
    \path (B) to node [transform shape,scale=.75] {$\Downarrow m_2$} (D);
    \path (C.north) to [pos=5/4] coordinate (D) (C.south);
    \path (C) to node [transform shape,scale=.75] {$\Downarrow m_1$} (D);
  \end{tikzpicture}
  =
  \begin{tikzpicture}[baseline=-2pt]
    \node (A) at (2.25,0) {\tikz[baseline=-2pt,scale=.4]{\strsquare{2}{4}{$h_y$,,$h_z$,,$h_w$,,,$h_x$}}};
    \node (B) at (.75,0) {\tikz[baseline=-2pt,scale=.4]{\strsquare{2}{4}{$h_y$,,$h_z$,,,$h_x$,$h_w$,}}};
    \node (C) at (-.75,0) {\tikz[baseline=-2pt,scale=.4]{\strsquare{2}{4}{$h_y$,,,$h_x$,$h_z$,,$h_w$,}}};
    \node (D) at (-2.25,0) {\tikz[baseline=-2pt,scale=.4]{\strsquare{2}{4}{,$h_x$,$h_y$,,$h_z$,,$h_w$,}}};
    \draw [->] (A) to (B);
    \draw [->] (B) to (C);
    \draw [->] (C) to (D);
    \draw [->,bend left=1.5cm] (A.south west) to (D.south east);
    \draw [->,bend left=1.5cm] (B.south west) to node (A) {} (D.south east);
    \path (C) to node [transform shape,scale=.75] {$\Downarrow m_3$} (A);
    \path (B.north) to [pos=5/4] coordinate (D) (B.south);
    \path (B) to node [transform shape,scale=.75] {$\Downarrow m_4$} (D);
  \end{tikzpicture},
\]
where
\begin{align}
  \nonumber
  m_1 &= \{h_wh_z,h_y|h_x\}, &
  m_3 &= \act{h_w}\{h_z,h_y|h_x\}, \\
  \nonumber
  m_2 &= \{h_w,h_z|\act{\partial h_y}h_x\}, &
  m_4 &= \{h_w,h_zh_y|h_x\}.
\end{align}

\subsection{KV-Polytope}

\[
  \begin{tikzpicture}[baseline=-2pt]
    \node (A) at (2.9,0) {\tikz[baseline=-2pt,scale=.4]{\strsquare{2}{4}{$h_z$,,$h_w$,,,$h_x$,,$h_y$}}};
    \node (B) at (1.45,0) {\tikz[baseline=-2pt,scale=.4]{\strsquare{2}{4}{$h_z$,,,$h_x$,$h_w$,,,$h_y$}}};
    \node (C1) at (0,-1.125) {\tikz[baseline=-2pt,scale=.4]{\strsquare{2}{4}{$h_z$,,,$h_x$,,$h_y$,$h_w$,}}};
    \node (C2) at (0,1.125) {\tikz[baseline=-2pt,scale=.4]{\strsquare{2}{4}{,$h_x$,$h_z$,,$h_w$,,,$h_y$}}};
    \node (D) at (-1.45,0) {\tikz[baseline=-2pt,scale=.4]{\strsquare{2}{4}{,$h_x$,$h_z$,,,$h_y$,$h_w$,}}};
    \node (E) at (-2.9,0) {\tikz[baseline=-2pt,scale=.4]{\strsquare{2}{4}{,$h_x$,,$h_y$,$h_z$,,$h_w$,}}};
    \draw [->] (A) to (B);
    \draw [->] (B) to (C1);
    \draw [->] (C1) to (D);
    \draw [->] (B) to (C2);
    \draw [->] (C2) to (D);
    \draw [->] (D) to (E);
    \draw [->,bend left=1.25cm] (A) to node (P) {} (C1);
    \draw [->,bend left=1.25cm] (C1) to node (Q) {} (E);
    \draw [->,bend left=2cm] (A.south) to node (R) {} (E.south);
    \path (C2.south) to node [transform shape,scale=.75] {$\Downarrow m_1$} (C1.north);
    \path (Q) to node [transform shape,scale=.75] {$\Downarrow m_3$} (D);
    \path (P) to node [transform shape,scale=.75] {$\Downarrow m_2$} (B);
    \path (R) to node [transform shape,scale=.75] {$\Downarrow m_4$} (C1);
  \end{tikzpicture}
  =
  \begin{tikzpicture}[baseline=-2pt]
    \node (A) at (2.9,0) {\tikz[baseline=-2pt,scale=.4]{\strsquare{2}{4}{$h_z$,,$h_w$,,,$h_x$,,$h_y$}}};
    \node (B) at (1.45,0) {\tikz[baseline=-2pt,scale=.4]{\strsquare{2}{4}{$h_z$,,,$h_x$,$h_w$,,,$h_y$}}};
    \node (C) at (0,0) {\tikz[baseline=-2pt,scale=.4]{\strsquare{2}{4}{,$h_x$,$h_z$,,$h_w$,,,$h_y$}}};
    \node (D) at (-1.45,0) {\tikz[baseline=-2pt,scale=.4]{\strsquare{2}{4}{,$h_x$,$h_z$,,,$h_y$,$h_w$,}}};
    \node (E) at (-2.9,0) {\tikz[baseline=-2pt,scale=.4]{\strsquare{2}{4}{,$h_x$,,$h_y$,$h_z$,,$h_w$,}}};
    \draw [->] (A) to (B);
    \draw [->] (B) to (C);
    \draw [->] (C) to (D);
    \draw [->] (D) to (E);
    \draw [->,bend left=1.6cm] (A.south) to node (F) {} (E.south);
    \draw [->,bend left=1.5cm] (A.south west) to node (A) {} (C.south east);
    \draw [->,bend left=1.5cm] (C.south west) to node (E) {} (E.south east);
    \path (A) to node [transform shape,scale=.75] {$\Downarrow m_5$} (B);
    \path (E) to node [transform shape,scale=.75] {$\Downarrow m_6$} (D);
    \path (F) to node [transform shape,scale=.75] {$\Downarrow m_7$} (C);
  \end{tikzpicture},
\]
\begin{align*}
  m_1 &= \{\{h_w,\act{\partial h_z}h_y\},\act{\act{\partial (h_wh_z)}h_yh_w}\{h_z,h_x\}\}^{-1}, & 
  m_5 &= \act{\act{\partial (h_wh_z)}h_y}\{h_w,h_z|h_x\} \\
  m_2 &= \{h_w|\act{\partial h_z}h_y,\act{\partial h_z}h_x\}, & 
  m_6 &= \{h_w,h_z|h_y\} \\
  m_3 &= \act{h_w}\{h_z|h_y,h_x\}, & 
  m_7 &= \{h_wh_z|h_y,h_x\} \\
  m_4 &= \{h_w,h_z|h_yh_x\}.
\end{align*}

\subsection{prisms}
Rewriting Equations \eqref{eq:3CM-L-HH} and \eqref{eq:3CM-HH-L} in terms of the auxiliary brackets $\left<,\right>$, we obtain
\begin{align}
  \left<l,hh'\right> &= \left<l,h\right>\act{\{\partial l,h\}}(\act{h}\left<l,h'\right>)\{\partial l|h,h'\}^{-1}, \\
  \label{eq:3CM-aux-HH-L}
  \left<hh',l\right> &= \act{h}\left<h',l\right>\act{\act{h}\{h',\partial l\}}\left<h,\act{\partial h'}l\right>\{h,h'|\partial l\}^{-1}.
\end{align}
These relations, Equations \eqref{eq:3cm_prism23-2} and \eqref{eq:3cm_prism2-23} can be depicted as follows:
\[
  \begin{tikzpicture}[baseline=-2pt]
    \node (A) at (1.5,0) {\tikz[baseline=-2pt,scale=.4]{\strsquare{1}{2}{$h'$,$h$}}};
    \node (B) at (0,1.75) {\tikz[baseline=-2pt,scale=.4]{\strsquare{2}{3}{$\partial l$,,,$h'$,,$h$}}};
    \node (C) at (-1.5,0) {\tikz[baseline=-2pt,scale=.4]{\strsquare{2}{3}{,$h'$,,$h$,$\partial l$,}}};
    \draw [->] (A) to (B);
    \draw [->] (B) to (C);
    \draw [->] (A) to coordinate (D) (C);
    \path (B) to node [transform shape,scale=.75] {$\Downarrow \left<l,hh'\right>$} (D);
  \end{tikzpicture}
  =
  \begin{tikzpicture}[baseline=-2pt]
    \node (A) at (3,0) {\tikz[baseline=-2pt,scale=.4]{\strsquare{1}{2}{$h'$,$h$}}};
    \node (B) at (0,3.5) {\tikz[baseline=-2pt,scale=.4]{\strsquare{2}{3}{$\partial l$,,,$h'$,,$h$}}};
    \node (C) at (-1.5,1.75) {\tikz[baseline=-2pt,scale=.4]{\strsquare{2}{3}{,$h'$,$\partial l$,,,$h$}}};
    \node (D) at (-3,0) {\tikz[baseline=-2pt,scale=.4]{\strsquare{2}{3}{,$h'$,,$h$,$\partial l$,}}};
    \draw [->] (A) to (B);
    \draw [->,bend right=1cm] (B.west) to node [transform shape,scale=.75,sloped,anchor=north] {$\Downarrow \{\partial l|h,h'\}^{-1}$} (D.north);
    \draw [->] (B) to (C);
    \draw [->] (C) to (D);
    \draw [->] (A) to coordinate (E) (D);
    \draw [->] (A) to coordinate (F) (C);
    \path (B) to node [transform shape,scale=.75] {$\Downarrow \act{h}\left<l,h'\right>$} (F);
    \path (C) to node [transform shape,scale=.75] {$\Downarrow \left<l,h\right>$} (E);
  \end{tikzpicture}
\]
\[
  \begin{tikzpicture}[baseline=-2pt]
    \node (A) at (1.5,0) {\tikz[baseline=-2pt,scale=.4]{\strsquare{1}{2}{$h'$,$h$}}};
    \node (B) at (0,1.75) {\tikz[baseline=-2pt,scale=.4]{\strsquare{2}{3}{$h'$,,$h$,,,$\partial l$}}};
    \node (C) at (-1.5,0) {\tikz[baseline=-2pt,scale=.4]{\strsquare{2}{3}{,$\partial l$,$h'$,,$h$,}}};
    \draw [->] (A) to (B);
    \draw [->] (B) to (C);
    \draw [->] (A) to coordinate (D) (C);
    \path (B) to node [transform shape,scale=.75] {$\Downarrow \left<hh',l\right>$} (D);
  \end{tikzpicture}
  =
  \begin{tikzpicture}[baseline=-2pt]
    \node (A) at (3,0) {\tikz[baseline=-2pt,scale=.4]{\strsquare{1}{2}{$h'$,$h$}}};
    \node (B) at (0,3.5) {\tikz[baseline=-2pt,scale=.4]{\strsquare{2}{3}{$h'$,,$h$,,,$\partial l$}}};
    \node (C) at (-1.5,1.75) {\tikz[baseline=-2pt,scale=.4]{\strsquare{2}{3}{$h'$,,,$\partial l$,$h$,}}};
    \node (D) at (-3,0) {\tikz[baseline=-2pt,scale=.4]{\strsquare{2}{3}{,$\partial l$,$h'$,,$h$,}}};
    \draw [->] (A) to (B);
    \draw [->,bend right=1cm] (B.west) to node [transform shape,scale=.75,sloped,anchor=north] {$\Downarrow \{h,h'|\partial l\}^{-1}$} (D.north);
    \draw [->] (B) to (C);
    \draw [->] (C) to (D);
    \draw [->] (A) to coordinate (E) (D);
    \draw [->] (A) to coordinate (F) (C);
    \path (B) to node [transform shape,scale=.75] {$\Downarrow \left<h,\act{\partial h'}l\right>$} (F);
    \path (C) to node [transform shape,scale=.75] {$\Downarrow \act{h}\left<h',l\right>$} (E);
  \end{tikzpicture}
\]
\[
  \begin{tikzpicture}[baseline=-2pt]
    \node (A) at (1.5,0) {\tikz[baseline=-2pt,scale=.4]{\strrect{2}{$h_2$,,,$h_1$}}};
    \node (B1) at (0,1.5) {\tikz[baseline=-2pt,scale=.4]{\strsquare{2}{3}{$\partial l$,,$h_2$,,,$h_1$}}};
    \node (B2) at (0,-1.5) {\tikz[baseline=-2pt,scale=.4]{\strrect{2}{,$h_1$,$h_2$,}}};
    \node (C) at (-1.5,0) {\tikz[baseline=-2pt,scale=.4]{\strsquare{2}{3}{$\partial l$,,,$h_1$,$h_2$,}}};
    \node (D) at (-4.5,0) {\tikz[baseline=-2pt,scale=.4]{\strsquare{2}{3}{,$h_1$,$\partial l$,,$h_2$,}}};
    \draw [->] (A) to (B1);
    \draw [->] (A) to (B2);
    \draw [->] (B1) to node [transform shape,scale=.75,sloped,anchor=north] {$\Downarrow \{h_2,\partial l|h_1\}^{-1}$} (D);
    \draw [->] (B2) to node [transform shape,scale=.75,sloped,anchor=south] {$\Downarrow \act{h_2}\left<l,h_1\right>$} (D);
    \draw [->] (B1) to (C);
    \draw [->] (C) to (D);
    \draw [->] (B2) to (C);
    \path (B1) to node [transform shape,scale=.6] {$\Downarrow \{\{h_2,h_1\},\act{\act{\partial h_2}h_1h_2}l\}^{-1}$} (B2);
  \end{tikzpicture}
  =
  \begin{tikzpicture}[baseline=-2pt]
    \node (A) at (1.5,0) {\tikz[baseline=-2pt,scale=.4]{\strrect{2}{$h_2$,,,$h_1$}}};
    \node (B1) at (0,1.5) {\tikz[baseline=-2pt,scale=.4]{\strsquare{2}{3}{$\partial l$,,$h_2$,,,$h_1$}}};
    \node (B2) at (0,-1.5) {\tikz[baseline=-2pt,scale=.4]{\strrect{2}{,$h_1$,$h_2$,}}};
    \node (C) at (-1.5,0) {\tikz[baseline=-2pt,scale=.4]{\strsquare{2}{3}{$h_2$,,,$h_1$,$\partial \act{h_2}l$,}}};
    \node (D) at (-4.5,0) {\tikz[baseline=-2pt,scale=.4]{\strsquare{2}{3}{,$h_1$,$\partial l$,,$h_2$,}}};
    \draw [->] (A) to (B1);
    \draw [->] (A) to (B2);
    \draw [->] (B1) to node [transform shape,scale=.75,sloped,anchor=north] {$\Downarrow \{\partial \act{h_2}l,h_2|h_1\}^{-1}$} (D);
    \draw [->] (B2) to (D);
    \draw [->] (A) to coordinate (E) (C);
    \draw [->] (B1) to (C);
    \draw [->] (C) to (D);
    \path (B2) to node [transform shape,scale=.75] {$\Downarrow \{\act{h_2}l,\{h_2,h_1\}\}^{-1}$} (C);
    \path (B1) to node [transform shape,scale=.75] {$\Downarrow \left<\act{h_2}l,\act{\partial h_2}h_1\right>$} (E);
  \end{tikzpicture}
\]
\[
  \begin{tikzpicture}[baseline=-2pt]
    \node (A) at (1.5,0) {\tikz[baseline=-2pt,scale=.4]{\strrect{2}{$h_2$,,,$h_1$}}};
    \node (B1) at (0,1.5) {\tikz[baseline=-2pt,scale=.4]{\strsquare{2}{3}{$\partial l$,,$h_2$,,,$h_1$}}};
    \node (B2) at (0,-1.5) {\tikz[baseline=-2pt,scale=.4]{\strrect{2}{,$h_1$,$h_2$,}}};
    \node (C) at (-1.5,0) {\tikz[baseline=-2pt,scale=.4]{\strsquare{2}{3}{$h_2$,,,$h_1$,$\partial \act{h_2}l$,}}};
    \node (D) at (-4.5,0) {\tikz[baseline=-2pt,scale=.4]{\strsquare{2}{3}{,$h_1$,$\partial l$,,$h_2$,}}};
    \draw [->] (A) to (B1);
    \draw [->] (A) to (B2);
    \draw [->] (B1) to node [transform shape,scale=.75,sloped,anchor=north] {$\Downarrow \{h_2|h_1,\partial l,\}^{-1}$} (D);
    \draw [->] (B2) to (D);
    \draw [->] (A) to coordinate (E) (C);
    \draw [->] (B1) to (C);
    \draw [->] (C) to (D);
    \path (B2) to node [transform shape,scale=.75] {$\Downarrow \{\{h_2,h_1\},\act{\act{\partial h_2}h_1h_2}l\}^{-1}$} (C);
    \path (B1) to node [transform shape,scale=.75] {$\Downarrow \act{\act{\partial h_2}h_1}\left<h_2,l\right>$} (E);
  \end{tikzpicture}
  =
  \begin{tikzpicture}[baseline=-2pt]
    \node (A) at (1.5,0) {\tikz[baseline=-2pt,scale=.4]{\strrect{2}{$h_2$,,,$h_1$}}};
    \node (B1) at (0,1.5) {\tikz[baseline=-2pt,scale=.4]{\strsquare{2}{3}{$\partial l$,,$h_2$,,,$h_1$}}};
    \node (B2) at (0,-1.5) {\tikz[baseline=-2pt,scale=.4]{\strrect{2}{,$h_1$,$h_2$,}}};
    \node (C) at (-1.5,0) {\tikz[baseline=-2pt,scale=.4]{\strsquare{2}{3}{$\partial l$,,,$h_1$,$h_2$,}}};
    \node (D) at (-4.5,0) {\tikz[baseline=-2pt,scale=.4]{\strsquare{2}{3}{,$h_1$,$\partial l$,,$h_2$,}}};
    \draw [->] (A) to (B1);
    \draw [->] (A) to (B2);
    \draw [->] (B1) to node [transform shape,scale=.75,sloped,anchor=north] {$\Downarrow \{h_2|\partial \act{h_1}l,h_1\}^{-1}$} (D);
    \draw [->] (B2) to node [transform shape,scale=.75,sloped,anchor=south] {$\Downarrow \left<h_2,\act{h_1}l\right>$} (D);
    \draw [->] (B1) to (C);
    \draw [->] (C) to (D);
    \draw [->] (B2) to (C);
    \path (B1) to node [transform shape,scale=.6] {$\Downarrow \{\act{\partial h_2}(\act{h_1}l),\{h_2,h_1\}\}$} (B2);
  \end{tikzpicture}
\]

\subsection{pastings}
Rewriting Equations \eqref{eq:3CM-Pasting} and \eqref{eq:3CM-Pasting2} in terms of the auxiliary brackets $\left<,\right>$, we obtain
\begin{align}
  \label{eq:3CM-aux-Pasting1}
  \left<l_1l_2,h\right> &= \act{l_1}\left<l_2,h\right>\{l_1,\{\partial l_2,h\}\}\act{\act{\partial l_1}\{\partial l_2,h\}}\left<l_1,h\right>\{\partial l_1,\partial l_2|h\}^{-1} \\
  \label{eq:3CM-aux-Pasting2}
  \left<h,l'l\right> &= \act{\act{h}l'}\left<h,l\right>\left<h,l'\right>\act{\{h,\partial l'\}}\{\act{\partial h}l',\{h,\partial l\}\}\{h|\partial l',\partial l\}^{-1}
\end{align}
These relations can be depicted as follows:
\begin{align*}
  \begin{tikzpicture}[baseline=-2pt]
    \node (A) at (1.5,0) {\tikz[baseline=-2pt,scale=.4]{\strrect{2}{,,,$h$}}};
    \node (B) at (0,1.75) {\tikz[baseline=-2pt,scale=.4]{\strrect{2}{$\partial (ll')$,,,$h$}}};
    \node (C) at (-1.5,0) {\tikz[baseline=-2pt,scale=.4]{\strrect{2}{,$h$,$\partial (ll')$,}}};
    \draw [->] (A) to node [anchor=south,sloped] {$\act{h}(ll')$} (B);
    \draw [->] (B) to node [anchor=south,sloped,transform shape,scale=.75] {$\{\partial (ll'),h\}$} (C);
    \draw [->] (A) to node [fill=white] (D) {$ll'$} (C);
    \path (B) to node [transform shape,scale=.75] {$\Downarrow \left<ll',h\right>$} (D);
  \end{tikzpicture}
  =
  \begin{tikzpicture}[baseline=-2pt]
    \node (A) at (3,0) {\tikz[baseline=-2pt,scale=.4]{\strrect{1}{$h$}}};
    \node (B) at (1.5,1.75) {\tikz[baseline=-2pt,scale=.4]{\strrect{2}{$\partial l'$,,,$h$}}};
    \node (C1) at (0,3.5) {\tikz[baseline=-2pt,scale=.4]{\strsquare{2}{3}{$\partial l'$,,$\partial l$,,,$h$}}};
    \node (C2) at (0,0) {\tikz[baseline=-2pt,scale=.4]{\strrect{2}{,$h$,$\partial l'$,}}};
    \node (D) at (-1.5,1.75) {\tikz[baseline=-2pt,scale=.4]{\strsquare{2}{3}{$\partial l'$,,,$h$,$\partial l$,}}};
    \node (E) at (-3,0) {\tikz[baseline=-2pt,scale=.4]{\strsquare{2}{3}{,$h$,$\partial l'$,,$\partial l$,}}};
    \draw [->] (A) to (B);
    \draw [->] (B) to (C1);
    \draw [->] (C1) to (D);
    \draw [->] (D) to (E);
    \draw [->] (A) to coordinate (F) (C2);
    \draw [->] (C2) to (E);
    \draw [->] (B) to (D);
    \draw [->] (B) to (C2);
    \draw [->,bend right=1cm] (C1.west) to node [transform shape,scale=.75,sloped,anchor=north] {$\Downarrow \{\partial l,\partial l'|h\}^{-1}$} (E.north);
    \path (D) to node [transform shape,scale=.75] {$\Downarrow \{l,\{\partial l',h\}\}$} (C2);
    \path (C1) to node [pos=1/4,transform shape,scale=.75] {$\Downarrow \left<l,h\right>$} (C2);
    \path (B) to node [transform shape,scale=.75] {$\Downarrow \left<l',h\right>$} (F);
  \end{tikzpicture}
\end{align*}
\begin{align*}
  \begin{tikzpicture}[baseline=-2pt]
    \node (A) at (1.5,0) {\tikz[baseline=-2pt,scale=.4]{\strrect{2}{$h$,,,}}};
    \node (B) at (0,1.75) {\tikz[baseline=-2pt,scale=.4]{\strrect{2}{$h$,,,$\partial (ll')$}}};
    \node (C) at (-1.5,0) {\tikz[baseline=-2pt,scale=.4]{\strrect{2}{,$\partial (ll')$,$h$,}}};
    \draw [->] (A) to node [anchor=south,sloped] {$\act{\partial h}(ll')$} (B);
    \draw [->] (B) to node [anchor=south,sloped,transform shape,scale=.75] {$\{h,\partial (ll')\}$} (C);
    \draw [->] (A) to node [fill=white] (D) {$\act{h}(ll')$} (C);
    \path (B) to node [transform shape,scale=.75] {$\Downarrow \left<h,ll'\right>$} (D);
  \end{tikzpicture}
  =
  \begin{tikzpicture}[baseline=-2pt]
    \node (A) at (3,0) {\tikz[baseline=-2pt,scale=.4]{\strrect{1}{$h$}}};
    \node (B) at (1.5,1.75) {\tikz[baseline=-2pt,scale=.4]{\strrect{2}{$h$,,,$\partial l'$}}};
    \node (C1) at (0,3.5) {\tikz[baseline=-2pt,scale=.4]{\strsquare{2}{3}{$h$,,,$\partial l'$,,$\partial l$}}};
    \node (C2) at (0,0) {\tikz[baseline=-2pt,scale=.4]{\strrect{2}{,$\partial l'$,$h$,}}};
    \node (D) at (-1.5,1.75) {\tikz[baseline=-2pt,scale=.4]{\strsquare{2}{3}{,$\partial l'$,$h$,,,$\partial l$}}};
    \node (E) at (-3,0) {\tikz[baseline=-2pt,scale=.4]{\strsquare{2}{3}{,$\partial l'$,,$\partial l$,$h$,}}};
    \draw [->] (A) to (B);
    \draw [->] (B) to (C1);
    \draw [->] (C1) to (D);
    \draw [->] (D) to (E);
    \draw [->] (A) to coordinate (F1) (C2);
    \draw [->] (C2) to coordinate (F2) (E);
    \draw [->] (B) to (C2);
    \draw [->] (C2) to (D);
    \draw [->,bend right=1cm] (C1.west) to node [transform shape,scale=.75,sloped,anchor=north] {$\Downarrow \{h|\partial l,\partial l'\}^{-1}$} (E.north);
    \path (C1) to node [transform shape,scale=.75] {$\Downarrow \{\act{\partial h}l,\{h,\partial l'\}\}$} (C2);
    \path (B) to node [transform shape,scale=.75] {$\Downarrow \left<h,l'\right>$} (F1);
    \path (D) to node [transform shape,scale=.75] {$\Downarrow \left<h,l\right>$} (F2);
  \end{tikzpicture}
\end{align*}

\subsection{Cube}
Rewriting Equation \eqref{eq:3CM-Cube} in terms of the auxiliary brackets $\left<,\right>$, we obtain
\begin{equation}
  \label{eq:3-CM-aux-Cube}
  \left<l_2,\partial l_1\right>
  = 
  \{l_2,l_1\}
  \left<\partial l_2,l_1\right>
  \act{\{\partial l_2,\partial l_1\}}\{l_1,l_2\}.
\end{equation}
This can be depicted as follows:
\[
  \begin{tikzpicture}[baseline=-2pt]
    \node (A) at (3,0) {\tikz[baseline=-2pt,scale=.4]{\strrect{1}{}}};
    \node (B) at (1.5,0) {\tikz[baseline=-2pt,scale=.4]{\strrect{1}{$\partial l_1$}}};
    \node (C) at (0,1.75) {\tikz[baseline=-2pt,scale=.4]{\strrect{2}{$\partial l_2$,,,$\partial l_1$}}};
    \node (D) at (-1.5,0) {\tikz[baseline=-2pt,scale=.4]{\strrect{2}{,$\partial l_1$,$\partial l_2$,}}};
    \draw [->] (A) to node [transform shape,scale=.75,anchor=south] {$l_1$} (B);
    \draw [->] (B) to coordinate (E) (D);
    \draw [->] (B) to (C);
    \draw [->] (C) to (D);
    \path (C) to node [transform shape,scale=.75] {$\Downarrow \left<l_2,\partial l_1\right>$} (E); 
  \end{tikzpicture}
  =
  \begin{tikzpicture}[baseline=-2pt]
    \node (A) at (3,0) {\tikz[baseline=-2pt,scale=.4]{\strrect{1}{}}};
    \node (B1) at (0,2.5) {\tikz[baseline=-2pt,scale=.4]{\strrect{1}{$\partial l_1$}}};
    \node (B2) at (0,0) {\tikz[baseline=-2pt,scale=.4]{\strrect{1}{$\partial l_1$}}};
    \node (B3) at (0,-2.5) {\tikz[baseline=-2pt,scale=.4]{\strrect{1}{$\partial l_1$}}};
    \node (C) at (-1.5,1.25) {\tikz[baseline=-2pt,scale=.4]{\strrect{2}{$\partial l_2$,,,$\partial l_1$}}};
    \node (D) at (-3,0) {\tikz[baseline=-2pt,scale=.4]{\strrect{2}{,$\partial l_1$,$\partial l_2$,}}};
    \draw [->] (A) to (B1);
    \draw [->] (A) to (B2);
    \draw [->] (A) to (B3);
    \draw [->] (B1) to (C);
    \draw [->] (B2) to (C);
    \draw [->] (B2) to coordinate (E) (D);
    \draw [->] (B3) to (D);
    \draw [->] (C) to (D);
    \path (C) to node [transform shape,scale=.75] {$\Downarrow \left<\partial l_2,l_1\right>$} (E);
    \path (B1.east) to node [transform shape,scale=.75] {$\Downarrow \{l_1,l_2\}$} (B2.east);
    \path (B2) to node [transform shape,scale=.75] {$\Downarrow \{l_2,l_1\}$} (B3);
  \end{tikzpicture}
\]

\subsection{Tube}
Rewriting Equations \eqref{eq:3CM-HonM} and \eqref{eq:3CM-pHonM} in terms of the auxiliary brackets $\left<,\right>$, we obtain
\begin{align}
  \label{eq:3CM-aux-HonM}
  \left<\partial m,h\right>\act{h}m &= m, \\
  \label{eq:3CM-aux-pHonM}
  \left<h,\partial m\right>\act{\partial h}m &= \act{h}m.
\end{align}
These relations can be depicted as follows:
\[
  \begin{tikzpicture}[baseline=-2pt]
    \node (A) at (1.5,0) {\tikz[baseline=-2pt,scale=.4]{\strrect{1}{$h$}}};
    \node (B) at (-1.5,0) {\tikz[baseline=-2pt,scale=.4]{\strrect{1}{$h$}}};
    \draw[->,bend right=1.5cm] (A.north west) to node [anchor=south] (C) {$e_L$} (B.north east);
    \draw[->,bend left=1.5cm] (A.south west) to node [anchor=north] (D) {$\partial m$} (B.south east);
    \path (C) to node {$\Downarrow m$} (D);
  \end{tikzpicture}
  =
  \begin{tikzpicture}[baseline=-2pt]
    \node (A) at (1.5,0) {\tikz[baseline=-2pt,scale=.4]{\strrect{1}{$h$}}};
    \node (B) at (0,1) {\tikz[baseline=-2pt,scale=.4]{\strrect{1}{$h$}}};
    \node (C) at (-1.5,0) {\tikz[baseline=-2pt,scale=.4]{\strrect{1}{$h$}}};
    \draw[->,bend right=1.5cm] (A) to coordinate (D) (B);
    \draw[->,bend left=1.5cm] (A) to coordinate (E) (B);
    \node [anchor=south west] at (D) {$e_L$};
    \path (D) to node {$\Downarrow \act{h}m$} (E);
    \draw [->] (B) to node [anchor=south east] {$e_L$} (C);
    \draw[->,bend left=1.5cm] (A.south west) to node [anchor=north] (D) {$\partial m$} (C.south east);
    \path (B.west) to node {$\Downarrow \left<\partial m,h\right>$} (D.west);
  \end{tikzpicture}
\]
\[
  \begin{tikzpicture}[baseline=-2pt]
    \node (A) at (1.5,0) {\tikz[baseline=-2pt,scale=.4]{\strrect{1}{$h$}}};
    \node (B) at (-1.5,0) {\tikz[baseline=-2pt,scale=.4]{\strrect{1}{$h$}}};
    \draw[->,bend right=1.5cm] (A.north west) to node [anchor=south] (C) {$e_L$} (B.north east);
    \draw[->,bend left=1.5cm] (A.south west) to node [anchor=north] (D) {$\partial \act{h}m$} (B.south east);
    \path (C) to node {$\Downarrow \act{h}m$} (D);
  \end{tikzpicture}
  =
  \begin{tikzpicture}[baseline=-2pt]
    \node (A) at (1.5,0) {\tikz[baseline=-2pt,scale=.4]{\strrect{1}{$h$}}};
    \node (B) at (0,1) {\tikz[baseline=-2pt,scale=.4]{\strrect{1}{$h$}}};
    \node (C) at (-1.5,0) {\tikz[baseline=-2pt,scale=.4]{\strrect{1}{$h$}}};
    \draw[->,bend right=1.5cm] (A) to coordinate (D) (B);
    \draw[->,bend left=1.5cm] (A) to coordinate (E) (B);
    \node [anchor=south west] at (D) {$e_L$}; 
    \path (D) to node {$\Downarrow \act{\partial h}m$} (E);
    \draw [->] (B) to node [anchor=south east] {$e_L$} (C);
    \draw[->,bend left=1.5cm] (A.south west) to node [anchor=north] (D) {$\partial \act{h}m$} (C.south east);
    \path (B.west) to node {$\Downarrow \left<h,\partial m\right>$} (D.west);
  \end{tikzpicture}
\]

\subsection{S condition}
\[
  \begin{tikzpicture}[baseline=-2pt]
    \node (P) at (3,0) {\tikz[scale=0.45]{\strrect{3}{$h_z$,,,,$h_y$,,,,$h_x$}}};
    \node (Q1) at (1,1.5) {\tikz[scale=0.45]{\strrect{3}{$h_z$,,,,,$h_x$,,$h_y$,}}};
    \node (Q2) at (1,-1.5) {\tikz[scale=0.45]{\strrect{3}{,$h_y$,,$h_z$,,,,,$h_x$}}};
    \node (R1) at (-1,1.5) {\tikz[scale=0.45]{\strrect{3}{,,$h_x$,$h_z$,,,,$h_y$,}}};
    \node (R2) at (-1,-1.5) {\tikz[scale=0.45]{\strrect{3}{,$h_y$,,,,$h_x$,$h_z$,,}}};
    \node (S) at (-3,0) {\tikz[scale=0.45]{\strrect{3}{,,$h_x$,,$h_y$,,$h_z$,,}}};
    \draw [->] (P) to (Q1);
    \draw [->] (P) to (Q2);
    \draw [->] (Q1) to coordinate (S1) (R1);
    \draw [->] (Q2) to coordinate (S2) (R2);
    \draw [->] (R1) to (S);
    \draw [->] (R2) to (S);
    \draw [->,bend left=1cm] (P) to (R1);
    \draw [->,bend right=1cm] (Q2) to (S);
    \path (Q1.north) to [pos=1/2] coordinate (A) (Q2.south);
    \path (Q1) to node {$\Downarrow m_1$} (A);
    \path (R1.north) to [pos=1/2] coordinate (B) (R2.south);
    \path (B) to node {$\Downarrow m_3$} (R2);
    \path (S1) to [pos=2/5] coordinate (Sn) (S2);
    \path (S1) to [pos=3/5] coordinate (Ss) (S2);
    \path (Sn) to node {$\Downarrow m_2$} (Ss);
  \end{tikzpicture}
  =
  \begin{tikzpicture}[baseline=-2pt]
    \node (P) at (3,0) {\tikz[scale=0.45]{\strrect{3}{$h_z$,,,,$h_y$,,,,$h_x$}}};
    \node (Q1) at (1,1.5) {\tikz[scale=0.45]{\strrect{3}{$h_z$,,,,,$h_x$,,$h_y$,}}};
    \node (Q2) at (1,-1.5) {\tikz[scale=0.45]{\strrect{3}{,$h_y$,,$h_z$,,,,,$h_x$}}};
    \node (R1) at (-1,1.5) {\tikz[scale=0.45]{\strrect{3}{,,$h_x$,$h_z$,,,,$h_y$,}}};
    \node (R2) at (-1,-1.5) {\tikz[scale=0.45]{\strrect{3}{,$h_y$,,,,$h_x$,$h_z$,,}}};
    \node (S) at (-3,0) {\tikz[scale=0.45]{\strrect{3}{,,$h_x$,,$h_y$,,$h_z$,,}}};
    \draw [->] (P) to (Q1);
    \draw [->] (P) to (Q2);
    \draw [->] (Q1) to coordinate (S1) (R1);
    \draw [->] (Q2) to coordinate (S2) (R2);
    \draw [->] (R1) to (S);
    \draw [->] (R2) to (S);
    \draw [->,bend left=1cm] (Q1) to (S);
    \draw [->,bend right=1cm] (P) to (R2);
    \path (Q1.north) to [pos=1/2] coordinate (A) (Q2.south);
    \path (A) to node {$\Downarrow m_6$} (Q2);
    \path (R1.north) to [pos=1/2] coordinate (B) (R2.south);
    \path (R1) to node {$\Downarrow m_4$} (B);
    \path (S1) to [pos=2/5] coordinate (Sn) (S2);
    \path (S1) to [pos=3/5] coordinate (Ss) (S2);
    \path (Sn) to node {$\Downarrow m_5$} (Ss);
  \end{tikzpicture}
\]
where
\begin{align*}
  m_1 &= \{\act{\partial h_z}h_y,h_z|h_x\}, &
  m_4 &= \{h_z|h_y,h_x\},\\
  m_2 &= L(\{h_z,h_y\},\act{\partial h_z}h_yh_z;h_x)^{-1}, &
  m_5 &= R(h_z;\{h_y,h_x\},\act{\partial h_y}h_xh_y), \\
  m_3 &= \{h_z,h_y|h_x\}^{-1}, &
  m_6 &= \{h_z|\act{\partial h_y}h_x,h_y\}^{-1}.
\end{align*}
$L$ and $R$ are defined in \eqref{eq:defL} and \eqref{eq:defR}, respectively. They can be depicted as:
\begin{align*}
  L(l,h_2;h_1) &= 
  \begin{tikzpicture}[baseline=-2pt]
    \node (A) at (3,0) {\tikz[baseline=-2pt,scale=.33]{\strrect{2}{$h_2$,,,$h_1$}}};
    \node (B1) at (0,2) {\tikz[baseline=-2pt,scale=.33]{\strrect{2}{$h_2'$,,,$h_1$}}};
    \node (B2) at (0,-2) {\tikz[baseline=-2pt,scale=.33]{\strrect{2}{,$h_1$,$h_2$,}}};
    \node (C) at (-3,0) {\tikz[baseline=-2pt,scale=.33]{\strrect{2}{,$h_1$,$h_2'$,}}};
    \node (D) at (-.75,0) {\tikz[baseline=-2pt,scale=.33]{\strsquare{2}{3}{$h_2$,,,$h_1$,$\partial l$,}}};
    \draw [->] (A) to (B1);
    \draw [->] (A) to (B2);
    \draw [->] (B1) to node [sloped,transform shape,scale=.75,anchor=north] {$\Downarrow \{\partial l,h_2|h_1\}^{-1}$} (C);
    \draw [->] (B2) to (C);
    \draw [->] (A) to coordinate (E) (D);
    \draw [->] (B1) to (D);
    \draw [->] (D) to (C);
    \path (B1) to node [pos=3/4,transform shape,scale=.75] {$\Downarrow \{l,\{h_2,h_1\}\}$} (B2);
    \path (B1) to node [transform shape,scale=.75] {$\Downarrow \left<l,\act{\partial h_2}h_1\right>$} (E);
  \end{tikzpicture}, &
  h_2' &:= \partial lh_1, \\
  R(h_2;l,h_1) &= 
  \begin{tikzpicture}[baseline=-2pt]
    \node (A) at (3,0) {\tikz[baseline=-2pt,scale=.33]{\strrect{2}{$h_2$,,,$h_1$}}};
    \node (B1) at (0,2) {\tikz[baseline=-2pt,scale=.33]{\strrect{2}{$h_2$,,,$h_1'$}}};
    \node (B2) at (0,-2) {\tikz[baseline=-2pt,scale=.33]{\strrect{2}{,$h_1$,$h_2$,}}};
    \node (C) at (-3,0) {\tikz[baseline=-2pt,scale=.33]{\strrect{2}{,$h_1'$,$h_2$,}}};
    \node (D) at (-.75,0) {\tikz[baseline=-2pt,scale=.33]{\strsquare{2}{3}{,$h_1$,$h_2$,,,$\partial l$}}};
    \draw [->] (A) to (B1);
    \draw [->] (A) to (B2);
    \draw [->] (B1) to node [sloped,transform shape,scale=.75,anchor=north] {$\Downarrow \{h_2|\partial l,h_1\}^{-1}$} (C);
    \draw [->] (B2) to node [sloped,transform shape,scale=.75,anchor=south] {$\Downarrow \left<h_2,l\right>$} (C);
    \draw [->] (B1) to (D);
    \draw [->] (B2) to (D);
    \draw [->] (D) to (C);
    \path [->] (A) to node [transform shape,scale=.75] {$\Downarrow \{\act{\partial h_2}l,\{h_2,h_1\}\}$} (D);
  \end{tikzpicture}, &
  h_1' &:= \partial lh_1.
\end{align*}

\section{Fundamental relations of Gray 4-categories}
In this appendix, we provide the fundamental relations of Gray 4-categories in explicit algebraic form.

\subsection{Left-Homanian relation}
\begin{align}
  \nonumber
  &\quad 
  [
    (((\alpha_w\#_2\alpha_z)\#_1t_1(\alpha_x))\#_2\{\alpha_y,\alpha_x\})
    \#_3
    (\{\alpha_w,\alpha_z|\alpha_x\}\#_2(\alpha_y\#_1s_1(\alpha_x)))
  ] \\
  \nonumber
  &\quad \#_4
  \{\alpha_w\#_2\alpha_z,\alpha_y|\alpha_x\} \\
  \nonumber
  &= 
  [
    ((\alpha_w\#_1t_1(\alpha_x))\#_2\{\alpha_z,\alpha_y|\alpha_x\})
    \#_3
    (\{\alpha_w,\alpha_x\}\#_2((\alpha_z\#_2\alpha_y)\#_1s_1(\alpha_x)))
  ] \\
  \label{eq:left-Hom}
  &\quad \#_4
  \{\alpha_w,\alpha_z\#_2\alpha_y|\alpha_x\}
\end{align}

\subsection{Right-Homanian relation}
\begin{align}
  \nonumber
  &\Bigl(
    [\{\alpha_w,\alpha_x\}\#_2(s_1(\alpha_w)\#_1(\alpha_y\#_2\alpha_z))]
    \#_3
    [(t_1(\alpha_w)\#_1\alpha_x)\#_2\{\alpha_w|\alpha_y,\alpha_z\}]
  \Bigr) 
  \#_4
  \{\alpha_w|\alpha_x,\alpha_y\#_2\alpha_z\} \\
  \nonumber
  &= 
  \Bigl(
    [\{\alpha_w|\alpha_x,\alpha_y\}\#_2(s_1(\alpha_w)\#_1\alpha_z)]
    \#_3
    [(t_1(\alpha_w)\#_1(\alpha_x\#_2\alpha_y))\#_2\{\alpha_w,\alpha_z\}]
  \Bigr) 
  \\
  \label{eq:right-Hom}
  &\quad 
  \#_4
  \{\alpha_w|\alpha_x\#_2\alpha_y,\alpha_z\}
\end{align}

\subsection{KV-polytope}
\begin{align}
  \nonumber
  &
  \Bigl(
    [(\alpha_w\#_1t_1(\alpha_y))\#_2\{\alpha_z,\alpha_y\}\#_2(s_1(\alpha_z)\#_1\alpha_x)] \\
    \nonumber
    &\qquad \#_3
    \{\{\alpha_w,\alpha_y\},\{\alpha_z,\alpha_x\}\} \\
    \nonumber
    &\qquad \#_3
    [(t_1(\alpha_w)\#_1\alpha_y)\#_2\{\alpha_w,\alpha_x\}\#_2(\alpha_z\#_1s_1(\alpha_x))]
  \Bigr) \\
  \nonumber
  & \quad \#_4
  \Bigl(
    [(\alpha_w\#_1t_1(\alpha_y))\#_2\{\alpha_z|\alpha_y,\alpha_x\}]
    \#^4_3
    [\{\alpha_w|\alpha_y,\alpha_x\}\#_2(\alpha_z\#_1s_1(\alpha_x))]
  \Bigr) \\
  \nonumber
  & \quad \#_4 
  \{\alpha_w,\alpha_z|\alpha_y\#_2\alpha_x\} \\
  \nonumber
  &= 
  \Bigl(
    [\{\alpha_w,\alpha_z|\alpha_y\}\#_2(s_1(\alpha_z)\#_1\alpha_x)]
    \#^4_3
    [(t_1(\alpha_w)\#_1\alpha_y)\#_2\{\alpha_w,\alpha_z|\alpha_x\}]
  \Bigr) \\
  \label{eq:KV-poly}
  &\quad \#_4 
  \{\alpha_w\#_2\alpha_z|\alpha_y,\alpha_x\}
\end{align}

\subsection{Prism 3-22}
\begin{align}
  \nonumber
  &
  \Bigl(
    \begin{bmatrix} & \alpha_2\#_2\alpha_1 \\ \Psi &\end{bmatrix}
    \#_3
    \{s_2(\Psi)|\alpha_2,\alpha_1\}
  \Bigr)
  \#_4
  \{\Psi,\alpha_2\#_2\alpha_1\} \\
  \nonumber
  &= 
  \Bigl(
    [\{\Psi,\alpha_2\}\#_2(s_1(\Psi)\#_1\alpha_1)]
    \#_3
    [(t_1(\Psi)\#_1\alpha_2)\#_2\{s_2(\Psi),\alpha_1\}]
  \Bigr) \\
  \nonumber
  &\qquad \#_4
  \Bigl(
  [\{t_2(\Psi),\alpha_2\}\#_2(s_1(\Psi)\#_1\alpha_1)]
  \#_3
  [(t_1(\Psi)\#_1\alpha_2)\#_2\{\Psi,\alpha_1\}]
  \Bigr) \\
  \label{eq:Prism3|22}
  &\qquad \#_4
  \Bigl(
    \{t_2(\Psi)|\alpha_2,\alpha_1\}
    \#_3
    \begin{bmatrix}\Psi & \\ & \alpha_2\#_2\alpha_1\end{bmatrix}
  \Bigr)
\end{align}

\subsection{Prism 32-2} 
\begin{align}
  \nonumber
  &
  \Bigl(
    \begin{bmatrix}& \alpha \\ \Psi\#_2\gamma &\end{bmatrix}
    \#_3
    \{s_2(\Psi),\gamma|\alpha\}
  \Bigr)
  \#_4
  \{\Psi\#_2\gamma,\alpha\} \\
  \nonumber
  &= 
  \Bigl(
    \{\Psi\#_1t_1(\alpha),\{\gamma,\alpha\}\}
    \#_3
    [\{s_2(\Psi),\alpha\}\#_2(\gamma\#_1s_1(\alpha))]
  \Bigr) \\
  \nonumber
  &\quad \#_4
  \Bigl(
    [(t_2(\Psi)\#_1t_1(\alpha))\#_2\{\gamma,\alpha\}]\#_3[\{\Psi,\alpha\}\#_2(\gamma\#_1s_1(\alpha))]
  \Bigr) \\
  \label{eq:Prism32|2}
  &\quad \#_4
  \Bigl(
    \{t_2(\Psi),\gamma|\alpha\}
    \#_3
    \begin{bmatrix}\Psi\#_2\gamma & \\ & \alpha\end{bmatrix}
  \Bigr)
\end{align}

\subsection{Prism 23-2}
\begin{align}
  \nonumber
  &
  \bigl(
    \begin{bmatrix} & \alpha \\ \gamma\#_2\Psi &\end{bmatrix}
    \#_3
    \{\gamma,s_2(\Psi)|\alpha\}
  \bigr)
  \#_4
  \{\gamma\#_2\Psi,\alpha\} \\
  \nonumber
  &= 
  \bigl(
    ((\gamma\#_1t_1(\alpha))\#_2\{\Psi,\alpha\})
    \#_3
    [\{\gamma,\alpha\}\#_2s_2(\Psi\#_1s_1(\alpha))]
  \bigr) \\
  \nonumber
  &\quad \#_4
  \bigl(
    [(\gamma\#_1t_1(\alpha))\#_2\{t_2(\Psi),\alpha\}]
    \#_3
    \{\{\gamma,\alpha\},\Psi\#_1s_1(\alpha)\}^{-1_4}
  \bigr) \\
  \label{eq:Prism23|2}
  &\quad \#_4
  \bigl(
    \{\gamma,t_2(\Psi)|\alpha\}
    \#_3
    \begin{bmatrix}\gamma\#_2\Psi & \\ & \alpha\end{bmatrix}
  \bigr)
\end{align}

\subsection{Prism 22-3}
\begin{align}
  \nonumber
  &\Bigl(
    \begin{bmatrix} & \Phi \\ \beta_2\#_2\beta_1 &\end{bmatrix}
    \#_3
    \{\beta_2,\beta_1|s_2(\Phi)\}
  \Bigr)
  \#_4
  \{\beta_2\#_2\beta_1,\Phi\} \\
  \nonumber
  &= 
  \bigl(
    [(\beta_2\#_1t_1(\Phi))\#_2\{\beta_1,\Phi\}]
    \#_3
    [\{\beta_2,s_2(\Phi)\}\#_2(\beta_1\#_1s_1(\Phi))]
  \bigr) \\
  \nonumber
  &\quad \#_4
  \bigl(
    [(\beta_2\#_1t_1(\Phi))\#_2\{\beta_1,t_2(\Phi)\}]
    \#_3
    [\{\beta_2,\Phi\}\#_2(\beta_1\#_1s_1(\Phi))]
  \bigr) \\
  \label{eq:Prism22|3}
  &\qquad \#_4
  \bigl(
    \{\beta_2,\beta_1|t_2(\Phi)\}
    \#_3
    \begin{bmatrix} \beta_2\#_2\beta_1 & \\ & \Phi \end{bmatrix}
  \bigr)
\end{align}

\subsection{Prism 2-32}
\begin{align}
  \nonumber
  &
  \bigl(
    \begin{bmatrix}& \Phi\#_2\gamma \\ \beta &\end{bmatrix}
    \#_3
    \{\beta|s_2(\Phi),\gamma\}
  \bigr)
  \#_4
  \{\beta,\Phi\#_2\gamma\} \\
  \nonumber
  &= 
  \bigl(
    [\{\beta,\Phi\}\#_2(s_1(\beta)\#_1\gamma)]
    \#_3
    [s_2(t_1(\beta)\#_1\Phi)\#_2\{\beta,\gamma\}]
  \bigr) \\
  \nonumber
  &\quad 
  \#_4\bigl(
    [\{\beta,t_2(\Phi)\}\#_2(s_1(\beta)\#_1\gamma)]
    \#_3
    \{t_1(\beta)\#_1\Phi,\{\beta,\gamma\}\}
  \bigr) \\
  \label{eq:Prism2|3|2}
  &\quad \#_4
  \bigl(
    \{\beta|t_2(\Phi),\gamma\}
    \#_3
    \begin{bmatrix} \beta & \\ & \Phi\#_2\gamma\end{bmatrix}
  \bigr)
\end{align}

\subsection{Prism 2-23}
\begin{align}
  \nonumber
  &
  \bigl(
    \begin{bmatrix}& \gamma\#_2\Phi \\ \beta &\end{bmatrix}
    \#_3
    \{\beta|\gamma,s_2(\Phi)\}
  \bigr) 
  \#_4
  \{\beta,\gamma\#_2\Phi\} \\
  \nonumber
  &=
  \bigl(
    \{\{\beta,\gamma\},s_1(\beta)\#_1\Phi\}^{-1_4}
    \#_3
    [(t_1(\beta)\#_1\gamma)\#_2\{\beta,s_2(\Phi)\}]
  \bigr) \\
  \nonumber
  &\quad \#_4
  \bigl(
    [\{\beta,\gamma\}\#_2(s_1(\beta)\#_1t_2(\Phi))]
    \#_3
    [(t_1(\beta)\#_1\gamma)\#_2\{\beta,\Phi\}]
  \bigr) \\
  \label{eq:Prism2|23}
  &\quad \#_4
  \bigl(
    \{\beta|\gamma,t_2(\Phi)\}
    \#_3
    \begin{bmatrix}\beta & \\ & \gamma\#_2\Phi\end{bmatrix}
  \bigr)
\end{align}

\subsection{Cube} 
\begin{align}
  \nonumber
  &
  \bigl(
    \begin{bmatrix}& t_2(\Phi) \\ \Psi &\end{bmatrix}
    \#_3
    \{s_2(\Psi),\Phi\}
  \bigr)
  \#_4
  \bigl(
    \{\Psi,t_2(\Phi)\}
    \#_3
    \begin{bmatrix}s_2(\Psi) & \\ & \Phi\end{bmatrix}
  \bigr) \\
  \nonumber
  &= 
  [
    \{\Psi\#_1t_1(\Phi),s_1(\Psi)\#_1\Phi\}
    \#_3
    \{s_2(\Psi),s_2(\Phi)\}
  ] \\
  \nonumber
  &\qquad \#_4
  [
    \begin{bmatrix}& \Phi \\ t_2(\Psi) &\end{bmatrix}
    \#_3
    \{\Psi,s_2(\Phi)\}
  ] 
  \#_4
  [
    \{t_2(\Psi),\Phi\}
    \#_3
    \begin{bmatrix}\Psi & \\ & s_2(\Phi)\end{bmatrix}
  ] \\
  \label{eq:Cube}
  &\qquad \#_4
  [
    \{t_2(\Psi),t_2(\Phi)\}
    \#_3
    \{t_1(\Psi)\#_1\Phi,\Psi\#_1s_1(\Phi)\}
  ]
\end{align}

\subsection{Pasting 33-2}
\begin{align}
  \label{eq:Pasting332}
  \{\Psi'\#_3\Psi,\alpha\}
  &= 
  \bigl(
    \begin{bmatrix}& \alpha \\ \Psi' &\end{bmatrix}
    \#_3
    \{\Psi,\alpha\}
  \bigr) 
  \#_4
  \bigl(
    \{\Psi',\alpha\}
    \#_3
    \begin{bmatrix}\Psi & \\ & \alpha\end{bmatrix}
  \bigr)
\end{align}

\subsection{Pasting 2-33}
\begin{align}
  \label{eq:Pasting233}
  \{\beta,\Phi'\#_3\Phi\} &= 
  \bigl(
    \begin{bmatrix}& \Phi' \\ \beta &\end{bmatrix}
    \#_3
    \{\beta,\Phi\}
  \bigr)
  \#_4
  \bigl(
    \{\beta,\Phi'\}
    \#_3
    \begin{bmatrix}\beta & \\ & \Phi\end{bmatrix}
  \bigr)
\end{align}

\subsection{Tube 1}
\begin{equation}
  \label{eq:Tube1}
  \{t_3(B),\alpha\}\#_4\bigl(\{t_2(B),\alpha\}\#_3\begin{bmatrix}B & \\ &\alpha\end{bmatrix}\bigr) 
  = \bigl(\begin{bmatrix}& \alpha \\ B &\end{bmatrix}\#_3\{s_2(B),\alpha\}\bigr)\#_4\{s_3(B),\alpha\} 
\end{equation}

\subsection{Tube 2}
\begin{equation}
  \label{eq:Tube2}
  \{\beta,t_3(A)\}\#_4\bigl(\{\beta,t_2(A)\}\#_3\begin{bmatrix}\beta & \\ & A\end{bmatrix}\bigr) 
  = \bigl(\begin{bmatrix}& A \\ \beta &\end{bmatrix}\#_3\{\beta,s_2(A)\}\bigr)\#_4\{\beta,s_3(A)\} 
\end{equation}

\noindent
\subsection{S-condition}
\begin{align}
  \nonumber
  &
  \bigl(
    \{\alpha_z\#_1t_1(\alpha_y),s_1(\alpha_z)\#_1\alpha_y|\alpha_x\}
    \#_3
    \begin{bmatrix}\{\alpha_z,\alpha_y\} & \\ & \alpha_x\end{bmatrix}
  \bigr) \\
  \nonumber
  &\quad \#_4
  \{\{\alpha_z,\alpha_y\},\alpha_x\}^{-1_4} \\
  \nonumber
  &\quad \#_4
  \bigl(
    \begin{bmatrix}& \alpha_x \\ \{\alpha_z,\alpha_y\} &\end{bmatrix}
    \#_3
    \{t_1(\alpha_z)\#_1\alpha_y,\alpha_z\#_1s_1(\alpha_y)|\alpha_x\}^{-1_4}
  \bigr) \\
  \nonumber
  &= 
  \bigl(
    \begin{bmatrix}& \{\alpha_y,\alpha_x\} \\ \alpha_z &\end{bmatrix}
    \#_3
    \{\alpha_z|t_1(\alpha_y)\#_1\alpha_x,\alpha_y\#_1s_1(\alpha_x)\}
  \bigr) \\
  \nonumber
  &\quad \#_4
  \{\alpha_z,\{\alpha_y,\alpha_x\}\} \\
  \label{eq:SpSm}
  &\quad \#_4
  \bigl(
    \{\alpha_z|\alpha_y\#_1t_1(\alpha_x),s_1(\alpha_y)\#_1\alpha_x\}^{-1_4}
    \#_3
    \begin{bmatrix}\alpha_z & \\ & \{\alpha_y,\alpha_x\}\end{bmatrix}
  \bigr)
\end{align}

\section{Specific Calculations for the Proof of Theorem~\ref{3-cm_to_gray}}
\label{proof:3-cm_to_Gray4-group}
This appendix provides the calculations omitted in the proof of Theorem \ref{3-cm_to_gray}. As seen in the case of prism 3-22, the formula that remains to be proved concerns only the first entry of the parentheses. The desired formula is easily obtained by reading off the expressions for the 3-crossed module from the corresponding diagrams of the 4-category.
\subsection{Prism 32-2}
\[
  \act{l}\{h_z,h_y|h_x\}^{-1}L(l,h_zh_y;h_x) = \{l,\act{h_z}\{h_y,h_x\}\}\act{\act{\partial lh_z}\{h_y,h_x\}}L(l,h_z;\act{\partial h_y}h_x)\{\partial lh_z,h_y|h_x\}^{-1}
\]
Starting from the left-hand side, we have
\begin{align*}
  &(\text{L.H.S}) \\
  &= 
  \act{l}\{h_z,h_y|h_x\}^{-1}\{l,\{h_zh_y,h_x\}\}\act{\act{\partial l}\{h_zh_y,h_x\}}\left<l,\act{\partial (h_zh_y)}h_x\right>\{\partial l,h_zh_y|h_x\}^{-1}
  & &\text{\tiny \eqref{eq:defL}} \\
  &= \{l,\act{h_z}\{h_y,h_x\}\{h_z,\act{\partial h_y}h_x\}\}\act{\partial l}\{h_z,h_y|h_x\}^{-1}\act{\act{\partial l}\{h_zh_y,h_x\}}\left<l,\act{\partial (h_zh_y)}h_x\right>\{\partial l,h_zh_y|h_x\}^{-1}
  & &\text{\tiny \eqref{eq:2CM-2-funct2}} \\
  &= \{l,\act{h_z}\{h_y,h_x\}\}\act{\act{\partial lh_z}\{h_y,h_x\}}\{l,\{h_z,\act{\partial h_y}h_x\}\} & &\text{\tiny \eqref{eq:HH_H-2CM}} \\
  &\qquad \act{\act{\partial lh_z}\{h_y,h_x\}\act{\partial l}\{h_z,\act{\partial h_y}h_x\}}\left<l,\act{\partial (h_zh_y)}h_x\right>\act{\partial l}\{h_z,h_y|h_x\}^{-1}\{\partial l,h_zh_y|h_x\}^{-1} & &\text{\tiny } \\
  &= \{l,\act{h_z}\{h_y,h_x\}\}\act{\act{\partial lh_z}\{h_y,h_x\}}\left(\{l,\{h_z,\act{\partial h_y}h_x\}\}\act{\act{\partial l}\{h_z,\act{\partial h_y}h_x\}}\left<l,\act{\partial (h_zh_y)}h_x\right>\right) \\
  &\qquad \act{\act{\partial lh_z}\{h_y,h_x\}}\{\partial l,h_z|\act{\partial h_y}h_x\}^{-1}\{\partial lh_z,h_y|h_x\}^{-1}& &\text{\tiny \eqref{eq:3-CM-Fund-left-Hom}} \\
  &= \{l,\act{h_z}\{h_y,h_x\}\}\act{\act{\partial lh_z}\{h_y,h_x\}}L(l,h_z;\act{\partial h_y}h_x)\{\partial lh_z,h_y|h_x\}^{-1} & &\text{\tiny \eqref{eq:defL}} \\
  &= (\text{R.H.S})
\end{align*}
On the right, we roughly indicate which relations are used in each step.

\subsection{Prism 23-2}
\[
  \act{\act{h_z}l}\{h_z,h_y|h_x\}^{-1}L(\act{h_z}l,h_zh_y;h_x) = \act{h_z}L(l,h_y;h_x)\act{\act{h_z}\{h_y',h_x\}}\{\{h_z,\act{\partial h_y}h_x\},\act{\act{\partial (h_zh_y)}h_xh_z}l\}^{-1}\{h_z,h_y'|h_x\}^{-1}
\]
Starting from the left-hand side, we have
\begin{align*}
  &(\text{L.H.S}) \\
  &= 
  \act{\act{h_z}l}\{h_z,h_y|h_x\}^{-1}\{\act{h_z}l,\{h_zh_y,h_x\}\}\act{\act{\partial \act{h_z}l}\{h_zh_y,h_x\}}\left<\act{h_z}l,\act{\partial (h_zh_y)}h_x\right>\{\partial \act{h_z}l,h_zh_y|h_x\}^{-1} & &\text{\tiny \eqref{eq:defL}} \\
  &= \{\act{h_z}l,\act{h_z}\{h_y,h_x\}\{h_z,\act{\partial h_y}h_x\}\} \\
  &\qquad \act{\partial\act{h_z}l}\{h_z,h_y|h_x\}^{-1}\act{\act{\partial \act{h_z}l}\{h_zh_y,h_x\}}\left<\act{h_z}l,\act{\partial (h_zh_y)}h_x\right>\{\partial \act{h_z}l,h_zh_y|h_x\}^{-1} & &\text{\tiny \eqref{eq:2CM-2-funct2}} \\
  &= \{\act{h_z}l,\act{h_z}\{h_y,h_x\}\}\act{\act{h_z\partial l}\{h_y,h_x\}}\{\act{h_z}l,\{h_z,\act{\partial h_y}h_x\}\} & &\text{\tiny \eqref{eq:H_HH-2CM}} \\
  &\qquad \act{\act{h_z\partial l}\{h_y,h_x\}\act{\partial\act{h_z}l}\{h_z,\act{\partial h_y}h_x\}}\left<\act{h_z}l,\act{\partial (h_zh_y)}h_x\right>\act{\partial\act{h_z}l}\{h_z,h_y|h_x\}^{-1}\{\partial \act{h_z}l,h_zh_y|h_x\}^{-1} & &\text{\tiny \eqref{eq:2CM-Peifferid}} \\
  &= \{\act{h_z}l,\act{h_z}\{h_y,h_x\}\}\act{\act{h_z\partial l}\{h_y,h_x\}}\left(\{\act{h_z}l,\{h_z,\act{\partial h_y}h_x\}\}\act{\act{\partial\act{h_z}l}\{h_z,\act{\partial h_y}h_x\}}\left<\act{h_z}l,\act{\partial (h_zh_y)}h_x\right>\right) & &\text{\tiny \eqref{eq:H_HH-2CM}} \\
  &\qquad 
  \act{\act{h_z\partial}\{h_y,h_z\}}\{\partial \act{h_z}l,h_z|\act{\partial h_y}h_x\}^{-1}\{h_z\partial l,h_y|h_x\}^{-1} & &\text{\tiny \eqref{eq:3-CM-Fund-left-Hom}} \\
  &= \{\act{h_z}l,\act{h_z}\{h_y,h_x\}\} \\
  &\qquad \act{\act{h_z\partial l}\{h_y,h_x\}}\left(\act{h_z}\left<l,\act{\partial h_y}h_x\right>\act{\act{h_z}\{\partial l,\act{\partial h_y}h_x\}}\{\{h_z,\act{\partial h_y}h_x\},\act{\act{\partial (h_zh_y)}h_xh_z}l\}^{-1}\{h_z,\partial l|\act{\partial h_y}h_x\}^{-1}\right) & &\text{\tiny \eqref{eq:H_HH-2CM}} \\
  &\qquad 
  \{h_z\partial l,h_y|h_x\}^{-1} \\
  &= \act{h_z}\left(\{l,\{h_y,h_x\}\}\act{\act{\partial l}\{h_y,h_x\}}\left<l,\act{\partial h_y}h_x\right>\right) \\
  &\qquad \act{\act{h_z\partial l}\{h_y,h_x\}\act{h_z}\{\partial l,\act{\partial h_y}h_x\}}\{\{h_z,\act{\partial h_y}h_x\},\act{\act{\partial (h_zh_y)}h_xh_z}l\}^{-1} \\
  &\qquad \act{h_z}\{\partial l,h_y|h_x\}^{-1}\{h_z,\partial lh_y|h_x\}^{-1} & &\text{\tiny \eqref{eq:3-CM-Fund-left-Hom}} \\
  &= \act{h_z}\left(\{l,\{h_y,h_x\}\}\act{\act{\partial l}\{h_y,h_x\}}\left<l,\act{\partial h_y}h_x\right>\{\partial l,h_y|h_x\}^{-1}\right) \\
  &\qquad \act{\act{h_z}\{\partial lh_y,h_x\}}\{\{h_z,\act{\partial h_y}h_x\},\act{\act{\partial (h_zh_y)}h_xh_z}l\}^{-1}\{h_z,\partial lh_y|h_x\}^{-1} & &\text{\tiny \eqref{eq:2CM-Peifferid}} \\
  &= \act{h_z}L(l,h_y;h_x)\act{\act{h_z}\{\partial lh_y,h_x\}}\{\{h_z,\act{\partial h_y}h_x\},\act{\act{\partial (h_zh_y)}h_xh_z}l\}^{-1}\{h_z,\partial lh_y|h_x\}^{-1} & &\text{\tiny \eqref{eq:defL}} \\
  &= (\text{R.H.S})
\end{align*}

\subsection{Prism 22-3.}
\[
  \act{\act{h_zh_y}l}\{h_z,h_y|h_x\}^{-1}R(h_zh_y;l,h_x)
  = \act{h_z}R(h_y;l,h_x)\act{\act{h_z}\{h_y,\partial lh_x\}}R(h_z;\act{\partial h_y}l,\act{\partial h_y}h_x)\{h_z,h_y|\partial lh_x\}^{-1}
\]
Starting from the left-hand side, we have
\begin{align*}
  &(\text{L.H.S}) \\
  &= 
  \act{\act{h_zh_y}l}\{h_z,h_y|h_x\}^{-1}
  \left<h_zh_y,l\right>\act{\{h_zh_y,\partial l\}}\{\act{\partial (h_zh_y)}l,\{h_zh_y,h_x\}\}\{h_zh_y|\partial l,h_x\}^{-1} & &\text{\tiny \eqref{eq:defR}} \\
  &= 
  \left<h_zh_y,l\right>  &&\text{\tiny \eqref{eq:2CM-Peifferid}} \\
  &\qquad 
  \act{\{h_zh_y,\partial l\}}(\act{\act{\partial (h_zh_y)}l}\{h_z,h_y|h_x\}^{-1}\{\act{\partial (h_zh_y)}l,\{h_zh_y,h_x\}\}) \\
  &\qquad 
  \{h_zh_y|\partial l,h_x\}^{-1} \\
  &= 
  \act{h_z}\left<h_y,l\right>\act{\act{h_z}\{h_y,\partial l\}}\left<h_z,\act{\partial h_y}l\right>\{h_z,h_y|\partial l\}^{-1}  &&\text{\tiny \eqref{eq:3CM-aux-HH-L}} \\
  &\qquad 
  \act{\{h_zh_y,\partial l\}}(\{\act{\partial (h_zh_y)}l,\act{h_z}\{h_y,h_x\}\{h_z,\act{\partial h_y}h_x\}\}\act{\partial\act{\partial (h_zh_y)}l}\{h_z,h_y|h_x\}^{-1}) & &\text{\tiny \eqref{eq:2CM-2-funct2}} \\
  &\qquad 
  \{h_zh_y|\partial l,h_x\}^{-1} \\
  &= 
  \act{h_z}\left<h_y,l\right>\red{\act{\act{h_z}\{h_y,\partial l\}}\left<h_z,\act{\partial h_y}l\right>} \\
  &\qquad 
  \act{\act{h_z}\{h_y,\partial l\}\{h_z,\act{\partial h_y}\partial l\}}(\red{\{\act{\partial (h_zh_y)}l,\act{h_z}\{h_y,h_x\}\}}\act{\act{\partial \act{\partial (h_zh_y)}lh_z}\{h_y,h_x\}}\{\act{\partial (h_zh_y)}l,\{h_z,\act{\partial h_y}h_x\}\}) & &\text{\tiny \eqref{eq:H_HH-2CM}} \\
  &\qquad 
  \{h_z,h_y|\partial l\}^{-1}
  \act{\{h_zh_y,\partial l\}}(\act{\partial\act{\partial (h_zh_y)}l}\{h_z,h_y|h_x\}^{-1})
  \{h_zh_y|\partial l,h_x\}^{-1} & &\text{\tiny \eqref{eq:2CM-Peifferid}}
\end{align*}
Here, we focus on the two factors in the red-colored part. 
\begin{align*}
  &\act{\act{h_z}\{h_y,\partial l\}\{h_z,\act{\partial h_y}\partial l\}}(\act{\{h_z,\act{\partial h_y}\partial l\}^{-1}}\left<h_z,\act{\partial h_y}l\right>\{\act{\partial (h_zh_y)}l,\act{h_z}\{h_y,h_x\}\}) \\
  &= \act{\act{h_z}\{h_y,\partial l\}\{h_z,\act{\partial h_y}\partial l\}}\Bigl(\{\partial\act{\{h_z,\act{\partial h_y}\partial l\}^{-1}}\left<h_z,\act{\partial h_y}l\right>\act{\partial (h_zh_y)}l,\act{h_z}\{h_y,h_x\}\} \\
  &\hspace{5cm} \act{\act{\partial \act{\partial (h_zh_y)}lh_z}\{h_y,h_x\}\{h_z,\act{\partial h_y}\partial l\}^{-1}}\left<h_z,\act{\partial h_y}l\right>\Bigr) &&\text{\tiny \eqref{eq:2CM-2-funct1}} \\
  &= \act{\act{h_z}\{h_y,\partial l\}\{h_z,\act{\partial h_y}\partial l\}}\{\{h_z,\act{\partial h_y}\partial l\}^{-1}\act{h_z}(\act{\partial h_y}l),\act{h_z}\{h_y,h_x\}\} &&\text{\tiny \eqref{eq:3CM-bHL}} \\
  &\qquad  \act{\act{h_z}\{h_y,\partial l\}\{h_z,\act{\partial h_y}\partial l\}\act{\partial \act{\partial (h_zh_y)}lh_z}\{h_y,h_x\}\{h_z,\act{\partial h_y}\partial l\}^{-1}}\left<h_z,\act{\partial h_y}l\right> \\
  &= \act{\act{h_z}\{h_y,\partial l\}}\{\act{h_z}(\act{\partial h_y}l),\act{h_z}\{h_y,h_x\}\}\act{\act{h_z}\{h_y,\partial l\}\{h_z,\act{\partial h_y}\partial l\}}\{\{h_z,\act{\partial h_y}\partial l\}^{-1},\act{h_z\partial \act{\partial h_y}l}\{h_y,h_x\}\} &&\text{\tiny \eqref{eq:HH_H-2CM}} \\
  &\qquad  \act{\act{h_z}\{h_y,\partial l\}\partial(\{\{h_z,\act{\partial h_y}\partial l\},\act{\partial \act{\partial (h_zh_y)}lh_z}\{h_y,h_x\}\})\act{\partial \{h_z,\act{\partial h_y}\partial l\}\partial \act{\partial (h_zh_y)}lh_z}\{h_y,h_x\}}\left<h_z,\act{\partial h_y}l\right> & &\text{\tiny \eqref{eq:2CM-bPeiffer}} \\
  &= \act{\act{h_z}\{h_y,\partial l\}}\{\act{h_z}(\act{\partial h_y}l),\act{h_z}\{h_y,h_x\}\}\act{\act{h_z}\{h_y,\partial l\}}\{\{h_z,\act{\partial h_y}\partial l\},\act{h_z\partial \act{\partial h_y}l}\{h_y,h_x\}\}^{-1} &&\text{\tiny (*)} \\
  &\qquad  \act{\act{h_z}\{h_y,\partial l\}\partial(\{\{h_z,\act{\partial h_y}\partial l\},\act{\partial \act{\partial (h_zh_y)}lh_z}\{h_y,h_x\}\})\act{h_z\partial \act{\partial h_y}l}\{h_y,h_x\}}\left<h_z,\act{\partial h_y}l\right> \\
  &= \act{h_z}(\act{\{h_y,\partial l\}}\{\act{\partial h_y}l,\{h_y,h_x\}\}) \\
  &\qquad  \act{\act{h_z}\{h_y,\partial l\}\act{h_z\partial \act{\partial h_y}l}\{h_y,h_x\}}\left<h_z,\act{\partial h_y}l\right>
  \act{\act{h_z}\{h_y,\partial l\}}\{\{h_z,\act{\partial h_y}\partial l\},\act{h_z\partial \act{\partial h_y}l}\{h_y,h_x\}\}^{-1} & &\text{\tiny \eqref{eq:2CM-Peifferid}}
\end{align*}
Here, in (*) we used $\act{l}\{l^{-1},l'\} = \{l,\act{\partial l^{-1}}l'\}^{-1}$, which follows from Equation \eqref{eq:HH_H-2CM}. Using the above,
\begin{align*}
  (\text{L.H.S}) &= 
  \act{h_z}\left<h_y,l\right>\act{h_z}(\act{\{h_y,\partial l\}}\{\act{\partial h_y}l,\{h_y,h_x\}\}) \\
  &\qquad  \act{\act{h_z}\{h_y,\partial l\}\act{h_z\partial \act{\partial h_y}l}\{h_y,h_x\}}\left<h_z,\act{\partial h_y}l\right>
  \act{\act{h_z}\{h_y,\partial l\}}\{\{h_z,\act{\partial h_y}\partial l\},\act{h_z\partial \act{\partial h_y}l}\{h_y,h_x\}\}^{-1} \\
  &\qquad 
  \act{\act{h_z}\{h_y,\partial l\}\{h_z,\act{\partial h_y}\partial l\}\act{\partial \act{\partial (h_zh_y)}lh_z}\{h_y,h_x\}}\{\act{\partial (h_zh_y)}l,\{h_z,\act{\partial h_y}h_x\}\} \\
  &\qquad 
  \{h_z,h_y|\partial l\}^{-1}
  \act{\{h_zh_y,\partial l\}}(\act{\partial\act{\partial (h_zh_y)}l}\{h_z,h_y|h_x\}^{-1})
  \{h_zh_y|\partial l,h_x\}^{-1} \\
  &= \act{h_z}\left<h_y,l\right>\act{h_z}(\act{\{h_y,\partial l\}}\{\act{\partial h_y}l,\{h_y,h_x\}\}) \\
  &\qquad  \act{\act{h_z}\{h_y,\partial l\}\act{h_z\partial \act{\partial h_y}l}\{h_y,h_x\}}\left<h_z,\act{\partial h_y}l\right> \\
  &\qquad 
  \act{\act{h_z}\{h_y,\partial l\}\act{\partial \{h_z,\act{\partial h_y}\partial l\}\partial \act{\partial (h_zh_y)}lh_z}\{h_y,h_x\}\{h_z,\act{\partial h_y}\partial l\}}\{\act{\partial (h_zh_y)}l,\{h_z,\act{\partial h_y}h_x\}\} \\
  &\qquad \act{\act{h_z}\{h_y,\partial l\}}\{\{h_z,\act{\partial h_y}\partial l\},\act{h_z\partial \act{\partial h_y}l}\{h_y,h_x\}\}^{-1} & &\text{\tiny \eqref{eq:2CM-Peifferid}} \\
  &\qquad 
  \{h_z,h_y|\partial l\}^{-1}
  \act{\{h_zh_y,\partial l\}}(\act{\partial\act{\partial (h_zh_y)}l}\{h_z,h_y|h_x\}^{-1})
  \{h_zh_y|\partial l,h_x\}^{-1} \\
  &= \act{h_z}\left<h_y,l\right>\act{h_z}(\act{\{h_y,\partial l\}}\{\act{\partial h_y}l,\{h_y,h_x\}\}) \\
  &\qquad  \act{\act{h_z}\{h_y,\partial l\}\act{h_z\partial \act{\partial h_y}l}\{h_y,h_x\}}(\left<h_z,\act{\partial h_y}l\right>\act{\{h_z,\act{\partial h_y}\partial l\}}\{\act{\partial (h_zh_y)}l,\{h_z,\act{\partial h_y}h_x\}\}) \\
  &\qquad 
  \act{h_z}\{h_y|\partial l,h_x\}^{-1}\act{\act{h_z}\{h_y,\partial lh_x\}}\{h_z|\act{\partial h_y}\partial l,\act{\partial h_y}h_x\}^{-1}\{h_z,h_y|\partial lh_x\}^{-1} & &\text{\tiny \eqref{eq:3CM-KV}} \\
  &= \act{h_z}\left<h_y,l\right>\act{h_z}(\act{\{h_y,\partial l\}}\{\act{\partial h_y}l,\{h_y,h_x\}\})\act{h_z}\{h_y|\partial l,h_x\}^{-1} & &\text{\tiny \eqref{eq:2CM-Peifferid}} \\
  &\qquad  \act{\act{h_z}\{h_y,\partial lh_x\}}(\left<h_z,\act{\partial h_y}l\right>\act{\{h_z,\act{\partial h_y}\partial l\}}\{\act{\partial (h_zh_y)}l,\{h_z,\act{\partial h_y}h_x\}\}) \\
  &\qquad 
  \act{\act{h_z}\{h_y,\partial lh_x\}}\{h_z|\act{\partial h_y}\partial l,\act{\partial h_y}h_x\}^{-1}\{h_z,h_y|\partial lh_x\}^{-1} \\
  &= \act{h_z}R(h_y;l,h_x)\act{\act{h_z}\{h_y,\partial lh_x\}}R(h_z;\act{\partial h_y}l,\act{\partial h_y}h_x)\{h_z,h_y|\partial lh_x\}^{-1} & &\text{\tiny \eqref{eq:defR}} \\
  &= (\text{L.H.S})
\end{align*}

\subsection{Prism 2-32}
\begin{align*}
  \act{\act{h_z}l}\{h_z|h_y,h_x\}^{-1}R(h_z;l,h_yh_x) = R(h_z;l,h_y)\act{\{h_z,\partial lh_y\}}\{\act{\partial h_z}l,\act{\act{\partial h_z}h_y}\{h_z,h_x\}\}\{h_z|\partial lh_y,h_x\}^{-1}
\end{align*}
Starting from the left-hand side, we have
\begin{align*}
  \text{(L.H.S)} &= \act{\act{h_z}l}\{h_z|h_y,h_x\}^{-1}\left<h_z,l\right>\act{\{h_z,\partial l\}}\{\act{\partial h_z}l,\{h_z,h_yh_x\}\}\{h_z|\partial l,h_yh_x\}^{-1} & &\text{\tiny \eqref{eq:defR}} \\
  &= \left<h_z,l\right>\act{\{h_z,\partial l\}}(\act{\act{\partial h_z}l}\{h_z|h_y,h_x\}^{-1}\{\act{\partial h_z}l,\{h_z,h_yh_x\}\})\{h_z|\partial l,h_yh_x\}^{-1} & &\text{\tiny \eqref{eq:2CM-Peifferid}} \\
  &= \left<h_z,l\right>\act{\{h_z,\partial l\}}(\{\act{\partial h_z}l,\{h_z,h_y\}\act{\act{\partial h_z}h_y}\{h_z,h_x\}\}\act{\partial\act{\partial h_z}l}\{h_z|h_y,h_x\}^{-1})\{h_z|\partial l,h_yh_x\}^{-1} & &\text{\tiny \eqref{eq:2CM-2-funct2}} \\
  &= \left<h_z,l\right>\act{\{h_z,\partial l\}}(\{\act{\partial h_z}l,\{h_z,h_y\}\}\act{\act{\partial \act{\partial h_z}l}\{h_z,h_y\}}\{\act{\partial h_z}l,\act{\act{\partial h_z}h_y}\{h_z,h_x\}\}) & &\text{\tiny \eqref{eq:H_HH-2CM}} \\
  &\qquad 
  \{h_z|\partial l,h_y\}^{-1}\{h_z|\partial lh_y,h_x\}^{-1} & &\text{\tiny \eqref{eq:3-CM-Fund-right-Hom}} \\
  &= \left<h_z,l\right>\act{\{h_z,\partial l\}}\{\act{\partial h_z}l,\{h_z,h_y\}\}\{h_z|\partial l,h_y\}^{-1}\act{\{h_z,\partial lh_y\}}\{\act{\partial h_z}l,\act{\act{\partial h_z}h_y}\{h_z,h_x\}\} & &\text{\tiny \eqref{eq:2CM-Peifferid}} \\
  &\qquad 
  \{h_z|\partial lh_y,h_x\}^{-1} & &\text{\tiny \eqref{eq:3-CM-Fund-right-Hom}} \\
  &= R(h_z;l,h_y)\act{\{h_z,\partial lh_y\}}\{\act{\partial h_z}l,\act{\act{\partial h_z}h_y}\{h_z,h_x\}\}\{h_z|\partial lh_y,h_x\}^{-1} & &\text{\tiny \eqref{eq:defR}} \\
  &= (\text{R.H.S})
\end{align*}

\subsection{Prism 2-23}
\[
  \act{\act{h_zh_y}l}\{h_z|h_y,h_x\}^{-1}R(h_z;\act{h_y}l,h_yh_x) = \{\{h_z,h_y\},\act{\act{\partial h_z}h_yh_x}l\}^{-1}\act{\{h_z,h_y\}}(\act{\act{\partial h_z}h_y}R(h_z;l,h_x))\{h_z|h_y,h_x'\}
\]
Starting from the left-hand side, we have
\begin{align*}
  &(\text{L.H.S}) \\
  &= \act{\act{h_zh_y}l}\{h_z|h_y,h_x\}^{-1}\left<h_z,\act{h_y}l\right>\act{\{h_z,\partial\act{h_y}l\}}\{\act{\partial h_z}(\act{h_y}l),\{h_z,h_yh_x\}\}\{h_z,\partial \act{h_y}l|h_yh_x\}^{-1} & &\text{\tiny \eqref{eq:defR}} \\
  &= \left<h_z,\act{h_y}l\right>\act{\{h_z,\partial\act{h_y}l\}}(\act{\act{\partial h_z}(\act{h_y}l)}\{h_z|h_y,h_x\}^{-1}\{\act{\partial h_z}(\act{h_y}l),\{h_z,h_yh_x\}\})\{h_z,\partial \act{h_y}l|h_yh_x\}^{-1} & & \text{\tiny \eqref{eq:2CM-Peifferid}} \\
  &= \left<h_z,\act{h_y}l\right>\act{\{h_z,\partial\act{h_y}l\}}(\{\act{\partial h_z}(\act{h_y}l),\{h_z,h_y\}\act{\act{\partial h_z}h_y}\{h_z,h_x\}\}\act{\partial\act{\partial h_z}(\act{h_y}l)}\{h_z|h_y,h_x\}^{-1}) & &\text{\tiny \eqref{eq:2CM-2-funct2}} \\
  &\qquad \{h_z,\partial \act{h_y}l|h_yh_x\}^{-1} \\
  &= \left<h_z,\act{h_y}l\right>\act{\{h_z,\partial\act{h_y}l\}}(\{\act{\partial h_z}(\act{h_y}l),\{h_z,h_y\}\}\act{\act{\partial \act{\partial h_z}(\act{h_y}l)}\{h_z,h_y\}}\{\act{\partial h_z}(\act{h_y}l),\act{\act{\partial h_z}h_y}\{h_z,h_x\}\}) & &\text{\tiny \eqref{eq:H_HH-2CM}} \\
  &\qquad 
  \{h_z|\partial \act{h_y}l,h_y\}^{-1}\{h_z|\partial \act{h_y}lh_y,h_x\}^{-1} & &\text{\tiny \eqref{eq:3-CM-Fund-right-Hom}} \\
  &= \left<h_z,\act{h_y}l\right>\act{\{h_z,\partial\act{h_y}l\}}\{\act{\partial h_z}(\act{h_y}l),\{h_z,h_y\}\}\{h_z|\partial \act{h_y}l,h_y\}^{-1} & & \text{\tiny \eqref{eq:2CM-Peifferid}} \\
  &\qquad \act{\{h_z,\partial\act{h_y}lh_y\}}\{\act{\partial h_z}(\act{h_y}l),\act{\act{\partial h_z}h_y}\{h_z,h_x\}\}\{h_z|\partial \act{h_y}lh_y,h_x\}^{-1} \\
  &= \{\{h_z,h_y\},\act{\act{\partial h_z}h_yh_z}l\}^{-1}\act{\{h_z,h_y\}}(\act{\act{\partial h_z}h_y}\left<h_z,l\right>)\{h_z|h_y,\partial l\}^{-1} & & \text{\tiny \eqref{eq:3cm_prism2-23}} \\
  &\qquad \act{\{h_z,h_y\partial l\}}\{\act{\partial h_z}(\act{h_y}l),\act{\act{\partial h_z}h_y}\{h_z,h_x\}\}\{h_z|h_y\partial l,h_x\}^{-1} \\
  &= \{\{h_z,h_y\},\act{\act{\partial h_z}h_yh_z}l\}^{-1}\act{\{h_z,h_y\}}(\act{\act{\partial h_z}h_y}\left<h_z,l\right>)\act{\{h_z,h_y\}\act{\act{\partial h_z}h_y}\{h_z,\partial l\}}\{\act{\partial h_z}(\act{h_y}l),\act{\act{\partial h_z}h_y}\{h_z,h_x\}\} & &\text{\tiny \eqref{eq:2CM-Peifferid}} \\
  &\qquad \act{\{h_z,h_y\}}(\act{\act{\partial h_z}h_y}\{h_z|\partial l,h_x\}^{-1})\{h_z|h_y,\partial lh_x\}^{-1} & & \text{\tiny \eqref{eq:3-CM-Fund-right-Hom}} \\
  &= \{\{h_z,h_y\},\act{\act{\partial h_z}h_yh_z}l\}^{-1}\act{\{h_z,h_y\}}[\act{\act{\partial h_z}h_y}(\left<h_z,l\right>\act{\{h_z,\partial l\}}\{\act{\partial h_z}l,\{h_z,h_x\}\}\{h_z|\partial l,h_x\}^{-1})] \\
  &\qquad \{h_z|h_y,\partial lh_x\}^{-1} \\
  &= \{\{h_z,h_y\},\act{\act{\partial h_z}h_yh_z}l\}^{-1}\act{\{h_z,h_y\}}(\act{\act{\partial h_z}h_y}R(h_z;l,h_x))\{h_z|h_y,\partial lh_x\}^{-1} & &\text{\tiny \eqref{eq:defR}} \\
  &= (\text{R.H.S})
\end{align*}

\subsection{Cube}
\[
  \act{l_2}R(h_2;l_1,h_1)L(l_2,h_2,\partial l_1h_1) = \{l_2,\act{h_2}l_1\}\act{\act{\partial l_2h_2}l_1}L(l_2,h_2;h_1)R(\partial l_2h_2;l_1,h_1)\act{\{\partial l_2h_2,\partial l_1h_1\}}\{\act{\partial h_2}l_1,\act{\act{\partial h_2}h_1}l_2\}
\]
Starting from the left-hand side, we have
\begin{align*}
  &(\text{L.H.S}) \\
  &= 
  \act{l_2}R(h_2;l_1,h_1) \\
  &\qquad \{l_2,\{h_2,\partial l_1h_1\}\}\act{\act{\partial l_2}\{h_2,\partial l_1h_1\}}\left<l_2,\act{\partial h_2}(\partial l_1h_1)\right>\{\partial l_2,h_2|\partial l_1h_1\}^{-1} & &\text{\tiny \eqref{eq:defL}} \\
  &= \act{l_2}R(h_2;l_1,h_1)\{l_2,\partial R(h_2;l_1,h_1)^{-1}\act{h_2}l_1\{h_2,h_1\}\act{\partial h_2}l_1^{-1}\} & &\text{\tiny \eqref{eq:3CM-twisted-2funct2}} \\
  &\qquad \act{\act{\partial l_2}\{h_2,\partial l_1h_1\}}\left<l_2,\act{\partial h_2}(\partial l_1h_1)\right>\{\partial l_2,h_2|\partial l_1h_1\}^{-1} \\
  &= \{l_2,\act{h_2}l_1\{h_2,h_1\}\act{\partial h_2}l_1^{-1}\}\act{\partial l_2}R(h_2;l_1,h_1) & &\text{\tiny \eqref{eq:2CM-2-funct2}} \\
  &\qquad \act{\act{\partial l_2}\{h_2,\partial l_1h_1\}}\left<l_2,\act{\partial h_2}(\partial l_1h_1)\right>\{\partial l_2,h_2|\partial l_1h_1\}^{-1} \\
  &= \{l_2,\act{h_2}l_1\}\act{\act{\partial l_2h_2}l_1}\{l_2,\{h_2,h_1\}\}\act{\act{\partial l_2h_2}l_1\act{\partial l_2}\{h_2,h_1\}}\{l_2,\act{\partial h_2}l_1^{-1}\} & &\text{\tiny \eqref{eq:H_HH-2CM}} \\
  &\qquad \act{\partial l_2}R(h_2;l_1,h_1)\act{\act{\partial l_2}\{h_2,\partial l_1h_1\}}\left<l_2,\act{\partial h_2}(\partial l_1h_1)\right>\{\partial l_2,h_2|\partial l_1h_1\}^{-1}
\end{align*}
Here, applying Equation~\eqref{eq:braket322} to the $\left<-,-\right>$ part,
\begin{align*}
  &(\text{L.H.S}) \\
  &= \{l_2,\act{h_2}l_1\}\act{\act{\partial l_2h_2}l_1}\{l_2,\{h_2,h_1\}\}\act{\partial l_2}R(h_2;l_1,h_1)\red{\act{\act{\partial l_2}(\{h_2,\partial l_1h_1\}\act{\partial h_2}l_1)}\{l_2,\act{\partial h_2}l_1^{-1}\}} & &\text{\tiny \eqref{eq:2CM-Peifferid}} \\
  &\qquad \act{\act{\partial l_2}\{h_2,\partial l_1h_1\}}(\red{\left<l_2,\act{\partial h_2}\partial l_1\right>}\act{\{\partial l_2,\act{\partial h_2}\partial l_1\}}(\act{\act{\partial h_2}\partial l_1}\left<l_2,\act{\partial h_2}h_1\right>)\{\partial l_2|\act{\partial h_2}\partial l_1,\act{\partial h_2}h_1\}^{-1}) \\
  &\qquad \{\partial l_2,h_2|\partial l_1h_1\}^{-1}
\end{align*}
Furthermore, we transform the red-colored part using Equation \eqref{eq:3-CM-aux-Cube} 
and obtain
\begin{align*}
  \red{\act{\act{\partial l_2}(\act{\partial h_2}l_1)}\{l_2,\act{\partial h_2}l_1^{-1}\}\left<l_2,\act{\partial h_2}\partial l_1\right>} &= \{l_2,\act{\partial h_2}l_1\}^{-1}\left<l_2,\act{\partial h_2}\partial l_1\right> \\
  &= \left<\partial l_2,\act{\partial h_2}l_1\right>\act{\{\partial l_2,\act{\partial h_2}\partial l_1\}}\{\act{\partial h_2}l_1,l_2\}
\end{align*}
Hence,
\begin{align*}
  &(\text{L.H.S}) \\
  &= \{l_2,\act{h_2}l_1\}\act{\act{\partial l_2h_2}l_1}\{l_2,\{h_2,h_1\}\}\act{\partial l_2}R(h_2;l_1,h_1) & &\text{\tiny \eqref{eq:2CM-Peifferid}} \\
  &\qquad \act{\act{\partial l_2}\{h_2,\partial l_1h_1\}}(\left<\partial l_2,\act{\partial h_2}l_1\right>\act{\{\partial l_2,\act{\partial h_2}\partial l_1\}}(\red{\{\act{\partial h_2}l_1,l_2\}\act{\act{\partial h_2}\partial l_1}\left<l_2,\act{\partial h_2}h_1\right>})\{\partial l_2|\act{\partial h_2}\partial l_1,\act{\partial h_2}h_1\}^{-1}) \\
  &\qquad \{\partial l_2,h_2|\partial l_1h_1\}^{-1}
\end{align*}
(We have recolored.) Focusing on the red part,
\begin{align*}
  &\red{\{\act{\partial h_2}l_1,l_2\}\act{\act{\partial h_2}\partial l_1}\left<l_2,\act{\partial h_2}h_1\right>} \\
  &= \act{\act{\partial h_2}l_1}\left<l_2,\act{\partial h_2}h_1\right>\{\act{\partial h_2}l_1,\partial \left<l_2,\act{\partial h_2}h_1\right>^{-1}l_2\} & &\text{\tiny \eqref{eq:2CM-2-funct2}} \\
  &= \act{\act{\partial h_2}l_1}\left<l_2,\act{\partial h_2}h_1\right>\{\act{\partial h_2}l_1,\{\partial l_2,\act{\partial h_2}h_1\}\act{\act{\partial h_2}h_1}l_2\} & &\text{\tiny \eqref{eq:3CM-bLH}} \\
  &= \act{\act{\partial h_2}l_1}\left<l_2,\act{\partial h_2}h_1\right>\{\act{\partial h_2}l_1,\{\partial l_2,\act{\partial h_2}h_1\}\}\act{\act{\partial \act{\partial h_2}l_1}\{\partial l_2,\act{\partial h_2}h_1\}}\{\act{\partial h_2}l_1,\act{\act{\partial h_2}h_1}l_2\} & &\text{\tiny \eqref{eq:H_HH-2CM}}
\end{align*}
Using this,
\begin{align*}
  &(\text{L.H.S}) \\
  &= \{l_2,\act{h_2}l_1\}\act{\act{\partial l_2h_2}l_1}\{l_2,\{h_2,h_1\}\}\act{\partial l_2}R(h_2;l_1,h_1) \\
  &\qquad \act{\act{\partial l_2}\{h_2,\partial l_1h_1\}}\Bigl(\left<\partial l_2,\act{\partial h_2}l_1\right> \\
  &\hspace{2cm} \act{\{\partial l_2,\act{\partial h_2}\partial l_1\}}(\act{\act{\partial h_2}l_1}\left<l_2,\act{\partial h_2}h_1\right>\{\act{\partial h_2}l_1,\{\partial l_2,\act{\partial h_2}h_1\}\}\act{\act{\partial \act{\partial h_2}l_1}\{\partial l_2,\act{\partial h_2}h_1\}}\{\act{\partial h_2}l_1,\act{\act{\partial h_2}h_1}l_2\}) \\
  &\hspace{2cm} \{\partial l_2|\act{\partial h_2}\partial l_1,\act{\partial h_2}h_1\}^{-1}\Bigr)\{\partial l_2,h_2|\partial l_1h_1\}^{-1} \\
  &= \{l_2,\act{h_2}l_1\}\act{\act{\partial l_2h_2}l_1}\{l_2,\{h_2,h_1\}\}\act{\partial l_2}R(h_2;l_1,h_1) \\
  &\qquad \act{\act{\partial l_2}\{h_2,\partial l_1h_1\}}\Bigl(\left<\partial l_2,\act{\partial h_2}l_1\right> \\
  &\hspace{2cm} \act{\{\partial l_2,\act{\partial h_2}\partial l_1\}}(\act{\act{\partial h_2}l_1}\left<l_2,\act{\partial h_2}h_1\right>\{\act{\partial h_2}l_1,\{\partial l_2,\act{\partial h_2}h_1\}\}\act{\act{\partial \act{\partial h_2}l_1}\{\partial l_2,\act{\partial h_2}h_1\}}\{\act{\partial h_2}l_1,\act{\act{\partial h_2}h_1}l_2\}) \\
  &\hspace{2cm} \{\partial l_2|\act{\partial h_2}\partial l_1,\act{\partial h_2}h_1\}^{-1}\Bigr)\{\partial l_2,h_2|\partial l_1h_1\}^{-1} \\
  &= \{l_2,\act{h_2}l_1\}\act{\act{\partial l_2h_2}l_1}\{l_2,\{h_2,h_1\}\}\act{\partial l_2}R(h_2;l_1,h_1)\act{\act{\partial l_2}\{h_2,\partial l_1h_1\}\act{\partial l_2}(\act{\partial h_2}l_1)}\left<l_2,\act{\partial h_2}h_1\right> & &\text{\tiny \eqref{eq:2CM-Peifferid}} \\
  &\qquad \act{\act{\partial l_2}\{h_2,\partial l_1h_1\}}\Bigl(\left<\partial l_2,\act{\partial h_2}l_1\right>\act{\{\partial l_2,\act{\partial h_2}\partial l_1\}}(\{\act{\partial h_2}l_1,\{\partial l_2,\act{\partial h_2}h_1\}\}) \\
  &\hspace{2cm} \{\partial l_2|\act{\partial h_2}\partial l_1,\act{\partial h_2}h_1\}^{-1}
  \act{\{\partial l_2,\act{\partial h_2}(\partial l_1h_1)\}}\{\act{\partial h_2}l_1,\act{\act{\partial h_2}h_1}l_2\}
  \Bigr)\{\partial l_2,h_2|\partial l_1h_1\}^{-1} & &\text{\tiny \eqref{eq:2CM-Peifferid}} \\
  &= \{l_2,\act{h_2}l_1\}\act{\act{\partial l_2h_2}l_1}\{l_2,\{h_2,h_1\}\}\act{\act{\partial l_2}(\act{h_2}l_1\{h_2,h_1\})}\left<l_2,\act{\partial h_2}h_1\right>\red{\act{\partial l_2}R(h_2;l_1,h_1)} & &\text{\tiny \eqref{eq:2CM-Peifferid}} \\
  &\qquad \red{\act{\act{\partial l_2}\{h_2,\partial l_1h_1\}}R(\partial l_2;\act{\partial h_2}l_1,\act{\partial h_2}h_1)} & &\text{\tiny \eqref{eq:defR}} \\
  &\hspace{2cm} 
  \red{\{\partial l_2,h_2|\partial l_1h_1\}^{-1}}\act{\{\partial l_2h_2,\partial l_1h_1\}}\{\act{\partial h_2}l_1,\act{\act{\partial h_2}h_1}l_2\} & &\text{\tiny \eqref{eq:2CM-Peifferid}}
\end{align*}
For the red-colored part, using the 22--3 relation already established,
\begin{align*}
  &(\text{L.H.S}) \\
  &= \{l_2,\act{h_2}l_1\}\act{\act{\partial l_2h_2}l_1}\{l_2,\{h_2,h_1\}\}\act{\act{\partial l_2}(\act{h_2}l_1\{h_2,h_1\})}\left<l_2,\act{\partial h_2}h_1\right> \\
  &\qquad \act{\act{\partial l_2h_2}l_1}\{\partial l_2,h_2|h_1\}^{-1}R(\partial l_2h_2;l_1,h_1)\act{\{\partial l_2h_2,\partial l_1h_1\}}\{\act{\partial h_2}l_1,\act{\act{\partial h_2}h_1}l_2\} \\
  &= \{l_2,\act{h_2}l_1\}\act{\act{\partial l_2h_2}l_1}L(l_2,h_2;h_1)\\
  &\qquad R(\partial l_2h_2;l_1,h_1)\act{\{\partial l_2h_2,\partial l_1h_1\}}\{\act{\partial h_2}l_1,\act{\act{\partial h_2}h_1}l_2\} \\
  &= (\text{R.H.S})
\end{align*}

\subsection{Pasting 33-2}
\[
  L(l'l;h_2,h_1) = \act{l'}L(l;h_2,h_1)L(l';\partial lh_2,h_1)
\]
Starting from the left-hand side, we have
\begin{align*}
  &L(l'l;h_2,h_1) \\
  &= \{l'l,\{h_2,h_1\}\}\act{\act{\partial (l'l)}\{h_2,h_1\}}\left<l'l,\act{\partial h_2}h_1\right>\{\partial (l'l),h_2|h_1\}^{-1} & &\text{\tiny \eqref{eq:defL}} \\
  &= \act{l'}\{l,\{h_2,h_1\}\}\{l',\act{\partial l}\{h_2,h_1\}\} & &\text{\tiny \eqref{eq:HH_H-2CM}} \\
  &\qquad 
  \act{\act{\partial (l'l)}\{h_2,h_1\}}(\act{l'}\left<l,\act{\partial h_2}h_1\right>\{l',\{\partial l,\act{\partial h_2}h_1\}\}\act{\act{\partial l'}\{\partial l,\act{\partial h_2}h_1\}}\left<l',\act{\partial h_2}h_1\right>\{\partial l',\partial l|\act{\partial h_2}h_1\}^{-1}) & &\text{\tiny \eqref{eq:3CM-aux-Pasting1}} \\
  &\qquad \{\partial (l'l),h_2|h_1\}^{-1} \\
  &= \act{l'}(\{l,\{h_2,h_1\}\}\act{\act{\partial l}\{h_2,h_1\}}\left<l,\act{\partial h_2}h_1\right>) & &\text{\tiny \eqref{eq:2CM-Peifferid}} \\
  &\qquad 
  \{l',\act{\partial l}\{h_2,h_1\}\{\partial l,\act{\partial h_2}h_1\}\}\act{\act{\partial (l'l)}\{h_2,h_1\}\act{\partial l'}\{\partial l,\act{\partial h_2}h_1\}}\left<l',\act{\partial h_2}h_1\right> \\
  &\qquad \act{\partial l'}\{\partial l,h_2|h_1\}^{-1}\{\partial l',\partial lh_2|h_1\}^{-1} & &\text{\tiny \eqref{eq:3-CM-Fund-left-Hom}} \\
  &= \act{l'}(\{l,\{h_2,h_1\}\}\act{\act{\partial l}\{h_2,h_1\}}\left<l,\act{\partial h_2}h_1\right>) \\
  &\qquad 
  \{l',\act{\partial l}\{h_2,h_1\}\{\partial l,\act{\partial h_2}h_1\}\}\act{\partial l'}\{\partial l,h_2|h_1\}^{-1} & &\text{\tiny \eqref{eq:2CM-Peifferid}} \\
  &\qquad \act{\act{\partial l'}\{\partial lh_2,h_1\}}\left<l',\act{\partial h_2}h_1\right>\{\partial l',\partial lh_2|h_1\}^{-1} \\
  &= \act{l'}(\{l,\{h_2,h_1\}\}\act{\act{\partial l}\{h_2,h_1\}}\left<l,\act{\partial h_2}h_1\right>\{\partial l,h_2|h_1\}^{-1}) \\
  &\qquad 
  \{l',\{\partial lh_2,h_1\}\}\act{\act{\partial l'}\{\partial lh_2,h_1\}}\left<l',\act{\partial h_2}h_1\right>\{\partial l',\partial lh_2|h_1\}^{-1} & &\text{\tiny \eqref{eq:2CM-2-funct2}} \\
  &= \act{l'}L(l;h_2,h_1)L(l';\partial lh_2,h_1).
\end{align*}

\subsection{Pasting 2-33}
\[
  R(h_2,l'l,h_1) = \act{\act{h_2}l'}R(h_2,l,h_1)R(h_2,l',\partial lh_1),
\]
Starting from the left-hand side, we have
\begin{align*}
  &(\text{L.H.S}) \\
  &= \left<h_2,l'l\right>\act{\{h_2,\partial (l'l)\}}\{\act{\partial h_2}(l'l),\{h_2,h_1\}\}\{h_2|\partial(l'l),h_1\}^{-1} & &\text{\tiny \eqref{eq:defR}} \\
  &= \act{\act{h_2}l'}\left<h_2,l\right>\left<h_2,l'\right>\act{\{h_2,\partial l'\}}\{\act{\partial h_2}l',\{h_2,\partial l\}\}\{h_2|\partial l',\partial l\}^{-1} & &\text{\tiny \eqref{eq:3CM-aux-Pasting2}} \\
  &\qquad \act{\{h_2,\partial (l'l)\}}(\act{\act{\partial h_2}l'}\{\act{\partial h_2}l,\{h_2,h_1\}\}\{\act{\partial h_2}l',\act{\partial \act{\partial h_2}l}\{h_2,h_1\}\}) & &\text{\tiny \eqref{eq:HH_H-2CM}} \\
  &\qquad \{h_2|\partial(l'l),h_1\}^{-1} \\
  &= 
  \act{\act{h_2}l'}\left<h_2,l\right>\left<h_2,l'\right>\act{\{h_2,\partial l'\}}\{\act{\partial h_2}l',\{h_2,\partial l\}\} \\
  &\qquad \act{\{h_2,\partial l'\}\act{\partial \act{\partial h_2}l'}\{h_2,\partial l\}}(\act{\act{\partial h_2}l'}\{\act{\partial h_2}l,\{h_2,h_1\}\}\{\act{\partial h_2}l',\act{\partial \act{\partial h_2}l}\{h_2,h_1\}\}) \\
  &\qquad \{h_2|\partial l',\partial l\}^{-1}\{h_2|\partial(l'l),h_1\}^{-1} & &\text{\tiny \eqref{eq:2CM-Peifferid}} \\
  &= 
  \act{\act{h_2}l'}\left<h_2,l\right>\left<h_2,l'\right> \\
  &\qquad \act{\{h_2,\partial l'\}}(\act{\act{\partial h_2}l'\{h_2,\partial l\}}\{\act{\partial h_2}l,\{h_2,h_1\}\}
  \{\act{\partial h_2}l',\{h_2,\partial l\}\}
  \act{\act{\partial \act{\partial h_2}l'}\{h_2,\partial l\}}\{\act{\partial h_2}l',\act{\partial \act{\partial h_2}l}\{h_2,h_1\}\}) & &\text{\tiny \eqref{eq:2CM-Peifferid}} \\
  &\qquad \act{\{h_2,\partial l'\}}(\act{\act{\partial h_2}\partial l'}\{h_2|\partial l,h_1\}^{-1})\{h_2|\partial l',\partial lh_1\}^{-1} & &\text{\tiny \eqref{eq:3-CM-Fund-right-Hom}} \\
  &= 
  \act{\act{h_2}l'}\left<h_2,l\right>\act{\act{h_2}l'\{h_2,\partial l\}}\{\act{\partial h_2}l,\{h_2,h_1\}\}\left<h_2,l'\right> & &\text{\tiny \eqref{eq:2CM-Peifferid}} \\
  &\qquad 
  \act{\{h_2,\partial l'\}}\{\act{\partial h_2}l',\{h_2,\partial l\}\act{\partial \act{\partial h_2}l}\{h_2,h_1\}\} & &\text{\tiny \eqref{eq:H_HH-2CM}} \\
  &\qquad \act{\{h_2,\partial l'\}}(\act{\act{\partial h_2}\partial l'}\{h_2|\partial l,h_1\}^{-1})\{h_2|\partial l',\partial lh_1\}^{-1} \\
  &= 
  \act{\act{h_2}l'}\left<h_2,l\right>\act{\act{h_2}l'\{h_2,\partial l\}}\{\act{\partial h_2}l,\{h_2,h_1\}\}\left<h_2,l'\right> \\
  &\qquad 
  \act{\{h_2,\partial l'\}\act{\partial h_2}l'}\{h_2|\partial l,h_1\}^{-1}\act{\{h_2,\partial l'\}}\{\act{\partial h_2}l',\{h_2,\partial lh_1\}\} & &\text{\tiny \eqref{eq:2CM-2-funct2}} \\
  &\qquad \{h_2|\partial l',\partial lh_1\}^{-1} \\
  &= 
  \act{\act{h_2}l'}\left<h_2,l\right>\act{\act{h_2}l'\{h_2,\partial l\}}\{\act{\partial h_2}l,\{h_2,h_1\}\}\act{\act{h_2}l'}\{h_2|\partial l,h_1\}^{-1} \\
  &\qquad 
  \left<h_2,l'\right> \act{\{h_2,\partial l'\}}\{\act{\partial h_2}l',\{h_2,\partial lh_1\}\}\{h_2|\partial l',\partial lh_1\}^{-1} & &\text{\tiny \eqref{eq:2CM-Peifferid}} \\
  &= \act{\act{h_2}l'}R(h_2;l,h_1)R(h_2;l',\partial lh_1) & &\text{\tiny \eqref{eq:defR}} \\
  &= (\text{R.H.S}).
\end{align*}

\subsection{Tube 1}
\[
  L(\partial ml,h_2;h_1)\act{\{\partial lh_2,h_1\}}(\act{\act{\partial h_2}h_1}m) = mL(l,h_2;h_1)
\]
Starting from the left-hand side, we obtain
\begin{align*}
  &(\text{L.H.S}) \\
  &= 
  \{\partial ml,\{h_2,h_1\}\}\act{\act{\partial l}\{h_2,h_1\}}\left<\partial ml,\act{\partial h_2}h_1\right>\{\partial l,h_2|h_1\}^{-1}
  \act{\{\partial lh_2,h_1\}}(\act{\act{\partial h_2}h_1}m) \\
  &= \act{\partial m}\{l,\{h_2,h_1\}\}\{\partial m,\act{\partial l}\{h_2,h_1\}\} & &\text{\tiny \eqref{eq:HH_H-2CM}} \\
  &\qquad 
  \act{\act{\partial l}\{h_2,h_1\}}(\act{\partial m}\left<l,\act{\partial h_2}h_1\right>\{\partial m,\{\partial l,\act{\partial h_2}h_1\}\}\act{\{\partial l,\act{\partial h_2}h_1\}}\left<\partial m,\act{\partial h_2}h_1\right>\{e_H,\partial l|\act{\partial h_2}h_1\}^{-1}) & &\text{\tiny \eqref{eq:3CM-aux-Pasting1}} \\
  &\qquad 
  \{\partial l,h_2|h_1\}^{-1}\act{\{\partial lh_2,h_1\}}(\act{\act{\partial h_2}h_1}m) \\
  &= m\{l,\{h_2,h_1\}\}m^{-1}\{\partial m,\act{\partial l}\{h_2,h_1\}\}\act{\act{\partial l}\{h_2,h_1\}}m & &\text{\tiny \eqref{eq:2CM-Peifferid}} \\
  &\qquad 
  \act{\act{\partial l}\{h_2,h_1\}}\left<l,\act{\partial h_2}h_1\right>\act{\act{\partial l}\{h_2,h_1\}}m^{-1}\act{\act{\partial l}\{h_2,h_1\}}\{\partial m,\{\partial l,\act{\partial h_2}h_1\}\}\{\partial l,h_2|h_1\}^{-1} & &\tikz[transform shape, scale=.75,baseline=-2pt,inner sep=0pt]{\node {$\begin{array}{ll}\text{lemma \ref{thm:unit}}\\ \text{\eqref{eq:2CM-Peifferid}}\end{array}$};} \\
  &\qquad 
  \act{\{\partial lh_2,h_1\}}(\left<\partial m,\act{\partial h_2}h_1\right>\act{\act{\partial h_2}h_1}m) \\
  &= m\{l,\{h_2,h_1\}\}\act{\act{\partial l}\{h_2,h_1\}}\left<l,\act{\partial h_2}h_1\right>\act{\act{\partial l}\{h_2,h_1\}\{\partial l,\act{\partial h_2}h_1\}}m^{-1}\{\partial l,h_2|h_1\}^{-1} & &\text{\tiny \eqref{eq:LH-2CM}} \\
  &\qquad 
  \act{\{\partial lh_2,h_1\}}(\left<\partial m,\act{\partial h_2}h_1\right>\act{\act{\partial h_2}h_1}m) \\
  &= m\{l,\{h_2,h_1\}\}\act{\act{\partial l}\{h_2,h_1\}}\left<l,\act{\partial h_2}h_1\right>\{\partial l,h_2|h_1\}^{-1}\act{\{\partial lh_2,h_1\}}m^{-1} & &\text{\tiny \eqref{eq:2CM-Peifferid}} \\
  &\qquad 
  \act{\{\partial lh_2,h_1\}}(\left<\partial m,\act{\partial h_2}h_1\right>\act{\act{\partial h_2}h_1}m) \\
  &= mL(l,h_2;h_1) & &\text{\tiny \eqref{eq:defL},\eqref{eq:3CM-aux-HonM}} \\
  &= (\text{R.H.S}).
\end{align*}

\subsection{Tube 2}
\[
  R(h_2;\partial ml,h_1)\act{\{h_2,\partial lh_1\}}(\act{\partial h_2}m) = \act{h_2}mR(h_2;l,h_1)
\]
Starting from the left-hand side, we have
\begin{align*}
  &(\text{L.H.S}) \\
  &= 
  \left<h_2,\partial ml\right>\act{\{h_2,\partial l\}}\{\act{\partial h_2}(\partial ml),\{h_2,h_1\}\}\{h_2|\partial l,h_1\}^{-1}
  \act{\{h_2,\partial lh_1\}}(\act{\partial h_2}m) & &\text{\tiny \eqref{eq:defR}} \\
  &= 
  \act{\act{h_2}\partial m}\left<h_2,l\right>\left<h_2,\partial m\right>\act{\{h_2,\partial \partial m\}}\{\act{\partial h_2}\partial m,\{h_2,\partial l\}\}\{h_2|\partial \partial m,\partial l\}^{-1} & &\text{\tiny \eqref{eq:3CM-aux-Pasting2}} \\
  &\qquad 
  \act{\{h_2,\partial l\}}\act{\partial \act{\partial h_2}m}\{\act{\partial h_2}l,\{h_2,h_1\}\}\act{\{h_2,\partial l\}}\{\partial \act{\partial h_2}m,\act{\partial \act{\partial h_2}l}\{h_2,h_1\}\} \\
  &\qquad 
  \{h_2|\partial l,h_1\}^{-1}
  \act{\{h_2,\partial lh_1\}}(\act{\partial h_2}m) \\
  &= 
  \act{h_2}m\left<h_2,l\right>\act{h_2}m^{-1}\left<h_2,\partial m\right>\{\act{\partial h_2}\partial m,\{h_2,\partial l\}\} & &\text{\tiny \eqref{eq:2CM-Peifferid}} \\
  &\qquad 
  \act{\{h_2,\partial l\}}\act{\partial \act{\partial h_2}m}\{\act{\partial h_2}l,\{h_2,h_1\}\}\act{\{h_2,\partial l\}}\{\partial \act{\partial h_2}m,\act{\partial \act{\partial h_2}l}\{h_2,h_1\}\} \\
  &\qquad 
  \{h_2|\partial l,h_1\}^{-1}
  \act{\{h_2,\partial lh_1\}}(\act{\partial h_2}m) \\
  &= 
  \act{h_2}m\left<h_2,l\right>\act{\partial h_2}m^{-1}\{\act{\partial h_2}\partial m,\{h_2,\partial l\}\} & &\text{\tiny \eqref{eq:3CM-aux-pHonM}} \\
  &\qquad 
  \act{\{h_2,\partial l\}}\act{\partial \act{\partial h_2}m}\{\act{\partial h_2}l,\{h_2,h_1\}\}\act{\{h_2,\partial l\}}\{\partial \act{\partial h_2}m,\act{\partial \act{\partial h_2}l}\{h_2,h_1\}\} \\
  &\qquad 
  \{h_2|\partial l,h_1\}^{-1}
  \act{\{h_2,\partial lh_1\}}(\act{\partial h_2}m) \\
  &= 
  \act{h_2}m\left<h_2,l\right>\act{\{h_2,\partial l\}}(\act{\partial h_2}m)^{-1} & &\text{\tiny \eqref{eq:LH-2CM}} \\
  &\qquad 
  \act{\{h_2,\partial l\}}\act{\partial \act{\partial h_2}m}\{\act{\partial h_2}l,\{h_2,h_1\}\}\act{\{h_2,\partial l\}}\{\partial \act{\partial h_2}m,\act{\partial \act{\partial h_2}l}\{h_2,h_1\}\} \\
  &\qquad 
  \{h_2|\partial l,h_1\}^{-1}
  \act{\{h_2,\partial lh_1\}}(\act{\partial h_2}m) \\
  &= 
  \act{h_2}m\left<h_2,l\right>\act{\{h_2,\partial l\}}\{\act{\partial h_2}l,\{h_2,h_1\}\} \\
  &\qquad 
  \act{\{h_2,\partial l\}}(\act{\partial h_2}m)^{-1}\act{\{h_2,\partial l\}}\{\partial \act{\partial h_2}m,\act{\partial \act{\partial h_2}l}\{h_2,h_1\}\} & &\text{\tiny \eqref{eq:2CM-Peifferid}} \\
  &\qquad 
  \{h_2|\partial l,h_1\}^{-1}
  \act{\{h_2,\partial lh_1\}}(\act{\partial h_2}m) \\
  &= 
  \act{h_2}m\left<h_2,l\right>\act{\{h_2,\partial l\}}\{\act{\partial h_2}l,\{h_2,h_1\}\} \\
  &\qquad 
  \act{\{h_2,\partial l\}\act{\partial \act{\partial h_2}l}\{h_2,h_1\}}(\act{\partial h_2}m)^{-1}
  \{h_2|\partial l,h_1\}^{-1}
  \act{\{h_2,\partial lh_1\}}(\act{\partial h_2}m) & &\text{\tiny \eqref{eq:LH-2CM}} \\
  &= 
  \act{h_2}m\left<h_2,l\right>\act{\{h_2,\partial l\}}\{\act{\partial h_2}l,\{h_2,h_1\}\}\{h_2|\partial l,h_1\}^{-1} & &\text{\tiny \eqref{eq:2CM-Peifferid}} \\
  &= \act{h_2}mR(h_2;l,h_1) & &\text{\tiny \eqref{eq:defR}} \\
  &= \text{(R.H.S)}
\end{align*}

\end{document}